\documentclass[11pt]{article}

\usepackage[T1]{fontenc}
\usepackage{lmodern}
\usepackage{amsmath,amssymb,amsthm}
\usepackage{mathrsfs}
\usepackage[margin=1in]{geometry}
\usepackage{enumitem}
\usepackage{xcolor}
\usepackage{tikz}
\usetikzlibrary{calc,positioning}
\definecolor{diagramred}{RGB}{196,45,52}
\definecolor{diagramblue}{RGB}{29,104,175}
\tikzset{
  s23edge/.style={line width=.85pt,line cap=round,line join=round,black!78},
  s23upper/.style={line width=1.2pt,line cap=round,line join=round,diagramred},
  s23lower/.style={line width=1.2pt,line cap=round,line join=round,diagramblue},
  s23upperfaint/.style={line width=.7pt,line cap=round,line join=round,diagramred!55!black},
  s23lowerfaint/.style={line width=.7pt,line cap=round,line join=round,diagramblue!55!black},
  s23return/.style={line width=1.35pt,line cap=round,line join=round,diagramblue},
  s23common/.style={line width=1.25pt,line cap=round,line join=round,black},
  s23vertex/.style={circle,fill=black,inner sep=1.35pt},
  s23label/.style={font=\small,fill=white,inner sep=1.1pt},
  s23tiny/.style={font=\footnotesize,fill=white,inner sep=.8pt},
  s23bracket/.style={line width=.65pt,black!65},
  s23cut/.style={line width=.65pt,densely dashed,black!60}
}
\usepackage[colorlinks=true,linkcolor=blue!45!black,citecolor=blue!45!black,urlcolor=blue!45!black]{hyperref}
\usepackage{microtype}

\newtheorem{theorem}{Theorem}[section]
\newtheorem{lemma}[theorem]{Lemma}
\newtheorem{proposition}[theorem]{Proposition}
\newtheorem{corollary}[theorem]{Corollary}
\theoremstyle{definition}
\newtheorem{definition}[theorem]{Definition}
\newtheorem*{theoremA}{Theorem A}
\newtheorem*{theoremB}{Theorem B}
\newtheorem*{theoremC}{Theorem C}

\newcommand{\cells}{\mathcal C}
\newcommand{\Path}{\mathcal P}
\newcommand{\K}{\mathcal K}
\newcommand{\Cl}{\operatorname{Cl}}
\newcommand{\D}{\mathcal D}
\newcommand{\topPath}{\operatorname{top}}
\newcommand{\botPath}{\operatorname{bot}}

\title{Finiteness properties of closed subgroups of Thompson's group \(F\)}
\author{Gili Golan}
\date{}

\begin{document}
\maketitle

\begin{abstract}
A subgroup \(H\) of Thompson's group \(F\) is \emph{closed} if every
piecewise-\(H\) function in \(F\) belongs to \(H\).  Prominent examples
of closed subgroups are the maximal subgroups of infinite index,
stabilizers and pointwise stabilizers of sets of points, and many of
Jones' subgroups.  A closed subgroup is
finitely generated if and only if its Stallings \(2\)-core is finite,
and it is then isomorphic to a diagram group over the core, in the
sense of Guba and Sapir.  We study the finiteness properties of closed
finitely generated subgroups of \(F\) with finitely many orbits on the
dyadic rationals.  For such a subgroup \(H\) the following are
equivalent: \(H\) is of type \(\mathrm{FP}_2\); \(H\) is finitely
presented; \(H\) is of type \(F_\infty\); \(H\) has finitely many
orbits on pairs of dyadic rationals; the action of \(H\) on the dyadic
rationals is oligomorphic.  If moreover the action of \(H\) on
\((0,1)\) is minimal, these conditions hold if and only if the image of
\(H\) in the abelianization \(\mathbb Z^2\) of \(F\) has rank two.  All
these conditions can be decided from the core of \(H\), and when they
hold a finite presentation of \(H\) can be computed.  As a consequence,
every finitely generated maximal subgroup of \(F\) is of type
\(F_\infty\).
\end{abstract}

\section{Introduction}
\label{sec:introduction}

Thompson's group \(F\) is the group of piecewise-linear
homeomorphisms of \([0,1]\) with finitely many breakpoints, all of them
dyadic rationals, and with slopes powers of \(2\).  Brown and Geoghegan
proved that \(F\) is of type \(F_\infty\); it was the first example of
a torsion-free group of type \(\mathrm{FP}_\infty\) with infinite
cohomological dimension \cite{BG84}.  The finiteness properties of
subgroups of \(F\) are much less understood.  Finitely generated
subgroups of \(F\) need not be finitely presented,
and no general method is known for deciding whether a given finitely
generated subgroup of \(F\) is finitely presented or of type
\(F_\infty\).  In this paper we answer these questions for a large
class of subgroups, the finitely generated closed subgroups with
finitely many orbits on the dyadic rationals.  For these subgroups we show that type
\(\mathrm{FP}_2\), finite presentability and type \(F_\infty\) are
equivalent to each other and to a simple dynamical condition, and that
they can be decided algorithmically; moreover, when they hold, results
of Guba and Sapir yield an algorithm for finding a finite presentation.

\subsection*{Closed subgroups and their cores}

Let \(\D=\mathbb Z[1/2]\cap(0,1)\) be the set of dyadic rationals in
\((0,1)\).  Let \(H\leq F\).  A function \(f\in F\) is
\emph{piecewise-\(H\)} if there is a finite subdivision of \([0,1]\)
such that on each interval of the subdivision \(f\) agrees with some
element of \(H\).  The set of all piecewise-\(H\) functions in \(F\) is
a subgroup of \(F\) containing \(H\), called the \emph{closure} of
\(H\) and denoted \(\Cl(H)\); the subgroup \(H\) is \emph{closed} if
\(H=\Cl(H)\).  Closed subgroups of \(F\) were introduced in
\cite{GS17} and studied further in
\cite{GolanMemo,GPSclosed,GolanMaximal}.
Many natural subgroups of \(F\) are closed.  Every maximal subgroup of
\(F\) of infinite index is closed \cite[Theorem~1.1]{GolanMaximal}.
The Jones subgroup \(\vec F\) and its generalizations \(\vec F_n\)
are closed \cite{GSjones,GPSclosed}, as are other Jones subgroups
studied in \cite{AielloNagnibeda1,AielloNagnibeda2}.
Stabilizers and pointwise stabilizers of sets of points in \((0,1)\)
are closed as well.  Finitely generated closed subgroups of \(F\) are undistorted
\cite{GPSclosed}.

Closed subgroups are best understood through the \emph{Stallings
\(2\)-core}, introduced by Guba and Sapir \cite{GS17} as a
two-dimensional analogue of the Stallings core graph of a subgroup of a
free group.  It is built in the framework of directed
\(2\)-complexes and diagram groups of Guba and Sapir
\cite{GubaSapir97,GS06}, which we now describe informally.  A
\emph{directed \(2\)-complex} consists of vertices, directed edges,
and \(2\)-cells; each \(2\)-cell is attached along two directed paths
with the same initial vertex and the same terminal vertex, its
\emph{top path} and its \emph{bottom path}.  The cells are divided
into positive cells and their inverses: the inverse of a cell with top
path \(P\) and bottom path \(Q\) is a cell with top path \(Q\) and
bottom path \(P\).  A \emph{diagram} over a directed \(2\)-complex is a
finite planar picture assembled from copies of the cells, similarly to
the way a van Kampen diagram is assembled from relator cells; it has a
top path and a bottom path, both connecting its initial vertex to its
terminal vertex.  Diagrams whose
top and bottom paths are labelled by the same path \(w\) can be
multiplied by stacking them vertically, and reduced by cancelling
adjacent mutually inverse cells.  The reduced diagrams with top and
bottom paths labelled by \(w\) form the \emph{diagram group} with base
\(w\).  Thompson's group \(F\) is itself a diagram group: it is
isomorphic to the diagram group with base \(x\) over the \emph{Dunce
hat}, the directed \(2\)-complex with one vertex, one directed edge
\(x\), and one positive cell with top path \(x\) and bottom path
\(xx\) \cite{GubaSapir97}.  Under
this isomorphism, the diagram of an element of \(F\) is obtained from a
pair of binary trees representing it by gluing the two trees along
their leaves and replacing every caret of the first tree by a positive
cell and every caret of the second tree by an inverse cell.

The Stallings \(2\)-core \(C(H)\) of a subgroup \(H\leq F\) is
obtained by taking the reduced diagrams of the elements of a generating
set of \(H\), identifying all their top and bottom edges to a single
edge \(p_H\), and then folding: whenever two cells have the same top
path or the same bottom path, they are identified.  The result is a
directed \(2\)-complex which does not depend on the chosen generating
set.  The construction shows that the core is finite whenever \(H\) is
finitely generated.  The core inherits the shape of
the cells of the diagrams of elements of \(F\): every positive cell of
\(C(H)\) has a single top edge and a bottom path of length two, and,
by foldedness, every edge is the top edge of at most one positive cell.
We call \(C(H)\) \emph{full} if every edge is the top edge of a
(necessarily unique) positive cell.  The edge \(p_H\) runs from the
\emph{initial vertex} of the core to its \emph{terminal vertex}; all
other vertices are called \emph{inner}, and an edge is \emph{inner} if
both its endpoints are inner vertices.

The core detects membership in the closure.  The diagram of an element
of \(F\) is \emph{accepted} by \(C(H)\) if its edges and cells can be
labelled by edges and cells of \(C(H)\), respecting incidences, so
that its top and bottom edges are labelled by \(p_H\).  An element of
\(F\) belongs to \(\Cl(H)\) if and only if its reduced diagram is
accepted by \(C(H)\) \cite{GolanMemo}.  Consequently \(\Cl(H)\) is
isomorphic to the diagram group over \(C(H)\) with base \(p_H\)
\cite{GolanMemo}.  Moreover, \(H\) and \(\Cl(H)\) have the same
core, and if the core of \(H\) is finite then \(\Cl(H)\) is finitely
generated \cite{GPSclosed}.  Thus a closed subgroup of \(F\) is
finitely generated if and only if its core is finite, and closed
finitely generated subgroups of \(F\) are exactly the diagram groups
over finite Stallings \(2\)-cores with base the distinguished edge.

This brings the finiteness properties of closed subgroups within reach
of the theory of diagram groups.  Two directed paths in a directed
\(2\)-complex \(\mathcal L\) are \emph{homotopic} if one is obtained
from the other by repeatedly replacing the top path of a cell by its
bottom path or vice versa.  We write \(\Path(\mathcal L)\) for the set
of homotopy classes of nonempty directed paths in \(\mathcal L\); with the
partial operation of concatenating composable paths, it is the
\emph{path semigroupoid} of \(\mathcal L\).  Guba and Sapir proved
that if \(\mathcal L\) is finite and \(\Path(\mathcal L)\) is finite,
then every diagram group over \(\mathcal L\) is of type \(F_\infty\)
\cite[Theorem~9.3]{GS06}.  Our main theorem shows that for a finite full
core \(\mathcal L=C(H)\) the diagram group over \(\mathcal L\) with
base \(p_H\) is of type \(\mathrm{FP}_2\) if and only if
\(\Path(\mathcal L)\) is finite, and characterizes the finiteness of \(\Path(\mathcal L)\)
in terms of the action of \(H\) on the dyadic rationals.

\subsection*{Main results}

Let \(H\) be a finitely generated closed subgroup of \(F\).  Then
\(C(H)\) is full if and only if \(H\) has finitely many orbits on
\(\D\) \cite{GolanMemo}; the alternative is that some open interval
contains at most one dyadic point from each \(H\)-orbit.  For a full
core, the inner vertices of \(C(H)\) correspond to the \(H\)-orbits on
\(\D\) \cite{GolanMemo}.  Write
\[
  \D^2_<=\{(a,b)\in\D^2:a<b\}
\]
for the set of increasing pairs of dyadic rationals.  Recall that an
action of a group on a set \(X\) is \emph{oligomorphic} if it has
finitely many orbits on \(X^n\) for every \(n\geq1\).

\begin{theoremA}[Theorem~\ref{thm:finite-full-core-equivalence} and
Proposition~\ref{prop:dyadic-tuple-orbits}]
Let \(H\leq F\) be a closed finitely generated subgroup with finitely
many orbits on \(\D\).  The following conditions are equivalent.
\begin{enumerate}[label=\textup{(\arabic*)}]
\item \(H\) is of type \(\mathrm{FP}_2\).
\item \(H\) is finitely presented.
\item \(H\) is of type \(F_\infty\).
\item The core \(C(H)\) has only finitely many homotopy classes of
      directed paths.
\item \(H\) has finitely many orbits on \(\D^2_<\).
\item The action of \(H\) on \(\D\) is oligomorphic.
\end{enumerate}
\end{theoremA}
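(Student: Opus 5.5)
The plan is to close a cycle of implications, using the identification $H=\Cl(H)\cong D(C(H),p_H)$ recalled above. We also use that $C(H)$ is finite (since $H$ is finitely generated) and full (since $H$ has finitely many orbits on $\D$).

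The chain $(3)\Rightarrow(2)\Rightarrow(1)$ is standard. For $(4)\Rightarrow(3)$ we apply Guba--Sapir \cite[Theorem~9.3]{GS06} directly: $C(H)$ is a finite complex, so if $\Path(C(H))$ is finite then every diagram group over it is of type $F_\infty$, in particular the one with base $p_H$.

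For $(4)\Leftrightarrow(5)\Leftrightarrow(6)$ we pass to the dynamics via the correspondence between points and paths in a full core. A dyadic $a\in\D$ corresponds to a vertex of an accepted tree diagram, that is, a choice of subdivision of $p_H$ through positive cells. The subdivision splits $p_H$ into a directed path $\alpha$ from the initial vertex to an inner vertex $v$, followed by a path from $v$ to the terminal vertex. Two points lie in the same $H$-orbit iff they reach the same inner vertex, by \cite{GolanMemo}. For an increasing tuple $a_1<\dots<a_n$, the subdivision cuts $p_H$ into $n+1$ consecutive paths $\gamma_0\gamma_1\cdots\gamma_n$. We then claim that two tuples lie in the same $H$-orbit iff the homotopy classes of all the $\gamma_i$ agree. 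Sufficiency: given homotopies $\gamma_i\simeq\gamma_i'$, we build a diagram over $C(H)$ from $p_H$ to $p_H$ sending one subdivision to the other piece by piece. This diagram is accepted, so it lies in $\Cl(H)=H$. Necessity: an element of $H$ mapping one tuple to the other has an accepted diagram, which restricts to homotopies between the corresponding pieces. Finitely many homotopy classes then give finitely many orbits on $n$-tuples, so $(4)\Rightarrow(6)$, and $(6)\Rightarrow(5)$ is trivial.

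For $(5)\Rightarrow(4)$, the same correspondence with $n=2$ shows that orbits on $\D^2_<$ control the classes of middle segments $\gamma_1$. We must show every class of nonempty directed path in $C(H)$ occurs as such a middle segment. Take any directed path $\gamma$ from $u$ to $w$. By fullness every edge is the top of a positive cell, and by the known structure of cores every inner vertex is reachable, through subdivisions of $p_H$, as a vertex of an accepted tree. So we need a subdivision of $p_H$ whose middle part is homotopic to $\gamma$; this is the delicate point. The approach is to first realize $u$ and $w$, then use homotopies (moves in both directions) to connect $u$ to $w$ along $\gamma$. Endpoint edges touching the initial or terminal vertex need separate handling, perhaps by appending extra cells.

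The last and hardest implication is $(1)\Rightarrow(4)$. Assume $\Path(C(H))$ is infinite. The first step is to produce, by finiteness and fullness, a vertex $v$ with infinitely many pairwise nonhomotopic paths of a controlled form, for example an infinite family $\gamma_k$ from $v$ to $v$ or of unbounded "depth". The next step is to show $H_2$ or relation modules fail to be finitely generated. To do this, we would use Guba--Sapir's description of diagram groups as fundamental groups of Squier complexes. That description gives a presentation of $D(C(H),p_H)$ whose relations come from commuting disjoint cells (squares). We then exhibit infinitely many independent "dependency" relations indexed by the $\gamma_k$: for each $k$, a commutator relation between two cells separated by $\gamma_k$. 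We must show that no finite subset generates them as a normal subgroup, even at the level of $\mathrm{FP}_2$. The plan is to apply a Bieri-style criterion: truncate the Squier complex to paths of bounded homotopy type, and show that the truncations have homology classes in $H_1$ of the covers that die only at unbounded level. The main obstacle is making this argument homological, so that it rules out $\mathrm{FP}_2$ and not merely finite presentability. The step I expect to be hardest is a Brown-criterion style filtration argument for the $\pi_1$ of the Squier component with an essentially non-finite tail.
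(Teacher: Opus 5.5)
Your implications $(3)\Rightarrow(2)\Rightarrow(1)$, $(4)\Rightarrow(3)$ and $(4)\Rightarrow(6)\Rightarrow(5)$ match the paper. Your tuple criterion is Proposition~\ref{prop:tuple-transport}, and the outer blocks $\gamma_0,\gamma_n$ are determined by their inner endpoints by Lemma~\ref{lem:boundary-paths}.

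For $(5)\Rightarrow(4)$ you locate the right difficulty but do not resolve it. The missing tool is that any two paths from $\iota_{\mathcal L}$ to $\tau_{\mathcal L}$ are homotopic. Hence $p_H\simeq A\gamma B$ for any $A\colon\iota_{\mathcal L}\to u$ and $B\colon w\to\tau_{\mathcal L}$. A common positive expansion of $p_H$ and $A\gamma B$ (Lemma~\ref{lem:common-expansion}) then displays a positive expansion of $\gamma$ as the subdivision word of a dyadic interval (Proposition~\ref{prop:interval-realization}). Paths meeting $\iota_{\mathcal L}$ or $\tau_{\mathcal L}$ need no realization at all. Their classes are determined by the other endpoint, so only inner classes matter (Lemma~\ref{lem:inner-reduction}).

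The genuine gap is $(1)\Rightarrow(4)$, the heart of the theorem, which your proposal does not prove. Your outline consists of an infinite family $\gamma_k$, square relations of the Squier complex ``separated by $\gamma_k$'', and a Brown/Bieri-type filtration. Nothing in it explains why those relations would be independent modulo finitely many. Moreover, no argument of this shape can work unless it exploits fullness essentially. The paper's own example shows this: the cyclic subgroup generated by an element of $F$ without fixed points in $(0,1)$ is closed, has finite core with infinitely many path classes, and yet is $\mathbb Z$, of type $F_\infty$.

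The paper instead first identifies the source of the infinitude, by induction on the number of inner edges:
\begin{itemize}
\item For minimal rank-two actions, $\Path_{\mathrm{in}}(\mathcal L)$ is a groupoid with exactly $q=[d_L\mathbb Z\times d_R\mathbb Z:A]$ classes between any two inner vertices.
\item Nonminimal rank-two actions reduce to normalized restrictions to orbitals, or to gaps of the exceptional minimal set, each with strictly fewer inner edges.
\end{itemize}
This shows that $\Path(\mathcal L)$ is infinite if and only if some rational $H$-isolated interval $I$ has $\operatorname{rank}\pi_{\mathrm{ab}}(K_I(H))=1$ (Proposition~\ref{prop:rank-one-isolated-characterization}).

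The obstruction is then group-theoretic rather than via Squier complexes. A closed rank-one group with finite full core is not $\mathrm{FP}_2$ (Theorem~\ref{thm:rank-one-not-FP2}). The proof applies Bieri--Strebel to the character given by the endpoint image, together with finiteness of breakpoint orbits and the fact that elements with nonzero character fix no dyadic point. Finally, $K_I(H)$ is a quasi-retract of $H$, so Alonso's theorem transfers the obstruction to $H$ (Theorem~\ref{thm:rank-one-isolated-not-FP2}). Without an identification of this kind and an obstruction adapted to it, your final step is a hope rather than a proof.
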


The equivalence of \textup{(4)} and \textup{(5)} follows from a
correspondence, established in Section~\ref{sec:interval-paths},
between homotopy classes of directed paths in a full core and
\(H\)-orbits of increasing pairs of dyadic rationals
(Corollary~\ref{cor:interval-pair-dictionary}).  Similarly, the action
of \(H\) on increasing \(n\)-tuples of dyadic rationals can be
described in terms of homotopy classes of directed paths in the core
(Proposition~\ref{prop:tuple-transport}), which gives the equivalence
of \textup{(4)} and \textup{(6)}.  The implication from \textup{(4)}
to \textup{(3)} is the theorem of Guba and Sapir, and the implications
from \textup{(3)} to \textup{(2)} to \textup{(1)} are general.  The
heart of the paper is the implication from \textup{(1)} to
\textup{(4)}.

The conditions of Theorem~A can moreover be decided algorithmically.

\begin{theoremB}[Theorem~\ref{thm:finite-full-core-decision}]
There is an algorithm which, given the finite full core of a closed
subgroup \(H\leq F\), decides whether \(H\) is of type
\(\mathrm{FP}_2\), or equivalently whether it is of type \(F_\infty\).
\end{theoremB}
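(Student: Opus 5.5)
By Theorem~A, for a closed subgroup \(H\) with finite full core, being of type \(\mathrm{FP}_2\) and being of type \(F_\infty\) are both equivalent to condition \textup{(4)}: \(\Path(C(H))\) is finite. So it suffices to give an algorithm that takes a finite full directed \(2\)-complex \(\mathcal L=C(H)\) and decides whether \(\mathcal L\) has only finitely many homotopy classes of nonempty directed paths.

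\textbf{Semi-decision for finiteness.} This half is a search for a certificate. For each pair of vertices \(u,v\), enumerate directed paths from \(u\) to \(v\) in order of length. Homotopy is generated by replacing the top edge of a cell by its bottom path of length two, or the reverse. Within any bounded length, reachability under these moves is finitely checkable only if we bound the intermediate lengths, so instead we look for a finite set \(S\) of paths closed in the following sense: every path of the form \(pe\), with \(p\in S\) and \(e\) a composable edge, is shown homotopic to some element of \(S\) by an explicit sequence of moves. Such an \(S\), together with the single edges, witnesses finiteness, because every path is built edge by edge. Conversely, if \(\Path(\mathcal L)\) is finite, choose a representative of each class and a homotopy for each extension. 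This gives a finite certificate, and enumerating all candidate certificates will eventually find it.

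\textbf{Semi-decision for infiniteness.} This is the main obstacle: a certificate that infinitely many classes exist. I would use the dictionary of Corollary~\ref{cor:interval-pair-dictionary}, under which classes of directed paths correspond to \(H\)-orbits on \(\D^2_<\). The plan is to find a homotopy invariant that separates infinitely many paths. The natural candidate is to map \(\mathcal L\) to the Dunce hat, sending every edge to \(x\). This gives a morphism to \(\Path\) of the Dunce hat, which is trivial, so it carries no information. I would instead use the full-core structure. Each edge \(e\) has a unique positive cell with bottom path \(e_0e_1\), so every path expands uniquely to a maximally subdivided form along any dyadic depth. Two paths are homotopic if and only if some common refinement of them agrees, which is a confluence or normal-form statement for the rewriting system \(e\to e_0e_1\). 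This rewriting system is terminating in the expanding direction only to infinite depth, so I would instead orient it as contraction \(e_0e_1\to e\). Foldedness says each edge is the top of at most one positive cell, but a pair \(e_0e_1\) could be the bottom of several cells. After checking local confluence of contraction modulo the critical overlaps, I would compute a finite completion, or show the completion stabilizes, and obtain normal forms. With normal forms in hand, \(\Path(\mathcal L)\) is finite if and only if the set of irreducible words is finite. The set of irreducible words is a regular language, being defined by forbidding finitely many factors along paths in a finite graph, so its finiteness is decidable by checking for cycles in a finite automaton.

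\textbf{Expected difficulty.} If completion fails to terminate, my fallback is to extract from the proof of \textup{(1)}\(\Rightarrow\)\textup{(4)} in Theorem~A an explicit witness of infinitude, for example a vertex and a loop-like family of paths \(pq^n r\) with pairwise distinct orbit data under Corollary~\ref{cor:interval-pair-dictionary}. I would then show that the existence of such a witness is detectable by a bounded search in \(\mathcal L\). Running both semi-decision procedures in parallel then yields the algorithm, and by Theorem~A it decides \(\mathrm{FP}_2\) and \(F_\infty\) simultaneously.
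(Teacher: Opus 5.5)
Your reduction to deciding finiteness of $\Path(\mathcal L)$ via Theorem~A is legitimate. Your certificate search for the finite case is also sound. If $S$ contains every single edge and is closed, up to explicitly verified homotopy, under right extension by a composable edge, then every nonempty path is homotopic to a member of $S$. Conversely, such an $S$ exists whenever $\Path(\mathcal L)$ is finite.

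The gap is the other half. Dovetailing two searches is an algorithm only if each search halts on the inputs it is responsible for. You give no procedure that is guaranteed to halt when $\Path(\mathcal L)$ is infinite.
\begin{itemize}
\item Knuth--Bendix completion of the contraction system $e_0e_1\to e$ need not terminate. Nothing guarantees a finite complete system when $\Path(\mathcal L)$ is infinite: the finite completions of Guba and Sapir are built \emph{from} finiteness of the path semigroupoid, not conversely.
\item Divergence of completion cannot be detected, so your fallback can never be triggered.
\item The fallback itself is the statement that needs proof. You suggest a family $pq^nr$ whose existence is ``detectable by a bounded search,'' but pairwise non-homotopy of infinitely many paths cannot be confirmed by a finite search unless you name a checkable invariant that certifies it.
\end{itemize}
In the paper, infinitude is witnessed by a rational $H$-isolated interval $I$ with $\operatorname{rank}\pi_{\mathrm{ab}}(K_I(H))=1$ (Proposition~\ref{prop:rank-one-isolated-characterization}). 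Such an interval in general appears only after several nested restrictions. Checking it requires knowing that $I$ is isolated, constructing the normalized core, and computing a rank; none of this appears in your proposal. (A minor point: foldedness forbids two positive cells with the same bottom path, so each $e_0e_1$ is the left side of at most one rule.)

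The missing idea is the structural recursion that the paper uses in place of semi-decisions. At each stage one has a finite full core $C(G)$ and proceeds as follows.
\begin{itemize}
\item Compute the rank of $\pi_{\mathrm{ab}}(G)$ from a finite generating set computed from the core.
\item If the rank is one, the answer is negative.
\item If the rank is two and the action is minimal, the answer is positive (Corollary~\ref{cor:minimal-rank-two-finfty}). Minimality is decidable because it holds exactly when the inner child graph has a unique cyclic component (Theorem~\ref{thm:minimality-child-automaton}).
\item Otherwise, read the fixed points (Proposition~\ref{prop:fixed-points-cyclic-components}) or a finite set of gap representatives (Proposition~\ref{prop:gap-orbit-representatives}) off the automaton. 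Construct the normalized cores effectively (Proposition~\ref{prop:normalized-core-effective}) and recurse on them.
\end{itemize}
Termination follows from the strict decrease in the number of inner edges (Corollary~\ref{cor:strict-inner-edge-descent}). Correctness follows from Corollaries~\ref{cor:fixed-points-orbital-reduction} and~\ref{cor:exceptional-gap-reduction}. With this recursion in hand, your finiteness search becomes unnecessary.
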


When the answer is positive, the core has finitely many homotopy
classes of paths, and the results of Guba and Sapir \cite{GS06} yield
an algorithm for finding a finite presentation of \(H\) and, for each
\(n\geq0\), computing the rank of \(H_n(H;\mathbb Z)\)
(Corollary~\ref{cor:effective-presentations-homology}).

An important application of Theorem~A concerns maximal subgroups.  The
maximal subgroups of \(F\) of infinite index were studied in
\cite{GolanMaximal}, where it was asked whether every finitely
generated maximal subgroup of \(F\) is finitely presented
\cite[Problem~8.8]{GolanMaximal}.  Maximal subgroups of infinite index
are closed and have full cores, so Theorem~A applies, and an analysis
of their possible dynamics on \((0,1)\) yields the following.

\begin{theoremC}[Theorem~\ref{thm:finitely-generated-maximal-subgroups-finfty}]
Every finitely generated maximal subgroup of Thompson's group \(F\) is
of type \(F_\infty\).
\end{theoremC}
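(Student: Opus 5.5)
The plan is to reduce Theorem~C to Theorem~A by showing that a finitely generated maximal subgroup \(M\) of \(F\) satisfies condition \textup{(5)}: it has finitely many orbits on \(\D^2_<\).

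\textbf{Reduction to infinite index.} If \(M\) has finite index in \(F\), then \(M\) is of type \(F_\infty\) because \(F\) is \cite{BG84}, and finiteness properties pass to finite-index subgroups. So assume \(M\) has infinite index. Then \(M\) is closed \cite[Theorem~1.1]{GolanMaximal}. It is also finitely generated, so \(C(M)\) is finite. I will first check that \(M\) has finitely many orbits on \(\D\), so that the core is full and Theorem~A applies. The intended argument is that if some open interval met each \(M\)-orbit at most once, one could construct a proper closed subgroup strictly between \(M\) and \(F\), contradicting maximality. Alternatively I will cite the classification in \cite{GolanMaximal}: every maximal subgroup of infinite index either stabilizes a point of \((0,1)\) or acts with finitely many orbits on \(\D\), and the former is handled separately below.

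\textbf{Case analysis on dynamics.} I split according to the classification of infinite-index maximal subgroups in \cite{GolanMaximal}.

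\emph{Stabilizer case.} If \(M\) is the stabilizer of an orbit or point \(\alpha\in(0,1)\), then \(M\) is finitely generated only for suitable \(\alpha\), essentially rational ones. In that case I will check directly that \(M\) has finitely many orbits on \(\D^2_<\). On each side of \(\alpha\) the group acts like a copy of \(F\) or of a rational-slope analogue, and \(F\) itself has finitely many orbits on increasing pairs.

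\emph{Minimal case.} If \(M\) acts minimally on \((0,1)\) without fixed points, I use the criterion stated after Theorem~A in the abstract: under minimality, the conditions of Theorem~A are equivalent to the image of \(M\) in \(F/[F,F]\cong\mathbb Z^2\) having rank two. The image is the pair of log-slopes at \(0\) and at \(1\). If this image had rank at most one, I would show that \(M\) lies in a proper subgroup of \(F\) of the form \(\{f\in F:(\log_2 f'(0^+),\log_2 f'(1^-))\in L\}\) for a proper sublattice or line \(L\). Such a subgroup is proper and contains \([F,F]\), contradicting maximality unless \(M\) already equals it, and that subgroup has finite index or is not of infinite index. Hence the image has rank two and Theorem~A gives type \(F_\infty\).

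\textbf{Main obstacle.} The hardest step is the exhaustive description of the possible dynamics of a finitely generated maximal subgroup of infinite index. I need to check that every such subgroup falls into one of these cases, including cases where \(M\) fixes finitely many points or acts minimally only on subintervals. For those cases I would try to decompose \(\D\) into finitely many invariant intervals and verify orbit finiteness on pairs interval by interval, using the correspondence of Corollary~\ref{cor:interval-pair-dictionary}.
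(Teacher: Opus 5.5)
Your finite-index reduction, point-stabilizer case and minimal case follow the paper's proof and are essentially sound, with two small repairs.

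\emph{Rank argument.} Take \(L\) of finite index from the start, for instance \(L=\pi_{\mathrm{ab}}(M)+2\mathbb Z^2\), which is proper when the rank is at most one. The preimage of a line has infinite index, so \(M\) being equal to it gives no contradiction by itself.

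\emph{Fullness.} The paper takes fullness of \(C(M)\) from \cite[Theorem~1.1 and Lemma~5.6]{GolanMaximal}. Your sketched construction of an intermediate closed subgroup is not an argument as it stands.

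\emph{Fixed points.} These are not an obstacle. If \(M\) fixes some \(c\in(0,1)\), then \(M\leq\operatorname{Stab}_F(c)<F\), so maximality gives \(M=\operatorname{Stab}_F(c)\), and your direct orbit count applies.

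The genuine gap is the case in which \(M\) has no fixed point in \((0,1)\) but does not act minimally. You give no reason why this cannot happen, and the paper does not exclude it; it treats it directly. By Proposition~\ref{prop:nonminimal-rank-two-alternative}, \(M\) then has an exceptional minimal set, call it \(\Sigma\). Your fallback of splitting \(\D\) into finitely many invariant intervals cannot work here. With no fixed points, \((0,1)\) is the only orbital, and the gaps of \(\Sigma\) are infinitely many intervals permuted by \(M\). Two ingredients are missing.

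\emph{First ingredient.} Maximality identifies the group. The subgroup \(\{f\in F:f(\Sigma)=\Sigma\}\) is proper because \(F\) acts minimally, so \(M\) equals it. Hence \(F[J]\leq M\) for every closed dyadic \(J\) inside a gap, and all intervals inside a given gap represent a single path class. Together with the finiteness of gap orbits (Proposition~\ref{prop:gap-orbit-representatives}), this controls the pairs lying in one gap (Proposition~\ref{prop:nonessential-gap-reduction}).

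\emph{Second ingredient.} Pairs \(a<b\) with \(\Sigma\cap(a,b)\neq\varnothing\) need a separate argument. The paper shows that these essential classes form a groupoid with exactly \([d_L\mathbb Z\times d_R\mathbb Z:\pi_{\mathrm{ab}}(M)]\) classes between any two inner vertices (Proposition~\ref{prop:essential-exact-count}). This uses rank two and the stabilizer-orbital arguments at points of \(\Sigma\).

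Without something playing these roles, your proof establishes Theorem~C only for point stabilizers and for minimal actions.
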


Section~\ref{sec:applications} contains further consequences of
Theorem~A.  Let \(H\) be closed with finite full core and of type
\(\mathrm{FP}_2\).  Then for every closed interval \(J\subseteq[0,1]\)
with dyadic endpoints, the subgroup of \(H\) of all elements supported
in \(J\) is of type \(F_\infty\), and a finite presentation of it can
be computed (Proposition~\ref{prop:supported-subgroups}); every
nonempty open subinterval of \((0,1)\) supports a copy of \(F\) inside
\(H\) (Proposition~\ref{prop:locally-supported-copies-of-F}); and every
closed subgroup of \(F\) containing \(H\) is again of type
\(F_\infty\) (Proposition~\ref{prop:closed-overgroups-finfty}).

\subsection*{Outline of the proof}

Let \(H\leq F\) be closed with finite full core \(\mathcal L=C(H)\),
and let \(A\leq\mathbb Z^2\) be the image of \(H\) under the
abelianization map \(F\to\mathbb Z^2\), which records the logarithms of
the slopes at \(0\) and at \(1\).  It is not hard to see that both
coordinate projections of \(A\) are nonzero, so \(A\) has rank one or
two.  The proof of Theorem~A proceeds by a case analysis on the rank of
\(A\) and on the dynamics of \(H\) on \((0,1)\).

\emph{Rank one.}  If \(A\) has rank one, then \(H\) is not of type
\(\mathrm{FP}_2\) (Theorem~\ref{thm:rank-one-not-FP2}), and at the same
time \(\mathcal L\) has infinitely many homotopy classes of paths
(Corollary~\ref{cor:rank-one-infinite-paths}).  The first statement is
proved by combining the theorem of Bieri and Strebel on groups of type
\(\mathrm{FP}_2\) without free subgroups \cite{BieriStrebel} with the
fact that the breakpoints of the elements of a finitely generated
subgroup of \(F\) lie in finitely many orbits
\cite{BleakBroughHermiller21}.  It therefore remains to treat the case
in which \(A\) has rank two.

\emph{Rank two, minimal action.}  Suppose that \(A\) has rank two and
that the action of \(H\) on \((0,1)\) is minimal, that is, every orbit
is dense.  We prove that the homotopy classes of nonempty directed paths
between inner vertices of \(\mathcal L\) form a groupoid, and that the number
of such classes between any two inner vertices is exactly the index of
\(A\) in the smallest subgroup of the form
\(d_L\mathbb Z\times d_R\mathbb Z\) containing it
(Proposition~\ref{prop:exact-inner-path-count}).  In particular
\(\Path(\mathcal L)\) is finite and \(H\) is of type \(F_\infty\)
(Corollary~\ref{cor:minimal-rank-two-finfty}).  Thus, for minimal
actions, the finiteness properties of \(H\), and even the number of
\(H\)-orbits on \(\D^2_<\), depend only on the image of \(H\) in the
abelianization of \(F\) and on the number of \(H\)-orbits on \(\D\).

\emph{Rank two, nonminimal action.}  Suppose finally that \(A\) has
rank two and that the action is not minimal.  Then either \(H\) has
fixed points in \((0,1)\), or it has no fixed points and possesses a
unique exceptional minimal set, a perfect nowhere dense closed
invariant set (Proposition~\ref{prop:nonminimal-rank-two-alternative});
the rank assumption excludes a closed discrete orbit.  In both cases we
reduce the problem to closed subgroups with smaller cores.  If \(H\)
has fixed points, there are finitely many of them, all rational, and
the restriction of \(H\) to each of its orbitals, transported to
\((0,1)\) by a canonical chart, is again a closed
subgroup with finite full core, and one with fewer inner edges than
\(\mathcal L\).  We show that \(\Path(\mathcal L)\) is finite if and
only if the corresponding statement holds for all these restrictions
(Corollary~\ref{cor:fixed-points-orbital-reduction}).  If \(H\) has an
exceptional minimal set \(M\), then \(H\) permutes the gaps of \(M\),
finitely many gaps meet every orbit of gaps, and the stabilizer of a
gap restricted to that gap is again, after normalization, a closed
subgroup with finite full core and fewer inner edges.  Here the
homotopy classes represented by intervals meeting \(M\) are shown to be
finite in number by a variant of the argument in the minimal case,
while the remaining classes live inside the gaps, and again
\(\Path(\mathcal L)\) is finite if and only if the corresponding
statement holds for the restrictions of \(H\) to these finitely many
gaps (Corollary~\ref{cor:exceptional-gap-reduction}).

Since the number of inner edges decreases at each step, after finitely
many steps this reduction arrives at closed subgroups whose image in
the abelianization has rank one or whose action on \((0,1)\) is
minimal.  It follows that \(\Path(\mathcal L)\) is infinite if and only
if one of the subgroups obtained in the process has rank-one image
(Proposition~\ref{prop:rank-one-isolated-characterization}).  To
conclude that \(H\) itself is then not of type \(\mathrm{FP}_2\), we
show that such a subgroup is a quasi-retract of \(H\) in the sense of
Alonso, and use his theorem that type \(\mathrm{FP}_2\) passes to
quasi-retracts \cite{Alonso} (Theorem~\ref{thm:rank-one-isolated-not-FP2}).
If, on the other hand, \(\Path(\mathcal L)\) is finite, then \(H\) is
of type \(F_\infty\) by the theorem of Guba and Sapir.

Every step of this reduction is effective.  The core is conveniently
encoded as a finite automaton whose states are the edges of the core and
whose transitions lead from an edge to the two bottom edges of its cell.
The rank of \(A\), minimality of the action, the fixed points, a
finite automaton description of the exceptional minimal set,
representatives of its gap orbits, and the cores of the restrictions
can all be computed from this automaton
(Theorem~\ref{thm:minimality-child-automaton},
Propositions~\ref{prop:fixed-points-cyclic-components},
\ref{prop:gap-orbit-representatives}
and~\ref{prop:normalized-core-effective}), which gives Theorem~B.

\subsection*{The non-full case}

The hypothesis in Theorem~A that \(H\) has finitely many orbits on
\(\D\), that is, that the core of \(H\) is full, cannot simply be
dropped.  For instance, the cyclic subgroup generated by an element of
\(F\) with no fixed points in \((0,1)\) is closed and, being infinite
cyclic, is of type \(F_\infty\); but its core has infinitely many
homotopy classes of directed paths and it has infinitely many orbits on
\(\D\), so conditions \textup{(4)}, \textup{(5)} and \textup{(6)} of
Theorem~A fail for it.  However, we expect that type \(\mathrm{FP}_2\)
implies type \(F_\infty\) for every closed finitely generated subgroup
of \(F\), whether or not its core is full, and we are currently
working on this question.

\subsection*{Organization}

Section~\ref{sec:preliminaries} gives the necessary background on
\(F\), directed \(2\)-complexes and diagram groups, the Stallings
\(2\)-core and its automaton, and orbitals and minimal sets of group
actions on intervals.  Section~\ref{sec:interval-paths} establishes the
correspondence between homotopy classes of paths in a full core and
orbits of intervals with dyadic endpoints, and relates the image of \(H\) in the
abelianization of \(F\) to certain loops in the core.
Section~\ref{sec:rank-one-endpoint-image} treats the rank-one case and
Section~\ref{sec:minimal-rank-two} the minimal rank-two case.
Section~\ref{sec:rational-isolated-intervals} studies
\(H\)-isolated intervals, that is, open intervals whose translates
under \(H\) are pairwise equal or disjoint, and the restriction of
\(H\) to such an interval; it also proves the rank-one obstruction via
quasi-retracts.  Section~\ref{sec:nonminimal-rank-two} handles the
nonminimal rank-two case.
Section~\ref{sec:finiteness-properties-finite-full-core} proves
Theorems~A and~B, and Section~\ref{sec:applications} contains
Theorem~C and other applications.

\subsection*{Acknowledgements}

The author was partially supported by the Israel Science Foundation
(grant No.~2275/24).  The mathematical arguments in this paper were
developed in part through discussions with ChatGPT (OpenAI), which also
assisted with the preparation of the manuscript.

\section{Preliminaries}
\label{sec:preliminaries}

\subsection{Binary words, dyadic intervals, and Thompson's group
\texorpdfstring{\(F\)}{F}}
\label{subsec:F}

Let \(\{0,1\}^*\) denote the set of finite binary words, including the
empty word \(\varnothing\), and let \(\{0,1\}^{\mathbb N}\) denote the
set of infinite binary words.  We write \(|u|\) for the length of a
finite word \(u\).  For a finite or infinite binary word \(\eta\), let
\(.\eta\in[0,1]\) denote the number with binary expansion \(\eta\); for
a finite word, trailing zeros are understood.

For \(u\in\{0,1\}^*\), put
\begin{equation}
\label{eq:standard-interval}
  [u]=[.\!u,.\!u1^\infty].
\end{equation}
We call \([u]\) a \emph{standard dyadic interval}.  Two such intervals
have intersecting interiors precisely when one of their defining words
is a prefix of the other.

We write
\[
  \D=\mathbb Z[1/2]\cap(0,1)
\]
for the set of dyadic points in \((0,1)\).  Every \(a\in\D\) has a
unique representation \(a=.\!u\), where \(u\) is a finite binary word
ending in \(1\).

Throughout, a binary tree is a rooted plane tree in which every vertex
has either no children or exactly two children, designated left and
right.  A \emph{caret} is a vertex together with its two children.

Intervals used for dyadic subdivisions or interval classes are always
assumed to be nondegenerate.

In a finite rooted binary tree, label every left edge by \(0\) and every
right edge by \(1\).  The word read along the path from the root to a
vertex is the \emph{address} of that vertex.  If the leaf addresses of a
tree \(T\), listed from left to right, are \(u_1,\ldots,u_m\), then the
intervals
\[
  [u_1],\ldots,[u_m]
\]
have pairwise disjoint interiors and cover \([0,1]\).  They form the
\emph{dyadic subdivision} associated with \(T\).  Every finite set of
dyadic points is contained in the set of subdivision points of some
finite binary tree.  Every closed interval with dyadic endpoints is
also a finite union of consecutive standard dyadic intervals.

A \emph{finite complete prefix code} is a finite set of binary words
such that every infinite binary word has exactly one of them as a
prefix.  The leaf addresses of a finite binary tree form such a code.

A dyadic subdivision of a closed interval with dyadic endpoints is a
finite subdivision into standard dyadic intervals.  For \(I=[0,1]\),
these are precisely the dyadic subdivisions associated with finite
rooted binary trees.

Recall that Thompson's group \(F\) consists of the increasing
piecewise-linear homeomorphisms of \([0,1]\) having finitely many
breakpoints, all dyadic, and slopes integral powers of \(2\); see
\cite{CFP}.

Throughout, maps are composed from left to right. Thus, if
\(f\colon X\to Y\) and \(g\colon Y\to Z\), then
\[
  fg=f\circ g\colon X\to Z,
  \qquad (fg)(x)=g(f(x)).
\]
We retain the usual notation \(f(x)\) for evaluation. In particular,
the natural action of a group of homeomorphisms is a right action,
written \(x\cdot f=f(x)\). The same convention applies to the action
on intervals and on tuples of points. For a subgroup \(G\), we also
use \(Gx\) as shorthand for the orbit set
\(\{g(x):g\in G\}\).

Let \(T_+\) and \(T_-\) be finite rooted binary trees with the same
number of leaves, labelled respectively
\[
  u_1,\ldots,u_m
  \qquad\text{and}\qquad
  v_1,\ldots,v_m
\]
from left to right.  The tree pair \((T_+,T_-)\) represents the element
\(f\in F\) which maps each interval \([u_i]\) linearly onto \([v_i]\).
Thus \(T_+\) is the domain tree and \(T_-\) is the range tree.  The
corresponding pair of leaf addresses
\[
  u_i\longrightarrow v_i
\]
is called a \emph{branch pair} of the tree pair.

We say that \(f\) has the branch pair \(u\to v\) if \(u\to v\) occurs
in some tree pair representing \(f\).  Equivalently, \(f\) maps
\([u]\) linearly onto \([v]\).

Replacing the leaves with addresses \(u\) and \(v\) by the pairs of
leaves \(u0,u1\) and \(v0,v1\), respectively, is an \emph{elementary
expansion} of the tree pair.  It replaces the branch pair \(u\to v\)
by the two branch pairs
\[
  u0\longrightarrow v0,
  \qquad
  u1\longrightarrow v1,
\]
without changing the represented element.  Repeating this operation
shows that if \(f\) has the branch pair \(u\to v\), then it also has the
branch pair
\[
  uw\longrightarrow vw
\]
for every finite binary word \(w\).  Moreover,
\begin{equation}
\label{eq:prefix-substitution}
  f(.\!u\xi)=.\!v\xi
\end{equation}
for every finite or infinite binary word \(\xi\).  Every element of
\(F\) has a unique reduced tree pair (that is, one which is not an
elementary expansion of another tree-pair diagram), and all its
tree-pair representatives are obtained from it by elementary
expansions.

We use the endpoint-slope convention
\begin{equation}
\label{eq:pi-ab}
  \pi_{\mathrm{ab}}\colon F\longrightarrow\mathbb Z^2,
  \qquad
  \pi_{\mathrm{ab}}(f)
  =
  \bigl(\log_2 f'(0^+),\,\log_2 f'(1^-)\bigr).
\end{equation}
This is the standard abelianization map of \(F\).  If a tree pair for
\(f\) has leftmost and rightmost branch pairs
\[
  0^a\longrightarrow0^b,
  \qquad
  1^c\longrightarrow1^d,
\]
then
\[
  \pi_{\mathrm{ab}}(f)=(a-b,c-d).
\]
Its kernel is the derived subgroup \([F,F]\).  Thus \([F,F]\) consists
precisely of those elements of \(F\) which fix pointwise a neighborhood
of \(0\) and a neighborhood of \(1\).  Conjugation by the reflection
\(x\mapsto1-x\) defines an automorphism of \(F\) which interchanges the
two entries of \(\pi_{\mathrm{ab}}\).

An action on an interval is \emph{minimal} if every orbit is dense in
that interval.

For a closed interval \(J=[a,b]\subseteq[0,1]\) with dyadic endpoints
and \(a<b\), let
\begin{equation}
\label{eq:FJ}
  F[J]=\{f\in F:f(x)=x\text{ for every }x\notin(a,b)\}.
\end{equation}
There is an increasing piecewise-linear homeomorphism
\(\phi\colon[0,1]\to J\), with dyadic breakpoints and slopes powers of
\(2\), such that the map \(f\mapsto\phi^{-1}f\phi\), followed by extension
by the identity outside \(J\), identifies \(F\) with \(F[J]\).

The group \(F[J]\) is order-\(n\)-transitive on \(\D\cap(a,b)\) for
every \(n\geq1\):
given two increasing finite sequences
\[
  a<x_1<\cdots<x_n<b,
  \qquad
  a<y_1<\cdots<y_n<b
\]
of dyadic points, there is \(f\in F[J]\) such that
\(f(x_i)=y_i\) for every \(i\).  The action of \(F[J]\) on \((a,b)\)
is minimal.

\subsection{\texorpdfstring{Directed \(2\)-complexes, homotopy, and diagrams}
{Directed 2-complexes, homotopy, and diagrams}}

We use the formalism of directed \(2\)-complexes of Guba and
Sapir~\cite{GS06}.

A directed graph \(\Gamma\) consists of a vertex set \(V(\Gamma)\), an edge
set \(E(\Gamma)\), and maps
\(\iota,\tau\colon E(\Gamma)\to V(\Gamma)\) assigning to each edge its
initial and terminal vertex.  A nonempty \(1\)-path in \(\Gamma\) is a
finite sequence \(P=e_1e_2\cdots e_n\), where \(n\geq1\), of edges with
\(\tau(e_k)=\iota(e_{k+1})\) for \(1\leq k<n\); we set
\(\iota(P)=\iota(e_1)\) and \(\tau(P)=\tau(e_n)\), and call \(n\) the
length of \(P\).  For every vertex \(v\) there is also an empty
\(1\)-path \(\varepsilon_v\) of length zero, with
\(\iota(\varepsilon_v)=\tau(\varepsilon_v)=v\).  We refer to both kinds
simply as \(1\)-paths.  If
\(\tau(P)=\iota(Q)\), the concatenation \(PQ\) is defined by
juxtaposition; concatenation is associative, and
\(\varepsilon_{\iota(P)}P=P=P\varepsilon_{\tau(P)}\).

\begin{definition}[Directed \(2\)-complex; {\cite[Definition~2.1]{GS06}}]
A \emph{directed \(2\)-complex} \(L\) consists of:
\begin{enumerate}[label=\textup{(\arabic*)}]
\item a directed graph \(L^{(1)}\), the \(1\)-skeleton of \(L\), with
      vertex set \(V(L)\) and edge set \(E(L)\);
\item a set \(\cells(L)\) of positive \(2\)-cells; each
      \(f\in\cells(L)\) is equipped with two nonempty \(1\)-paths
      \(\topPath(f)\) and \(\botPath(f)\) in \(L^{(1)}\), its top and
      bottom paths, satisfying
      \[
      \iota(\topPath(f))=\iota(\botPath(f))
      \quad\text{and}\quad
      \tau(\topPath(f))=\tau(\botPath(f));
      \]
\item for every \(f\in\cells(L)\), a new formal symbol \(f^{-1}\), the
      inverse cell of \(f\), with
      \[
      \topPath(f^{-1})=\botPath(f),
      \qquad
      \botPath(f^{-1})=\topPath(f),
      \qquad
      (f^{-1})^{-1}=f.
      \]
      We write
      \(\cells(L)^{-1}=\{f^{-1}:f\in\cells(L)\}\); the sets
      \(\cells(L)\) and \(\cells(L)^{-1}\) are disjoint by construction,
      and a cell of \(L\) is an element of
      \(\cells(L)\sqcup\cells(L)^{-1}\).
\end{enumerate}
The complex \(L\) is \emph{finite} if \(V(L)\), \(E(L)\), and
\(\cells(L)\) are all finite.  A \(1\)-path in \(L\) means a \(1\)-path
in \(L^{(1)}\).
\end{definition}

When specifying a directed \(2\)-complex, we shall normally list only its
positive cells; for every listed positive cell, the corresponding formal
inverse cell is included without being listed separately.

When all vertices and endpoint maps are understood (for instance, when
the \(1\)-skeleton has been described explicitly or displayed in a
figure), we use the semigroup-presentation notation
\[
L=
\left\langle
E(L)\ \middle|\
\topPath(f)=\botPath(f),\quad f\in\cells(L)
\right\rangle .
\]
Thus the edges are listed as generators, and each positive cell contributes
the relation equating its top and bottom paths.

\begin{definition}[Homotopy of \(1\)-paths]
Let \(L\) be a directed \(2\)-complex.  An \emph{elementary transformation}
applied to a \(1\)-path \(P\) consists in writing
\[
P=A\,\topPath(f)\,B
\]
for some cell \(f\in\cells(L)\sqcup\cells(L)^{-1}\) and \(1\)-paths
\(A,B\), and replacing \(P\) by \(A\,\botPath(f)\,B\).  Two \(1\)-paths
\(P,Q\) are \emph{homotopic}, written \(P\simeq Q\), if \(Q\) is obtained
from \(P\) by a finite sequence of elementary transformations.
\end{definition}

Homotopy is an equivalence relation, since inverse cells undo elementary
transformations.  Since the top and bottom paths of a cell share their
endpoints, homotopic paths have the same initial and terminal vertices; and
if \(P\simeq Q\), then \(APB\simeq AQB\) for all \(1\)-paths \(A,B\)
such that these concatenations are defined.  Since the top and bottom paths
of cells are nonempty, an elementary transformation never applies to
\(\varepsilon_v\); thus
\[
\varepsilon_v\simeq P\quad\Longleftrightarrow\quad P=\varepsilon_v.
\]

\begin{definition}[Diagrams; {\cite[Definition~2.4]{GS06}}]
Let \(L\) be a directed \(2\)-complex.  A \emph{diagram over \(L\)} is a
finite connected plane directed graph \(\Delta\), with every edge labelled
by an edge of \(L\) and every bounded face labelled by a cell of \(L\).
The edge labelling respects incidence: the label of every \(1\)-path in
\(\Delta\) is a \(1\)-path in \(L\).  We require further that:
\begin{enumerate}[label=\textup{(\arabic*)}]
\item \(\Delta\) has exactly one source \(\iota(\Delta)\), a vertex with
      no incoming edges, and exactly one sink \(\tau(\Delta)\), a vertex
      with no outgoing edges;
\item every \(1\)-path in \(\Delta\) is simple, that is, it visits no
      vertex twice; in particular, \(\Delta\) has no directed cycles;
\item the boundary of each bounded face labelled by a cell \(f\) consists
      of two \(1\)-paths in \(\Delta\) with common endpoints, the top and
      bottom paths of the face, whose edge labels read \(\topPath(f)\) and
      \(\botPath(f)\), respectively, and of which the top path lies above
      the bottom path in the plane.
\end{enumerate}

By \cite[Section~2]{GS06}, conditions \textup{(1)}--\textup{(3)} imply that
the boundary of the unbounded face consists of two \(1\)-paths from
\(\iota(\Delta)\) to \(\tau(\Delta)\), the top path
\(\topPath(\Delta)\) and the bottom path \(\botPath(\Delta)\) of the
diagram.  If their edge labels read \(P\) and \(Q\), respectively, then
\(\Delta\) is a \((P,Q)\)-diagram; it is \emph{spherical} if \(P=Q\).
For every nonempty \(1\)-path \(P\) there is a diagram
\(\varepsilon(P)\) with no cells whose top and bottom paths coincide and
have label \(P\); it is called the \emph{trivial diagram} on \(P\).
\end{definition}

We write
\[
  \Delta\colon P\longrightarrow_L Q
\]
to indicate that \(\Delta\) is a \((P,Q)\)-diagram over \(L\), omitting
the subscript when the directed \(2\)-complex is clear.

If \(\Delta_1\) is a \((P,Q)\)-diagram and \(\Delta_2\) is a
\((Q,R)\)-diagram, their \emph{vertical concatenation}
\(\Delta_1\circ\Delta_2\), a \((P,R)\)-diagram, is obtained by identifying
\(\botPath(\Delta_1)\) with \(\topPath(\Delta_2)\).  If
\(\Delta_i\) is a \((P_i,Q_i)\)-diagram for \(i=1,2\), and the
concatenations \(P_1P_2\) and \(Q_1Q_2\) are defined, then the
\emph{horizontal sum} \(\Delta_1+\Delta_2\) is the
\((P_1P_2,Q_1Q_2)\)-diagram obtained by identifying
\(\tau(\Delta_1)\) with \(\iota(\Delta_2)\).  Reflecting a
\((P,Q)\)-diagram \(\Delta\) in a horizontal line and inverting all cell
labels yields the \emph{inverse diagram} \(\Delta^{-1}\), a
\((Q,P)\)-diagram.

\begin{lemma}[{\cite[Lemma~2.5]{GS06}}]
Two nonempty \(1\)-paths \(P,Q\) of a directed \(2\)-complex \(L\) are homotopic if
and only if there exists a \((P,Q)\)-diagram over \(L\).
\end{lemma}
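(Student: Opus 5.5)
We prove the two implications separately: a \((P,Q)\)-diagram yields a homotopy by peeling off cells one at a time, and a homotopy yields a diagram by stacking one-cell diagrams.

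\emph{From homotopy to diagram.}  Suppose \(P\simeq Q\) via elementary transformations
\[
P=P_0\to P_1\to\cdots\to P_n=Q .
\]
If \(n=0\) we take the trivial diagram \(\varepsilon(P)\).  Otherwise each step has the form
\[
P_{k-1}=A\,\topPath(f)\,B,\qquad P_k=A\,\botPath(f)\,B .
\]
For such a step, form the \((\topPath(f),\botPath(f))\)-diagram \(\pi_f\) consisting of a single face labelled \(f\).  It has two boundary paths with common endpoints, both nonempty, and they can be drawn as two disjoint arcs, top above bottom.  Then take the horizontal sum \(\varepsilon(A)+\pi_f+\varepsilon(B)\), dropping any summand whose path is empty.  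This is a \((P_{k-1},P_k)\)-diagram.  The vertical concatenation of these \(n\) diagrams is then a \((P,Q)\)-diagram.

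The one point needing care is that the conditions of a diagram survive vertical concatenation and horizontal sums:
\begin{itemize}
\item a unique source and a unique sink;
\item every \(1\)-path is simple;
\item in each face, the top path lies above the bottom path.
\end{itemize}
I would take these closure properties from \cite[Section~2]{GS06}.  The planar placement is also clear, since we glue along boundary paths that run monotonically from the source to the sink.

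\emph{From diagram to homotopy.}  We induct on the number of faces of a \((P,Q)\)-diagram \(\Delta\).  With zero faces, the graph is connected with a single source and sink, and every path is simple.  Its unbounded face is bounded by \(\topPath(\Delta)\) and \(\botPath(\Delta)\), and since there are no bounded faces these two paths must coincide, so \(P=Q\).

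If \(\Delta\) has faces, the key step is to find a face \(\pi\) whose top path lies entirely on \(\topPath(\Delta)\).  Write \(\topPath(\Delta)=A\,\topPath(\pi)\,B\).  Deleting the edges of \(\topPath(\pi)\) (and any interior vertices left isolated) produces an \((A\,\botPath(\pi)\,B,\;Q)\)-diagram \(\Delta'\) with one fewer face.  Since \(P\to A\,\botPath(\pi)\,B\) is an elementary transformation, induction gives \(P\simeq Q\).

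The main obstacle is proving that such an "upper" face exists, and that \(\Delta'\) still satisfies the diagram axioms.  For existence, I would consider the faces adjacent to \(\topPath(\Delta)\) from below.  Order faces by setting \(\pi_1<\pi_2\) when some edge lies on the bottom of \(\pi_1\) and on the top of \(\pi_2\).  Acyclicity of this relation should follow from the absence of directed cycles together with planarity.  A minimal face under this order has its whole top path on the outer boundary: every edge of its top path borders, from above, either another face or the unbounded face, and minimality rules out another face.

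Alternatively, this is exactly the content of \cite[Lemma~2.5]{GS06}, and since the paper quotes the lemma I would simply cite that argument rather than redo the planar combinatorics.
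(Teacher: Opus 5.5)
Your proposal is correct and follows the same route as the paper's justification. One direction stacks the one-cell diagrams \(\varepsilon(A)+\pi_f+\varepsilon(B)\); the other decomposes a diagram into such a stack, which is what your repeated peeling of a cell whose top path is a subpath of \(\topPath(\Delta)\) accomplishes. The one step you only sketch, acyclicity of the ``directly above'' relation on cells, is exactly what the paper also takes from \cite[Lemma~2.5]{GS06}, and your idea works: a shortest cycle of cells gives a simple closed curve that crosses every shared edge in the same direction, so all crossed edges point into (or all out of) the enclosed region, which is impossible because every vertex lies on a directed path from the unique source to the unique sink, and both of these lie outside the curve.
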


Indeed, a single elementary transformation corresponds to a diagram with
one cell, and a sequence of elementary transformations to a vertical stack
of such diagrams; conversely every diagram decomposes into such a stack.

Two cells of a diagram form a \emph{dipole} if the bottom path of the first
is the top path of the second and their labels are mutually inverse cells of
\(L\).  Removing the two cells and identifying the top path of the first
with the bottom path of the second produces a new diagram.  A diagram is
\emph{reduced} if it contains no dipole, and two diagrams are
\emph{equivalent} if one is obtained from the other by finitely many
insertions and removals of dipoles.

\begin{theorem}[{\cite[Theorem~2.6]{GS06}}]
Every equivalence class of diagrams over a directed \(2\)-complex contains
exactly one reduced diagram.
\end{theorem}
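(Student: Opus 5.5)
The plan is to treat dipole removal as a terminating rewriting system on isomorphism classes of diagrams and apply Newman's diamond lemma. Diagrams are considered up to isomorphism of labelled plane graphs, that is, up to isotopy preserving labels.

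\emph{Existence.} Removing a dipole lowers the number of cells by two. Starting from any diagram \(\Delta\), we can therefore remove dipoles only finitely many times, and we must end at a reduced diagram equivalent to \(\Delta\). For this step I would first check, as in \cite[Section~2]{GS06}, that removing a dipole always yields a diagram again. The new diagram is still planar, still has a unique source and a unique sink, and all its \(1\)-paths are still simple. The reason is that identifying the top path of the first cell with the bottom path of the second only merges two paths that already had common endpoints.

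\emph{Uniqueness.} Write \(\Delta\to\Delta'\) when \(\Delta'\) is obtained from \(\Delta\) by removing one dipole. Equivalence of diagrams is precisely the equivalence relation generated by \(\to\), since an insertion is the reverse of a removal. The relation \(\to\) is terminating. By Newman's lemma it is therefore enough to prove local confluence: if \(\Delta\to\Delta_1\) and \(\Delta\to\Delta_2\), then \(\Delta_1\) and \(\Delta_2\) have a common descendant. Confluence together with termination gives the Church--Rosser property. Then any two equivalent diagrams reduce to the same normal form, and a reduced diagram is its own normal form, so each equivalence class contains exactly one reduced diagram.

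\emph{Local confluence.} Let the two removed dipoles be \((\pi_1,\pi_2)\) and \((\pi_3,\pi_4)\). There are three cases.
\begin{enumerate}[label=\textup{(\roman*)}]
\item The two dipoles involve four distinct cells. Each dipole survives the removal of the other, because the removal is local and does not change the shared boundary path of the untouched pair. Removing them in either order gives the same diagram.
\item The two dipoles coincide. Then \(\Delta_1=\Delta_2\) and there is nothing to prove.
\item The two dipoles share exactly one cell. Since the top path of a cell in a diagram is a single path, the shared cell is the lower cell of one dipole and the upper cell of the other. So we have cells \(\pi_1,\pi_2,\pi_3\) labelled \(f,f^{-1},f\), with \(\botPath(\pi_1)=\topPath(\pi_2)\) and \(\botPath(\pi_2)=\topPath(\pi_3)\). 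Removing \((\pi_1,\pi_2)\) leaves a single cell labelled \(f\) with top path \(\topPath(\pi_1)\) and bottom path \(\botPath(\pi_3)\). Removing \((\pi_2,\pi_3)\) leaves exactly the same configuration. Hence \(\Delta_1\cong\Delta_2\).
\end{enumerate}

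I expect the main obstacle to be making the overlap analysis rigorous rather than pictorial. Two points need care. First, one must show that the shared-cell case really forces the stacked configuration \(\pi_1,\pi_2,\pi_3\) described above. Second, one must show that in the disjoint case a dipole is not destroyed by removing the other one, even when the two dipoles share boundary edges but no cells. For both points I would use the description of a diagram as a vertical stack of one-cell diagrams from \cite[Lemma~2.5]{GS06}, together with the fact that the top and bottom paths of each face are uniquely determined by the face.
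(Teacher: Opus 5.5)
The paper gives no proof of its own here, only the citation to Guba--Sapir, and your argument is the standard proof of this result and is correct: removals terminate because each deletes two cells, local confluence follows from the disjoint/shared-cell case split, and Newman's lemma finishes. The one justification to repair is in case (iii): a cell cannot be the upper (or the lower) cell of two distinct dipoles because every edge of a diagram lies on the bottom path of at most one cell and on the top path of at most one cell, since an edge has only two sides in the plane; this has nothing to do with a top path being a single path.
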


\begin{definition}[Diagram groups]
Let \(L\) be a directed \(2\)-complex and \(P\) a nonempty \(1\)-path in
\(L\).  The \emph{diagram group} \(D(L,P)\) is the set of reduced
spherical \((P,P)\)-diagrams over \(L\), with multiplication
\[
\Delta_1\cdot\Delta_2
   :=\text{the reduced form of }\Delta_1\circ\Delta_2,
\]
identity \(\varepsilon(P)\), and inversion
\(\Delta\mapsto\Delta^{-1}\).  It is a group
\cite[Section~2]{GS06}.
\end{definition}

\begin{definition}[The path semigroupoid]
For vertices \(v,w\) of a directed \(2\)-complex \(L\), put
\[
\Path(L)(v,w)
   =\{\text{nonempty \(1\)-paths from \(v\) to \(w\) in \(L\)}\}/\simeq ,
\qquad
\Path(L)=\coprod_{v,w\in V(L)}\Path(L)(v,w),
\]
where \(\simeq\) is homotopy of \(1\)-paths.  For
\(u,v,w\in V(L)\), concatenation induces associative composition maps
\[
\Path(L)(u,v)\times\Path(L)(v,w)\longrightarrow\Path(L)(u,w),
\qquad ([P],[Q])\longmapsto[PQ].
\]
With these partially defined products, \(\Path(L)\) is the
\emph{path semigroupoid} of \(L\).  We say that \(\Path(L)\) is
\emph{finite} if the displayed disjoint union is finite.
\end{definition}

Recall that a group \(G\) is of type \(F_\infty\) if there is a
\(K(G,1)\) CW-complex with finitely many cells in every dimension.

\begin{theorem}[{\cite[Theorem~9.3]{GS06}}]
\label{thm:finite-path-semigroupoid}
Let \(L\) be a finite directed \(2\)-complex with finitely many homotopy
classes of nonempty \(1\)-paths.  Then every diagram group over \(L\) is
of type \(F_\infty\).
\end{theorem}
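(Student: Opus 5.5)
This is Guba and Sapir's theorem, and I would reprove it by the standard route: a free, proper action of \(D(L,P)\) on a contractible cube complex, combined with Brown's criterion. Fix a nonempty \(1\)-path \(P\).

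\textbf{The complex.} Let \(X\) be Farley's cube complex for the pair \((L,P)\).
\begin{enumerate}[label=\textup{(\roman*)}]
\item Its vertices are the reduced diagrams \(\Delta\colon P\to Q\), where \(Q\) ranges over paths homotopic to \(P\), taken up to the obvious identification.
\item Two vertices \(\Delta\) and \(\Delta'\) span a cube if \(\Delta'\) is the reduced form of \(\Delta\circ\Gamma\). Here \(\Gamma\) is a ``thin'' diagram: a horizontal sum of trivial diagrams and pairwise disjoint single cells applied to \(\botPath(\Delta)\).
\item The group \(D(L,P)\) acts on the left by concatenation. The action is free on cells, since reduced forms are unique by \cite[Theorem~2.6]{GS06}.
\item \(X\) is CAT(0), hence contractible. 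The link of a vertex is a flag complex: pairwise disjoint cell applications commute, and this gives the flag condition.
\end{enumerate}
Orbits of vertices correspond to paths \(Q\simeq P\), and there are infinitely many of these. So \(X\) is not cocompact, and I would apply Brown's criterion rather than read off \(F_\infty\) directly.

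\textbf{The filtration.} Let \(X_n\) be the full subcomplex spanned by vertices whose bottom path has length at most \(n\). Since \(L\) is finite, there are finitely many paths of each length, so each \(X_n\) is \(D(L,P)\)-cocompact. Cell stabilizers are trivial. By Brown's criterion (in its Morse-theoretic form, with height equal to the length of the bottom path), it suffices to show that the descending link of a vertex with bottom path \(Q\) of length \(n\) is \(k\)-connected once \(n\) is large compared with \(k\).

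\textbf{Where finiteness of \(\Path(L)\) enters.} Since \(\Path(L)\) is finite, there is a bound \(N\) such that every homotopy class contains a representative of length at most \(N\). Consequently, every subpath of \(Q\) of length greater than \(N\) is homotopic to a strictly shorter path. I would then replace the cube-complex descending link by a homotopy-equivalent ``matching-type'' complex. Its simplices are collections of pairwise disjoint subpaths of \(Q\), each of bounded length, each replaced by a shorter homotopic path through a reduced diagram. To set this up, I would first pass to a finite family of ``moves'', namely the reduced diagrams between paths of length at most \(2N+1\) that decrease length. This amounts to subdividing the cubes, or to working with a quasi-isometric cube complex built from these moves instead of single cells; the resulting complex is again contractible by the same Farley argument.

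\textbf{Connectivity, the main obstacle.} I expect the hard step to be the connectivity estimate for these descending links. The plan is the usual argument for matching and arc complexes. A long path \(Q\) contains about \(n/(2N+1)\) disjoint windows, each admitting a length-decreasing move. Hence the descending link contains a large join of nonempty pieces, and any sphere of dimension \(k\) in it can be coned off, by a Quillen-fibre style argument, inside the star of one far-away window. The delicate points are two. First, overlapping moves must be controlled. Second, the link of the thin-diagram complex must really be modelled by disjoint moves, and reducedness must not create unexpected identifications. Once \(k\)-connectivity for \(n\gg k\) is established, Brown's criterion shows that \(D(L,P)\) is of type \(F_\infty\).
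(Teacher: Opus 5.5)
\textbf{The gap.} The step that fails is replacing single cells by a ``finite family of moves'' and asserting that the resulting complex is ``again contractible by the same Farley argument.'' You are right that some change is unavoidable. Take the one-vertex complex \(L=\langle x,y\mid x=yy,\ y=yyy\rangle\). It has only two path classes, \([y]\) and \([x]=[yy]\), so every path of length at least \(2\) is homotopic to a shorter one. Yet the only cell applicable to \(x^n\) is \(x\to yy\). So in Farley's complex, with height the bottom-path length, every vertex with bottom path \(x^n\) has empty descending link, at arbitrarily large height.

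Your repair, however, changes the group. The reduced diagrams between two homotopic short paths are not finitely many. For the Dunce hat, the reduced \((xxx,xx)\)-diagrams form a torsor for \(D(\K,xxx)\cong F\). So you must choose finitely many of them. A cube complex whose edges and cubes are these chosen moves, applied disjointly, is Farley's complex of a larger complex \(L'\), obtained from \(L\) by adjoining one cell per chosen move. That complex is CAT(0), and its sublevel sets are cocompact for \(D(L',P)\). But they have in general infinitely many \(D(L,P)\)-orbits. The reason is that replacing each new cell by its chosen diagram gives a retraction \(D(L',P)\to D(L,P)\) whose kernel is in general infinite, so \(D(L,P)\) has infinite index. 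Keeping the vertex set of your \(X\) and merely adding move edges does not help either. Each move edge would have to be filled along a path in \(X\) that in general rises above its endpoints (as for \(x^n\) above), which destroys both the cube structure and the Morse analysis.

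\textbf{The missing idea, and what remains.} You should prove the result for the enlarged complex and then descend. Since the new cells have homotopic top and bottom paths in \(L\), \(D(L,P)\) is a retract of \(D(L',P)\) \cite[Lemma~5.1(4)]{GS06}, and type \(F_\infty\) passes to retracts. The paper itself simply quotes \cite[Theorem~9.3]{GS06}. The Guba--Sapir ingredients it reuses in Proposition~\ref{prop:effective-completion} and Corollary~\ref{cor:effective-presentations-homology} are exactly these: a finite \emph{complete} completion \(\widehat L\), with a unique irreducible representative in each of the finitely many classes, and the retractions \(D(L,w)\to D(\widehat L,w)\). Guba and Sapir then exploit the terminating, confluent rewriting on the Squier cube complex of \(\widehat L\), which sidesteps both the choice of a height function and any descending-link connectivity estimate.

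Even with the retraction added, your route has two open points. First, cells of \(L\) with \(|\topPath(f)|=|\botPath(f)|\) must remain in \(L'\). They make bottom-path length constant along some edges, so length is not a Morse function without a tie-breaker. Second, the connectivity of the complex of disjoint non-minimal windows, which you rightly call the heart of the matter, is only sketched, not proved.
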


\subsection{The Dunce hat, positive diagrams, and common expansions}
\label{subsec:dunce}

The \emph{Dunce hat} is the directed \(2\)-complex
\begin{equation}\label{eq:dunce}
  \K=\langle x\mid x=x^2\rangle ,
\end{equation}
with one vertex, one edge \(x\), and one positive cell with top path
\(x\) and bottom path \(xx\).

A diagram is \emph{positive} if all of its cells are labelled by
positive cells.

\begin{definition}[Tree-like complexes and positive expansions]
\label{def:tree-like}
A directed \(2\)-complex \(L\) is \emph{tree-like} if, for every positive
cell \(f\), the path \(\topPath(f)\) is a single edge, the path
\(\botPath(f)\) has length at least two, and distinct positive cells
have distinct top edges.  If there exists a positive \((P,Q)\)-diagram
over \(L\), then \(Q\) is a \emph{positive expansion} of \(P\).  A
\emph{common positive expansion} of nonempty \(1\)-paths \(P_1,P_2\) is a
\(1\)-path which is a positive expansion of both.
\end{definition}

The Dunce hat is tree-like, as are the cores considered below.

\begin{lemma}[Common positive expansions]
\label{lem:common-expansion}
Let \(L\) be a tree-like directed \(2\)-complex, and let \(P_1\simeq P_2\)
be homotopic nonempty \(1\)-paths.  Then \(P_1\) and \(P_2\) have a
common positive expansion.  More precisely, if \(\Delta\) is a reduced
\((P_1,P_2)\)-diagram, then the \(1\)-skeleton of \(\Delta\) contains a
\(1\)-path \(\gamma\) from its source to its sink which passes through
every vertex of \(\Delta\).  Let \(R\) be the label of \(\gamma\) in
\(L\).  The part of \(\Delta\) above \(\gamma\) is a positive
\((P_1,R)\)-diagram, and the inverse of the subdiagram below \(\gamma\)
is a positive \((P_2,R)\)-diagram.
\end{lemma}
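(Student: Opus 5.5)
The plan is to take a reduced \((P_1,P_2)\)-diagram \(\Delta\), which exists by the Guba--Sapir lemma since \(P_1\simeq P_2\), and to exploit the tree-like shape of the cells. In a tree-like complex every positive cell has a single top edge, so in \(\Delta\) each positive face has a single top edge and a bottom path of length at least two, and each negative face has a single bottom edge. I would first show that no negative face lies above a positive face along a common edge in the following sense. Suppose an edge \(e\) of \(\Delta\) is the bottom edge of a negative face \(\pi_1\) and the top edge of a positive face \(\pi_2\). Then \(e\) is the top edge of both the positive cell \(\pi_2\) and the positive cell \(\pi_1^{-1}\) in \(L\). Since distinct positive cells have distinct top edges, the two labels are the same positive cell, so \(\pi_1\pi_2\) is a dipole, which is impossible because \(\Delta\) is reduced. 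Hence every edge of \(\Delta\) that is a bottom edge of some negative face is either on \(\botPath(\Delta)\) or is the top edge of another negative face.

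Next I will partition the faces into positive and negative ones and define the "upper part" \(\Delta^+\) as the union of the positive faces. Applying the observation above along with the planar structure should show that \(\Delta^+\) is closed upward: any face above a positive face along a shared edge must itself be positive. The argument is that if a face \(\pi\) shares an edge with a positive face \(\pi'\) lying below it, then \(\pi\) is negative only if that edge is its single bottom edge, which was just excluded. Consequently the positive faces form a subdiagram bounded above by \(\topPath(\Delta)=P_1\) and below by a path \(\gamma\) from source to sink. Symmetrically, the negative faces form a subdiagram below \(\gamma\) with bottom \(P_2\).

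To make this rigorous I will order faces using the standard fact from \cite[Section~2]{GS06} that a diagram decomposes as a vertical stack of one-cell diagrams, that is, as a sequence of elementary transformations. I will then argue, by repeatedly swapping adjacent layers, that all positive cells can be moved to the top. Consider two consecutive layers in which a negative cell is immediately followed by a positive cell. If they act on disjoint portions of the path, they commute. If they overlap, the positive cell's single top edge lies in the negative cell's single-edge bottom path, so they share exactly that edge, which is the dipole situation excluded above. Bubble-sorting in this way yields the splitting \(P_1\to R\to P_2\), where the first part is positive and the second part is the inverse of a positive diagram. This gives the common positive expansion \(R\), labelled by the cut path \(\gamma\).

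Finally, \(\gamma\) passes through every vertex of \(\Delta\). Every vertex is interior to or on the boundary of some face, and each vertex lies on the intermediate path of the stack at some stage. Vertices are created by positive cells and destroyed by negative ones: a positive cell adds the inner vertices of its bottom path, and a negative cell removes them. After sorting, every vertex created is present at level \(R\) before any negative cell removes it, and a vertex of \(P_1\) persists until some negative cell removes it. The main obstacle I expect is this careful bookkeeping of the commuting swaps and the vertex persistence, but it is a routine induction on the number of cells.
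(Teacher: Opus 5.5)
Your argument is correct, and it takes a different route from the paper only in that it actually proves the lemma. The paper gives no argument of its own. It cites the horizontal-path observation of Guba and Sapir, which is stated for semigroup presentations, and transfers it to directed \(2\)-complexes by forgetting and then restoring the vertices of \(L\).

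You instead work directly over \(L\), using two inputs. The first is the decomposition of a diagram into a vertical stack of one-cell layers from \cite{GS06}. The second is the observation that a negative face whose single bottom edge is the single top edge of a positive face forms a dipole, since distinct positive cells have distinct top edges.

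Your bubble sort is sound. Take two adjacent layers, first negative, then positive. Either they act on disjoint edges and can be interchanged without changing \(\Delta\), or the positive cell acts on the edge just created by the negative one. The second case is exactly that dipole inside \(\Delta\), which reducedness excludes. So the sort never gets stuck, and it ends with all positive layers above all negative ones.

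Your route gives a self-contained proof in the language of directed \(2\)-complexes, with no passage through semigroup presentations. The paper's citation is shorter because it hands the combinatorics to Guba--Sapir.

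Three small points to tighten:
\begin{itemize}
\item Your second paragraph, on upward closure of the positive faces, is only heuristic and is superseded by the stack argument, so you can drop it.
\item In the vertex count, say it cleanly. Negative layers create no vertices, because their bottom path is one edge. Positive layers remove none, because their top path is one edge. Every vertex of \(\Delta\) lies on \(P_1\) or is created by some layer, so it survives to \(\gamma\).
\item An edge below a negative face that is not on \(\botPath(\Delta)\) lies on the multi-edge top path of another negative face, not on its ``top edge''.
\end{itemize}
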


This is the horizontal-path observation of Guba and Sapir
\cite[Section~5, remarks preceding Theorem~26]{GubaSapir99}.  It is
stated there for semigroup presentations; forgetting the vertices of
\(L\) and then restoring them gives the formulation above.

For the Dunce hat, positive diagrams with top path \(x\) are naturally
identified with finite rooted binary trees: each cell subdivides its top
edge into the two edges of its bottom path, and the bottom edges of the
diagram, read from left to right, correspond to the leaves.  For a
finite rooted binary tree \(T\), let \(\Delta(T)\) denote the
corresponding positive diagram.

By \cite[Example~6.4]{GubaSapir97}, there is an isomorphism
\begin{equation}\label{eq:F-diagram-model}
  F\cong D(\K,x).
\end{equation}
Under this isomorphism, the element represented by a reduced tree pair
\((T_+,T_-)\) corresponds to the reduced spherical diagram
\[
  \Delta(T_+)\circ\Delta(T_-)^{-1}.
\]
Conversely, applying Lemma~\ref{lem:common-expansion} to a reduced
spherical diagram over \(\K\) decomposes it into two positive tree
diagrams.  Thus every reduced spherical diagram over \(\K\) arises from
a reduced tree pair.

\subsection{\texorpdfstring{The Stallings \(2\)-core of a subgroup of \(F\)}
{The Stallings 2-core of a subgroup of F}}
\label{subsec:core}

We recall the Stallings \(2\)-core of Guba and Sapir.  The construction
first appeared in print in \cite[Section~3.2]{GS17}.

\begin{definition}[The Stallings \(2\)-core]
\label{def:core}
Let \(H\leq F\), and choose a generating set \(X\) for \(H\).  For each
\(g\in X\), let
\(\Delta_g\) be the reduced spherical diagram over \(\K\) representing
\(g\).

Identify the top and bottom edges of all the diagrams \(\Delta_g\),
\(g\in X\), with one another, and denote their common image by \(p_H\).
If \(X\) is empty, take instead a single edge \(p_H\).
For every cell \(\pi\) occurring in one of the diagrams, designate its
one-edge boundary as \(\topPath(\pi)\) and its two-edge boundary as
\(\botPath(\pi)\), regardless of which of these two paths lay
geometrically above the other in the original diagram \(\Delta_g\).

Starting with the resulting bouquet, repeatedly perform the following
folding operation.  If two distinct cells have the same top path or the
same bottom path, identify the two cells and identify their other boundary
paths edge by edge.

Continue performing foldings until no folding is possible.  If infinitely many foldings are required, take the direct limit of a
sequence in which no possible folding is postponed indefinitely.  The resulting folded directed \(2\)-complex, together with
the distinguished edge \(p_H\), is the \emph{Stallings \(2\)-core} of
\(H\), denoted \(C(H)\).  The quotient cells produced by this construction are declared to be
the positive cells of \(C(H)\).  We then adjoin to \(C(H)\) the formal
inverse of each such cell.
\end{definition}

The based isomorphism type of \((C(H),p_H)\) is independent of the
chosen generating set and of the order in which the foldings are
performed.  If \(H\) is finitely generated, take \(X\) to be finite.
Then \(C(H)\) is finite.

For the remainder of this subsection, fix a subgroup \(H\leq F\), and
write
\[
\mathcal L=C(H),\qquad p=p_H.
\]

Every positive cell of \(\mathcal L\) has a single top edge and a two-edge bottom
path.  If its top edge is \(e\) and its bottom path is \(e_0e_1\), we
write the cell as
\begin{equation}
\label{eq:core-cell}
  e=e_0e_1.
\end{equation}
The edges \(e_0\) and \(e_1\) are called the \emph{left child} and
\emph{right child} of \(e\), respectively.  Foldedness implies that,
whenever they exist, they are uniquely determined by \(e\).  An edge
which is the top of no positive cell is a \emph{leaf} of the core, and
the core is \emph{full} if it has no leaves.  In particular, every core
is tree-like.

\begin{definition}[Core labellings, acceptance, and closure]
\label{def:acceptance}
Let \(H\leq F\), let \(\mathcal L=C(H)\), and let \(p=p_H\).  Let
\(\Delta\) be a diagram over the Dunce hat \(\K\).  A \emph{core labelling} of \(\Delta\)
consists of labelling every edge of \(\Delta\) by an edge of \(\mathcal L\) and
every cell of \(\Delta\) by a cell of \(\mathcal L\), in such a way that, with
these labels, \(\Delta\) is a diagram over \(\mathcal L\).  Signs are preserved:
a cell labelled by the positive cell of \(\K\) is labelled by a
positive cell of \(\mathcal L\), whereas a cell labelled by the inverse cell of
\(\K\) is labelled by an inverse cell of \(\mathcal L\).

A spherical \((x,x)\)-diagram over \(\K\) is \emph{accepted} by
\((\mathcal L,p)\) if it admits a core labelling in which both its top and bottom
edges are labelled by \(p\).

The \emph{closure} of \(H\) is
\[
\Cl(H)=
\left\{
h\in F\ \middle|\
\text{the reduced diagram of \(h\) is accepted by }(C(H),p_H)
\right\}.
\]
The subgroup \(H\) is \emph{closed} if \(H=\Cl(H)\).
\end{definition}

By foldedness, a core labelling, if it exists, is uniquely determined
by the labels on the top path.  Indeed, reading the diagram from top to
bottom, a positive cell is determined by its top edge, while a negative
cell is determined by its two-edge top path.  Likewise, the core
labelling is determined by the labels on the bottom path.

\begin{theorem}[The core and the closure]
\label{thm:core-closure}
Let \(H\leq F\), let \(\mathcal L=C(H)\), and let \(p=p_H\).
\begin{enumerate}[label=\textup{(\roman*)}]
\item The set \(\Cl(H)\) is a subgroup of \(F\) containing \(H\), and
      forgetting the core labels induces a canonical isomorphism
      \[
      D(\mathcal L,p)\cong\Cl(H).
      \]
\item If \(H\) is closed, then \(H\) is finitely generated if and only
      if \(C(H)\) is finite.
\end{enumerate}
\end{theorem}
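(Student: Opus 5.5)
The plan is to prove (i) by checking that forgetting core labels is well behaved on diagrams, and to prove (ii) by pairing the definition of \(C(H)\) with the cited finite-generation result of \cite{GPSclosed}. The single fact doing most of the work is the foldedness of \(\mathcal L\), in the form already noted above: a core labelling of a diagram over \(\K\) is uniquely determined by the labels of its top path.

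For (i), let \(\Phi\colon D(\mathcal L,p)\to F\cong D(\K,x)\) send a reduced spherical \((p,p)\)-diagram over \(\mathcal L\) to the same planar diagram with every edge relabelled \(x\) and every cell relabelled by the positive or inverse cell of \(\K\) of the same sign. The steps, in order, are as follows.
\begin{enumerate}[label=\textup{(\alph*)}]
\item Forgetting labels commutes with vertical concatenation, so \(\Phi\) is compatible with stacking.
\item Forgetting labels takes reduced diagrams to reduced diagrams. Suppose the image contains a dipole over \(\K\).
  \begin{itemize}
  \item If the upper cell is positive, it is labelled \(e=e_0e_1\) in \(\mathcal L\). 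The inverse cell below it has top path labelled \(e_0e_1\). By foldedness the only positive cell with bottom path \(e_0e_1\) is the one with top \(e\), so the lower label is \((e=e_0e_1)^{-1}\), and the pair is already a dipole over \(\mathcal L\).
  \item If the upper cell is an inverse cell, it is \(f^{-1}\) with bottom edge \(e\). The positive cell below it has top edge \(e\), which determines that cell uniquely, so again the pair is a dipole over \(\mathcal L\).
  \end{itemize}
\item Consequently \(\Phi\) is a homomorphism, because reduction commutes with forgetting labels.
\item \(\Phi\) is injective, because a labelling is determined by its top label \(p\).
\item The image of \(\Phi\) is exactly the set of reduced diagrams over \(\K\) accepted by \((\mathcal L,p)\), which is \(\Cl(H)\) by definition.
\end{enumerate}
Hence \(\Cl(H)\) is a subgroup and \(\Phi\) is the canonical isomorphism. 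To see that \(H\subseteq\Cl(H)\), take a generator \(g\in X\). The quotient map from the bouquet of the \(\Delta_g\) onto the folded complex labels each \(\Delta_g\) as a diagram over \(\mathcal L\) with top and bottom edges labelled \(p\). So each \(\Delta_g\) is accepted, and since \(\Cl(H)\) is a group, \(H\leq\Cl(H)\).

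For (ii), one direction is immediate. If \(H\) is finitely generated, take \(X\) finite; the bouquet is then a finite complex. Each folding identifies cells and edges and never creates new ones, so only finitely many foldings occur and \(C(H)\) is finite. Independence of the generating set then shows that the core of \(H\) is finite whichever generating set is used.

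Conversely, suppose \(C(H)\) is finite and \(H\) is closed, so \(H=\Cl(H)\cong D(\mathcal L,p)\) by (i). I would cite \cite{GPSclosed}: if the core of \(H\) is finite then \(\Cl(H)\) is finitely generated. This gives the claim directly.

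The main obstacle is this converse direction if one tries to prove it rather than cite it. A general diagram group over a finite complex need not be finitely generated, so the argument must exploit the tree-like, folded structure of \(\mathcal L\). My first attempt would run as follows.
\begin{itemize}
\item By Lemma~\ref{lem:common-expansion}, every element of \(D(\mathcal L,p)\) is a pair of positive expansions of \(p\) meeting in a common bottom path \(R\).
\item Choose, for each edge \(e\) of the finite complex, a fixed positive diagram from \(p\) to a path through \(e\), in the spirit of a spanning tree.
\item Rewrite an arbitrary element as a product of elements obtained by changing the expansion at one cell at a time, using these fixed connecting diagrams.
\item Conclude that the elements of bounded size so obtained form a finite generating set.
\end{itemize}
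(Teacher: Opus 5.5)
Your proposal is correct, but it takes a different route from the paper, which proves nothing here: part (i) is quoted from \cite[Definition~3.7 and Lemma~3.8]{GolanMemo}, and part (ii) from \cite[Corollary~5.14]{GPSclosed}.

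For (i) you give a self-contained argument. Foldedness makes every dipole over \(\K\) in a labelled diagram a dipole over \(\mathcal L\); your two cases are exactly the two kinds of dipole over \(\K\). Hence the forgetful map has the following properties:
\begin{itemize}
\item it preserves reducedness;
\item it is a homomorphism, by uniqueness of reduced diagrams;
\item it is injective, because a core labelling is determined by its top label;
\item its image is exactly the set of accepted reduced diagrams.
\end{itemize}
The subgroup property of \(\Cl(H)\) then comes for free. The quotient map labels each \(\Delta_g\), and under the sign convention of Definition~\ref{def:core} an inverse cell of \(\Delta_g\) is labelled by the inverse of a positive core cell; this gives \(H\leq\Cl(H)\). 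Your route makes explicit exactly where foldedness enters. Your step (b) is also the fact used implicitly in the first assertion of Lemma~\ref{lem:acceptance-reduction}. The citation is simply more economical.

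For (ii) you do the same as the paper. The easy direction is the remark following Definition~\ref{def:core}. The nontrivial direction, that a finite core implies \(\Cl(H)\) (hence the closed group \(H\)) is finitely generated, is deferred to \cite{GPSclosed}.

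Your sketch of a direct proof of that direction is unnecessary and, as written, is not a proof. Its last step carries all the difficulty: showing that the ``one-cell change'' elements form a finite set and that the rewriting terminates. Rely on the citation there.
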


\begin{proof}
Part \textup{(i)} is
\cite[Definition~3.7 and Lemma~3.8]{GolanMemo}; see also
\cite[Section~2.4]{GPSclosed}.  Part \textup{(ii)} is
\cite[Corollary~5.14]{GPSclosed}.
\end{proof}

Let \(H\leq F\). An element \(f\in F\) is a
\emph{piecewise-\(H\) function} if there are points
\[
0=c_0<c_1<\cdots<c_n=1
\]
and elements \(h_1,\ldots,h_n\in H\) such that \(f\) and \(h_i\)
agree on \([c_{i-1},c_i]\) for every \(i\). Since
\(f,h_1,\ldots,h_n\) belong to \(F\), the subdivision may be chosen
with dyadic subdivision points. After refining it to a dyadic
subdivision, this is equivalent to requiring that \(f\) have a
tree-pair diagram in which every branch pair \(u\to v\) is a branch
pair of some element of \(H\).

The closure \(\Cl(H)\) is precisely the subgroup of all piecewise-\(H\)
functions in \(F\) \cite[Theorem~5.6]{GolanMemo}. Consequently, \(H\)
is closed if and only if every piecewise-\(H\) function belonging to
\(F\) lies in \(H\).

\begin{definition}[Tree automata, readable words, and acceptance]
\label{def:readable}
We follow \cite[Section~2]{GolanDown}, specialized to binary
automata.  A \emph{rooted deterministic binary automaton} is a triple
\(\mathcal A=(Q,\delta,p_{\mathcal A})\), where \(Q=Q(\mathcal A)\) is a
set of \emph{states}, \(p_{\mathcal A}\in Q\) is the \emph{root}, and
\[
  \delta\colon Q\times\{0,1\}\dashrightarrow Q
\]
is a partial \emph{transition function}.  It is a \emph{tree
automaton} if
\begin{enumerate}[label=\textup{(\arabic*)}]
\item every state is reachable from \(p_{\mathcal A}\); and
\item for every state \(q\), either \(\delta(q,i)\) is defined for
      both \(i\in\{0,1\}\) or for neither.
\end{enumerate}
A state at which neither transition is defined is a \emph{leaf}, and
\(\mathcal A\) is \emph{full} if it has no leaves.  The tree automaton
\(\mathcal A\) is \emph{folded} if no two distinct states have the
same ordered pair of children.  A \emph{morphism} of tree automata is
a map of state sets which carries root to root and satisfies
\(\varphi(\delta(q,i))=\delta(\varphi(q),i)\) whenever the left-hand
side is defined, and an \emph{isomorphism} is a morphism whose inverse
is also a morphism.  A
morphism carries the state reached by a word to the state reached by
the same word, and every state is reached from the root by some word;
hence there is at most one morphism between two given tree automata.
A tree-automaton in the sense of
\cite[Definition~2.4]{GolanMaximal} is a folded tree automaton in the
sense above.

We extend \(\delta\) recursively to finite binary words by
\[
  \delta(q,\varnothing)=q,
  \qquad
  \delta(q,ui)=\delta\bigl(\delta(q,u),i\bigr)
  \qquad
  \bigl(u\in\{0,1\}^*,\ i\in\{0,1\}\bigr),
\]
whenever the right-hand side is defined; consequently
\[
  \delta(q,uw)=\delta\bigl(\delta(q,u),w\bigr)
\]
whenever either side is defined.  A finite binary word \(u\) is
\emph{readable in \(\mathcal A\)} if \(\delta(p_{\mathcal A},u)\) is defined,
and
in that case we put
\[
  u_{\mathcal A}^{+}=\delta(p_{\mathcal A},u).
\]

A tree-pair diagram with branch pairs
\[
  u_1\longrightarrow v_1,\ldots,u_m\longrightarrow v_m
\]
is \emph{accepted by \(\mathcal A\)} if all the words \(u_i,v_i\) are
readable in \(\mathcal A\) and
\[
  (u_i)_{\mathcal A}^{+}=(v_i)_{\mathcal A}^{+}
  \qquad(1\leq i\leq m).
\]
Let \(\mathcal D(\mathcal A)\) be the set of elements of \(F\)
admitting at least one accepted tree-pair diagram.  It is a subgroup
of \(F\) \cite[Lemmas~2.5 and~2.11]{GolanDown}, called the
\emph{subgroup accepted by \(\mathcal A\)}.  If \(\mathcal A\) is
folded, then an element of \(F\) belongs to \(\mathcal D(\mathcal A)\)
if and only if its reduced tree-pair diagram is accepted by
\(\mathcal A\) \cite[Lemma~2.5]{GolanDown}.
\end{definition}

We now realize the core \(\mathcal L=C(H)\) as a tree automaton.
Consider the state set \(E(\mathcal L)\), with root state \(p\), and
the partial transition function
\[
  \delta\colon E(\mathcal L)\times\{0,1\}
  \dashrightarrow E(\mathcal L)
\]
defined as follows.  If
\[
  e=e_0e_1
\]
is a positive cell of \(\mathcal L\), then
\[
  \delta(e,0)=e_0,
  \qquad
  \delta(e,1)=e_1.
\]
If \(e\) is a leaf of \(\mathcal L\), neither transition from \(e\) is
defined.  We extend \(\delta\) to finite binary words by the recursion
above.

This transition function is well defined.  Indeed, two positive cells
with top edge \(e\) would have the same top path and hence would
coincide by foldedness.  Since an edge is either the top edge of a
positive cell or a leaf of \(\mathcal L\), at every state either both
transitions are defined or neither transition is defined; in
particular the leaves of the automaton are exactly the leaves of the
core, and the automaton is full if and only if the core is full.  If
two states \(e\) and \(e'\) have the same ordered pair of children,
then the positive cells with top edges \(e\) and \(e'\) have the same
bottom path; foldedness therefore forces these cells to coincide, and
hence \(e=e'\).  Finally, every edge of \(C(H)\) is reached from \(p\)
by some finite binary word \cite[Section~4]{GolanMemo}, so every state
is reachable from the root state.

Thus \(E(\mathcal L)\), with root state \(p\) and transition function
\(\delta\), is a folded tree automaton.  We call it the \emph{child
automaton} of \(\mathcal L\) and denote it by
\(\mathcal A(\mathcal L)\).  For the child automaton we suppress the
subscript \(\mathcal A(\mathcal L)\) and write
\[
  u^+=\delta(p,u).
\]
Thus \(\varnothing\) is readable and \(\varnothing^+=p\).  If \(u\) is
readable and \(u^+=e\) is the top edge of the positive cell
\[
  e=e_0e_1,
\]
then \(u0\) and \(u1\) are readable, and
\[
  (u0)^+=e_0,
  \qquad
  (u1)^+=e_1.
\]

If the core is full, every binary word is readable.  Reading a word
\(u\) in \(\mathcal A(\mathcal L)\) ends at the state \(u^+\), which is
an edge of \(\mathcal L\).  The following lemma describes exactly when
\(\Delta(T)\) admits a core labelling and identifies the \(1\)-path in
\(\mathcal L\) read along its bottom path.

\begin{lemma}[Trees over the core]
\label{lem:trees-over-core}
Let \(T\) be a finite rooted binary tree whose leaves, from left to
right, have addresses
\[
  u_1,\ldots,u_m.
\]
The positive diagram \(\Delta(T)\) admits a core labelling with top edge
\(p\) if and only if every \(u_i\) is readable.  In that case the
labelling is unique, and its bottom path is
\[
  u_1^+\cdots u_m^+.
\]
In particular, this word in the edges of \(\mathcal L\) is a
\(1\)-path.
\end{lemma}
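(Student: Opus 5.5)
We argue by induction on the number of carets of \(T\), building the labelling of \(\Delta(T)\) from the top edge downward.  Recall that \(\Delta(T)\) is obtained by stacking one positive cell of \(\K\) for each caret of \(T\).  The edge of \(\Delta(T)\) corresponding to a vertex of \(T\) with address \(w\) is the top edge of the cell of the caret at \(w\) when \(w\) is internal, and a bottom edge of the diagram when \(w\) is a leaf.  The claim to prove by induction is the following.  A core labelling with top edge \(p\) exists if and only if every vertex address of \(T\) is readable, and then the edge with address \(w\) must be labelled \(w^+\).

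For the base case, \(T\) is a single vertex.  Then \(\Delta(T)=\varepsilon(x)\) has one edge, which is labelled \(p=\varnothing^+\).  No condition is needed, since \(\varnothing\) is readable.

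For the inductive step, pick a caret of \(T\) whose children \(w0,w1\) are both leaves, and let \(T'\) be the tree obtained by deleting it.  Then \(\Delta(T)\) is obtained from \(\Delta(T')\) by attaching one positive cell below the bottom edge with address \(w\).  In a core labelling this cell must be a positive cell of \(\mathcal L\) whose top edge is the label of that edge.  By induction that label is forced to be \(w^+\).  A positive cell of \(\mathcal L\) with top edge \(w^+\) exists if and only if \(w^+\) is not a leaf of the core, which is the same as \(w0\) and \(w1\) being readable.  By foldedness such a cell is unique, namely \(w^+=(w0)^+(w1)^+\).  So the labels of the two new edges are forced to be \((w0)^+\) and \((w1)^+\), and they satisfy the incidence conditions because the cell has this boundary in \(\mathcal L\).

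Readability is prefix-closed, and every internal vertex address of \(T\) is a prefix of a leaf address.  Hence ``all vertex addresses readable'' is equivalent to ``all \(u_i\) readable'', which gives the stated criterion and the uniqueness of the labelling.  The bottom path of \(\Delta(T)\) consists of the leaf edges in left-to-right order, so it reads \(u_1^+\cdots u_m^+\).  Since it is the label of a \(1\)-path in a diagram over \(\mathcal L\), it is a \(1\)-path in \(\mathcal L\).

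The only delicate point is the uniqueness/forcing step, which rests on foldedness: at most one positive cell has a given top edge.  The rest is bookkeeping between addresses and edges of \(\Delta(T)\).
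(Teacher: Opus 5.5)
Your proof is correct and takes essentially the same route as the paper: both build the labelling from the root downward, forcing the edge at address \(w\) to carry the label \(w^+\), using that \(w^+\) has a positive cell exactly when \(w0,w1\) are readable, and using foldedness to make that cell unique. Your induction on carets, peeling off a caret whose two children are leaves, is just a formalized version of the paper's ``continue downward'' step. Your observation that readability is prefix-closed plays the role of the paper's root-to-leaf reading argument.
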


\begin{proof}
If such a labelling exists, reading the edge labels along each
root-to-leaf branch shows that every \(u_i\) is readable.

Conversely, suppose that all the \(u_i\) are readable.  We construct
the labelling from the root downward.  Under the identification of
Subsection~\ref{subsec:dunce}, the edges of \(\Delta(T)\) correspond
to the vertices of \(T\), the top edge corresponding to the root.
Label the top edge by \(p=\varnothing^+\), and suppose that the edge
corresponding to the vertex with address \(w\) has already been
labelled by \(w^+\).  If that vertex is not a leaf of \(T\), then
\(w0\) and \(w1\) are readable, and the positive cell
\[
  w^+=(w0)^+(w1)^+
\]
labels its caret and determines the labels of its two child edges.
Continuing downward labels every edge and cell of \(\Delta(T)\).  The
labels of the bottom edges, read from left to right, are
\(u_1^+,\ldots,u_m^+\).  Uniqueness follows from foldedness.
\end{proof}

Whenever the equivalent conditions of
Lemma~\ref{lem:trees-over-core} hold, we denote the resulting positive
diagram over \(\mathcal L\) by
\[
  \Psi_T\colon p\longrightarrow u_1^+\cdots u_m^+.
\]

\begin{lemma}[Acceptance in terms of branch pairs]
\label{lem:acceptance-branches}
Let
\[
  \Delta=\Delta(T_+)\circ\Delta(T_-)^{-1}
\]
be a tree-pair diagram with branch pairs
\[
  u_1\longrightarrow v_1,\ldots,u_m\longrightarrow v_m,
\]
listed from left to right.  Then \(\Delta\) is accepted by
\((\mathcal L,p)\) if and only if all the words \(u_i,v_i\) are
readable and
\[
  u_i^+=v_i^+
  \qquad(1\leq i\leq m).
\]
\end{lemma}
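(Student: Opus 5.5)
The plan is to apply Lemma~\ref{lem:trees-over-core} to each of the two trees separately, and then to show that the two labellings can be glued along the common bottom path exactly when the stated equalities hold. The diagram \(\Delta=\Delta(T_+)\circ\Delta(T_-)^{-1}\) is obtained by identifying the bottom path of \(\Delta(T_+)\), whose \(i\)th edge corresponds to the leaf \(u_i\), with the bottom path of \(\Delta(T_-)\), whose \(i\)th edge corresponds to the leaf \(v_i\). In the inverse diagram \(\Delta(T_-)^{-1}\) that path becomes the top path.

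For the forward direction, suppose \(\Delta\) has a core labelling with top and bottom edges labelled \(p\).
\begin{enumerate}[label=\textup{(\alph*)}]
\item Restricting it to the subdiagram \(\Delta(T_+)\) gives a core labelling of \(\Delta(T_+)\) with top edge \(p\).
\item Restricting it to \(\Delta(T_-)^{-1}\) and inverting gives a core labelling of \(\Delta(T_-)\) with top edge \(p\). This uses that inverting a diagram over \(\mathcal L\) and inverting the cell labels again yields a diagram over \(\mathcal L\), and that signs are preserved: the cells coming from \(T_-\) are inverse cells in \(\Delta\), so they become positive cells in \(\Delta(T_-)\).
\item Lemma~\ref{lem:trees-over-core} then shows that all \(u_i\) and \(v_i\) are readable. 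By the uniqueness part of that lemma, the label of the \(i\)th middle edge is both \(u_i^+\) and \(v_i^+\). Hence \(u_i^+=v_i^+\) for every \(i\).
\end{enumerate}

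For the converse, assume all the words are readable and \(u_i^+=v_i^+\) for every \(i\).
\begin{enumerate}[label=\textup{(\alph*)}]
\item Lemma~\ref{lem:trees-over-core} gives positive diagrams \(\Psi_{T_+}\colon p\to u_1^+\cdots u_m^+\) and \(\Psi_{T_-}\colon p\to v_1^+\cdots v_m^+\) over \(\mathcal L\).
\item Since \(u_i^+=v_i^+\), these two bottom paths coincide as \(1\)-paths in \(\mathcal L\).
\item The vertical concatenation \(\Psi_{T_+}\circ\Psi_{T_-}^{-1}\) is therefore a spherical \((p,p)\)-diagram over \(\mathcal L\). Its underlying diagram over \(\K\) is exactly \(\Delta\), and the positive and inverse cells land in the right places, so this is a core labelling of \(\Delta\) with top and bottom edges labelled \(p\).
\end{enumerate}

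The only step needing some care is the identification of geometry in the first direction: one must check that the dividing path between the \(T_+\) cells and the \(T_-\) cells really is a \(1\)-path in \(\Delta\) splitting it into \(\Delta(T_+)\) above and \(\Delta(T_-)^{-1}\) below, so that the restrictions are legitimate diagrams. This holds by construction, since \(\Delta\) was defined as that vertical concatenation. The rest is bookkeeping; note in particular that no reducedness of the tree pair is needed anywhere.
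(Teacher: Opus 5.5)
Your proposal is correct and follows the paper's proof: both directions restrict or glue the core labellings of the two tree diagrams via Lemma~\ref{lem:trees-over-core}, with acceptance reducing to agreement of the leaf labels \(u_i^+=v_i^+\) along the middle path. Your extra bookkeeping (inverting the lower half to get a labelling of \(\Delta(T_-)\) with top edge \(p\), sign preservation, and the observation that reducedness is never used) just spells out what the paper leaves implicit.
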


\begin{proof}
Suppose first that \(\Delta\) is accepted.  Restricting its core
labelling to the two tree diagrams and applying
Lemma~\ref{lem:trees-over-core} shows that all the words \(u_i,v_i\)
are readable.  The labels on the two copies of the middle path agree,
so \(u_i^+=v_i^+\) for every \(i\).

Conversely, suppose that all the words \(u_i,v_i\) are readable and
that \(u_i^+=v_i^+\) for every \(i\).  By
Lemma~\ref{lem:trees-over-core}, the two tree diagrams admit unique core
labellings with top edge \(p\), and their bottom paths are
\[
  u_1^+\cdots u_m^+
  \qquad\text{and}\qquad
  v_1^+\cdots v_m^+,
\]
respectively.  These labellings glue across the middle path of
\(\Delta\) precisely when the corresponding leaf-edge labels agree.
\end{proof}

Thus a tree-pair diagram is accepted by the child automaton
\(\mathcal A(\mathcal L)\) in the sense of
Definition~\ref{def:readable} if and only if it is accepted by
\((\mathcal L,p)\) in the sense of Definition~\ref{def:acceptance}.  In
particular, the subgroup accepted by \(\mathcal A(C(H))\) is
\(\Cl(H)\).

\begin{lemma}[Acceptance and reduction]
\label{lem:acceptance-reduction}
Let \(H\leq F\), let \(\mathcal L=C(H)\), and let \(p=p_H\).  If a
diagram \(\Delta\) over \(\K\) is accepted by \((\mathcal L,p)\), then
every reduction of \(\Delta\) is accepted.  In particular, the reduced
form of \(\Delta\) is accepted.

If, in addition, \(\mathcal L\) is full and \(\Delta\) is a tree-pair
diagram, then \(\Delta\) is accepted if and only if its reduced
tree-pair diagram is accepted.
\end{lemma}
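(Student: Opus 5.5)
The plan is to track the core labelling through the removal of a single dipole and then induct on the number of dipoles removed.

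First part: it suffices to treat a single dipole removal. Let \(\Delta\) carry a core labelling with top and bottom edges labelled by \(p\). Suppose \(\pi_1,\pi_2\) form a dipole in \(\Delta\) over \(\K\), so the bottom path of \(\pi_1\) is the top path of \(\pi_2\) and their \(\K\)-labels are mutually inverse. In the core labelling, \(\pi_1\) is labelled by some cell \(c\) of \(\mathcal L\), and \(\pi_2\) by some cell \(c'\) with \(\topPath(c')=\botPath(c)\), and with the opposite sign.

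The key step is that \(c'=c^{-1}\), which follows from foldedness. If \(\pi_1\) is positive, then \(c\) is a positive cell \(e=e_0e_1\). The cell \(c'\) is an inverse cell \(g^{-1}\) with \(\botPath(g)=e_0e_1\), and foldedness (distinct positive cells have distinct bottom paths) gives \(g=c\). If \(\pi_1\) is negative, then \(c=g^{-1}\) has bottom path \(\topPath(g)=e\), a single edge. The cell \(c'\) is then a positive cell with top edge \(e\), and uniqueness of the positive cell with a given top edge gives \(c'=g\).

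It follows that \(\pi_1,\pi_2\) also form a dipole of the labelled diagram over \(\mathcal L\). The outer boundary of \(\Delta\) is untouched, the top path of \(\pi_1\) and the bottom path of \(\pi_2\) carry the same labels, and they are identified when the dipole is removed. So the restriction of the labelling to the remaining edges and cells is a core labelling of the reduced diagram, still with top and bottom labelled \(p\). Induction on the number of dipole removals gives the first statement, and uniqueness of reduced forms (Theorem~2.6 of \cite{GS06}) gives the claim for the reduced form.

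Second part: one direction is the first part, since the reduced tree-pair diagram is the reduced form of any tree-pair diagram representing the same element. Conversely, suppose \(\mathcal L\) is full and the reduced tree pair is accepted. Every tree pair for the element is obtained from it by elementary expansions, so it suffices to show that a single elementary expansion preserves acceptance. I will use Lemma~\ref{lem:acceptance-branches} for this. Since the core is full, all words are readable. An expansion replaces the branch pair \(u\to v\), where \(u^+=v^+\), by \(u0\to v0\) and \(u1\to v1\). Then \((ui)^+=\delta(u^+,i)=\delta(v^+,i)=(vi)^+\), so the new diagram is accepted as well.

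The only delicate point I expect is the sign bookkeeping in the foldedness argument identifying \(c'\) with \(c^{-1}\). Fullness is needed only so that the expanded words remain readable.
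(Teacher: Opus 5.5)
Your proof is correct and follows the paper's argument. Dipole removal preserves the core labelling; you make explicit, via foldedness, the check the paper leaves implicit, namely that a dipole over $\K$ is labelled by mutually inverse cells of $\mathcal L$. For the converse under fullness, each elementary expansion preserves acceptance: you verify this through Lemma~\ref{lem:acceptance-branches} and the child automaton, whereas the paper phrases it as inserting a positive--inverse dipole over $\mathcal L$ along an edge that, by fullness, is the top edge of a positive cell.
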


\begin{proof}
Removing a dipole from a diagram over \(\mathcal L\) leaves a diagram
over \(\mathcal L\), so it preserves the core labelling.  This proves
the first assertion.

For the second assertion, one implication follows from the first.
Conversely, every tree-pair diagram is obtained from its reduced
tree-pair diagram by finitely many common expansions.  Each common
expansion inserts a positive--inverse dipole along a single edge.
Since \(\mathcal L\) is full, that edge is the top edge of a positive
cell of \(\mathcal L\), so the expanded tree-pair diagram remains
accepted.
\end{proof}

In particular, if a tree-pair diagram representing \(f\in F\) is
accepted by \((\mathcal L,p)\), then its reduced tree-pair diagram is
accepted by Lemma~\ref{lem:acceptance-reduction}.  Hence
\(f\in\Cl(H)\) by Theorem~\ref{thm:core-closure}\textup{(i)}.

\begin{lemma}[Branch-pair criterion]
\label{lem:branch-criterion}
Let \(H\leq F\) be closed, and let \(u,v\) be words readable in
\(C(H)\).  Then
\[
  u^+=v^+
  \quad\Longleftrightarrow\quad
  \text{some \(h\in H\) has the branch pair \(u\to v\).}
\]
\end{lemma}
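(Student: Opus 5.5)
Both directions can be proved by building explicit tree-pair diagrams and appealing to the acceptance criterion of Lemma~\ref{lem:acceptance-branches}, together with closedness, \(H=\Cl(H)\), in the form of Theorem~\ref{thm:core-closure}(i).

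For the direction \((\Leftarrow)\), suppose \(h\in H\) has the branch pair \(u\to v\). Then some tree pair \((T_+,T_-)\) for \(h\) has \(u\) as a leaf of \(T_+\) and \(v\) as the corresponding leaf of \(T_-\). Since \(h\in H\subseteq\Cl(H)\), the reduced diagram of \(h\) is accepted by \((C(H),p_H)\). I need acceptance of the specific pair \((T_+,T_-)\), which is an expansion of the reduced one.
\begin{enumerate}[label=\textup{(\alph*)}]
\item If the core is full, the second part of Lemma~\ref{lem:acceptance-reduction} gives this directly.
\item In general I would argue as follows. Each expansion step replaces a branch pair \(w\to w'\) with \(w^+=(w')^+\) by \(w0\to w'0\) and \(w1\to w'1\).
\item Because \(w^+=(w')^+\), the word \(w0\) is readable if and only if \(w'0\) is, and then \((w0)^+=(w'0)^+\); the same holds for \(w1\).
\item So, following the expansion path from the reduced pair toward \((T_+,T_-)\), every branch pair stays readable with equal endpoints, as long as the words met are readable.
\item The words \(u\) and \(v\) are readable by hypothesis. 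Each is obtained by extending a leaf of the reduced pair, so every prefix of \(u\) and of \(v\) is readable.
\end{enumerate}
I would therefore expand only along the branch leading to \(u\to v\): restrict to the chain of single expansions from the reduced branch pair \(w\to w'\), where \(w\) is a prefix of \(u\) and \(w'\) is a prefix of \(v\), down to \(u\to v\). By (c), each step preserves equality of the \(^+\)-labels, so \(u^+=v^+\). I only need the label equality for this one branch, not acceptance of the whole expanded tree.

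For the direction \((\Rightarrow)\), suppose \(u^+=v^+=e\). I would construct an element \(f\in F\) with a tree pair in which \(u\to v\) is a branch pair and every branch pair \(u_i\to v_i\) satisfies \(u_i^+=v_i^+\). Then \(f\) is accepted, so \(f\in\Cl(H)=H\) by closedness and Theorem~\ref{thm:core-closure}(i).

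The construction has two steps.
\begin{itemize}
\item Take finite trees \(T_1\) containing \(u\) as a leaf and \(T_2\) containing \(v\) as a leaf, with all leaves readable. For example, use the minimal tree containing the word, whose leaves are \(u\) and siblings of prefixes of \(u\). Since every prefix of \(u\) is readable, each prefix's label has a positive cell, so the sibling words are readable.
\item By Lemma~\ref{lem:trees-over-core}, the bottom paths are \(P_1 e P_2\) and \(Q_1 e Q_2\), where \(P_1,Q_1\) run from \(\iota(p_H)\) to \(\iota(e)\) and \(P_2,Q_2\) run from \(\tau(e)\) to \(\tau(p_H)\).
\end{itemize}

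The main obstacle is matching the remaining leaves. Everything to the left of \(u\) must be paired with everything to the left of \(v\) with equal labels, and similarly on the right. For this I need \(P_1\) and \(Q_1\) to have a common positive expansion, and likewise \(P_2\) and \(Q_2\).

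My plan is to prove that any two \(1\)-paths in \(C(H)\) with the same endpoints that arise as left (or right) portions of bottom paths of trees are homotopic. Lemma~\ref{lem:common-expansion} then gives common positive expansions. These correspond to refining the left parts of \(T_1\) and \(T_2\) so that their leaf labels coincide termwise, and leaves refined in a positive diagram over \(\mathcal L\) with equal labels give branch pairs with equal \(^+\).

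I expect the homotopy to come from the construction of the core. Every vertex of the core is reached by a path from \(\iota(p_H)\) obtained by folding diagrams of elements of \(H\). Using foldedness, I would argue that paths from \(\iota(p_H)\) to a given vertex, read off trees, are homotopic, citing the description of \(C(H)\) in \cite[Section~4]{GolanMemo}. If this fails in general, the fallback is to cite \cite{GolanMemo} directly, where the equality \(u^+=v^+\Leftrightarrow\) branch pair in \(\Cl(H)\) is proved.
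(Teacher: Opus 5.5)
Your argument is correct in structure and takes a different route from the paper. The paper does not prove the lemma directly: it quotes \cite[Lemma~2.19]{GolanMaximal}, which produces an element of \(\Cl(H)\) with branch pair \(u\to v\), and then uses \(\Cl(H)=H\).

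Your \((\Leftarrow)\) direction is complete and needs no fullness. Every tree pair of \(h\) is an expansion of the reduced one, so \(u=wz\) and \(v=w'z\) for some reduced branch pair \(w\to w'\). That pair satisfies \(w^+=(w')^+\), because the reduced diagram of \(h\in H\subseteq\Cl(H)\) is accepted. Hence \(u^+=\delta(w^+,z)=\delta((w')^+,z)=v^+\).

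Your \((\Rightarrow)\) reduction is also right. It is exactly the mechanism of Proposition~\ref{prop:exact-transport}, specialized to the standard intervals \([u]\) and \([v]\) and without the fullness assumption: pad the minimal trees, take common positive expansions of the outer blocks, stack the two positive diagrams, and finish with Lemma~\ref{lem:acceptance-reduction} and closedness. Note that \(P_1\) and \(Q_1\) are empty simultaneously, since both end at \(\iota(e)\) and \(\iota_{\mathcal L}\) has no incoming edges; the same holds for \(P_2,Q_2\). So Lemma~\ref{lem:common-expansion} is only ever applied to nonempty paths.

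The step you leave open, \(P_1\simeq Q_1\) and \(P_2\simeq Q_2\), is precisely Lemma~\ref{lem:boundary-paths} (\cite[Propositions~10.1 and~10.2]{GolanMemo}). It holds for arbitrary nonempty paths out of \(\iota_{\mathcal L}\) (respectively into \(\tau_{\mathcal L}\)) with the same other endpoint. Within this paper it is an independently imported result, so you should invoke it rather than your foldedness sketch, which does not work as stated.

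The problem with the sketch is this. Homotopies inside the individual generator diagrams do descend to the core, but they only cover paths that lift to a single diagram. Folding identifies vertices and so creates composable paths made of edges from different diagrams. The left and right portions of trees read deep into the core need not lift to any single diagram, so the homotopy statement is a genuine theorem about the folded complex.

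With that lemma cited, your proof is complete. It trades the citation of \cite[Lemma~2.19]{GolanMaximal} for the boundary-path lemma. In exchange, it makes visible that the branch-pair criterion is an instance of the common-expansion argument used throughout Section~\ref{sec:interval-paths}.
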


\begin{proof}
This is \cite[Lemma~2.19]{GolanMaximal}; the element obtained there
belongs to \(\Cl(H)\), which equals \(H\) by closedness.
\end{proof}

We shall also need to fold automata.  By
\cite[Definition~2.9]{GolanDown}, every tree automaton \(\mathcal A\)
has a \emph{folded quotient} \(\overline{\mathcal A}\), the quotient of
\(\mathcal A\) by the smallest congruence for which the quotient
automaton is folded; concretely, it is obtained by repeatedly
identifying two states which have the same ordered pair of children.
It is finite if \(\mathcal A\) is finite, and full if \(\mathcal A\)
is full.  Moreover
\[
  \mathcal D(\mathcal A)=\mathcal D(\overline{\mathcal A})
\]
by \cite[Lemma~2.11]{GolanDown}.

We call a tree automaton \(\mathcal A\) a \emph{core automaton} if
\[
  \mathcal A\cong\mathcal A(C(G))
\]
for some subgroup \(G\leq F\).  The following proposition recognizes
the core automata among the finite full folded tree automata, and
reconstructs the directed \(2\)-core from the automaton.

\begin{proposition}[Finite full core automata and directed
\texorpdfstring{\(2\)}{2}-cores]
\label{prop:finite-full-core-automaton}
Let \(\mathcal A\) be a finite full folded tree automaton, and let
\(G\leq F\) be the subgroup accepted by \(\mathcal A\).  Then
\(\mathcal A\) is a core automaton if and only if, for every pair of
finite binary words \(u,v\) satisfying
\[
  u_{\mathcal A}^{+}=v_{\mathcal A}^{+},
\]
some element of \(G\) has the branch pair \(u\to v\).  In that case
\[
  \mathcal A\cong\mathcal A(C(G)).
\]

Moreover, the based directed \(2\)-complex \(C(G)\) is recovered from
\(\mathcal A\) as follows.  For every state \(q\in Q(\mathcal A)\),
introduce two formal symbols \(\iota_q\) and \(\tau_q\).  Put
\[
  q_0=\delta_{\mathcal A}(q,0),
  \qquad
  q_1=\delta_{\mathcal A}(q,1),
\]
and let \(\sim\) be the equivalence relation generated by
\[
  \iota_q\sim\iota_{q_0},
  \qquad
  \tau_{q_0}\sim\iota_{q_1},
  \qquad
  \tau_{q_1}\sim\tau_q
\]
for every state \(q\).  The vertices of \(C(G)\) are the resulting
equivalence classes.  The state \(q\) gives an edge from
\([\iota_q]\) to \([\tau_q]\), and the two transitions from \(q\)
give the positive cell
\[
  q=q_0q_1.
\]
The root state \(p_{\mathcal A}\) gives the distinguished edge.
\end{proposition}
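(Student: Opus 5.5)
We first build a candidate complex \(\mathcal L_{\mathcal A}\) from \(\mathcal A\) as in the ``moreover'' paragraph, and show that it is a folded, full, tree-like directed \(2\)-complex whose child automaton is \(\mathcal A\). Folded means that no two cells share a top edge or a bottom path. The first fact to check is that the identifications \(\sim\) make every cell \(q=q_0q_1\) well formed. The relations \(\iota_q\sim\iota_{q_0}\), \(\tau_{q_0}\sim\iota_{q_1}\), \(\tau_{q_1}\sim\tau_q\) say exactly this: \(q_0q_1\) is a \(1\)-path with the same endpoints as \(q\). Folding at the level of cells follows from two facts. Each state has exactly one pair of children, so each edge is the top of exactly one positive cell. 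Since \(\mathcal A\) is folded, distinct cells have distinct bottom paths \((q_0,q_1)\). Thus \(\mathcal L_{\mathcal A}\) is folded, full and tree-like, and \(\mathcal A(\mathcal L_{\mathcal A})=\mathcal A\) by construction. By Lemma~\ref{lem:acceptance-branches}, applied verbatim to \(\mathcal L_{\mathcal A}\), the diagrams accepted by \((\mathcal L_{\mathcal A},p_{\mathcal A})\) are exactly the tree pairs accepted by \(\mathcal A\). Hence the group of accepted elements is \(G\).

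For the ``only if'' direction, assume \(\mathcal A\cong\mathcal A(C(G'))\) for some \(G'\leq F\). Then \(\mathcal D(\mathcal A)=\Cl(G')\), so \(G=\Cl(G')\). Since \(G\) and \(\Cl(G')\) have the same core, \(G\) is closed with \(\mathcal A(C(G))\cong\mathcal A\). Lemma~\ref{lem:branch-criterion} applied to the closed group \(G\) then gives the branch-pair condition.

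For the ``if'' direction, assume the branch-pair condition. We compare \(C(G)\) with \(\mathcal L_{\mathcal A}\) in two steps.

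\emph{Morphism from the core.} Each generator of \(G\) has an accepted reduced diagram over \(\mathcal L_{\mathcal A}\). The folding process defining \(C(G)\) only identifies cells with equal top or equal bottom paths. Since \(\mathcal L_{\mathcal A}\) is folded, these labellings induce a morphism \(C(G)\to\mathcal L_{\mathcal A}\) carrying \(p_G\) to \(p_{\mathcal A}\). On automata this gives a morphism \(\varphi\colon\mathcal A(C(G))\to\mathcal A\), which is unique and satisfies \(\varphi(u^+)=u^+_{\mathcal A}\).

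\emph{Injectivity.} The states of \(\mathcal A\) are all reached from the root, so \(\varphi\) is surjective. For injectivity, suppose \(u^+_{\mathcal A}=v^+_{\mathcal A}\). The hypothesis gives \(g\in G\) with the branch pair \(u\to v\). By Lemma~\ref{lem:acceptance-reduction} and fullness, the tree pair of \(g\) containing \(u\to v\) is accepted by \((C(G),p_G)\), since \(g\in G\subseteq\Cl(G)\) and \(C(G)\) is full. To see that \(C(G)\) is full: a leaf of \(C(G)\) would map to a state of \(\mathcal A\), and it would have to be expanded in the diagrams of elements of \(G\) reached by deeper words. More precisely, every word is readable in \(C(G)\) because \(G\) contains elements with branch pairs \(uw\to uw'\) for arbitrarily deep words. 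Once readability is established, Lemma~\ref{lem:acceptance-branches} gives \(u^+=v^+\) in \(C(G)\). Hence \(\varphi\) is injective, and it is an isomorphism because morphisms commute with transitions.

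With \(\mathcal A(C(G))\cong\mathcal A\), a second comparison recovers \(C(G)\cong\mathcal L_{\mathcal A}\). In any full core the vertices must satisfy the three relations \(\iota_q\sim\iota_{q_0}\), \(\tau_{q_0}\sim\iota_{q_1}\), \(\tau_{q_1}\sim\tau_q\). So \(\mathcal L_{\mathcal A}\) maps onto \(C(G)\) and is bijective on edges and cells. It remains to show that no extra vertex identifications occur in \(C(G)\). We obtain this from the universal nature of folding: \(C(G)\) is the folded quotient of the bouquet of generator diagrams, and \(\mathcal L_{\mathcal A}\) is a folded complex receiving that bouquet. So \(C(G)\) maps to \(\mathcal L_{\mathcal A}\) as well, and the two maps are mutually inverse.

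\textbf{Main obstacle.} The delicate point is showing that every word is readable in \(C(G)\), i.e.\ that \(C(G)\) is full and deep enough. We plan to handle it as follows. Let \(u\) be readable in \(C(G)\) with \(u^+\) a leaf. Using finiteness of \(\mathcal A\), choose \(w\neq\varnothing\) with \((uw)^+_{\mathcal A}=(uww')^+_{\mathcal A}\) for some \(w'\neq\varnothing\), a repeated state along a path. The hypothesis then supplies \(g\in G\) with the branch pair \(uw\to uww'\). The reduced diagram of \(g\) must read through \(u\) strictly below \(u\), which forces \(u^+\) to carry a cell in \(C(G)\), a contradiction.
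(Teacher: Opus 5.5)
Your route differs from the paper's, and it works except at the step you flag yourself: fullness of \(C(G)\). Both the surjectivity of \(\varphi\) and your appeal to Lemma~\ref{lem:acceptance-reduction} in the injectivity step depend on it.

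\textbf{The gap.} The assertion that ``the reduced diagram of \(g\) must read through \(u\) strictly below \(u\)'' is false. You only know that \(g\) has the branch pair \(uw\to uww'\). This pair can be an expansion of a branch pair \(x\to y\) of the reduced tree pair of \(g\) in which \(x\) and \(y\) are both prefixes of \(u\). For example, an element whose reduced tree pair contains \(x\to x0\) (in \(F\), take \(x=0\)) also has the branch pair \(x000\to x0000\). With \(u=x00\) and \(w=w'=0\), its reduced diagram does not read \(u\) at all. The hypothesis only says that \emph{some} element has the branch pair, so you may be handed exactly such an element, and no contradiction follows. Your earlier sentence about branch pairs \(uw\to uw'\) ``for arbitrarily deep words'' has the same defect. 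Branch pairs of expanded tree pairs say nothing by themselves about readability in \(C(G)\), which is built from reduced diagrams.

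\textbf{The repair.} Use incomparable words. Take \(n\) with \(2^n>|Q(\mathcal A)|\), distinct words \(w_1\neq w_2\) of length \(n\) with \((uw_1)^+_{\mathcal A}=(uw_2)^+_{\mathcal A}\), and \(g\in G\) with the branch pair \(uw_1\to uw_2\). Let \(x\to y\) be the branch pair of the reduced tree pair of \(g\) with \(x\) a prefix of \(uw_1\). If \(x\) were a prefix of \(u=xu'\), then \(yu'w_1=xu'w_2\) and \(|y|=|x|\) would force \(y=x\) and \(w_1=w_2\). Hence \(u\) is an interior vertex of the reduced domain tree. Since \(g\in G\subseteq\Cl(G)\), that reduced diagram is accepted by \(C(G)\), and its labelling puts a positive cell on \(u^+\).

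Your comparable choice can also be saved. In the bad case, \(y=xt\) with \(t\neq\varnothing\) and \(ts=sw'\) for \(s=u'w\), so \(u'\) is a prefix of \(t^\infty\). Moreover \(x^+=y^+=(xt)^+\) in \(C(G)\), so reading \(t\) from \(x^+\) returns to \(x^+\), and \(u^+\) again has children.

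\textbf{Comparison with the paper.} With this repair, your argument is a correct alternative. The paper notes that \(\mathcal A\) is reduced and quotes \cite[Lemma~6.1]{GolanMaximal} for the equivalence and the isomorphism. It then proves injectivity on vertices by tracing the folding construction: vertex identifications in positive tree diagrams are generated by the three incidence relations, and bouquet-forming and folding only identify whole edges. You instead reprove the equivalence directly. You get vertex injectivity from the morphism \(C(G)\to\mathcal L_{\mathcal A}\) supplied by the universal property of folding, which is shorter.
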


\begin{proof}
We first note that \(\mathcal A\) is reduced in the sense of
\cite[Definition~4.1]{GolanMaximal}.  Indeed, suppose that
\(\mathcal A\) were a nontrivial extension of a tree automaton
\(\mathcal B\).  Then a nonempty binary tree \(T\) would have been
attached at some leaf of \(\mathcal B\).  The leaves of \(T\) are
leaves of \(\mathcal A\); since \(\mathcal A\) is full, \(T\) has no
leaves and is therefore infinite, contrary to the finiteness of
\(\mathcal A\).

The asserted equivalence therefore follows from
\cite[Lemma~6.1]{GolanMaximal}.  Moreover, in the converse part of the
proof of that lemma, the subgroup is taken to be precisely the subgroup
\(G\) accepted by \(\mathcal A\), and a bijective morphism from the
core automaton of \(G\) to \(\mathcal A\) is constructed.  Consequently,
\[
  \mathcal A\cong\mathcal A(C(G)).
\]

Use this isomorphism to identify the states of \(\mathcal A\) with the
edges of \(C(G)\), and let \(\widetilde{\mathcal L}\) be the based
directed \(2\)-complex constructed from \(\mathcal A\) in the statement.
For every positive cell
\[
  q=q_0q_1
\]
of \(C(G)\), its boundary satisfies
\[
  \iota(q)=\iota(q_0),
  \qquad
  \tau(q_0)=\iota(q_1),
  \qquad
  \tau(q_1)=\tau(q).
\]
Thus the assignment \(\iota_q\mapsto\iota(q)\),
\(\tau_q\mapsto\tau(q)\) on formal endpoints is constant on the
equivalence classes of \(\sim\).  Together with the map sending the
edge and positive cell associated with \(q\) to the corresponding edge
and positive cell of \(C(G)\), it therefore gives a well-defined
morphism
\[
  \Phi\colon\widetilde{\mathcal L}\longrightarrow C(G),
  \qquad
  \Phi([\iota_q])=\iota(q),
  \qquad
  \Phi([\tau_q])=\tau(q).
\]
It is bijective on edges and on positive cells.  It is also surjective
on vertices, since by the construction in Definition~\ref{def:core}
every vertex of \(C(G)\) is an endpoint of an edge.

It remains to prove injectivity on vertices.  Recall the construction
of \(C(G)\) from Definition~\ref{def:core}.  By
Subsection~\ref{subsec:dunce}, every reduced spherical diagram used in
forming the initial bouquet is a tree-pair diagram.  For a positive
tree diagram, the identifications among the formal endpoints of its
edges are generated precisely by the three incidence relations
\[
  \iota(q)=\iota(q_0),
  \qquad
  \tau(q_0)=\iota(q_1),
  \qquad
  \tau(q_1)=\tau(q)
\]
coming from its positive cells.  This follows by induction on the
number of carets, since attaching a caret to an edge \(q\) adds the two
edges \(q_0,q_1\) and exactly one new vertex, namely the common
endpoint \(\tau(q_0)=\iota(q_1)\), and contributes exactly the three
displayed relations.  The same statement holds for the inverse of a
positive tree diagram.

Gluing the two tree diagrams along their common leaf path identifies
the corresponding middle edges edge by edge.  Forming the initial
bouquet likewise identifies whole directed edges, and every subsequent
folding identifies two cells together with their remaining boundary
paths edge by edge.  Each of these operations is a quotient by
identifications of edges, and identifying two edges forces exactly the
identification of their initial vertices and of their terminal
vertices.  Consequently, these operations introduce no vertex
identifications other than identifications of corresponding endpoints
of edges that are themselves identified.

After passage to the final core, all edge identifications have already
been recorded by using a single pair of symbols
\(\iota_q,\tau_q\) for each edge \(q\) of \(C(G)\), and every positive
cell of \(C(G)\) is the image of a positive cell of one of the
diagrams, no cells being created along the way.  The remaining vertex
identifications are therefore exactly the three incidence relations
contributed by the positive cells of \(C(G)\), namely the relations
defining \(\sim\).  Hence \(\Phi\) is injective on vertices.

It follows that
\[
  \widetilde{\mathcal L}\cong C(G)
\]
as based directed \(2\)-complexes.
\end{proof}

\subsection{Inner vertices, dyadic points, and boundary paths}
\label{subsec:inner-paths}

Throughout this subsection, let \(H\leq F\) be closed, and write
\[
\mathcal L=C(H),\qquad p=p_H,\qquad
\iota_{\mathcal L}=\iota(p),\qquad \tau_{\mathcal L}=\tau(p).
\]
The vertices \(\iota_{\mathcal L}\) and \(\tau_{\mathcal L}\) are called the initial and
terminal vertices of \(\mathcal L\); all other vertices are called
\emph{inner}.  We write \(V_{\mathrm{in}}(\mathcal L)\) for the set of inner
vertices of \(\mathcal L\).  An edge is inner if both its endpoints are inner,
and a \(1\)-path is inner if all its edges are inner.

\begin{lemma}[Boundary incidence; {\cite[Corollary~4.10]{GolanMemo}}]
\label{lem:boundary-incidence}
The vertex \(\iota_{\mathcal L}\) has no incoming edges, and \(\tau_{\mathcal L}\) has no
outgoing edges.  Every inner vertex has at least one incoming and one
outgoing edge.  If \(u\) is readable, then:
\begin{enumerate}[label=\textup{(\arabic*)}]
\item \(u\) contains the digit \(0\) if and only if \(u^+\) is not
      incident to \(\tau_{\mathcal L}\);
\item \(u\) contains the digit \(1\) if and only if \(u^+\) is not
      incident to \(\iota_{\mathcal L}\).
\end{enumerate}
Consequently, \(u^+\) is an inner edge if and only if \(u\) contains
both digits.
\end{lemma}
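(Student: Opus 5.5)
The lemma is cited from \cite[Corollary~4.10]{GolanMemo}, but it also follows directly from the automaton and the vertex description in Proposition~\ref{prop:finite-full-core-automaton} (or the analogous description for general cores via Definition~\ref{def:core}). The key observation is that an edge is incident to \(\iota_{\mathcal L}\) exactly when its initial vertex is \(\iota_{\mathcal L}\), and I would show that the edges with this property are exactly the states \(u^+\) with \(u\in 0^*\). Symmetrically, the edges ending at \(\tau_{\mathcal L}\) are exactly the states \(u^+\) with \(u\in1^*\).

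\textbf{Step 1: describe the vertex classes.} Assign to each edge \(u^+\) the formal endpoints \(\iota_{u^+},\tau_{u^+}\). The generating relations are \(\iota_q\sim\iota_{q_0}\), \(\tau_{q_0}\sim\iota_{q_1}\) and \(\tau_{q_1}\sim\tau_q\). I would prove that the class of \(\iota_p\) consists only of initial symbols \(\iota_{(0^k)^+}\), and never contains a symbol \(\tau_q\) or \(\iota_{q}\) with \(q\) outside \(\{(0^k)^+\}\). The natural approach is to verify this on the bouquet of tree-pair diagrams before folding, where it is geometric: in a tree diagram, the leftmost vertex is the initial endpoint exactly of the left-spine edges, whose addresses lie in \(0^*\). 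Gluing the two trees of a tree pair along their leaves, and identifying top and bottom edges with \(p\), preserves this, because leaf \(0^a\) matches leaf \(0^b\).

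Folding then only identifies edges whose addresses both lie in \(0^*\), or both lie outside it. This is the main obstacle. I would handle it by arguing through the subgroup: \(u^+=v^+\) implies, by Lemma~\ref{lem:branch-criterion} applied to \(\Cl(H)\), that some element of \(F\) has the branch pair \(u\to v\). An element of \(F\) fixes \(0\), so it maps \(.u\) to \(.v\) linearly on \([u]\). Hence \(.u=0\) if and only if \(.v=0\), which means \(u\in0^*\) if and only if \(v\in0^*\). The same argument with \(1\) works at the right endpoint. So the property ``\(u\in0^*\)'' is well defined on edges, and I would check that it is exactly incidence to \(\iota_{\mathcal L}\). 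One direction is by induction along the left spine. For the other, I would show that the \(\sim\)-class of \(\iota_p\) is closed only under the relation \(\iota_q\sim\iota_{q_0}\), since the other two relations involve \(\tau\)'s and never link \(\tau\)-symbols back to \(\iota_p\).

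\textbf{Step 2: derive the stated claims.} \(\iota_{\mathcal L}\) has no incoming edges, because no \(\tau\)-symbol lies in its class. Likewise \(\tau_{\mathcal L}\) has no outgoing edges. Parts (1) and (2) are then exactly the spine characterizations just proved, and an edge is inner iff it is incident to neither boundary vertex, which is the final consequence.

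\textbf{Step 3: inner vertices.} Every vertex is \([\tau_{q_0}]=[\iota_{q_1}]\) for some cell, or a boundary vertex. Taking a representative gives an incoming edge \(q_0\) and an outgoing edge \(q_1\), which handles every inner vertex.
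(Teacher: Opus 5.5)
Your proof has a gap at exactly the step you call the main obstacle: the claim that \(u^+=v^+\) forces \(u\in0^*\Leftrightarrow v\in0^*\). You do not prove it; you import it from Lemma~\ref{lem:branch-criterion}, and this is circular. You use the nontrivial direction of that lemma, from \(u^+=v^+\) to an element with branch pair \(u\to v\). That direction is obtained by gluing two tree diagrams with bottom paths \(A\,u^+B\) and \(A'\,v^+B'\), which requires \(A\simeq A'\) and \(B\simeq B'\). If \(u\in0^*\), then \(A\) is empty and \(A'\) runs from \(\iota_{\mathcal L}\) to \(\iota(v^+)=\iota_{\mathcal L}\). One then needs exactly that such a path is empty, i.e.\ that \(\iota_{\mathcal L}\) has no incoming edges, which is the first assertion of the present lemma (note that Lemma~\ref{lem:boundary-paths} only compares nonempty paths). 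As far as I can tell, the cited sources build the criterion on top of this boundary-incidence statement. In any case, your own two-line argument shows that the criterion implies the dichotomy, so invoking it does not prove the dichotomy independently.

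A smaller imprecision: the relation \(\tau_{q_0}\sim\iota_{q_1}\) does involve an \(\iota\)-symbol. So the claim that the class of \(\iota_p\) is ``closed only under \(\iota_q\sim\iota_{q_0}\)'' needs two facts: a right child is never a spine edge, and the parent of a spine left child is a spine edge. Both follow from the dichotomy, but you must say so.

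The missing idea is that folding never merges a boundary vertex with anything, and you can check this directly on the construction of Definition~\ref{def:core}.
\begin{itemize}
\item Take as invariant: no edge ends at \(\iota(p)\) and no edge begins at \(\tau(p)\). It holds in the bouquet, because each \(\Delta_g\) has a unique source and sink, which become \(\iota(p)\) and \(\tau(p)\).
\item Folding two cells with common bottom path \(q_0q_1\) identifies two top edges that already share both endpoints \(\iota(q_0)\) and \(\tau(q_1)\), so no vertices merge.
\item Folding two cells with common top edge \(q\) merges only the middle vertices \(\tau(q_0)=\iota(q_1)\) and \(\tau(q_0')=\iota(q_1')\). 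Each of these ends one edge and starts another, so by the invariant neither is a boundary vertex, and the invariant persists. It also survives the direct limit, since every vertex identification occurs at a finite stage.
\item Now induct on \(|u|\): \(\iota(\varnothing^+)=\iota_{\mathcal L}\), \(\iota((w0)^+)=\iota(w^+)\), and \(\iota((w1)^+)=\tau((w0)^+)\neq\iota_{\mathcal L}\). Hence \(\iota(u^+)=\iota_{\mathcal L}\) iff \(u\in0^*\). Together with the invariant this is (2), and (1) is symmetric.
\end{itemize}
Your Step~3 then goes through. An inner vertex \(\iota(u^+)\) has \(u=w10^k\) and equals \(\tau((w0)^+)=\iota((w1)^+)\); similarly for \(\tau(u^+)\) with \(u=w01^k\). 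This route also avoids the global \(\sim\)-description of vertices. The paper proves that description only for finite full cores, whereas the lemma is stated for arbitrary closed \(H\).
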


Thus every non-inner edge either begins at \(\iota_{\mathcal L}\) or ends at
\(\tau_{\mathcal L}\).  In particular, every \(1\)-path with inner endpoints is
inner.  Since homotopy preserves endpoints, every path occurring in a
homotopy between paths with inner endpoints is also inner.

\begin{lemma}[Accessibility]
\label{lem:accessibility}
Every vertex of \(\mathcal L\) lies on a \(1\)-path from \(\iota_{\mathcal L}\) to
\(\tau_{\mathcal L}\).
\end{lemma}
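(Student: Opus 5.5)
The plan is to use the core's origin as a quotient of tree-pair diagrams, together with the child automaton, to realize each vertex concretely. By Definition~\ref{def:core}, every edge of \(\mathcal L\) is reached from \(p\) by some binary word \(u\), so every edge is \(u^+\) for some readable \(u\). Every vertex is an endpoint of some edge (proof of Proposition~\ref{prop:finite-full-core-automaton}). It therefore suffices to show that every edge \(e=u^+\) lies on a \(1\)-path from \(\iota_{\mathcal L}\) to \(\tau_{\mathcal L}\).

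Given a readable word \(u\), choose a finite rooted binary tree \(T\) with a leaf at address \(u\) whose other leaves are readable. Build \(T\) as the minimal tree containing the branch from the root to \(u\). Its leaves are \(u\) together with the siblings \(w\bar i\) of the proper prefixes \(wi\) of \(u\). Each such sibling is readable: since \(wi\) is readable, \(w^+\) is the top edge of a positive cell, so both \(\delta(w^+,0)\) and \(\delta(w^+,1)\) are defined. Lemma~\ref{lem:trees-over-core} then gives a positive diagram \(\Psi_T\colon p\to u_1^+\cdots u_m^+\) over \(\mathcal L\), whose bottom path is a \(1\)-path that contains \(u^+\) as one of its edges. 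The top and bottom paths of \(\Psi_T\) have common endpoints, so this bottom path runs from \(\iota(p)=\iota_{\mathcal L}\) to \(\tau(p)=\tau_{\mathcal L}\). Hence every endpoint of \(u^+\) lies on a \(1\)-path from \(\iota_{\mathcal L}\) to \(\tau_{\mathcal L}\).

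The main point to check is the claim that every edge of \(C(H)\) is of the form \(u^+\), which the paper cites from \cite[Section~4]{GolanMemo} when building the child automaton. If a self-contained argument is wanted, it runs as follows. Every edge of the initial bouquet is an edge of some \(\Delta_g=\Delta(T_+)\circ\Delta(T_-)^{-1}\), and hence an edge of one of the two tree diagrams. Each edge of a tree diagram is reached from the top edge \(p\) by descending through positive cells: for \(\Delta(T_-)^{-1}\), the inverse cells become positive cells of the core after the top/bottom redesignation in Definition~\ref{def:core}. Folding maps edges onto edges compatibly with cells, so the property persists to the quotient. The degenerate case \(X=\varnothing\) is trivial, since then \(\mathcal L\) consists of the single edge \(p\).
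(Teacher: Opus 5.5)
Your argument is correct, but it takes a different route from the paper's.

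The paper argues directly from the construction in Definition~\ref{def:core}. Every vertex of the initial bouquet lies on a source-to-sink path of one of the generator diagrams \(\Delta_g\), and the label of that path runs from \(\iota(p)\) to \(\tau(p)\). The gluing and folding maps carry such \(1\)-paths to \(1\)-paths with the same endpoints, so no reachability statement is needed.

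You instead pass through the child automaton. Using the fact that every edge is \(u^+\) for some readable \(u\), you complete the branch to \(u\) into a tree whose other leaves are readable. Lemma~\ref{lem:trees-over-core} then puts \(u^+\) in the bottom path of a positive diagram \(p\to u_1^+\cdots u_m^+\). That bottom path has the endpoints of \(p\), for instance because homotopic paths share endpoints.

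Your route costs the reachability fact from \cite[Section~4]{GolanMemo}. Your self-contained justification of that fact is itself an argument on the generator diagrams, of the same kind as the paper's proof. In return, you get a slightly stronger conclusion: every edge, hence every vertex, lies on a positive expansion of \(p\), not merely on some \(1\)-path from \(\iota_{\mathcal L}\) to \(\tau_{\mathcal L}\). Neither argument needs fullness of the core.
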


\begin{proof}
Every vertex in the initial bouquet lies on a directed source-to-sink
path in one of the generator diagrams, and the quotient defining
\(C(H)\) preserves such paths.  If the generating set is empty, the
assertion is immediate because \(\mathcal L\) consists of the single edge \(p\).
\end{proof}

\begin{lemma}[Leaf intervals]
\label{lem:leaf-intervals}
Suppose that \(u\) is readable and that \(u^+\) is a leaf of \(\mathcal L\).
Then no two distinct points of \(\operatorname{int}[u]\) belong to the
same \(H\)-orbit.
\end{lemma}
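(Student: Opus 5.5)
The plan is to argue by contradiction, using only the readability structure of the child automaton. Since \(H\) is closed, every \(h\in H\) lies in \(\Cl(H)\), so by Theorem~\ref{thm:core-closure} its reduced tree-pair diagram is accepted by \((\mathcal L,p)\). By Lemma~\ref{lem:acceptance-branches}, this diagram has branch pairs \(u_1\to v_1,\dots,u_m\to v_m\) with all \(u_i,v_i\) readable and \(u_i^+=v_i^+\).

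The basic observation is the following. Because \(u^+\) is a leaf, no proper extension \(uw\) (\(w\neq\varnothing\)) of \(u\) is readable: reading \(u\) ends at a state with no transitions.

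Now suppose \(x\neq y\) are points of \(\operatorname{int}[u]\) and \(h\in H\) satisfies \(h(x)=y\).

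\emph{Step 1: \([u]\) lies in a single domain leaf.} The leaf intervals \([u_i]\) cover \([0,1]\). Consider any \([u_i]\) whose interior meets \(\operatorname{int}[u]\); then \(u_i\) and \(u\) are prefix-comparable. If \(u\) were a proper prefix of \(u_i\), then \(u_i\) would be a readable proper extension of \(u\), which is impossible. Hence every such \(u_i\) is a prefix of \(u\). Since the \([u_i]\) have disjoint interiors, there is a single such leaf, and \(u=u_iw\) for some word \(w\). In particular \(h\) maps \([u]\) linearly onto \([v_iw]\), by~\eqref{eq:prefix-substitution}.

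\emph{Step 2: the image word is readable with the same state.} Since \(\delta(v_i^+,w)=\delta(u_i^+,w)=u^+\) is defined, the word \(v_iw\) is readable and \((v_iw)^+=u^+\) is a leaf. Thus \(v_iw\), too, has no readable proper extension.

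\emph{Step 3: conclude.} Since \(x\) is interior to \([u]\), the point \(y=h(x)\) is interior to \([v_iw]\). It is also interior to \([u]\), so \(v_iw\) and \(u\) are prefix-comparable. Neither can be a proper prefix of the other, because both are readable words ending at leaf states. Hence \(v_iw=u\). Then \(h\) maps \([u]\) linearly onto itself, so \(h\) is the identity on \([u]\), and \(y=x\), a contradiction.

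The only delicate point is Step 1, namely making sure that \([u]\) is contained in one leaf interval of the domain tree rather than being split among several. This is exactly where the leaf hypothesis is used, since it forbids readable proper extensions of \(u\). The rest is bookkeeping with the branch-pair substitution~\eqref{eq:prefix-substitution}.
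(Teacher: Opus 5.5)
Your proof is correct, and it takes a different route from the paper's.

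The paper's proof has two stages:
\begin{itemize}
\item It treats dyadic points only. Writing \(a=.uw_1\) and \(b=.uw_2\), it notes that \(u\) is the longest readable prefix of both words. It then invokes an external result (Lemma~6.2 of the memoir it cites) to get \(w_1=w_2\).
\item It passes to arbitrary points by continuity. It approximates \(x\) by a dyadic point \(a\) with \(a,h(a)\in\operatorname{int}[u]\) and \(h(a)\neq a\).
\end{itemize}

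You instead work with one element \(h\) and its accepted reduced tree pair, via Theorem~\ref{thm:core-closure} and Lemma~\ref{lem:acceptance-branches}. The leaf hypothesis forces \([u]\) to lie inside a single domain leaf \([u_i]\) and transports the leaf state to \(v_iw\). Prefix-comparability of two readable words that both end at leaf states then forces \(v_iw=u\).

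What your route buys:
\begin{itemize}
\item It is self-contained within tools already set up in the paper.
\item It handles nondyadic points directly, with no approximation step.
\item It does not need closedness, only \(H\subseteq\Cl(H)\).
\item It proves something slightly stronger: every \(h\in H\) with \(h(\operatorname{int}[u])\cap\operatorname{int}[u]\neq\varnothing\) is the identity on \([u]\).
\end{itemize}
The paper's version is shorter on the page because the orbit analysis is outsourced to the cited lemma.

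One small point to make explicit in Step~1 is that some domain leaf has interior meeting \(\operatorname{int}[u]\). For example, a nondyadic point of \(\operatorname{int}[u]\) lies in the interior of some \([u_i]\). Uniqueness then follows as you say, since two distinct prefixes of \(u\) would give nested leaf intervals.
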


\begin{proof}
Let \(a=.\!uw_1\) and \(b=.\!uw_2\) be dyadic points in
\(\operatorname{int}[u]\), with their finite binary expansions chosen to
end in \(1\).  Since \(u^+\) is a leaf, \(u\) is the longest readable
prefix of both \(uw_1\) and \(uw_2\).  Hence
\cite[Lemma~6.2]{GolanMemo} implies that \(a\) and \(b\) can belong to
the same \(H\)-orbit only if \(w_1=w_2\), and therefore only if \(a=b\).

This proves the assertion for dyadic points.  In general, if
\(h(x)=y\neq x\) for
\(x,y\in\operatorname{int}[u]\), continuity allows us to choose a
dyadic point \(a\) sufficiently close to \(x\) that
\(a,h(a)\in\operatorname{int}[u]\) and \(h(a)\neq a\).  Since \(F\)
preserves \(\D\), this contradicts the dyadic case.
\end{proof}

\begin{corollary}
\label{cor:minimal-full}
If \(H\) acts minimally on \((0,1)\), then \(\mathcal L\) is full.
\end{corollary}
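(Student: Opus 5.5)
We argue by contraposition, using Lemma~\ref{lem:leaf-intervals}. Suppose \(\mathcal L\) is not full, so some edge \(e\) of \(\mathcal L\) is a leaf. By the construction of the child automaton, every edge of \(\mathcal L\) is reached from \(p\) by some finite binary word. Hence there is a readable word \(u\) with \(u^+=e\), and this \(u^+\) is a leaf.

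Lemma~\ref{lem:leaf-intervals} then says that no two distinct points of \(\operatorname{int}[u]\) lie in the same \(H\)-orbit. The interval \(\operatorname{int}[u]=(.u,.u1^\infty)\) is a nonempty open subinterval of \([0,1]\). Its intersection with \((0,1)\) is also a nonempty open interval \(U\), because removing the endpoints \(0\) and \(1\) leaves it nonempty and open.

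Pick any point \(x\in U\). If the action were minimal, the orbit of \(x\) would be dense in \((0,1)\). It would therefore meet \(U\) in infinitely many points, and in particular in some point \(y\neq x\). Then \(x\) and \(y\) are distinct points of \(\operatorname{int}[u]\) in the same \(H\)-orbit, which contradicts Lemma~\ref{lem:leaf-intervals}. So the action is not minimal, which proves the contrapositive.

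The one point that needs care is that the orbit meets \(U\) in infinitely many points, not just one. This holds because \(U\) is an open interval with no isolated points, so a dense subset of \((0,1)\) meets \(U\) in a set dense in \(U\), and such a set is infinite. In particular, this step does not rely on the orbit being infinite for any other reason. No other difficulty is expected.
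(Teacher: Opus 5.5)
Your proof is correct and is exactly the argument the paper intends: the corollary is stated as an immediate consequence of Lemma~\ref{lem:leaf-intervals}. A leaf \(u^+\) gives the open interval \(\operatorname{int}[u]\), which meets each \(H\)-orbit in at most one point, and this is incompatible with every orbit being dense. The intersection with \((0,1)\) is harmless but unnecessary, since \(\operatorname{int}[u]\subseteq(0,1)\) already.
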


Suppose that \(\mathcal L\) is full, so every finite binary word is
readable. For \(a\in\D\), write \(a=.\!u\), where \(u\) ends in \(1\), and define
\[
v(a)=\iota(u^+).
\]
This is an inner vertex, called the \emph{type} of \(a\).  We also set
\[
v(0)=\iota_{\mathcal L},\qquad v(1)=\tau_{\mathcal L}.
\]
Since the left child of an edge has the same initial vertex as the edge
itself,
\begin{equation}
\label{eq:zero-ray-initial}
\iota\bigl((u0^j)^+\bigr)=\iota(u^+)
\qquad (j\geq0).
\end{equation}

Let \(a=.\!u\) and \(b=.\!v\), where \(u\) and \(v\) end in \(1\).  By
the tree-pair description, \(a\) and \(b\) belong to the same
\(H\)-orbit if and only if, for some \(m,n\geq0\), an element of \(H\)
has the branch pair
\[
u0^m\longrightarrow v0^n.
\]
By Lemma~\ref{lem:branch-criterion}, this is equivalent to
\[
(u0^m)^+=(v0^n)^+.
\]
By \cite[Remark~6.4 and Proposition~6.6]{GolanMemo}, such \(m,n\)
exist if and only if
\[
\iota(u^+)=\iota(v^+),
\]
or equivalently \(v(a)=v(b)\).  Finally, let \(z\) be an inner vertex.
By the construction of the core, \(z\) is the initial vertex of \(u^+\)
for some readable word \(u\).  Since \(z\) is inner,
Lemma~\ref{lem:boundary-incidence} implies that \(u\) contains a \(1\).
Write \(u=w0^j\), where \(w\) ends in \(1\).  Then
\eqref{eq:zero-ray-initial} yields \(z=v(.\!w)\).

\begin{proposition}[The orbit--core correspondence;
{\cite[Section~6]{GolanMemo}}]
\label{prop:orbit-core}
Assume that \(\mathcal L\) is full. For \(a,b\in\D\),
\[
a\text{ and }b\text{ belong to the same }H\text{-orbit}
\quad\Longleftrightarrow\quad
v(a)=v(b).
\]
Consequently, \(a\mapsto v(a)\) induces a bijection
\[
\D/H\longrightarrow V_{\mathrm{in}}(\mathcal L).
\]
\end{proposition}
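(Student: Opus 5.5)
The argument just before the statement already does most of the work; the proof assembles it into a well-defined, injective and surjective map. I will treat the proposition as having three parts: the equivalence \(a\sim_H b\iff v(a)=v(b)\), the fact that \(v\) takes inner values, and surjectivity onto \(V_{\mathrm{in}}(\mathcal L)\).

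\emph{Equivalence.} Write \(a=.\!u\) and \(b=.\!v\) with \(u,v\) ending in \(1\). First I show that \(a\) and \(b\) lie in one \(H\)-orbit if and only if some \(h\in H\) has a branch pair \(u0^m\to v0^n\). If \(h\) has such a branch pair, then \eqref{eq:prefix-substitution} with \(\xi\) empty gives \(h(a)=b\). Conversely, suppose \(h(a)=b\). Choose a tree pair for \(h\) whose domain subdivision has \(a\) as a subdivision point. Then \(a\) is the left endpoint of some leaf interval \([w]\), and \(w\) must have the form \(u0^m\), since the left endpoint \(.\!w\) of \([w]\) equals \(a\). The branch pair \(w\to w'\) maps \(.\!w\) to \(.\!w'=b\), so similarly \(w'=v0^n\). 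By Lemma~\ref{lem:branch-criterion}, which applies because \(H\) is closed and every word is readable in a full core, this becomes \((u0^m)^+=(v0^n)^+\) for some \(m,n\ge0\).

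It remains to show that this last condition is equivalent to \(\iota(u^+)=\iota(v^+)\). The forward direction is immediate from \eqref{eq:zero-ray-initial}: equal edges have equal initial vertices. The converse is the cited input \cite[Remark~6.4, Proposition~6.6]{GolanMemo}. I expect this to be the only substantive step. If I had to reprove it, I would follow the \(0\)-rays from \(u^+\) and \(v^+\). By finiteness each ray is eventually periodic, and each consists of edges with the same initial vertex. I would then use the folding and acceptance structure of the core to show that two edges sharing an initial vertex have \(0\)-rays that eventually meet.

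\emph{Inner values.} Since \(u\) ends in \(1\) and \(a\in(0,1)\), the word \(u\) contains a \(1\), so by Lemma~\ref{lem:boundary-incidence}(2) the edge \(u^+\) is not incident to \(\iota_{\mathcal L}\). Its initial vertex is also not \(\tau_{\mathcal L}\), which has no outgoing edges. Hence \(v(a)\) is inner, and \(v\) induces a well-defined injective map \(\D/H\to V_{\mathrm{in}}(\mathcal L)\).

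\emph{Surjectivity.} Let \(z\) be an inner vertex. By Lemma~\ref{lem:accessibility} and the construction of the core, \(z=\iota(u^+)\) for some word \(u\). Lemma~\ref{lem:boundary-incidence} forces \(u\) to contain a \(1\), so I write \(u=w0^j\) with \(w\) ending in \(1\). Then \eqref{eq:zero-ray-initial} gives \(z=v(.\!w)\), which completes the proof.
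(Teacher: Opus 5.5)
Your proposal follows the paper's argument essentially step for step: the tree-pair reduction to a branch pair \(u0^m\to v0^n\), Lemma~\ref{lem:branch-criterion}, the same citation of \cite[Remark~6.4, Proposition~6.6]{GolanMemo} for the key equivalence with \(\iota(u^+)=\iota(v^+)\), and the decomposition \(u=w0^j\) for surjectivity. Your explicit check that \(v(a)\) is inner fills in a point the paper only asserts. One caution: your sketched reproof of the cited step uses finiteness of the core, which is not assumed here (only fullness), so your proof, like the paper's, genuinely rests on the citation.
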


\begin{proposition}[Characterization of finite full cores]
\label{prop:finite-full-core}
Suppose that \(H\leq F\) is closed and that
\(\mathcal L=C(H)\) is finite.  The following are equivalent:
\begin{enumerate}[label=\textup{(\roman*)}]
\item \(\mathcal L\) is full;
\item \(H\) has finitely many orbits on \(\D\);
\item there is no nonempty open interval \(U\subseteq(0,1)\) such that
      every two distinct dyadic points of \(U\) belong to distinct
      \(H\)-orbits.
\end{enumerate}
\end{proposition}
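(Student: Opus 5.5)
We prove the cycle of implications (i)\(\Rightarrow\)(ii)\(\Rightarrow\)(iii)\(\Rightarrow\)(i), using the child automaton \(\mathcal A(\mathcal L)\), Lemma~\ref{lem:leaf-intervals}, and Proposition~\ref{prop:orbit-core}.

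\emph{(i)\(\Rightarrow\)(ii).} If \(\mathcal L\) is full, Proposition~\ref{prop:orbit-core} gives a bijection \(\D/H\to V_{\mathrm{in}}(\mathcal L)\). Since \(\mathcal L\) is finite, \(H\) has only finitely many orbits on \(\D\).

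\emph{(ii)\(\Rightarrow\)(iii).} Suppose \(U\) is a nonempty open interval in which distinct dyadic points lie in distinct \(H\)-orbits. Then \(U\cap\D\) is infinite, so it meets infinitely many orbits, contradicting (ii).

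\emph{(iii)\(\Rightarrow\)(i).} We argue by contraposition. Suppose \(\mathcal L\) is not full, so some edge \(e\) is a leaf. Every edge of \(C(H)\) is reached from \(p\) by some readable word, so \(e=u^+\) for some readable \(u\). Lemma~\ref{lem:leaf-intervals} then says that no two distinct points of \(\operatorname{int}[u]\) lie in the same \(H\)-orbit. Taking \(U=\operatorname{int}[u]\), which is a nonempty open subinterval of \((0,1)\), violates (iii).

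One point needs checking in this last step: \(\operatorname{int}[u]\) must lie inside \((0,1)\) and be nonempty. This holds for every finite word \(u\), since \([u]\) has positive length and its interior avoids the endpoints \(0\) and \(1\) except at the boundary of \([u]\). The case \(u=\varnothing\) is also covered: then \(p\) is a leaf, \(H\) is trivial, and \(U=(0,1)\) works.

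The only real obstacle is the reachability fact, that every edge is \(u^+\) for some readable \(u\). This is quoted in the construction of the child automaton from \cite[Section~4]{GolanMemo}. Everything else follows immediately from the cited lemmas, so no part of the argument should be hard.
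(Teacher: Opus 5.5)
Your proof is correct and is essentially the paper's argument. The paper likewise uses the orbit--core bijection of Proposition~\ref{prop:orbit-core} for (i)\(\Rightarrow\)(ii), the infinitude of dyadic points in an open interval for (ii)\(\Rightarrow\)(iii), and Lemma~\ref{lem:leaf-intervals} applied to \(U=\operatorname{int}[u]\), where \(u^+\) is a leaf, for (iii)\(\Rightarrow\)(i).
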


\begin{proof}
If \(\mathcal L\) is full, Proposition~\ref{prop:orbit-core} identifies
the \(H\)-orbits on \(\D\) with the inner vertices of \(\mathcal L\).
Thus \textup{(i)} implies \textup{(ii)}.  Since every nonempty open
interval contains infinitely many dyadic points, \textup{(ii)} implies
\textup{(iii)}.

Finally, if \(\mathcal L\) is not full, then some readable word \(u\)
has \(u^+\) a leaf.  Lemma~\ref{lem:leaf-intervals}, applied to
\(U=\operatorname{int}[u]\), shows that \textup{(iii)} fails.  Hence
\textup{(iii)} implies \textup{(i)}.
\end{proof}

\begin{lemma}[Vertices of a dyadic subdivision]
\label{lem:subdivision-vertices}
Assume that \(\mathcal L\) is full. Let \(T\) be a finite rooted
binary tree with leaf addresses
\[
u_1,\ldots,u_m
\]
from left to right, and let
\[
0=a_0<a_1<\cdots<a_m=1
\]
be its subdivision points, so that
\[
[u_k]=[a_{k-1},a_k]\qquad(1\leq k\leq m).
\]
By Lemma~\ref{lem:trees-over-core},
\[
u_1^+\cdots u_m^+
\]
is a \(1\)-path in \(\mathcal L\).  If \(z_0,z_1,\ldots,z_m\) are its successive
vertices, then
\[
z_k=v(a_k)\qquad(0\leq k\leq m).
\]
\end{lemma}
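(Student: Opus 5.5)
We need \(z_k=v(a_k)\). The endpoint cases are immediate: \(z_0=\iota(u_1^+)\) and \(u_1=0^j\) for some \(j\), so \(z_0=\iota(p)=\iota_{\mathcal L}\) by repeated use of the rule that a left child shares its initial vertex with its parent. Symmetrically, \(u_m=1^{j'}\), and a right child shares its terminal vertex with its parent, so \(z_m=\tau_{\mathcal L}\). These agree with the conventions \(v(0)=\iota_{\mathcal L}\) and \(v(1)=\tau_{\mathcal L}\).

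For \(0<k<m\), we have \(z_k=\iota(u_{k+1}^+)\), and \(a_k\) is the left endpoint of \([u_{k+1}]\), so \(a_k=.\!u_{k+1}\). The only issue is that \(u_{k+1}\) need not end in \(1\). Since \(a_k\neq0\), the word \(u_{k+1}\) contains a \(1\) (otherwise it is \(0^j\) and \([u_{k+1}]\) would start at \(0\)). Hence we can write \(u_{k+1}=w0^j\) with \(w\) ending in \(1\), so that \(a_k=.\!w\).

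By definition \(v(a_k)=\iota(w^+)\), and \eqref{eq:zero-ray-initial} gives \(\iota\bigl((w0^j)^+\bigr)=\iota(w^+)\). Therefore \(z_k=\iota(u_{k+1}^+)=v(a_k)\). Fullness is used only to guarantee that all words involved are readable; Lemma~\ref{lem:trees-over-core} supplies the identification of the vertices \(z_k\) along the bottom path.

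There is no real obstacle here. The only point needing care is the translation \(a_k=.\!u_{k+1}\), namely that the left endpoint of \([u]\) is \(.\!u\) with trailing zeros allowed. Stripping those zeros is exactly what \eqref{eq:zero-ray-initial} handles.
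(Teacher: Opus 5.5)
Your proof is correct and follows essentially the same route as the paper: write the interior subdivision point as \(a_k=.\!w\) with \(w\) ending in \(1\), observe that \(u_{k+1}=w0^j\) because \([u_{k+1}]\) begins at \(a_k\), and conclude with \eqref{eq:zero-ray-initial}. Your explicit check of the endpoint cases \(k=0,m\) via the left-child and right-child incidences simply spells out what the paper calls immediate.
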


\begin{proof}
The assertion is immediate for \(k=0,m\).  Let \(1\leq k<m\), and
write \(a_k=.\!w\), where \(w\) ends in \(1\).  Since
\([u_{k+1}]\) begins at \(a_k\), we have \(u_{k+1}=w0^j\) for some
\(j\geq0\).  Hence
\[
z_k=\iota(u_{k+1}^+)
   =\iota\bigl((w0^j)^+\bigr)
   =\iota(w^+)
   =v(a_k)
\]
by \eqref{eq:zero-ray-initial}.
\end{proof}

We finish with facts valid for every core \(\mathcal L=C(H)\); fullness is no
longer assumed.

The following is \cite[Propositions~10.1 and~10.2]{GolanMemo}.

\begin{lemma}[Paths meeting the boundary]
\label{lem:boundary-paths}
\begin{enumerate}[label=\textup{(\arabic*)}]
\item If \(P,Q\) are nonempty \(1\)-paths beginning at \(\iota_{\mathcal L}\),
      then
      \[
      P\simeq Q\quad\Longleftrightarrow\quad
      \tau(P)=\tau(Q).
      \]
\item If \(P,Q\) are nonempty \(1\)-paths ending at \(\tau_{\mathcal L}\), then
      \[
      P\simeq Q\quad\Longleftrightarrow\quad
      \iota(P)=\iota(Q).
      \]
\end{enumerate}
\end{lemma}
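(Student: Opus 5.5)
The plan is to prove \textup{(1)} in detail and obtain \textup{(2)} by the left--right symmetry. Reflecting all diagrams in a vertical line swaps the roles of \(0\) and \(1\) in addresses and of \(\iota_{\mathcal L}\) and \(\tau_{\mathcal L}\). The implication ``\(\Rightarrow\)'' in \textup{(1)} is trivial, since homotopic paths share endpoints. For the converse, fix a nonempty \(1\)-path \(P\) from \(\iota_{\mathcal L}\) to a vertex \(z\). The aim is to show that \(P\) is homotopic to a \emph{canonical} path depending only on \(z\).

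\textbf{Step 1 (reduce \(P\) to a tree prefix).} By Lemma~\ref{lem:accessibility}, choose a \(1\)-path \(Q\) from \(z\) to \(\tau_{\mathcal L}\), or take \(Q\) empty if \(z=\tau_{\mathcal L}\). I would first show that every \(1\)-path from \(\iota_{\mathcal L}\) to \(\tau_{\mathcal L}\) is homotopic to \(p\). The idea is that such a path, read in the child automaton, is the bottom of a tree diagram \(\Psi_T\), by an induction on length using Lemma~\ref{lem:boundary-incidence} to locate the first and last edges as \((0^j)^+\) and \((1^k)^+\). I expect this to be the delicate part; alternatively one can invoke the memoir's description of \(\iota\)-\(\tau\) paths directly.

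Granting it, \(PQ\simeq p\), and Lemma~\ref{lem:common-expansion} gives a common positive expansion \(R\) of \(PQ\) and \(p\). Since positive cells of a tree-like complex replace single edges, \(R=R_1R_2\) with \(R_1\) a positive expansion of \(P\) and \(R_2\) one of \(Q\). On the other hand, \(R\) is the bottom path \(u_1^+\cdots u_m^+\) of some \(\Psi_T\), by Lemma~\ref{lem:trees-over-core}. Hence \(P\simeq u_1^+\cdots u_k^+\) for some \(k\), and the intervals \([u_1],\dots,[u_k]\) tile \([0,a]\) for a dyadic \(a\) (or \(a=1\)).

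\textbf{Step 2 (canonical form of a prefix).} Write \(a=.\!w\) with \(w\) ending in \(1\). I would show by induction that \(u_1^+\cdots u_k^+\) is homotopic to the ``left-spine'' path \(\lambda(w)\). This is the concatenation of the edges \((w'0)^+\) over the prefixes \(w'1\) of \(w\), read left to right, and it is the bottom of the minimal subtree of \(T\) having \([0,a]\) as a union of leaves. Collapsing sibling pairs \(u0^+,u1^+\) into \(u^+\) by inverse cells reduces any prefix to this form. Here readability is automatic, because all words involved are prefixes of readable leaf words. Note that \(\tau(\lambda(w))=\iota(w^+)\).

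\textbf{Step 3 (same endpoint implies homotopic).} Let \(P\simeq\lambda(w)\) and \(P'\simeq\lambda(w')\), with \(\iota(w^+)=\iota(w'^+)=z\). If \(z=\tau_{\mathcal L}\), both are homotopic to \(p\) by Step 1. Otherwise, by \cite[Remark~6.4 and Proposition~6.6]{GolanMemo} there are \(m,n\) with \((w0^m)^+=(w'0^n)^+\). Since \(H\) is closed, Lemma~\ref{lem:branch-criterion} gives \(h\in H\) with branch pair \(w0^m\to w'0^n\). Then \(h\) maps \([0,.w]\) onto \([0,.w']\), so its accepted reduced tree-pair diagram, after expansion, has a prefix column of branch pairs covering \([0,.w]\to[0,.w']\). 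The corresponding left part of the accepted diagram is a \((\lambda',\lambda'')\)-subdiagram over \(\mathcal L\). Here \(\lambda'\) and \(\lambda''\) are prefix paths ending at \(z\), which by Step 2 are homotopic to \(\lambda(w)\) and \(\lambda(w')\). Hence \(P\simeq P'\).

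I expect the main obstacle to be Step 1. Splitting a positive expansion of \(PQ\) at the vertex \(z\) requires some care in the non-full case, and showing that every \(\iota_{\mathcal L}\)-\(\tau_{\mathcal L}\) path is homotopic to \(p\) is where the specific structure of cores, namely Lemma~\ref{lem:boundary-incidence} and the memoir's analysis, enters.
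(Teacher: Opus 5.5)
The paper does not prove this lemma; it quotes it from \cite[Propositions~10.1 and~10.2]{GolanMemo}. A self-contained proof therefore has to supply that content, and in your outline all of it sits in Step~1. The claim there, that every $\iota_{\mathcal L}$--$\tau_{\mathcal L}$ path is homotopic to $p$, is exactly part~\textup{(1)} with $\tau(P)=\tau_{\mathcal L}$ and $Q=p$. Your Step~3 also depends on it.

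The mechanism you propose for Step~1 is false: such a path need not be the bottom path of any $\Psi_T$. Take $H=\pi_{\mathrm{ab}}^{-1}(2\mathbb Z\times\mathbb Z)$.
\begin{itemize}
\item It is closed, since a piecewise-$H$ function agrees near $0$ with an element of $H$.
\item It contains $[F,F]$, so it is transitive on $\D$ and its core is full.
\item No element of $H$ has the branch pair $0\to00$, since that would give first endpoint coordinate $-1$. Hence $\ell_1=(0)^+\neq(00)^+=\ell_2$ by Lemma~\ref{lem:branch-criterion}.
\item Nevertheless $\tau(\ell_2)=v(1/4)=v(1/2)=\iota(r_1)$, by Lemma~\ref{lem:subdivision-vertices} and Proposition~\ref{prop:orbit-core}.
\end{itemize}
So $\ell_2r_1$ is an $\iota_{\mathcal L}$--$\tau_{\mathcal L}$ path of length two. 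The only two-leaf tree has bottom path $\ell_1r_1$, so $\ell_2r_1$ is not the bottom path of any $\Psi_T$. By the lemma it is homotopic to $p$, but no positive diagram leads from $p$ to it. Any homotopy must use inverse cells that absorb an inner loop, and that is exactly where the orbit structure of $H$ has to enter. Your fallback of citing the memoir is what the paper does, so the proposal adds nothing beyond the citation. The splitting of the common expansion in Step~1 and your Step~2 are themselves fine.

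For full cores you can close the gap without a global Step~1. Induct on the length of $P$, proving that $P$ is homotopic to the subdivision word of some $[0,a]$ with $v(a)=\tau(P)$, where $v(1)=\tau_{\mathcal L}$. To append an edge $e=x^+$, write $x=x'0^i$ with $x'$ ending in $1$.
\begin{itemize}
\item Then $v(.x')=\iota(e)=v(a)$, so Proposition~\ref{prop:orbit-core} gives $h\in H$ with $h(.x')=a$.
\item An accepted, suitably expanded tree pair of $h$ (Lemma~\ref{lem:acceptance-reduction}, Proposition~\ref{prop:transport}) replaces $e$ by a subdivision word of $[a,h(.x'+2^{-|x|})]$.
\item When $\tau(P)=\tau_{\mathcal L}$ this yields $P\simeq p$ automatically, and your Step~3 then finishes.
\end{itemize}
This repair, like your Step~3, uses facts the paper records only for full cores. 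Step~3 needs $w0^m$ to be readable and needs acceptance to survive expansion. The lemma, however, is asserted for arbitrary cores, where a leaf $w^+$ blocks both. In that generality the memoir's analysis is genuinely needed.
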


By Lemma~\ref{lem:accessibility}, a nonempty path beginning at
\(\iota_{\mathcal L}\) can be extended on the right to a path from \(\iota_{\mathcal L}\)
to \(\tau_{\mathcal L}\), which is homotopic to \(p\) by
Lemma~\ref{lem:boundary-paths}.  The dual statement holds for paths
ending at \(\tau_{\mathcal L}\).  More generally, every \(1\)-path \(W\) can be
completed to a path
\[
AWB
\]
from \(\iota_{\mathcal L}\) to \(\tau_{\mathcal L}\), where \(A\) or \(B\) may be empty.

\begin{definition}[Thinness]
\label{def:thinness}
The path semigroupoid \(\Path(\mathcal L)\) is \emph{thin} if
\[
\bigl|\Path(\mathcal L)(v,w)\bigr|\leq1
\]
for every pair of vertices \(v,w\) of \(\mathcal L\).
\end{definition}

\begin{lemma}[Reduction to inner vertices]
\label{lem:inner-reduction}
\begin{enumerate}[label=\textup{(\arabic*)}]
\item The path semigroupoid \(\Path(\mathcal L)\) is thin if and only if any two
      nonempty \(1\)-paths with the same inner endpoints are homotopic.
\item Suppose that \(V(\mathcal L)\) is finite (in particular, this holds if
      \(\mathcal L\) is finite) and that
      \[
      \bigl|\Path(\mathcal L)(v,w)\bigr|<\infty
      \]
      for every pair of inner vertices \(v,w\).  Then \(\Path(\mathcal L)\) is
      finite.
\end{enumerate}
\end{lemma}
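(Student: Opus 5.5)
The idea is to reduce every pair of vertices involving \(\iota_{\mathcal L}\) or \(\tau_{\mathcal L}\) to Lemma~\ref{lem:boundary-paths}. Such a pair has at most one homotopy class of paths. Together with the boundary incidence facts of Lemma~\ref{lem:boundary-incidence}, this leaves only pairs of inner vertices to consider. In both parts I would first classify a pair \((v,w)\) of vertices for which a nonempty \(1\)-path from \(v\) to \(w\) exists.

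First, if \(v=\iota_{\mathcal L}\), then any two nonempty paths from \(v\) to \(w\) have the same terminal vertex \(w\), so they are homotopic by Lemma~\ref{lem:boundary-paths}(1). Hence \(|\Path(\mathcal L)(\iota_{\mathcal L},w)|\le1\). Second, if \(w=\tau_{\mathcal L}\), then \(|\Path(\mathcal L)(v,\tau_{\mathcal L})|\le1\) by Lemma~\ref{lem:boundary-paths}(2). Third, if \(v=\tau_{\mathcal L}\), there is no nonempty path at all, because \(\tau_{\mathcal L}\) has no outgoing edges. Symmetrically, if \(w=\iota_{\mathcal L}\) the set is empty, because \(\iota_{\mathcal L}\) has no incoming edges. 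Both facts come from Lemma~\ref{lem:boundary-incidence}. Every pair not covered by these cases has \(v,w\) both inner.

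For part (1), thinness holds if and only if \(|\Path(\mathcal L)(v,w)|\le1\) for all \(v,w\). By the case analysis, this bound is automatic unless both \(v\) and \(w\) are inner. Thinness is therefore equivalent to requiring that any two nonempty paths between inner endpoints \(v,w\) be homotopic, which is exactly the stated condition. The forward implication is immediate from the definition, and the reverse follows from the case analysis.

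For part (2), write \(\Path(\mathcal L)\) as the disjoint union over the finitely many pairs \((v,w)\in V(\mathcal L)^2\). Pairs with a boundary endpoint contribute at most one class each by the case analysis. Pairs of inner vertices contribute finitely many classes by hypothesis. A finite union of finite sets is finite.

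There is no real obstacle here. The only point needing care is that Lemma~\ref{lem:boundary-paths} is stated for all cores without assuming fullness, so it applies directly, and that the empty path is excluded from \(\Path(\mathcal L)\) by definition.
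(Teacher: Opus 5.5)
Your proof is correct and follows essentially the same route as the paper: each nonempty path begins at \(\iota_{\mathcal L}\), ends at \(\tau_{\mathcal L}\), or has two inner endpoints, and Lemma~\ref{lem:boundary-paths} settles the first two cases. Your extra step, using Lemma~\ref{lem:boundary-incidence} to rule out paths starting at \(\tau_{\mathcal L}\) or ending at \(\iota_{\mathcal L}\), just makes explicit what the paper's one-line trichotomy leaves implicit.
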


\begin{proof}
Every nonempty \(1\)-path either begins at \(\iota_{\mathcal L}\), ends at
\(\tau_{\mathcal L}\), or has two inner endpoints.  In the first two cases, its
homotopy class is determined by its other endpoint by
Lemma~\ref{lem:boundary-paths}.  Both assertions now follow.
\end{proof}

\subsection{Orbitals and minimal sets}
\label{subsec:minimal-sets}

Let \(J\) be a nonempty open interval and let
\(G\leq\operatorname{Homeo}_+(J)\).  Put
\[
\operatorname{Fix}_J(G)
 =
\{x\in J:g(x)=x\text{ for every }g\in G\},
\qquad
\operatorname{Supp}_J(G)=J\setminus\operatorname{Fix}_J(G).
\]
An \emph{orbital} of \(G\) in \(J\) is a connected component of
\(\operatorname{Supp}_J(G)\).  A nonempty subset \(M\subseteq J\) is a
\emph{minimal set} for the action of \(G\) if it is closed in \(J\),
\(G\)-invariant, and contains no proper nonempty closed
\(G\)-invariant subset.  A perfect nowhere-dense minimal set is an
\emph{exceptional minimal set}; the connected components of its
complement in \(J\) are its \emph{gaps}.

For \(g\in\operatorname{Homeo}_+(J)\), write
\[
  \operatorname{Supp}_J(g)=\operatorname{Supp}_J(\langle g\rangle),
\]
and call the orbitals of \(\langle g\rangle\) the orbitals of \(g\).

If \((a,b)\) is an orbital of \(G\) and \(x\in(a,b)\), then
\[
\inf Gx=a,
\qquad
\sup Gx=b.
\]
Indeed, an infimum or supremum lying in \((a,b)\) would be fixed by
every element of \(G\).

\begin{lemma}[Finite fixed set]
\label{lem:finite-fixed-set}
Let \(H\leq F\) be closed, and suppose that its core is finite and full.
Then
\[
\operatorname{Fix}_{(0,1)}(H)
\]
is finite and consists of rational points.
\end{lemma}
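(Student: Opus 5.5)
The plan is to use the child automaton \(\mathcal A(\mathcal L)\) of the finite full core \(\mathcal L=C(H)\) and to show that every point fixed by all of \(H\) is an eventually periodic binary sequence. Rationals are exactly the eventually periodic sequences, and finitely many automaton cycles can produce only finitely many such points.

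\textbf{Step 1: a fixed point has no dyadic neighbours of the same type.} Let \(x\in(0,1)\) be fixed by \(H\).
\begin{itemize}[label=\textbullet]
\item \(x\) is not dyadic. If it were, then by Proposition~\ref{prop:orbit-core} its orbit would be \(\{x\}\). But its type \(v(x)\) is an inner vertex, and this vertex is also the type of other dyadic points: take \(w0^j1\)-type extensions inside the standard intervals, using the construction preceding Proposition~\ref{prop:orbit-core} together with the fact that inner vertices are reached along paths through arbitrarily small intervals. This should give a dyadic point \(a\neq x\) with \(v(a)=v(x)\), which is impossible.
\item Now let \(\xi\) be the infinite binary expansion of \(x\), which is unique. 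Suppose two distinct prefixes \(u\) and \(u'\) of \(\xi\), with \(|u|<|u'|\), satisfy \(u^+=u'^+\).
\item By Lemma~\ref{lem:branch-criterion}, some \(h\in H\) has the branch pair \(u\to u'\). Then \(h\) maps \([u]\) linearly onto the proper subinterval \([u']\).
\item Write \(u'=uw\) with \(w\) nonempty. Then \(h(.u\eta)=.uw\eta\), so \(h(x)=x\) says \(\xi=uw\eta'\) with \(h(.u\eta')=.uw\eta'\). Hence \(\eta'=w\eta'\), so \(\eta'=w^\infty\). Thus \(\xi=uw^\infty\) is eventually periodic and \(x\) is rational.
\end{itemize}

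\textbf{Step 2: such a repetition always occurs.} The prefixes of \(\xi\) are infinitely many, while \(E(\mathcal L)\) is finite. By pigeonhole, two distinct prefixes \(u\) and \(u'\) have \(u^+=u'^+\). So every fixed point is rational, of the form \(.uw^\infty\).

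\textbf{Step 3: only finitely many fixed points.} We may take \(u\) and \(u'=uw\) to be the first repetition along \(\xi\). Then \(|u'|\le|E(\mathcal L)|\), so both \(u\) and \(w\) have length at most \(|E(\mathcal L)|\). There are finitely many such words, hence finitely many possible values \(.uw^\infty\). Therefore \(\operatorname{Fix}_{(0,1)}(H)\) is finite and consists of rational points.

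\textbf{Main concern.} The delicate point is the step \(\eta'=w\eta'\Rightarrow\eta'=w^\infty\), together with checking that \(h\) really acts on \(\xi\) by prefix substitution via \eqref{eq:prefix-substitution}. Both follow directly, since \(x\in\operatorname{int}[u]\) whenever \(\xi\) is not eventually constant. The case where \(\xi\) is eventually constant cannot arise, because \(x\) is not dyadic. So the dyadic exclusion in Step 1 is not actually needed for rationality, only to make the expansion of \(x\) unique.
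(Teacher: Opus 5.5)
\textbf{Gap: fixed points can be dyadic.} The first bullet of Step~1 is false, and the rest of your write-up depends on it. A closed subgroup with finite full core can fix a dyadic point. Take \(H=\operatorname{Stab}_F(1/2)\cong F\times F\): it is closed and finitely generated, and its orbits on \(\D\) are \(\D\cap(0,1/2)\), \(\{1/2\}\) and \(\D\cap(1/2,1)\). By Theorem~\ref{thm:core-closure}\textup{(ii)} and Proposition~\ref{prop:finite-full-core} its core is finite and full, yet it fixes \(1/2\).

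Your justification also runs against Proposition~\ref{prop:orbit-core}. There, \(v(a)=v(x)\) holds exactly when \(a\in Hx\), so a fixed dyadic point is the \emph{only} dyadic point of its type. Steps~2 and~3 use uniqueness of the expansion to pass from the equality of numbers \(.uw\eta'=.u\eta'\) to the equality of words \(w\eta'=\eta'\). So as written they bound only the nondyadic fixed points, and finiteness of the set of dyadic fixed points is never proved.

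\textbf{Repair.} Either of two short fixes works.
\begin{enumerate}
\item Count the dyadic fixed points separately. Each forms a singleton \(H\)-orbit, so by Proposition~\ref{prop:orbit-core} there are at most \(|V_{\mathrm{in}}(\mathcal L)|\) of them.
\item Delete the first bullet and run Steps~2--3 with an arbitrary binary expansion \(\xi=u\eta'\) of \(x\), comparing real values instead of words. By \eqref{eq:prefix-substitution}, \(h(x)=.uw\eta'\). Hence \(h(x)=x\) says that \(y=.\eta'\) satisfies \(y=.w+2^{-|w|}y\), whose unique solution is \(y=.w^\infty\).
\end{enumerate}
In either case every fixed point lies among the finitely many numbers \(.uw^\infty\) with \(|uw|\le|E(\mathcal L)|\).

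\textbf{Comparison with the paper.} Once repaired, your argument is valid and takes a genuinely different route. The paper cites Bleak--Brough--Hermiller (a finitely generated subgroup of \(F\) has finitely many orbitals, all with rational endpoints) and then uses Proposition~\ref{prop:finite-full-core} to exclude a fixed interval. Your argument is self-contained: it needs only Lemma~\ref{lem:branch-criterion} and pigeonhole in the finite child automaton. It also yields an explicit finite list of candidate fixed points, in the spirit of Proposition~\ref{prop:fixed-points-cyclic-components}.
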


\begin{proof}
By Theorem~\ref{thm:core-closure}\textup{(ii)}, \(H\) is finitely
generated.  It is standard that a finitely generated subgroup of \(F\)
has only finitely many orbitals and that every endpoint of an orbital
is rational; see, for example,
\cite[Sections~2.2 and~4]{BleakBroughHermiller21}.  Hence
\(\operatorname{Fix}_{(0,1)}(H)\) is a finite union of intervals and
rational points.  Proposition~\ref{prop:finite-full-core} rules out a
nonempty fixed interval. Therefore
\(\operatorname{Fix}_{(0,1)}(H)\) is finite and consists of rational
points.
\end{proof}

\begin{lemma}[The fixed-point-free minimal-set alternative]
\label{lem:minimal-set-alternative}
Let \(J\) be a nonempty open interval, and let
\(G\leq\operatorname{Homeo}_+(J)\) be finitely generated.  Suppose that
\(\operatorname{Fix}_J(G)=\varnothing\).  Then \(G\) has a minimal set
in \(J\), and every minimal set \(M\subseteq J\) is of exactly one of
the following types:
\begin{enumerate}[label=\textup{(\arabic*)}]
\item \(M=J\), that is, the action of \(G\) on \(J\) is minimal;
\item \(M\) is a closed discrete orbit;
\item \(M\) is an exceptional minimal set.
\end{enumerate}
In case \textup{(2)}, the action on the ordered orbit \(M\) induces a
nonzero homomorphism \(G\to\mathbb Z\).  In case \textup{(3)}, \(M\) is
the unique minimal set of \(G\), the infimum and supremum of \(M\) are
the two endpoints of \(J\), and \(G\) permutes the gaps of \(M\).
\end{lemma}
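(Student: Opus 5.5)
This is the classical trichotomy for finitely generated groups acting on the line without global fixed points; I would reduce to known facts about actions on \(\mathbb R\) and supply the short arguments. Identify \(J\) with \(\mathbb R\) by an orientation-preserving homeomorphism.

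\textbf{Existence of a minimal set.} Let \(g_1,\dots,g_k\) be generators. Since \(\operatorname{Fix}_J(G)=\varnothing\), for every \(x\) some generator moves \(x\). Fix \(x_0\) and set \(I=[x_0-\delta,\,x_0+\delta]\). I claim that every orbit meets a fixed compact interval \(K\). Choose \(x_0\) and some \(g\) with \(g(x_0)>x_0\). Every point lies between \(g^{n}(x_0)\) and \(g^{n+1}(x_0)\) for some \(n\) if \(g\) has no fixed points. In general, use the standard argument: the sets \(\{x: g_i(x)\neq x\}\) cover \(J\), so an orbit closure cannot escape to an end of \(J\) without passing a compact interval. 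More precisely, if an orbit \(Gx\subseteq(c,\sup J)\) with \(c'=\inf Gx\), then \(c'\) is fixed by all generators, which is a contradiction. Hence \(\inf Gx=\inf J\) and \(\sup Gx=\sup J\) for every \(x\), so every orbit meets any fixed compact interval \(K\) with nonempty interior. Now apply Zorn's lemma to the nonempty closed invariant sets ordered by inclusion. The intersection of a chain meets \(K\) by compactness, so it is nonempty. This gives a minimal set \(M\), and the same argument shows \(\inf M=\inf J\) and \(\sup M=\sup J\).

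\textbf{Trichotomy.} The boundary \(\partial M\) and the set of accumulation points \(M'\) are closed and invariant subsets of \(M\), so by minimality each is \(\varnothing\) or \(M\). If \(M'=\varnothing\), then \(M\) is discrete and closed, and it is a single orbit because any orbit inside \(M\) has closure \(M\) by minimality, which gives case (2). If \(M'=M\), then \(M\) is perfect. Either \(\operatorname{int}M\neq\varnothing\), in which case \(\partial M\neq M\) forces \(\partial M=\varnothing\), so \(M\) is clopen in the connected set \(J\) and \(M=J\), which is case (1). Or \(M\) is nowhere dense, which is case (3). The types are clearly exclusive.

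\textbf{Extra claims.} In case (2), \(M\) is order-isomorphic to \(\mathbb Z\), since it is discrete, closed, and unbounded in both directions. Order-preserving bijections of \(\mathbb Z\) are translations, so \(G\to\operatorname{Aut}(M,<)\cong\mathbb Z\) is a homomorphism. It is nonzero because a trivial image would make every point of \(M\) a global fixed point. In case (3), gaps are permuted because \(G\) preserves \(J\setminus M\) and acts by homeomorphisms. For uniqueness, the main obstacle, let \(N\) be another minimal set. If \(N\cap M\neq\varnothing\), then \(N=M\). Otherwise \(N\) lies in the union of the gaps. Take \(y\in N\) in a gap \((a,b)\) of \(M\). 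The orbit of \(y\) is dense in \(N\) and accumulates at \(\inf J\), so it meets gaps of \(M\) arbitrarily far to the left. Since \(M\) is perfect, the endpoint \(a\in M\) is a limit of points of \(M\) from the left, and \(Ga\) is dense in \(M\). I would show that \(a\in\overline{Gy}\) as follows: choose \(h\in G\) with \(h(a)\) close to a point \(m\in M\) that is an accumulation point of gaps of \(M\) from both sides. Gaps near \(m\) are short because there are only countably many gaps of bounded total length inside a compact set, so \(h\) maps \((a,b)\) to a short gap near \(m\) and \(h(y)\) is close to \(m\). Thus \(\overline{Gy}\) meets \(M\), so \(N\cap M\neq\varnothing\), a contradiction.
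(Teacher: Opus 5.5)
\textbf{The gap is in the existence of a minimal set.} You correctly show that \(\inf Gx=\inf J\) and \(\sup Gx=\sup J\) for every \(x\). You then conclude that every orbit meets \emph{any} compact interval \(K\) with nonempty interior, and this does not follow. For \(G=\langle x\mapsto x+1\rangle\) acting on \(\mathbb R\), the orbit \(\mathbb Z\) is unbounded in both directions but misses \([1/4,3/4]\).

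Your preliminary remarks do not close this. The first only covers the case where some element has no fixed point at all. The sentence about the supports of the generators covering \(J\) is not yet an argument. Note also that your existence argument never uses finite generation; the fixed-point argument for \(\inf Gx\) works for any subgroup. Finite generation is exactly what existence of a minimal set requires (for infinitely generated groups acting without global fixed points a minimal set need not exist). As written, your Zorn step therefore has no compact set anchoring the intersection of a chain.

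\textbf{The fix.} Let \(S\) be a finite symmetric generating set, fix \(x_0\in J\), and put \(K=[x_0,\max_{s\in S}s(x_0)]\). Every orbit \(Gy\) contains points below \(x_0\) and points above \(\max K\). Joining two such points by a word in \(S\) gives a chain \(z_0,\ z_1=s_1(z_0),\ \ldots\) inside \(Gy\). If the chain never entered \(K\), some step would go from \(z_j<x_0\) to \(s(z_j)>\max K\geq s(x_0)\). This contradicts the fact that \(s\) is increasing. With this \(K\), your Zorn/compactness argument goes through.

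\textbf{The rest is sound.} The trichotomy via \(M'\) and \(\partial M\), the translation homomorphism to \(\mathbb Z\) in case (2), the permutation of gaps, and uniqueness in case (3) are the standard arguments. In the uniqueness step, the fact you actually need is this: take \(m_-<m<m_+\) with \(m_\pm\in M\). Then \([m_-,m_+]\) contains only finitely many gaps of length at least \(\varepsilon\), and none of them has \(m\) as an endpoint. That is why a gap whose left endpoint is sufficiently close to \(m\) is short. The paper does not prove this lemma; it quotes it from Deroin--Navas--Rivas. Once the existence step is repaired, your proposal is a self-contained version of that cited result.
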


This is the standard minimal-set alternative for finitely generated
groups of orientation-preserving homeomorphisms of an interval; see
\cite[Lemma~3.5.18 and the discussion following it]{DNR}.

\section{Intervals and paths in a full core}
\label{sec:interval-paths}

Throughout this section, \(H\leq F\) is closed, \(\mathcal L=C(H)\) is full, and
\(p=p_H\) is the distinguished edge of \(\mathcal L\).  These assumptions
remain in force throughout the section.

\subsection{Subdivision paths}
\label{subsec:subdivision-paths}

\begin{definition}
\label{def:subdivision-word}
Let \(I=[a,b]\subset(0,1)\), where \(a,b\in\D\) and \(a<b\), and let
\[
  \mathscr S=([u_1],\ldots,[u_m])
\]
be a dyadic subdivision of \(I\).  Since \(\mathcal L\) is full, every \(u_i\)
is readable.  The edge word
\[
  W_{\mathscr S}=u_1^+\cdots u_m^+
\]
is the \emph{subdivision word} associated with \(\mathscr S\).
\end{definition}

\begin{lemma}
\label{lem:subdivision-words}
Let \(I=[a,b]\subset(0,1)\), where \(a,b\in\D\) and \(a<b\), and let
\(\mathscr S\) be a dyadic subdivision of \(I\).  Then
\(W_{\mathscr S}\) is a \(1\)-path from \(v(a)\) to \(v(b)\).  If
\(\mathscr T\) is another dyadic subdivision of \(I\), then
\[
  W_{\mathscr S}\simeq W_{\mathscr T}.
\]
Moreover, if
\[
  \Delta\colon W_{\mathscr S}\longrightarrow_{\mathcal L} Q
\]
is positive, then
\[
  Q=W_{\mathscr S'}
\]
for some dyadic refinement \(\mathscr S'\) of \(\mathscr S\).
\end{lemma}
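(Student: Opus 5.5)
Each of the three assertions reduces to an earlier observation about tree diagrams over \(\mathcal L\).

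\emph{\(W_{\mathscr S}\) is a path from \(v(a)\) to \(v(b)\).}
Choose a finite rooted binary tree \(T\) whose dyadic subdivision of \([0,1]\) contains all the intervals \([u_1],\ldots,[u_m]\) as consecutive leaves. Such a tree exists because each \([u_i]\) is a standard dyadic interval: take the union of the prefix-closures of the \(u_i\), which gives a finite tree once it is completed by adding sibling leaves. By Lemma~\ref{lem:trees-over-core}, the bottom path of \(\Psi_T\) is a \(1\)-path. The word \(u_1^+\cdots u_m^+\) is a consecutive subword of that bottom path, so it is itself a \(1\)-path. Lemma~\ref{lem:subdivision-vertices} identifies its vertices: it begins at \(v(a)\) and ends at \(v(b)\).

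\emph{Independence of the subdivision.}
Any two dyadic subdivisions \(\mathscr S,\mathscr T\) of \(I\) have a common dyadic refinement \(\mathscr R\). For instance, subdivide every interval \([u]\) of either subdivision until all pieces have length \(2^{-N}\) for \(N\) large. It therefore suffices to show \(W_{\mathscr S}\simeq W_{\mathscr R}\) whenever \(\mathscr R\) refines \(\mathscr S\). Refinement is obtained by finitely many elementary splittings \([u]\mapsto[u0],[u1]\). Each splitting replaces the letter \(u^+\) by \((u0)^+(u1)^+\), and since \(\mathcal L\) is full this is exactly the bottom path of the positive cell \(u^+=(u0)^+(u1)^+\). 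So each splitting is one elementary transformation, and more precisely yields a positive diagram \(W_{\mathscr S}\to W_{\mathscr R}\).

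\emph{Positive diagrams from \(W_{\mathscr S}\) come from refinements.}
I would induct on the number of cells of the positive diagram \(\Delta\). Since \(\mathcal L\) is tree-like, every positive cell has a single top edge. Hence a nonempty positive diagram has a cell whose top edge lies on \(\topPath(\Delta)\), and removing it leaves a positive diagram with fewer cells. If that top edge is the \(i\)th letter \(u_i^+\), the cell is, by foldedness, the unique positive cell \(u_i^+=(u_i0)^+(u_i1)^+\). Peeling it off corresponds to the elementary splitting of \([u_i]\), and the new top path is \(W_{\mathscr S_1}\) for the refined subdivision \(\mathscr S_1\). Induction then gives \(Q=W_{\mathscr S'}\).

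The main obstacle is justifying the decomposition of a positive diagram into a top cell followed by a smaller positive diagram. This is standard for diagrams, where the cells are ordered by a "top-down" order. The key point is a cell incident to the top path along its whole top edge; such a cell exists because the top boundary of the first cell in any vertical decomposition of \(\Delta\) into one-cell diagrams lies on \(\topPath(\Delta)\), as in the remark following \cite[Lemma~2.5]{GS06}.
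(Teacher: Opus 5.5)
Your proposal is correct and follows essentially the same route as the paper. The paper extends \(\mathscr S\) to a tree and applies Lemmas~\ref{lem:trees-over-core} and~\ref{lem:subdivision-vertices}, passes to a common refinement through elementary splittings \([u]\mapsto[u0],[u1]\), and observes that each cell of a positive diagram performs exactly such a splitting. You make explicit two details the paper leaves implicit: the construction of the extending tree, and the cell-by-cell induction, which relies on the top-down decomposition of a diagram into one-cell diagrams.
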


\begin{proof}
Extend \(\mathscr S\) to a dyadic subdivision of \([0,1]\), and let
\(T\) be the corresponding binary tree.  By
Lemma~\ref{lem:trees-over-core}, \(W_{\mathscr S}\) is a consecutive
subpath of the bottom \(1\)-path of \(\Psi_T\).  Hence it is a
\(1\)-path in \(\mathcal L\), and
Lemma~\ref{lem:subdivision-vertices} shows that its initial and terminal
vertices are \(v(a)\) and \(v(b)\).

Any two dyadic subdivisions of \(I\) have a common refinement.  An
elementary refinement replaces an interval \([u]\) by \([u0]\) and
\([u1]\), and the corresponding positive cell replaces \(u^+\) by
\[
  (u0)^+(u1)^+.
\]
This proves that \(W_{\mathscr S}\simeq W_{\mathscr T}\).

Finally, every cell of a positive diagram replaces one edge in exactly
this way.  Thus the bottom \(1\)-path of \(\Delta\) is
\(W_{\mathscr S'}\) for a dyadic refinement \(\mathscr S'\) of
\(\mathscr S\).
\end{proof}

\begin{definition}
\label{def:interval-class}
For dyadic \(0<a<b<1\), the \emph{interval class} associated with
\([a,b]\) is
\[
  \omega[a,b]=[W_{\mathscr S}]
  \in\Path(\mathcal L)(v(a),v(b)),
\]
where \(\mathscr S\) is any dyadic subdivision of \([a,b]\).  We also
write \(\omega(I)\) when \(I=[a,b]\).
\end{definition}

\begin{lemma}[Cutting]
\label{lem:cutting}
Let \(a,b,c\in\D\), with \(a<b<c\).  Then
\[
  \omega[a,c]=\omega[a,b]\,\omega[b,c].
\]
\end{lemma}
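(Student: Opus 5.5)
The cutting formula should follow directly from the freedom, established in Lemma~\ref{lem:subdivision-words}, to compute an interval class from any dyadic subdivision. Choose a dyadic subdivision \(\mathscr S_1=([u_1],\ldots,[u_k])\) of \([a,b]\) and a dyadic subdivision \(\mathscr S_2=([u_{k+1}],\ldots,[u_m])\) of \([b,c]\). Such subdivisions exist because every closed interval with dyadic endpoints is a finite union of consecutive standard dyadic intervals. Their concatenation \(\mathscr S=([u_1],\ldots,[u_m])\) consists of standard dyadic intervals with pairwise disjoint interiors covering \([a,c]\), so it is a dyadic subdivision of \([a,c]\).

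By Definition~\ref{def:subdivision-word}, the subdivision words satisfy the literal equality of edge words
\[
  W_{\mathscr S}=u_1^+\cdots u_m^+=W_{\mathscr S_1}W_{\mathscr S_2}.
\]
By Lemma~\ref{lem:subdivision-words}, \(W_{\mathscr S_1}\) is a \(1\)-path ending at \(v(b)\) and \(W_{\mathscr S_2}\) is a \(1\)-path beginning at \(v(b)\). Hence the concatenation is defined in \(\Path(\mathcal L)\), and the product of the two classes is \([W_{\mathscr S_1}W_{\mathscr S_2}]\). Since Definition~\ref{def:interval-class} does not depend on the choice of subdivision (again by Lemma~\ref{lem:subdivision-words}), we get
\[
  \omega[a,c]=[W_{\mathscr S}]=[W_{\mathscr S_1}][W_{\mathscr S_2}]=\omega[a,b]\,\omega[b,c].
\]

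The only point requiring any care is checking that gluing two dyadic subdivisions along \(b\) yields a dyadic subdivision in the paper's sense, namely a finite subdivision into standard dyadic intervals. This is immediate from the definition, so I expect no real obstacle.
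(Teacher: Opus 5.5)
Your proof is correct and is essentially the paper's own argument: take dyadic subdivisions of \([a,b]\) and \([b,c]\), note that their union is a dyadic subdivision of \([a,c]\) with \(W_{\mathscr S}=W_{\mathscr S_1}W_{\mathscr S_2}\) as literal edge words, and pass to homotopy classes. Your extra check, via Lemma~\ref{lem:subdivision-words}, that the two blocks meet at \(v(b)\) and that the class does not depend on the subdivision is a harmless explicit version of what the paper leaves implicit.
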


\begin{proof}
Let \(\mathscr S_1\) and \(\mathscr S_2\) be dyadic subdivisions of
\([a,b]\) and \([b,c]\), respectively.  Their union is a dyadic
subdivision \(\mathscr S\) of \([a,c]\), and
\[
  W_{\mathscr S}=W_{\mathscr S_1}W_{\mathscr S_2}.
\]
Passing to homotopy classes proves the result.
\end{proof}

\subsection{Transport, realization, and placement}
\label{subsec:transport-realization-placement}

\begin{proposition}[Transport]
\label{prop:transport}
Let \(h\in H\), and let \(a,b\in\D\), with \(a<b\).  Then
\[
  \omega[a,b]=\omega[h(a),h(b)].
\]
\end{proposition}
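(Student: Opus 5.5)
We choose a dyadic subdivision of \([a,b]\) on which \(h\) is linear on each piece, and read the same subdivision word on both sides. Since \(h\in F\), it has a tree-pair diagram \((T_+,T_-)\) with branch pairs \(u_i\to v_i\). We take common elementary expansions so that \(a\) and \(b\) are subdivision points of \(T_+\). This is possible because every finite set of dyadic points lies among the subdivision points of some tree, and expanding \(T_+\) forces the matching expansion of \(T_-\). After this step there are consecutive leaves \(u_k,\ldots,u_l\) of \(T_+\) with \([u_k]\cup\cdots\cup[u_l]=[a,b]\). Thus \(\mathscr S=([u_k],\ldots,[u_l])\) is a dyadic subdivision of \([a,b]\).

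The branch pairs \(u_i\to v_i\) mean that \(h\) maps \([u_i]\) linearly onto \([v_i]\). Since \(h\) is increasing and the leaves are listed left to right, the intervals \([v_k],\ldots,[v_l]\) are consecutive leaf intervals of \(T_-\) and cover exactly \([h(a),h(b)]\). Hence \(\mathscr T=([v_k],\ldots,[v_l])\) is a dyadic subdivision of \([h(a),h(b)]\). Both endpoints stay in \((0,1)\) because \(h\) fixes \(0\) and \(1\).

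Each \(u_i\to v_i\) is a branch pair of the element \(h\in H\). By Lemma~\ref{lem:branch-criterion}, where closedness of \(H\) is used and all words are readable by fullness, this gives \(u_i^+=v_i^+\) for every \(i\). Therefore the subdivision words coincide letter by letter:
\[
  W_{\mathscr S}=u_k^+\cdots u_l^+=v_k^+\cdots v_l^+=W_{\mathscr T}.
\]
By Definition~\ref{def:interval-class}, which is independent of the subdivision by Lemma~\ref{lem:subdivision-words}, this yields \(\omega[a,b]=\omega[h(a),h(b)]\). As a consistency check, the endpoint types also agree, \(v(a)=v(h(a))\), by Proposition~\ref{prop:orbit-core}.

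The only point needing care is the first step. One must check that an expanded tree pair still represents \(h\), and that its branch pairs are genuine branch pairs of \(h\). Both follow from the discussion of elementary expansions in Subsection~\ref{subsec:F}. No other obstacle is expected.
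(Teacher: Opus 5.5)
Your proof is correct and follows the paper's argument: expand a tree pair for \(h\) so that \(a,b\) are subdivision points of the domain tree, observe that \(u_i^+=v_i^+\) for every branch pair, and conclude that the two subdivision words coincide letter by letter. The only difference is cosmetic: you obtain \(u_i^+=v_i^+\) from Lemma~\ref{lem:branch-criterion}, whereas the paper uses that the expanded tree pair of \(h\) is accepted and then applies Lemma~\ref{lem:acceptance-branches}.
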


\begin{proof}
Choose an accepted tree pair for \(h\), expanded so that \(a\) and
\(b\) are subdivision points of its domain tree.  By
Lemma~\ref{lem:acceptance-branches}, \(u^+=v^+\) for every branch pair
\(u\to v\).  Hence the subdivision words associated with
\([a,b]\) and \([h(a),h(b)]\) coincide.
\end{proof}

\begin{proposition}[Exact transport]
\label{prop:exact-transport}
Let \(I,J\subset(0,1)\) be closed intervals with dyadic endpoints.  Then
\[
  \omega(I)=\omega(J)
\]
if and only if some \(h\in H\) maps \(I\) onto \(J\).
\end{proposition}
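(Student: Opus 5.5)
One direction is Proposition~\ref{prop:transport}: if \(h\in H\) maps \(I=[a,b]\) onto \(J\), then \(J=[h(a),h(b)]\), and hence \(\omega(J)=\omega(I)\).

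For the converse, write \(I=[a,b]\) and \(J=[c,d]\), and assume \(\omega(I)=\omega(J)\). Fix dyadic subdivisions \(\mathscr S\) of \(I\) and \(\mathscr T\) of \(J\). Then \(W_{\mathscr S}\simeq W_{\mathscr T}\). The core is tree-like, so Lemma~\ref{lem:common-expansion} gives a common positive expansion \(R\), together with positive diagrams \(W_{\mathscr S}\to R\) and \(W_{\mathscr T}\to R\). By the last part of Lemma~\ref{lem:subdivision-words}, there are dyadic refinements \(\mathscr S'=([u_1],\dots,[u_m])\) of \(I\) and \(\mathscr T'=([v_1],\dots,[v_m])\) of \(J\) with
\[
  W_{\mathscr S'}=R=W_{\mathscr T'}.
\]
These are equalities of edge words, so they have the same length \(m\) and \(u_i^+=v_i^+\) for every \(i\). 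In addition, \(v(a)=v(c)\) and \(v(b)=v(d)\), because homotopic paths share their endpoints.

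The next step is to build a piecewise-\(H\) element \(f\in F\) that maps \([u_i]\) linearly onto \([v_i]\) for each \(i\). By Lemma~\ref{lem:branch-criterion}, each pair \(u_i\to v_i\) is a branch pair of some element of \(H\). Outside the two intervals we must also define \(f\) from \([0,a]\) to \([0,c]\) and from \([b,1]\) to \([d,1]\), again using only branch pairs of elements of \(H\). Choose dyadic subdivisions of \([0,a]\) and \([0,c]\). Their subdivision words are paths from \(\iota_{\mathcal L}\) to \(v(a)=v(c)\), so they are homotopic by Lemma~\ref{lem:boundary-paths}(1). Applying Lemma~\ref{lem:common-expansion} and the refinement argument of Lemma~\ref{lem:subdivision-words} once more, we may refine both subdivisions so that their subdivision words coincide as edge words.

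I expect this step to be the main technical point. Lemma~\ref{lem:subdivision-words} is stated only for intervals in \((0,1)\), so it has to be redone for intervals touching \(0\). The same argument should work: extend the subdivision to all of \([0,1]\), read it off a tree diagram \(\Psi_T\) as in Lemma~\ref{lem:trees-over-core}, and note that positive cells refine one standard interval at a time. The interval \([b,1]\) against \([d,1]\) is handled symmetrically, using Lemma~\ref{lem:boundary-paths}(2).

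Matching the pieces in order gives a subdivision of \([0,1]\) into standard dyadic intervals \([u]\) paired with \([v]\), with \(u^+=v^+\) for every pair. By Lemma~\ref{lem:branch-criterion}, each pair is a branch pair of some element of \(H\). The resulting map \(f\), linear on each piece, is an increasing piecewise-linear homeomorphism with dyadic breakpoints and power-of-\(2\) slopes, so \(f\in F\). It is piecewise-\(H\) by construction, and it maps \(I\) onto \(J\).

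To conclude we need \(f\in H\). Since the subdivisions come from two trees \(T_+\) and \(T_-\) with matching leaf labels, \(f\) has an accepted tree pair by Lemma~\ref{lem:acceptance-branches}, and so \(f\in\Cl(H)\). Alternatively, \(f\in\Cl(H)\) because \(\Cl(H)\) is exactly the set of piecewise-\(H\) functions in \(F\). Since \(H\) is closed, \(\Cl(H)=H\), so \(f\in H\) and \(f(I)=J\).
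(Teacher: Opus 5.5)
Your proof is correct and follows essentially the same route as the paper. Both split the configuration into the left, middle and right blocks, use Lemma~\ref{lem:boundary-paths} to get homotopy of the outer blocks, take common positive expansions blockwise, and conclude by closedness. The only difference is presentational: the paper forms the spherical diagram \(\Theta_I\circ\Theta_J^{-1}\) over \(\mathcal L\) directly, whereas you describe the same object as a tree pair accepted via Lemma~\ref{lem:acceptance-branches}. The paper's phrasing also avoids your boundary concern about Lemma~\ref{lem:subdivision-words}, since any positive diagram attached below \(\Psi_T\) is still a tree diagram.
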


\begin{proof}
One implication follows from Proposition~\ref{prop:transport}.
Conversely, suppose that \(\omega(I)=\omega(J)\).  Choose dyadic
subdivisions \(\mathscr S_I\) and \(\mathscr S_J\) of \(I\) and \(J\),
extend them to dyadic subdivisions of \([0,1]\), and let \(T_I,T_J\)
be the corresponding binary trees.  Write
\[
  \Psi_{T_I}\colon p\longrightarrow A_IW_IB_I,
  \qquad
  \Psi_{T_J}\colon p\longrightarrow A_JW_JB_J,
\]
where
\[
  W_I=W_{\mathscr S_I},
  \qquad
  W_J=W_{\mathscr S_J}.
\]

The hypothesis gives \(W_I\simeq W_J\), so these \(1\)-paths have the
same initial and terminal vertices.  Lemma~\ref{lem:boundary-paths}
therefore gives
\[
  A_I\simeq A_J,
  \qquad
  B_I\simeq B_J.
\]
By Lemma~\ref{lem:common-expansion}, choose common positive expansions
\[
  A_I,A_J\longrightarrow A,\qquad
  W_I,W_J\longrightarrow W,\qquad
  B_I,B_J\longrightarrow B.
\]
Taking their horizontal sums and attaching them below the corresponding
diagrams \(\Psi_{T_I}\) and \(\Psi_{T_J}\) gives positive diagrams
\[
  \Theta_I,\Theta_J\colon p\longrightarrow AWB
\]
with the same bottom \(1\)-path.  Thus
\[
  \Theta_I\circ\Theta_J^{-1}
\]
is a spherical diagram over \(\mathcal L\), and by
Theorem~\ref{thm:core-closure} it represents an element \(h\in H\).

By Lemma~\ref{lem:subdivision-words}, there are refinements
\(\mathscr S_I'\) of \(\mathscr S_I\) and \(\mathscr S_J'\) of
\(\mathscr S_J\) such that
\[
  W=W_{\mathscr S_I'}=W_{\mathscr S_J'}.
\]
In the resulting tree pair, the intervals of \(\mathscr S_I'\) are
paired in order with those of \(\mathscr S_J'\).  Hence \(h(I)=J\).
\end{proof}

\begin{proposition}[Realization by an interval]
\label{prop:interval-realization}
Let \(v_1,v_2\) be inner vertices of \(\mathcal L\), and let
\[
  \xi\in\Path(\mathcal L)(v_1,v_2).
\]
There are \(a,b\in\D\), with \(a<b\), such that
\[
  v(a)=v_1,\qquad v(b)=v_2,\qquad
  \xi=\omega[a,b].
\]
\end{proposition}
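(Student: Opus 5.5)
Let \(P\) be a nonempty \(1\)-path from \(v_1\) to \(v_2\) representing \(\xi\). The plan is to embed \(P\) into a path from \(\iota_{\mathcal L}\) to \(\tau_{\mathcal L}\) and then realize that long path as the bottom path of a positive tree diagram.

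First, by the completion remark after Lemma~\ref{lem:boundary-paths} (which rests on Lemma~\ref{lem:accessibility}), choose \(1\)-paths \(A,B\) so that \(APB\) runs from \(\iota_{\mathcal L}\) to \(\tau_{\mathcal L}\). Since \(v_1,v_2\) are inner, \(A\) and \(B\) are nonempty. By Lemma~\ref{lem:boundary-paths}, \(APB\simeq p\).

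Next, apply Lemma~\ref{lem:common-expansion} to \(p\) and \(APB\); the core is tree-like. This gives a common positive expansion \(R\), with positive diagrams \(\Theta_1\colon p\to R\) and \(\Theta_2\colon APB\to R\). Because \(\Theta_2\) is positive, it splits as a horizontal sum \(\Theta_A+\Theta_P+\Theta_B\). Each cell of \(\Theta_2\) has a single top edge, so its descendants are confined to the part below that edge; hence the cells below \(A\), \(P\), \(B\) stay separate. Thus \(R=A'P'B'\), where \(P'\) is a positive expansion of \(P\), so \(P'\simeq P\), and the subpath \(P'\) starts at \(v_1\) and ends at \(v_2\).

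Now read \(\Theta_1\) over the Dunce hat. After forgetting labels it is \(\Delta(T)\) for a finite binary tree \(T\). By the uniqueness in Lemma~\ref{lem:trees-over-core}, \(\Theta_1=\Psi_T\), so \(R=u_1^+\cdots u_m^+\). The subpath \(P'\) is a consecutive block \(u_i^+\cdots u_j^+\) with \(1<i\le j<m\), since \(A'\) and \(B'\) are nonempty. This block is exactly the subdivision word of the dyadic subdivision \([u_i],\ldots,[u_j]\) of \([a,b]\), where \(a=a_{i-1}\) and \(b=a_j\) are subdivision points of \(T\).

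Since \(1\le i-1\) and \(j\le m-1\), we have \(a<b\) and \(a,b\in\D\). Lemma~\ref{lem:subdivision-vertices} then gives \(v(a)=z_{i-1}=\iota(P')=v_1\) and \(v(b)=v_2\). Finally, \(\omega[a,b]=[P']=[P]=\xi\).

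The step needing the most care is the horizontal splitting of \(\Theta_2\) together with the identification \(\Theta_1=\Psi_T\). This uses only tree-likeness and foldedness, so I expect it to be routine.
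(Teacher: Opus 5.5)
Your proposal is correct and follows the paper's proof essentially step by step: complete \(P\) to \(APB\) from \(\iota_{\mathcal L}\) to \(\tau_{\mathcal L}\), obtain \(APB\simeq p\) from Lemma~\ref{lem:boundary-paths}, take a common positive expansion, split the diagram over \(APB\) horizontally, and read off \(P'\) as the subdivision word of a consecutive block of leaves of the tree underlying the diagram \(p\to R\). The differences are only in presentation. You spell out, via the uniqueness in Lemma~\ref{lem:trees-over-core}, that this diagram is \(\Psi_T\). You also obtain the vertex types directly from Lemma~\ref{lem:subdivision-vertices} rather than through Lemma~\ref{lem:subdivision-words}.
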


\begin{proof}
Choose a nonempty \(1\)-path
\[
  P\colon v_1\longrightarrow v_2
\]
such that \(\xi=[P]\).  By Lemma~\ref{lem:accessibility}, there are nonempty
\(1\)-paths
\[
  A\colon\iota_{\mathcal L}\longrightarrow v_1,
  \qquad
  B\colon v_2\longrightarrow\tau_{\mathcal L}.
\]
Lemma~\ref{lem:boundary-paths} gives
\[
  p\simeq APB.
\]
By Lemma~\ref{lem:common-expansion}, there are positive diagrams
\[
  p\longrightarrow R,
  \qquad
  APB\longrightarrow R.
\]
Since every positive cell has a one-edge top \(1\)-path, the second
diagram is a horizontal sum
\[
  \Delta_A+\Delta_P+\Delta_B.
\]
Accordingly,
\[
  R=A'P'B',
\]
where \(\Delta_P\colon P\to P'\) is positive.

After forgetting its core labels, the positive diagram \(p\to R\) is
the diagram of a finite binary tree.  The consecutive edges forming
\(P'\) therefore determine a dyadic subdivision of some interval
\([a,b]\).  Hence
\[
  P'=W_{\mathscr S}
\]
for a dyadic subdivision \(\mathscr S\) of \([a,b]\).  Since \(A'\),
\(P'\), and \(B'\) are nonempty,
\[
  0<a<b<1.
\]
Lemma~\ref{lem:subdivision-words} gives
\[
  v(a)=v_1,\qquad v(b)=v_2,
\]
and
\[
  \xi=[P]=[P']=\omega[a,b].
\]
\end{proof}

\begin{corollary}[The interval-pair dictionary]
\label{cor:interval-pair-dictionary}
Let \(v,w\) be inner vertices of \(\mathcal L\), and let
\[
  \Omega_{v,w}
  =
  \{(a,b)\in\D^2:a<b,\ v(a)=v,\ v(b)=w\}.
\]
The group \(H\) acts on \(\Omega_{v,w}\) on the right by
\[
  (a,b)\mathbin{\cdot}h=(h(a),h(b)).
\]
The assignment
\[
  (a,b)\mathbin{\cdot}H\longmapsto\omega[a,b]
\]
is a bijection from the set of \(H\)-orbits in \(\Omega_{v,w}\) to
\[
  \Path(\mathcal L)(v,w).
\]

Consequently, the \(H\)-orbits on
\[
  \{(a,b)\in\D^2:a<b\}
\]
are in bijection with
\[
  \Path_{\mathrm{in}}(\mathcal L)
  =
  \coprod_{v,w\in V_{\mathrm{in}}(\mathcal L)}
  \Path(\mathcal L)(v,w).
\]
In particular,
\[
  \Path_{\mathrm{in}}(\mathcal L)\text{ is finite}
  \quad\Longleftrightarrow\quad
  H\text{ has finitely many orbits on increasing pairs in }\D.
\]
\end{corollary}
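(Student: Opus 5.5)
The corollary follows by assembling the three propositions of Subsection~\ref{subsec:transport-realization-placement}. I would first check that the action is well defined. If \((a,b)\in\Omega_{v,w}\) and \(h\in H\), then \(h(a)<h(b)\), because \(h\) is increasing, and both images are dyadic points of \((0,1)\). Proposition~\ref{prop:orbit-core} gives \(v(h(a))=v(a)=v\) and \(v(h(b))=w\). Hence \(\Omega_{v,w}\) is \(H\)-invariant, and the right action on it is compatible with the left-to-right composition convention.

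Next I would show that the map on orbits is well defined and lands in the right set. For \((a,b)\in\Omega_{v,w}\), Definition~\ref{def:interval-class} gives \(\omega[a,b]\in\Path(\mathcal L)(v(a),v(b))=\Path(\mathcal L)(v,w)\). Proposition~\ref{prop:transport} gives \(\omega[a,b]=\omega[h(a),h(b)]\), so the value depends only on the orbit.

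For injectivity, suppose \(\omega[a,b]=\omega[a',b']\). Proposition~\ref{prop:exact-transport} provides \(h\in H\) with \(h([a,b])=[a',b']\). Since \(h\) is an increasing homeomorphism, \(h(a)=a'\) and \(h(b)=b'\), so the two pairs lie in the same orbit. For surjectivity, let \(\xi\in\Path(\mathcal L)(v,w)\). Proposition~\ref{prop:interval-realization} yields \(a<b\) in \(\D\) with \(v(a)=v\), \(v(b)=w\) and \(\omega[a,b]=\xi\), so \((a,b)\in\Omega_{v,w}\) is a preimage.

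For the global statement, note that \(\{(a,b)\in\D^2:a<b\}\) is the disjoint union of the \(H\)-invariant sets \(\Omega_{v,w}\) over inner vertices \(v,w\). Every \(a\in\D\) has an inner type, as shown before Proposition~\ref{prop:orbit-core}. The orbit set of the union is therefore the disjoint union of the orbit sets, and combining the bijections gives the bijection with \(\Path_{\mathrm{in}}(\mathcal L)\). The finiteness equivalence follows at once. No step is a real obstacle, since all the substance lies in the cited propositions; the only point needing care is deducing \(h(a)=a'\) and \(h(b)=b'\) from \(h(I)=J\), which holds because \(h\) preserves order.
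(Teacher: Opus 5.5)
Your proposal is correct and follows the paper's proof: Proposition~\ref{prop:transport} for well-definedness on orbits, Proposition~\ref{prop:exact-transport} for injectivity, and Proposition~\ref{prop:interval-realization} for surjectivity, with the global statement obtained by writing the increasing pairs as the disjoint union of the sets \(\Omega_{v,w}\). Your extra checks are details the paper leaves implicit: the \(H\)-invariance of \(\Omega_{v,w}\) via the orbit--core correspondence, and the deduction of \(h(a)=a'\), \(h(b)=b'\) from \(h([a,b])=[a',b']\) by monotonicity.
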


\begin{proof}
Proposition~\ref{prop:transport} shows that the assignment is constant
on \(H\)-orbits.  Proposition~\ref{prop:interval-realization} gives
surjectivity, and Proposition~\ref{prop:exact-transport} gives
injectivity.

Every increasing pair \((a,b)\in\D^2\) belongs to exactly one set
\(\Omega_{v,w}\), namely the one for which \(v=v(a)\) and \(w=v(b)\).
Taking the disjoint union of the preceding bijections over all inner
vertices \(v,w\) proves the final assertion.
\end{proof}

\begin{proposition}[Transport of increasing tuples]
\label{prop:tuple-transport}
Let \(k\geq2\), and let
\[
  a_1<\cdots<a_k,
  \qquad
  b_1<\cdots<b_k
\]
be points of \(\D\).  There is an element \(h\in H\) such that
\[
  h(a_i)=b_i\qquad(1\leq i\leq k)
\]
if and only if
\[
  v(a_i)=v(b_i)\qquad(1\leq i\leq k)
\]
and
\[
  \omega[a_i,a_{i+1}]=\omega[b_i,b_{i+1}]
  \qquad(1\leq i<k).
\]
\end{proposition}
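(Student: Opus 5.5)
The forward implication is immediate from earlier results. If \(h(a_i)=b_i\) for all \(i\), then Proposition~\ref{prop:orbit-core} gives \(v(a_i)=v(b_i)\). Proposition~\ref{prop:transport}, applied to each consecutive pair, gives \(\omega[a_i,a_{i+1}]=\omega[h(a_i),h(a_{i+1})]=\omega[b_i,b_{i+1}]\).

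For the converse I will imitate the proof of Proposition~\ref{prop:exact-transport}, but keep track of all the cut points at once. The steps are as follows.

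\begin{enumerate}[label=\textup{(\arabic*)}]
\item Choose a dyadic subdivision of \([0,1]\) having all the \(a_i\) as subdivision points. Do the same for the \(b_i\). Let \(T_a,T_b\) be the corresponding trees.
\item By Lemma~\ref{lem:trees-over-core}, the bottom paths of \(\Psi_{T_a}\) and \(\Psi_{T_b}\) decompose as
\[
  A\,W_1\cdots W_{k-1}\,B
  \qquad\text{and}\qquad
  A'\,W_1'\cdots W_{k-1}'\,B'.
\]
Here \(W_i\) is the subdivision word of \([a_i,a_{i+1}]\), \(A\) that of \([0,a_1]\), and \(B\) that of \([a_k,1]\); the primed words are defined in the same way from the \(b_i\).
\item By hypothesis \(W_i\simeq W_i'\) for each \(i\). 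The paths \(A\) and \(A'\) both start at \(\iota_{\mathcal L}\) and end at \(v(a_1)=v(b_1)\), so Lemma~\ref{lem:boundary-paths} gives \(A\simeq A'\). Dually, \(B\simeq B'\) because \(v(a_k)=v(b_k)\). Note that \(A,B\) are nonempty since \(0<a_1\) and \(a_k<1\).
\item Apply Lemma~\ref{lem:common-expansion} to each of the \(k+1\) homotopic pairs separately, obtaining common positive expansions \(A'',W_i'',B''\). Stack the horizontal sums of these positive diagrams below \(\Psi_{T_a}\) and \(\Psi_{T_b}\). This gives two positive diagrams \(\Theta_a,\Theta_b\colon p\to A''W_1''\cdots W_{k-1}''B''\).
\item By Theorem~\ref{thm:core-closure} and closedness, \(\Theta_a\circ\Theta_b^{-1}\) represents some \(h\in H\).
\end{enumerate}

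It remains to check that \(h(a_i)=b_i\). By Lemma~\ref{lem:subdivision-words}, each expanded block is the subdivision word of a refinement of the corresponding interval; the same holds for \(A''\) and \(B''\), by the same argument applied to the tree. The resulting tree pair therefore pairs, in order, the leaves lying in \([a_i,a_{i+1}]\) with those lying in \([b_i,b_{i+1}]\). Consequently \(h\) maps each \([a_i,a_{i+1}]\) onto \([b_i,b_{i+1}]\), and hence \(h(a_i)=b_i\).

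The main obstacle is this block bookkeeping: the common expansion has to be done blockwise, so that block boundaries are matched to block boundaries. This works because a positive cell has a one-edge top, so a positive diagram on a concatenation splits as a horizontal sum. The same fact makes the leaf counts of corresponding blocks agree.
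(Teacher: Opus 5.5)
Your proposal is correct and follows the paper's proof essentially step for step. Both arguments get the outer blocks homotopic from Lemma~\ref{lem:boundary-paths} and the inner blocks homotopic from the hypothesis, then take blockwise common positive expansions and attach their horizontal sums below the two tree diagrams. Closedness then yields \(h\), and the matched block boundaries give \(h(a_i)=b_i\). Your remark that the outer blocks need the tree argument rather than Lemma~\ref{lem:subdivision-words} as stated is a correct and harmless refinement.
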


\begin{proof}
Suppose first that such an element \(h\) exists.
Proposition~\ref{prop:orbit-core} gives
\[
  v(a_i)=v(b_i)
  \qquad(1\leq i\leq k),
\]
and Proposition~\ref{prop:transport} gives
\[
  \omega[a_i,a_{i+1}]
  =
  \omega[b_i,b_{i+1}]
  \qquad(1\leq i<k).
\]

Conversely, choose dyadic subdivisions of \([0,1]\) containing
respectively all the points \(a_i\) and all the points \(b_i\).  The
corresponding bottom \(1\)-paths have decompositions
\[
  A_0W_1\cdots W_{k-1}A_k
\]
and
\[
  B_0V_1\cdots V_{k-1}B_k,
\]
where
\[
  [W_i]=\omega[a_i,a_{i+1}],
  \qquad
  [V_i]=\omega[b_i,b_{i+1}].
\]
The path \(A_0\) ends at \(v(a_1)\), while \(B_0\) ends at \(v(b_1)\).
Since these vertices are equal,
Lemma~\ref{lem:boundary-paths}\textup{(1)} gives
\[
  A_0\simeq B_0.
\]
Similarly, Lemma~\ref{lem:boundary-paths}\textup{(2)} gives
\[
  A_k\simeq B_k.
\]
By hypothesis,
\[
  W_i\simeq V_i
  \qquad(1\leq i<k).
\]

Apply Lemma~\ref{lem:common-expansion} separately to each corresponding
pair of blocks.  Taking the horizontal sums of the resulting positive
diagrams gives two positive diagrams over \(\mathcal L\) with the same
bottom \(1\)-path and with the same marked block boundaries.  Let
\[
  \Theta_{\mathbf a}\colon p\longrightarrow_{\mathcal L}R,
  \qquad
  \Theta_{\mathbf b}\colon p\longrightarrow_{\mathcal L}R
\]
be these two positive diagrams.  Then
\[
  \Theta_{\mathbf a}\circ\Theta_{\mathbf b}^{-1}
\]
is a spherical diagram over \(\mathcal L\).  By
Theorem~\ref{thm:core-closure}\textup{(i)} and the closedness of \(H\),
it represents an element \(h\in H\).  The marked block boundaries give
\[
  h(a_i)=b_i
  \qquad(1\leq i\leq k).
\]
\end{proof}

\begin{corollary}[Factor placement]
\label{cor:interval-placement}
Let \(I,J\subset(0,1)\) be closed intervals with endpoints in \(\D\).
Let \(\mathscr S_I\) and \(\mathscr S_J\) be dyadic subdivisions of
\(I\) and \(J\), respectively, and put
\[
  W_I=W_{\mathscr S_I},
  \qquad
  W_J=W_{\mathscr S_J}.
\]
Suppose that
\[
  W_J\simeq C\,W_I\,D,
\]
where \(C\) and \(D\) are \(1\)-paths and either may be empty.  Then
there is a closed subinterval \(J'\subseteq J\), with endpoints in
\(\D\), such that
\[
  \omega(J')=\omega(I).
\]
Consequently, some element of \(H\) maps \(I\) onto \(J'\).
\end{corollary}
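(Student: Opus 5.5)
The plan is to make the homotopy \(W_J\simeq CW_ID\) positive on both sides by passing to a common positive expansion, and then to read off \(J'\) as the part of a refined subdivision of \(J\) that sits below the middle block \(W_I\).

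\emph{Step 1: a common positive expansion.} By Lemma~\ref{lem:common-expansion}, the paths \(W_J\) and \(CW_ID\) have a common positive expansion \(R\). Take the positive diagram \(CW_ID\to R\). Since every positive cell of \(\mathcal L\) has a one-edge top path, this diagram splits as a horizontal sum \(\Delta_C+\Delta_I+\Delta_D\), exactly as in the proof of Proposition~\ref{prop:interval-realization}. Hence \(R=C'W'D'\), where \(\Delta_I\colon W_I\to W'\) is positive. If \(C\) or \(D\) is empty, the corresponding summand is simply absent.

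\emph{Step 2: identify both blocks as subdivision words.} Apply the last part of Lemma~\ref{lem:subdivision-words} to the positive diagram \(W_I\to W'\). This gives \(W'=W_{\mathscr S_I'}\) for some dyadic refinement \(\mathscr S_I'\) of \(\mathscr S_I\). Applying the same lemma to the positive diagram \(W_J\to R\) gives \(R=W_{\mathscr S_J'}\) for some dyadic refinement \(\mathscr S_J'\) of \(\mathscr S_J\).

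\emph{Step 3: extract \(J'\).} Write \(\mathscr S_J'=([w_1],\ldots,[w_n])\). The edges of \(R\) correspond in order to the intervals \([w_k]\). The block \(W'\) occupies a consecutive range of positions \(r+1,\ldots,r+s\), where \(r=|C'|\) and \(s=|W'|\geq1\).

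Let \(J'\) be the union \([w_{r+1}]\cup\cdots\cup[w_{r+s}]\). This is a closed subinterval of \(J\) with dyadic endpoints, nondegenerate because \(s\geq1\), and \(([w_{r+1}],\ldots,[w_{r+s}])\) is a dyadic subdivision of \(J'\). Its subdivision word is literally \(W'\) as an edge word. Therefore
\[
  \omega(J')=[W']=[W_I]=\omega(I).
\]

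The endpoints of \(J'\) lie in \(J\subset(0,1)\), so they lie in \(\D\), as Definition~\ref{def:interval-class} requires. The final assertion, that some element of \(H\) maps \(I\) onto \(J'\), then follows from Proposition~\ref{prop:exact-transport}.

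\emph{Main obstacle.} Step 3 is the only point needing care. The identification \(R=W_{\mathscr S_J'}\) must be made position-by-position, so that the edges of \(W'\) really correspond to a consecutive block of intervals of \(\mathscr S_J'\). I plan to justify this directly from the proof of Lemma~\ref{lem:subdivision-words}: each cell of a positive diagram replaces one edge \(u^+\) by \((u0)^+(u1)^+\), so every position of the bottom path of \(W_J\to R\) carries a word \(w_k\). Consecutive positions then give adjacent intervals, and a consecutive block of them has union an interval.
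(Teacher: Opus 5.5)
Your proposal is correct and follows the paper's proof essentially step for step. The paper also takes a common positive expansion via Lemma~\ref{lem:common-expansion}, splits it horizontally as \(R=C'W'D'\), identifies \(R\) as the subdivision word of a refinement of \(\mathscr S_J\) via Lemma~\ref{lem:subdivision-words}, reads off \(J'\) as the union of the consecutive intervals under \(W'\), and concludes with Proposition~\ref{prop:exact-transport}; your remark on the position-by-position correspondence only makes explicit what the paper leaves implicit.
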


\begin{proof}
By Lemma~\ref{lem:common-expansion}, choose a common positive expansion
\(Q\), with positive diagrams
\[
  W_J\longrightarrow Q,
  \qquad
  C\,W_I\,D\longrightarrow Q.
\]
By Lemma~\ref{lem:subdivision-words},
\[
  Q=W_{\mathscr T}
\]
for a refinement \(\mathscr T\) of the chosen subdivision of \(J\).

The second positive diagram decomposes as a horizontal sum along the
indicated top blocks, with the empty blocks omitted.  Its bottom
\(1\)-path therefore has the form
\[
  Q=C'W'D',
\]
where \(W_I\to W'\) is positive.  Again by
Lemma~\ref{lem:subdivision-words},
\[
  W'=W_{\mathscr S'}
\]
for a refinement \(\mathscr S'\) of the chosen subdivision of \(I\).

The consecutive standard intervals corresponding to \(W'\) in
\(\mathscr T\) form a closed subinterval \(J'\subseteq J\) with dyadic
endpoints.  Hence
\[
  \omega(J')=[W']=[W_I]=\omega(I).
\]
Proposition~\ref{prop:exact-transport} now gives an element of \(H\)
mapping \(I\) onto \(J'\).
\end{proof}

\subsection{Boundary periods and endpoint relations}
\label{subsec:boundary-periods}

Continue to assume that \(H\leq F\) is closed and that
\(\mathcal L=C(H)\) is full.  Put
\[
  A=\pi_{\mathrm{ab}}(H).
\]

Assume throughout the remainder of this subsection that there exists
\((m,n)\in A\) such that \(mn\neq0\); equivalently, that \(A\) is not
contained in the union of the two coordinate axes.

This assumption is automatic when \(\mathcal L\) is finite.  Indeed,
there are integers \(1\leq m<n\) such that
\[
  (0^m)^+=(0^n)^+.
\]
Lemma~\ref{lem:branch-criterion} gives an element of \(H\) with branch
pair
\[
  0^m\longrightarrow 0^n,
\]
and hence an element of \(A\) with nonzero first coordinate.  Applying
the same argument to the words \(1^j\) gives an element of \(A\) with
nonzero second coordinate.  Thus the standing assumption holds.

Consequently, if \(\mathcal L\) is finite and \(A\) has rank one, then
\[
  A=\mathbb Z(a_L,a_R)
\]
for some \(a_L,a_R\in\mathbb Z\) such that \(a_La_R\neq0\).

Let
\[
  d_L=\gcd\{a:(a,b)\in A\},
  \qquad
  d_R=\gcd\{b:(a,b)\in A\},
\]
where both gcds are taken to be positive.  For \(n\geq0\), put
\[
  \ell_n=(0^n)^+,
  \qquad
  r_n=(1^n)^+.
\]

\begin{lemma}[Boundary periodicity]
\label{lem:boundary-periodicity}
There are \(N_L,N_R\geq1\) such that
\[
  \ell_{n+d_L}=\ell_n
  \qquad(n\geq N_L),
\]
\[
  r_{n+d_R}=r_n
  \qquad(n\geq N_R),
\]
and
\[
  \ell_m=\ell_n
  \quad\Longleftrightarrow\quad
  m\equiv n\pmod{d_L}
  \qquad(m,n\geq N_L),
\]
\[
  r_m=r_n
  \quad\Longleftrightarrow\quad
  m\equiv n\pmod{d_R}
  \qquad(m,n\geq N_R).
\]
\end{lemma}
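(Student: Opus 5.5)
The plan is to translate equalities \(\ell_m=\ell_n\) into elements of \(H\) via Lemma~\ref{lem:branch-criterion}, and then read off their slopes at \(0\) through \(\pi_{\mathrm{ab}}\). The key observation is the following. Since \(H\) is closed and \(\mathcal L\) is full, every word is readable. Hence \(\ell_m=\ell_n\) holds if and only if some \(h\in H\) has the branch pair \(0^m\to0^n\). Such an \(h\) maps \([0^m]\) linearly onto \([0^n]\), so \(\pi_{\mathrm{ab}}(h)\) has first coordinate \(m-n\).

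\emph{From equality to divisibility.} Suppose \(\ell_m=\ell_n\). The observation gives an element of \(A\) with first coordinate \(m-n\), so \(d_L\mid m-n\). This direction holds for all \(m,n\), without any lower bound.

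\emph{From divisibility to equality.} We need the converse for large \(m,n\). Let \(S=\{a:(a,b)\in A\}\); it is a subgroup of \(\mathbb Z\), namely \(d_L\mathbb Z\), and \(d_L>0\) by the standing assumption. Pick \(h\in H\) with \(\pi_{\mathrm{ab}}(h)=(d_L,b)\) for some \(b\). Then \(h\) agrees near \(0\) with \(x\mapsto 2^{d_L}x\). Hence for all \(n\) beyond some \(N\), \(h\) has the branch pair \(0^{n+d_L}\to0^n\), obtained by elementary expansion of the leftmost branch pair of a tree pair for \(h\). The easy direction of Lemma~\ref{lem:branch-criterion} (or Lemma~\ref{lem:acceptance-branches}) then gives \(\ell_{n+d_L}=\ell_n\) for \(n\geq N\). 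Set \(N_L=\max(N,1)\). Iterating yields \(\ell_m=\ell_n\) whenever \(m,n\geq N_L\) and \(m\equiv n\pmod{d_L}\).

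\emph{The right endpoint.} The statements for \(r_n\) follow by the same argument using the second coordinate and the branch pairs \(1^{n+d_R}\to1^n\). Alternatively, one can apply the reflection automorphism.

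I expect no serious obstacle. The only care needed is in the divisibility-to-equality step: we must check that an element realizing \(d_L\) in the first coordinate has the branch pairs \(0^{n+d_L}\to0^n\) for all large \(n\). This follows from the leftmost branch pair \(0^a\to0^{a-d_L}\) of a tree pair by appending zeros. Finiteness of \(\mathcal L\) is not needed for this lemma.
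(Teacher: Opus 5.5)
Your proof is correct and follows the paper's argument: the branch criterion together with the slope at $0$ gives $d_L\mid m-n$, and an element of $H$ with first endpoint coordinate $\pm d_L$ gives the periodicity for large indices. The only cosmetic difference is the propagation step. The paper obtains one equality $\ell_m=\ell_{m+d_L}$ from the reduced tree pair and then takes left children in the folded core. You instead expand the leftmost branch pair and apply the branch criterion at each depth; these are the same step seen from the core side and from the tree-pair side.
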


\begin{proof}
By the definition of \(d_L\), there is \(h\in H\) whose first
endpoint coordinate is \(-d_L\).  Let
\[
  0^m\longrightarrow0^n
\]
be the leftmost branch pair of the reduced tree pair for \(h\).  On
\([0^m]\), the slope of \(h\) is \(2^{m-n}\).  Hence
\[
  m-n=-d_L,
\]
so \(n=m+d_L\).  Since \(h\) is nontrivial, its reduced tree pair has at least two leaves, and hence \(m\geq1\).

The reduced tree pair is accepted because \(H\) is closed.
Lemma~\ref{lem:acceptance-branches} therefore gives
\[
  \ell_m=\ell_{m+d_L}.
\]
By the uniqueness of the children of a given top edge following
\eqref{eq:core-cell}, equal edges have equal left children.  Repeatedly
taking left children gives
\[
  \ell_{m+j}=\ell_{m+d_L+j}
  \qquad(j\geq0).
\]
Thus the left periodicity holds with \(N_L=m\).

Conversely, suppose that \(\ell_m=\ell_n\).
Lemma~\ref{lem:branch-criterion} supplies an element \(g\in H\) having
the branch pair
\[
  0^m\longrightarrow0^n.
\]
On \([0^m]\), the slope of \(g\) is \(2^{m-n}\); since this interval
contains \(0\), it follows that
\[
  \log_2 g'(0^+)=m-n.
\]
By the definition of \(d_L\), we have \(d_L\mid m-n\).  Combining this
divisibility with the established periodicity proves the left-hand
equivalence.  The right-hand assertions follow symmetrically.
\end{proof}

For every \(n\geq0\), define the companion edges
\[
  \lambda_n=(0^n1)^+,
  \qquad
  \rho_n=(1^n0)^+.
\]
The corresponding positive cells are
\[
  \ell_n=\ell_{n+1}\lambda_n,
  \qquad
  r_n=\rho_n r_{n+1}.
\]
For \(n\geq1\), both words \(0^n1\) and \(1^n0\) contain both digits,
so Lemma~\ref{lem:boundary-incidence} shows that \(\lambda_n\) and
\(\rho_n\) are inner edges.  Lemma~\ref{lem:boundary-periodicity} and
the uniqueness of the two children of a top edge give
\[
  \lambda_{n+d_L}=\lambda_n
  \qquad(n\geq N_L),
\]
and
\[
  \rho_{n+d_R}=\rho_n
  \qquad(n\geq N_R).
\]

Set
\[
  \ell=\ell_{N_L},
  \qquad
  r=r_{N_R},
  \qquad
  v_L=\tau(\ell),
  \qquad
  v_R=\iota(r).
\]
Successively applying the positive cells along the left boundary gives
\[
\begin{aligned}
  \ell_{N_L}
  &\simeq
  \ell_{N_L+1}\lambda_{N_L}\\
  &\simeq
  \ell_{N_L+2}\lambda_{N_L+1}\lambda_{N_L}\\
  &\ \ \vdots\\
  &\simeq
  \ell_{N_L+d_L}
  \lambda_{N_L+d_L-1}\cdots\lambda_{N_L}\\
  &=
  \ell_{N_L}
  \lambda_{N_L+d_L-1}\cdots\lambda_{N_L}.
\end{aligned}
\]
Define
\[
  B_L=
  \lambda_{N_L+d_L-1}\cdots\lambda_{N_L}.
\]
The final line is a \(1\)-path of the form \(\ell B_L\) with the same
endpoints as \(\ell\).  Consequently, \(B_L\) is an inner \(1\)-path
from \(v_L\) to \(v_L\).  Put
\[
  \Lambda_L=[B_L]\in\Path(\mathcal L)(v_L,v_L).
\]

At the right boundary, the corresponding development is
\[
\begin{aligned}
  r_{N_R}
  &\simeq
  \rho_{N_R}r_{N_R+1}\\
  &\simeq
  \rho_{N_R}\rho_{N_R+1}r_{N_R+2}\\
  &\ \ \vdots\\
  &\simeq
  \rho_{N_R}\cdots\rho_{N_R+d_R-1}
  r_{N_R+d_R}\\
  &=
  \rho_{N_R}\cdots\rho_{N_R+d_R-1}
  r_{N_R}.
\end{aligned}
\]
Thus
\[
  B_R=
  \rho_{N_R}\cdots\rho_{N_R+d_R-1}
\]
is an inner \(1\)-path from \(v_R\) to \(v_R\).  Put
\[
  \Lambda_R=[B_R]\in\Path(\mathcal L)(v_R,v_R).
\]

The displayed developments are realized by positive diagrams
\[
  \ell\longrightarrow_{\mathcal L} \ell B_L,
  \qquad
  r\longrightarrow_{\mathcal L} B_Rr.
\]
Iterating them gives positive diagrams
\[
  \ell\longrightarrow_{\mathcal L} \ell B_L^{\,i},
  \qquad
  r\longrightarrow_{\mathcal L} B_R^{\,i} r
  \qquad(i\geq1).
\]
The one-period development and its iteration are shown in
Figure~\ref{fig:left-boundary-periods}.  The right-boundary diagram is
obtained by reflecting the same construction.

\begin{figure}[htbp]
\centering
\begin{tikzpicture}[x=.82cm,y=.58cm]
  \begin{scope}[xshift=-6.2cm]
    \foreach \i in {0,...,5}
      \coordinate (a\i) at (\i,0);
    \draw[s23edge] (a0) .. controls (1.05,4.15) and (3.95,4.15) ..
      node[s23label,above=2pt] {$\ell_{N_L}$} (a5);
    \draw[s23edge] (a0) .. controls (.85,3.20) and (3.15,3.20) ..
      node[s23tiny,above=1pt] {$\ell_{N_L+1}$} (a4);
    \draw[s23edge] (a0) .. controls (.65,2.30) and (2.35,2.30) ..
      node[s23tiny,above=1pt] {$\ell_{N_L+2}$} (a3);
    \draw[s23edge] (a0) .. controls (.45,1.42) and (1.55,1.42) ..
      node[s23tiny,above=1pt] {$\ell_{N_L+3}$} (a2);
    \draw[s23return] (a0) .. controls +(.27,.52) and +(-.27,.52) ..
      node[s23tiny,above=1pt] {$\ell$} (a1);
    \draw[s23upper] (a1) .. controls +(.27,.52) and +(-.27,.52) ..
      (a2);
    \draw[s23upper] (a2) .. controls +(.27,.52) and +(-.27,.52) ..
      (a3);
    \draw[s23upper] (a3) .. controls +(.27,.52) and +(-.27,.52) ..
      (a4);
    \draw[s23upper] (a4) .. controls +(.27,.52) and +(-.27,.52) ..
      (a5);
    \foreach \i in {0,...,5}
      \node[s23vertex] at (a\i) {};
    \node[font=\scriptsize,fill=white,inner sep=.4pt,below=3pt]
      at ($(a1)!.5!(a2)$) {$\lambda_{N_L+3}$};
    \node[font=\scriptsize,fill=white,inner sep=.4pt,below=15pt]
      at ($(a2)!.5!(a3)$) {$\lambda_{N_L+2}$};
    \node[font=\scriptsize,fill=white,inner sep=.4pt,below=3pt]
      at ($(a3)!.5!(a4)$) {$\lambda_{N_L+1}$};
    \node[font=\scriptsize,fill=white,inner sep=.4pt,below=15pt]
      at ($(a4)!.5!(a5)$) {$\lambda_{N_L}$};
    \draw[s23bracket] ($(a1)+(0,-1.75)$) -- ++(0,-.22) -|
      ($(a5)+(0,-1.75)$);
    \node[s23label,anchor=north] at ($(a1)!.5!(a5)+(0,-2.02)$)
      {$B_L$};
    \node[font=\small] at (2.5,4.85) {(a) one period, \(d_L=4\)};
  \end{scope}

  \begin{scope}[xshift=1.0cm]
    \foreach \i in {0,...,7}
      \coordinate (b\i) at (\i,0);
    \draw[s23edge] (b0) .. controls (1.45,4.35) and (5.55,4.35) ..
      node[s23label,above=2pt] {$\ell$} (b7);
    \draw[s23edge] (b0) .. controls (1.25,3.70) and (4.75,3.70) .. (b6);
    \draw[s23edge] (b0) .. controls (1.05,3.02) and (3.95,3.02) .. (b5);
    \draw[s23edge] (b0) .. controls (.85,2.38) and (3.15,2.38) .. (b4);
    \draw[s23edge] (b0) .. controls (.65,1.75) and (2.35,1.75) .. (b3);
    \draw[s23edge] (b0) .. controls (.45,1.12) and (1.55,1.12) .. (b2);
    \draw[s23return] (b0) .. controls +(.27,.48) and +(-.27,.48) ..
      node[s23tiny,above=1pt] {$\ell$} (b1);
    \foreach \i/\j in {1/2,2/3,3/4,4/5,5/6,6/7}
      \draw[s23upper] (b\i) .. controls +(.27,.48) and +(-.27,.48) ..
        (b\j);
    \foreach \i in {0,...,7}
      \node[s23vertex] at (b\i) {};
    \draw[s23bracket] ($(b1)+(0,-.75)$) -- ++(0,-.22) -|
      ($(b4)+(0,-.75)$);
    \draw[s23bracket] ($(b4)+(0,-.75)$) -- ++(0,-.22) -|
      ($(b7)+(0,-.75)$);
    \node[s23label,below=9pt] at ($(b1)!.5!(b4)$) {$B_L$};
    \node[s23label,below=9pt] at ($(b4)!.5!(b7)$) {$B_L$};
    \node[font=\small] at (3.5,4.85)
      {(b) two periods, \(d_L=3\)};
  \end{scope}
\end{tikzpicture}
\caption{The left boundary comb.  In panel \textup{(a)}, successive
development through one period gives a positive
\((\ell,\ell B_L)\)-diagram.  Panel \textup{(b)} shows two consecutive
periods: the first bottom edge is again \(\ell\), and each following
block is one copy of \(B_L\).  All edges are oriented from left to
right.}
\label{fig:left-boundary-periods}
\end{figure}

\begin{lemma}[Positive expansion of an extreme edge]
\label{lem:extreme-expansion}
\begin{enumerate}[label=\textup{(\arabic*)}]
\item Let \(P\) be a nonempty \(1\)-path beginning at \(v_L\), and let
      \[
        \Delta\colon\ell P\longrightarrow_{\mathcal L} R
      \]
      be positive.  Then there are an integer \(k\geq0\), a positive
      diagram
      \[
        P\longrightarrow_{\mathcal L} P',
      \]
      and, for \(0\leq j<k\), positive diagrams
      \[
        \lambda_{N_L+j}\longrightarrow_{\mathcal L} U_j
      \]
      such that
      \[
        R=
        \ell_{N_L+k}U_{k-1}\cdots U_0P'.
      \]
      When \(k=0\), the product of the \(U_j\) is absent.
\item Let \(P\) be a nonempty \(1\)-path ending at \(v_R\), and let
      \[
        \Delta\colon Pr\longrightarrow_{\mathcal L} R
      \]
      be positive.  Then there are an integer \(k\geq0\), a positive
      diagram
      \[
        P\longrightarrow_{\mathcal L} P',
      \]
      and, for \(0\leq j<k\), positive diagrams
      \[
        \rho_{N_R+j}\longrightarrow_{\mathcal L} V_j
      \]
      such that
      \[
        R=
        P'V_0\cdots V_{k-1}r_{N_R+k}.
      \]
      When \(k=0\), the product of the \(V_j\) is absent.
\end{enumerate}
\end{lemma}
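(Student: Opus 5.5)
The plan is to exploit the tree-like structure: in a positive diagram over \(\mathcal L\), every cell has a one-edge top path, so the diagram decomposes as a horizontal sum along the edges of its top path. Concretely, a positive \((\ell P,R)\)-diagram \(\Delta\) splits as \(\Delta_\ell+\Delta_P\), where \(\Delta_\ell\colon\ell\to R_1\) and \(\Delta_P\colon P\to P'\) are positive and \(R=R_1P'\). This is the same decomposition already used in the proofs of Proposition~\ref{prop:interval-realization} and Corollary~\ref{cor:interval-placement}. It reduces part~(1) to analysing a positive diagram with top path the single edge \(\ell=\ell_{N_L}\), and we take \(P\to P'\) to be \(\Delta_P\).

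For a positive diagram \(\Gamma\colon\ell_{N_L}\to R_1\), I will argue by induction on the number of cells of \(\Gamma\). We claim that \(R_1=\ell_{N_L+k}U_{k-1}\cdots U_0\) with positive diagrams \(\lambda_{N_L+j}\to U_j\). If \(\Gamma\) has no cells, take \(k=0\). Otherwise the leftmost bottom edge of \(\Gamma\) descends from the top edge through a chain of left children. More concretely, under forgetting labels \(\Gamma\) is the diagram of a finite binary tree, by the identification in Subsection~\ref{subsec:dunce}. The leftmost branch of this tree has some depth \(k\). Its vertices at addresses \(0^j\), for \(j\leq k\), are labelled by the left children iterated from \(\ell_{N_L}\), that is, by \(\ell_{N_L+j}\). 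This uses the uniqueness of children of a top edge and the cells \(\ell_n=\ell_{n+1}\lambda_n\). The right child of the vertex \(0^j\), for \(j<k\), is labelled \(\lambda_{N_L+j}\), and the subtree hanging there gives a positive diagram \(\lambda_{N_L+j}\to U_j\). Reading the bottom path from left to right gives first the leaf \(0^k\), labelled \(\ell_{N_L+k}\), then the bottom path of the subtree at \(0^{k-1}1\), namely \(U_{k-1}\), and so on down to \(U_0\). This yields the stated form. The alternative is to phrase the whole argument as a direct induction peeling off the cell at the top edge and applying horizontal decomposition again to \(\ell_{N_L+1}\lambda_{N_L}\). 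That requires a slightly generalized induction hypothesis starting from an arbitrary \(\ell_n\), which is harmless since nothing depends on \(n=N_L\).

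Part~(2) is the mirror image. Decompose \(\Delta\colon Pr\to R\) as \(\Delta_P+\Delta_r\). Then follow the rightmost branch of the tree under \(r=r_{N_R}\), using the cells \(r_n=\rho_n r_{n+1}\). The bottom path reads \(V_0\cdots V_{k-1}\) followed by the last leaf \(r_{N_R+k}\).

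The only point needing care, and the main obstacle, is justifying the horizontal splitting. Because each cell's top path is a single edge, no cell of a positive diagram can straddle the vertex between two consecutive top edges. This can be seen by induction on the number of cells: the first cell applied in a vertical decomposition into one-cell diagrams acts on a single edge of the current path. It follows that the cells descended from \(\ell\) and those descended from \(P\) form the two summands.
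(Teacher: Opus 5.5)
Your proposal is correct and is essentially the paper's proof. The paper reads \(\Delta\) as a sequence of single-edge replacements and follows the leftmost edge descended from \(\ell\), recording the companions \(\lambda_{N_L+j}\) and the blocks \(U_j\) they generate; this is your leftmost-branch description of the tree under \(\ell\). Your explicit justification of the splitting \(\Delta=\Delta_\ell+\Delta_P\) makes precise the paper's brief remark that the edges descended from \(P\) remain to the right of the preceding blocks.
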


\begin{proof}
Read the first diagram from top to bottom as a finite sequence of
replacements of one edge by the two children on the bottom of its
positive cell.  Beginning with the initial edge \(\ell\), follow the
edge obtained by choosing the left child at every such replacement.

Suppose that this edge is replaced \(k\) times.  At the \((j+1)\)-st
replacement, for \(0\leq j<k\), its left child is \(\ell_{N_L+j+1}\),
while its right child is \(\lambda_{N_L+j}\).  Thus the first edge of the
final bottom path is \(\ell_{N_L+k}\).

Any subsequent replacements beginning at the companion edge
\(\lambda_{N_L+j}\) produce a consecutive bottom path \(U_j\).  The
companion created last occurs first, so these paths appear in the order
\[
  U_{k-1},\ldots,U_0.
\]
The replacements beginning at the edges of \(P\) produce a positive
expansion \(P'\) of \(P\), and all these edges remain to the right of
the preceding blocks.  This gives the asserted decomposition of \(R\).

The proof of the second assertion is symmetric: follow the final edge
by choosing the right child, and record the left companion created at
each replacement.
\end{proof}

\begin{lemma}[Boundary comparison]
\label{lem:boundary-comparison}
\begin{enumerate}[label=\textup{(\arabic*)}]
\item If \(P,Q\colon v_L\to v\) are nonempty inner \(1\)-paths, then
      there are positive integers \(j_P,j_Q\) such that
      \[
        \Lambda_L^{j_P}[P]=\Lambda_L^{j_Q}[Q].
      \]
\item If \(P,Q\colon v\to v_R\) are nonempty inner \(1\)-paths, then
      there are positive integers \(j_P,j_Q\) such that
      \[
        [P]\Lambda_R^{j_P}=[Q]\Lambda_R^{j_Q}.
      \]
\end{enumerate}
\end{lemma}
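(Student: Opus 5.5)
The plan is to prove part (1) and obtain part (2) by the left–right symmetric argument, using the right boundary comb in place of the left one. Fix nonempty inner paths \(P,Q\colon v_L\to v\). The idea is to embed both \(\ell P\) and \(\ell Q\) into paths from \(\iota_{\mathcal L}\) and then use Lemma~\ref{lem:boundary-paths}\textup{(1)}. Both \(\ell P\) and \(\ell Q\) are nonempty \(1\)-paths beginning at \(\iota(\ell)=\iota_{\mathcal L}\), because \(\ell=\ell_{N_L}=(0^{N_L})^+\) begins at \(\iota_{\mathcal L}\). They both end at \(v\). Hence Lemma~\ref{lem:boundary-paths}\textup{(1)} gives \(\ell P\simeq\ell Q\). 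By Lemma~\ref{lem:common-expansion}, choose positive diagrams
\[
  \Delta_P\colon \ell P\longrightarrow_{\mathcal L} R,
  \qquad
  \Delta_Q\colon \ell Q\longrightarrow_{\mathcal L} R
\]
with a common bottom path \(R\).

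Next, apply Lemma~\ref{lem:extreme-expansion}\textup{(1)} to each diagram. This gives
\[
  R=\ell_{N_L+k}U_{k-1}\cdots U_0P'
   =\ell_{N_L+k'}U'_{k'-1}\cdots U'_0Q'.
\]
Here \(P\to P'\), \(Q\to Q'\), \(\lambda_{N_L+j}\to U_j\) and \(\lambda_{N_L+j}\to U'_j\) are positive. In both decompositions the first edge of \(R\) is the single edge \(\ell_{N_L+k}=\ell_{N_L+k'}\). Comparing what follows it, the remainders agree as \(1\)-paths:
\[
  U_{k-1}\cdots U_0P'=U'_{k'-1}\cdots U'_0Q'.
\]
Passing to homotopy classes, and using \(U_j\simeq\lambda_{N_L+j}\), we get
\[
  [\lambda_{N_L+k-1}\cdots\lambda_{N_L}P]=[\lambda_{N_L+k'-1}\cdots\lambda_{N_L}Q].
\]

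It remains to turn these strings of companions into powers of \(\Lambda_L\). The strings need not have length divisible by \(d_L\), so the diagrams must be pushed further. Before comparing, attach below both \(\Delta_P\) and \(\Delta_Q\) the same positive diagram \(\ell_{N_L+k}\to\ell_{N_L+k+s}\lambda_{N_L+k+s-1}\cdots\lambda_{N_L+k}\), where \(s\) is chosen so that \(k+s\equiv0\pmod{d_L}\) and \(k+s\ge1\). The bottoms are still common. Now the numbers of left-boundary replacements are \(K=k+s\) and \(K'=k'+s\). The lengths of the companion strings may still differ, since \(k\neq k'\) is possible. However, \(\ell_{N_L+k}=\ell_{N_L+k'}\) forces \(k\equiv k'\pmod{d_L}\) by Lemma~\ref{lem:boundary-periodicity}, so \(d_L\) divides both \(K\) and \(K'\). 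By the periodicity \(\lambda_{n+d_L}=\lambda_n\) for \(n\ge N_L\), the string \(\lambda_{N_L+K-1}\cdots\lambda_{N_L}\) is exactly \(B_L^{K/d_L}\). Setting \(j_P=K/d_L\) and \(j_Q=K'/d_L\), both positive, gives \(\Lambda_L^{j_P}[P]=\Lambda_L^{j_Q}[Q]\).

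The main obstacle is the bookkeeping in Lemma~\ref{lem:extreme-expansion}. Its decomposition is defined by following the leftmost edge, so after appending extra combs one must check that the bottom path splits as claimed. In particular, one needs that the first edge of \(R\) is the same in both diagrams and that it is what remains of the left-boundary development, so that the rest of \(R\) is literally the same path. This holds because the first edge of \(R\) is shared by both diagrams, and in each diagram its top ancestor is \(\ell\).
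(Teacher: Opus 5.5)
Your proof is the paper's proof. It uses $\ell P\simeq\ell Q$ from Lemma~\ref{lem:boundary-paths}, a common positive expansion, Lemma~\ref{lem:extreme-expansion} on both diagrams, and $k\equiv k'\pmod{d_L}$ from Lemma~\ref{lem:boundary-periodicity}. It then develops the shared first edge further to reach multiples of $d_L$ and strips off that first edge.

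There is one small slip. Your condition $k+s\ge 1$ only guarantees $j_P\ge 1$. If $k'=0$ and $k$ is already a positive multiple of $d_L$, then $s=0$ is allowed and gives $j_Q=0$. You should require $k'+s\ge 1$ as well, for instance by taking $s\ge 1$. This matches the paper's requirement that the total number of left-child cells in \emph{each} diagram be a positive multiple of $d_L$.
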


\begin{proof}
The \(1\)-paths \(\ell P\) and \(\ell Q\) begin at \(\iota_{\mathcal L}\) and
have the same terminal vertex.  Lemma~\ref{lem:boundary-paths}\textup{(1)}
gives
\[
  \ell P\simeq\ell Q.
\]
By Lemma~\ref{lem:common-expansion}, there are positive diagrams
\[
  \ell P\longrightarrow_{\mathcal L} R,
  \qquad
  \ell Q\longrightarrow_{\mathcal L} R
\]
with the same bottom \(1\)-path \(R\).  Lemma~\ref{lem:extreme-expansion}
gives two decompositions
\[
  R=
  \ell_{N_L+k_P}U_{k_P-1}\cdots U_0P'
\]
and
\[
  R=
  \ell_{N_L+k_Q}V_{k_Q-1}\cdots V_0Q',
\]
where \(P'\) and \(Q'\) are positive expansions of \(P\) and \(Q\),
respectively.

The first edge of the same \(1\)-path \(R\) therefore has the two
labels
\[
  \ell_{N_L+k_P}=\ell_{N_L+k_Q}.
\]
Lemma~\ref{lem:boundary-periodicity} gives
\[
  k_P\equiv k_Q\pmod{d_L}.
\]
Expand this common first edge through the same number of additional
left-child cells, chosen so that the total number of left-child cells
in each diagram is a positive multiple of \(d_L\).  The resulting
common bottom \(1\)-path begins with \(\ell\) and has two decompositions
\[
  \ell S_PP'=\ell S_QQ',
\]
where, for some positive integers \(j_P,j_Q\),
\[
  [S_P]=\Lambda_L^{j_P},
  \qquad
  [S_Q]=\Lambda_L^{j_Q}.
\]
The two positive diagrams and the precise endpoints of these four
blocks are shown in Figure~\ref{fig:boundary-comparison}.

\begin{figure}[htbp]
\centering
\begin{tikzpicture}[x=1.12cm,y=.82cm]
  \foreach \i in {0,...,8}
    \coordinate (q\i) at (-4+\i,0);

  \draw[s23upper]
    (q0) .. controls (-3.25,3.35) and (-1.00,3.35) ..
    node[s23label,above=2pt] {$\ell$} (q4);
  \draw[s23upper]
    (q4) .. controls (1.00,2.65) and (3.20,2.65) ..
    node[s23label,above=2pt] {$P$} (q8);
  \draw[s23upperfaint]
    (q0) .. controls (-3.35,2.15) and (-1.95,2.15) .. (q3);
  \draw[s23upperfaint]
    (q0) .. controls (-3.50,1.18) and (-2.55,1.18) .. (q2);
  \draw[s23upperfaint]
    (q2) .. controls (-1.75,.92) and (-1.25,.92) .. (q3);
  \draw[s23upperfaint]
    (q3) .. controls (-.75,.70) and (-.30,.70) .. (q4);
  \draw[s23upperfaint]
    (q4) .. controls (.80,1.55) and (1.40,1.55) .. (q6);
  \draw[s23upperfaint]
    (q6) .. controls (2.55,1.35) and (3.25,1.35) .. (q8);

  \draw[s23lower]
    (q0) .. controls (-3.10,-3.35) and (-.40,-3.35) ..
    node[s23label,below=2pt] {$\ell$} (q5);
  \draw[s23lower]
    (q5) .. controls (1.85,-2.55) and (3.35,-2.55) ..
    node[s23label,below=2pt] {$Q$} (q8);
  \draw[s23lowerfaint]
    (q0) .. controls (-3.25,-2.15) and (-1.35,-2.15) .. (q4);
  \draw[s23lowerfaint]
    (q0) .. controls (-3.45,-1.18) and (-2.35,-1.18) .. (q2);
  \draw[s23lowerfaint]
    (q2) .. controls (-1.70,-.90) and (-1.25,-.90) .. (q3);
  \draw[s23lowerfaint]
    (q3) .. controls (-.70,-1.05) and (-.20,-1.05) .. (q4);
  \draw[s23lowerfaint]
    (q4) .. controls (.25,-.72) and (.70,-.72) .. (q5);
  \draw[s23lowerfaint]
    (q5) .. controls (1.75,-1.38) and (2.25,-1.38) .. (q7);
  \draw[s23lowerfaint]
    (q7) .. controls (3.25,-.78) and (3.65,-.78) .. (q8);

  \draw[s23common] (q0)--(q1)--(q2)--(q3)--(q4)--(q5)--(q6)--(q7)--(q8);
  \draw[s23upper] ([yshift=4pt]q1)--([yshift=4pt]q4);
  \draw[s23upper] ([yshift=4pt]q4)--([yshift=4pt]q8);
  \draw[s23lower] ([yshift=-4pt]q1)--([yshift=-4pt]q5);
  \draw[s23lower] ([yshift=-4pt]q5)--([yshift=-4pt]q8);
  \foreach \i in {0,...,8}
    \node[s23vertex] at (q\i) {};

  \node[s23label,above=7pt] at ($(q0)!.5!(q1)$) {$\ell$};
  \node[s23label,above=9pt] at ($(q1)!.5!(q4)$) {$S_P$};
  \node[s23label,above=9pt] at ($(q4)!.5!(q8)$) {$P'$};
  \node[s23label,below=9pt] at ($(q1)!.5!(q5)$) {$S_Q$};
  \node[s23label,below=9pt] at ($(q5)!.5!(q8)$) {$Q'$};
  \foreach \i in {1,4,8}
    \draw[diagramred,line width=.7pt]
      ([yshift=1pt]q\i)--([yshift=8pt]q\i);
  \foreach \i in {1,5,8}
    \draw[diagramblue,line width=.7pt]
      ([yshift=-1pt]q\i)--([yshift=-8pt]q\i);
  \node[s23tiny,align=center] at (0,3.95)
    {top path \(\ell P\)};
  \node[s23tiny,align=center] at (0,-3.95)
    {reflected top path \(\ell Q\)};
\end{tikzpicture}
\caption{The common positive expansion in
Lemma~\ref{lem:boundary-comparison}.  The upper top path has the two
blocks \(\ell\) and \(P\); the reflected lower top path has the two
blocks \(\ell\) and \(Q\).  Their common bottom \(1\)-path is
\(\ell S_PP'=\ell S_QQ'\).  The marked vertices specify the endpoints
of \(S_P,P',S_Q,Q'\); moreover
\([S_P]=\Lambda_L^{j_P}\), \([S_Q]=\Lambda_L^{j_Q}\),
\([P']=[P]\), and \([Q']=[Q]\).}
\label{fig:boundary-comparison}
\end{figure}

Removing their common first edge gives an equality of \(1\)-paths
\[
  S_PP'=S_QQ'.
\]
Passing to homotopy classes, and using \(P'\simeq P\) and
\(Q'\simeq Q\), gives
\[
  \Lambda_L^{j_P}[P]=\Lambda_L^{j_Q}[Q].
\]

For the right-hand assertion, Lemma~\ref{lem:boundary-paths}\textup{(2)}
gives
\[
  Pr\simeq Qr.
\]
Apply Lemma~\ref{lem:common-expansion}, follow the final edge using
Lemma~\ref{lem:extreme-expansion}\textup{(2)}, and develop the common
final edge far enough to complete whole right periods.  Removing that
final edge from the common bottom \(1\)-path gives the asserted
equality.
\end{proof}

For an integer \(t\), put
\[
  t_+=\max\{t,0\},
  \qquad
  t_-=\max\{-t,0\}.
\]
Thus \(t=t_+-t_-\).  A factor whose exponent is zero is omitted from
the formulas below.

\begin{proposition}[Endpoint vectors and boundary relations]
\label{prop:endpoint-relations}
Let \(x,y\in\mathbb Z\) and suppose that
\[
  (d_Lx,d_Ry)\in A.
\]
Then there are a nonempty inner \(1\)-path
\[
  Z\colon v_L\longrightarrow v_R
\]
and a positive diagram
\[
  \Theta\colon p\longrightarrow_{\mathcal L} \ell Zr
\]
such that
\[
  \Lambda_L^{x_+}[Z]\Lambda_R^{y_+}
  =
  \Lambda_L^{x_-}[Z]\Lambda_R^{y_-}.
\]

Conversely, suppose that \(Z\colon v_L\to v_R\) is a nonempty inner
\(1\)-path and that there is a positive diagram
\[
  \Theta\colon p\longrightarrow_{\mathcal L} \ell Zr.
\]
If \(\alpha,\beta,\gamma,\delta\) are nonnegative integers and
\[
  \Lambda_L^\alpha[Z]\Lambda_R^\beta
  =
  \Lambda_L^\gamma[Z]\Lambda_R^\delta,
\]
then
\[
  \bigl((\alpha-\gamma)d_L,\,
        (\beta-\delta)d_R\bigr)\in A.
\]
\end{proposition}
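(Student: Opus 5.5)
Both directions rest on the branch-pair description of elements of \(H\) via accepted tree pairs, and on reading off the endpoint slopes from the leftmost and rightmost branch pairs.

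\emph{First assertion.} Given \((d_Lx,d_Ry)\in A\), choose \(h\in H\) with \(\pi_{\mathrm{ab}}(h)=(d_Lx,d_Ry)\). Take an accepted tree pair for \(h\) and expand it until two conditions hold. First, the leftmost domain leaf is \(0^{m}\) and the leftmost range leaf is \(0^{n}\) with \(m,n\geq N_L\), \(m\equiv n\equiv N_L\pmod{d_L}\). Second, the analogous statement holds on the right. Expansion keeps \(m-n=d_Lx\), so the congruences are compatible; we may also refine so that \(0^m1\)-type companions appear. Then \(\ell_m=\ell_n=\ell\) and \(r=r_{m'}=r_{n'}\) by Lemma~\ref{lem:boundary-periodicity}.

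The domain tree gives a positive diagram \(p\to \ell\,W\,r\), and the range tree gives \(p\to\ell\,W'\,r\). Here \(W,W'\) are the middle subdivision words, and \(W=W'\) as \(1\)-paths because the branch pairs have \(u^+=v^+\) (Lemma~\ref{lem:acceptance-branches}). Now compare this with a fixed reference. Choose the tree in which \(0^{m}\) and \(1^{m'}\) are leaves, say with \(m\geq n\), \(m'\geq n'\) after relabelling. Passing from the shorter leftmost leaf \(0^{n}\) to \(0^{m}\) replaces \(\ell\) by \(\ell B_L^{(m-n)/d_L}\) (the left comb), and similarly on the right.

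So set \(Z\) to be the middle word of the coarser-at-both-ends tree, arranged so that it is nonempty. One tree pair then reads \(\ell\,B_L^{x_+}ZB_R^{y_+}r\) on one side and \(\ell\,B_L^{x_-}ZB_R^{y_-}r\) on the other, up to homotopy of middle parts. Cancelling \(\ell,r\) gives \(\Lambda_L^{x_+}[Z]\Lambda_R^{y_+}=\Lambda_L^{x_-}[Z]\Lambda_R^{y_-}\). The main care is sign bookkeeping. If the left and right signs disagree, take \(Z\) from a tree in which \(0^{\max}\) and \(1^{\min}\), etc., are leaves; the comb diagrams can always be inserted below \(\ell\) or \(r\) as needed. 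Alternatively, take the common refinement that is deepest at both ends, and realize \(Z\) as the middle word of \(\Theta\).

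\emph{Converse.} Suppose \(\Lambda_L^\alpha[Z]\Lambda_R^\beta=\Lambda_L^\gamma[Z]\Lambda_R^\delta\). Extend \(\Theta\) by combs to positive diagrams \(p\to\ell B_L^\alpha ZB_R^\beta r\) and \(p\to\ell B_L^\gamma Z B_R^\delta r\). Both forget to binary trees, with leftmost leaves \(0^{N_L+\alpha d_L+c}\) and \(0^{N_L+\gamma d_L+c}\) respectively, where \(c\) is the depth offset from \(\Theta\), and similarly on the right. The middle paths are homotopic, so take a common positive expansion (Lemma~\ref{lem:common-expansion}) of the middles, leaving \(\ell,r\) untouched. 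This yields two positive diagrams from \(p\) with identical bottom path. As in Proposition~\ref{prop:exact-transport}, this gives an element \(h\in H\) (by closedness) whose leftmost branch pair is \(0^{a}\to0^{a'}\) with \(a-a'=(\alpha-\gamma)d_L\), and similarly on the right. Hence \(\pi_{\mathrm{ab}}(h)=((\alpha-\gamma)d_L,(\beta-\delta)d_R)\in A\).

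The main obstacle is the first direction: arranging the expansions so that a single nonempty \(Z\) and a single \(\Theta\) serve both sides with the exponents \(x_\pm,y_\pm\) exactly.
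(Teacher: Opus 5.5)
Your converse is correct and matches the paper's argument. You develop \(\Theta\) through \(\alpha,\beta\) (respectively \(\gamma,\delta\)) complete periods, insert a common positive expansion of the two middle blocks without touching \(\ell\) and \(r\), and read off the endpoint vector of the resulting spherical diagram over \(\mathcal L\), which represents an element of \(H\) by closedness.

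The first direction has a genuine gap, at exactly the step you flag as the main obstacle.

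\emph{Why cancellation fails.} You cannot obtain \(\Lambda_L^{x_+}[Z]\Lambda_R^{y_+}=\Lambda_L^{x_-}[Z]\Lambda_R^{y_-}\) by cancelling \(\ell\) and \(r\) from a homotopy of whole paths. By Lemma~\ref{lem:boundary-paths}, any two nonempty paths from \(\iota_{\mathcal L}\) to \(\tau_{\mathcal L}\) are homotopic. So \(\ell Pr\simeq\ell Qr\) carries no information about \(P\) and \(Q\). For instance, in the rank-one case \(\ell B_L^nZr\simeq p\) for every \(n\), although the classes \(\Lambda_L^n[Z]\) are pairwise distinct (cf.\ Corollary~\ref{cor:rank-one-infinite-paths}). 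What you need is a literal equality of labelled \(1\)-paths \(\ell Mr\), in which \(M\) is, blockwise, a positive expansion of each of the two middle paths.

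\emph{Why your comparison does not supply it.} Attaching combs at the extreme leaves alone does not make the domain and range trees coincide. Each tree may contain middle carets that the other lacks. You are therefore still left with two different bottom paths, related only by the vacuous homotopy above.

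\emph{The missing step.} Pass to the least common expansion \(U\) of the domain and range trees, that is, the union of their caret sets.
\begin{itemize}
\item After your expansions, \(\Psi_{T_+}\) and \(\Psi_{T_-}\) have the same bottom path \(\ell Zr\), and \(\Psi_U\), with bottom path \(\ell Mr\), refines both.
\item The domain-side refinement adds exactly \(d_Lx_-\) carets along the left spine and \(d_Ry_-\) along the right spine. By Lemma~\ref{lem:extreme-expansion}, together with the congruence of the extreme depths to \(N_L\) and \(N_R\) and the periodicity of the companion edges, \(M\) is a positive expansion of \(B_L^{x_-}ZB_R^{y_-}\).
\item The range-side refinement shows in the same way that \(M\) is a positive expansion of \(B_L^{x_+}ZB_R^{y_+}\).
\item Only now does removing the first and last edges of the single path \(\ell Mr\) give the required homotopy of middles.
\end{itemize}

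\emph{Signs.} This construction treats all sign patterns of \((x,y)\) at once. Your relabelling cannot do this: replacing \(h\) by \(h^{-1}\) flips both signs simultaneously. The mixed-sign case is therefore left unresolved in your sketch.
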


\begin{proof}
Choose \(h\in H\) with
\[
  \pi_{\mathrm{ab}}(h)=(d_Lx,d_Ry).
\]
Let \((T_+,T_-)\) be the reduced tree pair for \(h\), with \(T_+\) the
domain tree and \(T_-\) the range tree.  Since \(H\) is closed, its
tree-pair diagram is accepted.  By
Lemma~\ref{lem:acceptance-reduction}, every common expansion of this
tree pair is accepted.

If necessary, first expand a paired leaf so that both trees have at
least three leaves.  Then expand the paired extreme leaves
simultaneously until their leftmost depths are at least \(N_L\) and
congruent to \(N_L\) modulo \(d_L\), and their rightmost depths are at
least \(N_R\) and congruent to \(N_R\) modulo \(d_R\).

Lemma~\ref{lem:acceptance-branches} therefore gives the same bottom
\(1\)-path for the positive diagrams \(\Psi_{T_+}\) and
\(\Psi_{T_-}\).

Let \(m_L,m_R\) be the leftmost and rightmost depths of the domain
tree, and let \(n_L,n_R\) be those of the range tree.  Then
\[
  m_L-n_L=d_Lx,
  \qquad
  m_R-n_R=d_Ry.
\]
By Lemma~\ref{lem:boundary-periodicity}, the first edge in their common
bottom path is labelled \(\ell\), and the final edge is labelled \(r\).
Writing this common bottom path as
\[
  \ell Zr,
\]
the path \(Z\) is nonempty because the trees have at least three
leaves.  Every non-extreme leaf address contains both digits.
Lemma~\ref{lem:boundary-incidence} therefore shows that every edge of
\(Z\) is inner.  Either \(\Psi_{T_+}\) or \(\Psi_{T_-}\) may be taken
as the positive diagram \(\Theta\).

Let \(U\) be the least common expansion of the domain and range trees,
obtained by taking the union of their caret sets.  Its leftmost and
rightmost depths are
\[
  \max\{m_L,n_L\},
  \qquad
  \max\{m_R,n_R\}.
\]
Consequently, the domain tree acquires \(d_Lx_-\) additional left
levels and \(d_Ry_-\) additional right levels, whereas the range tree
acquires \(d_Lx_+\) additional left levels and \(d_Ry_+\) additional
right levels.

Both refinements have the same final bottom \(1\)-path.  Its extreme
edges are again labelled \(\ell\) and \(r\), so write it as
\[
  \ell Mr.
\]
Applying Lemma~\ref{lem:extreme-expansion} to the first and final parts
of the domain refinement gives a positive diagram
\[
  B_L^{x_-}ZB_R^{y_-}\longrightarrow_{\mathcal L} M.
\]
The range refinement similarly gives a positive diagram
\[
  B_L^{x_+}ZB_R^{y_+}\longrightarrow_{\mathcal L} M.
\]
Figure~\ref{fig:endpoint-common-refinement} displays these two middle
diagrams and the endpoints of all six top blocks.

\begin{figure}[htbp]
\centering
\begin{tikzpicture}[x=.67cm,y=.68cm]
  \begin{scope}[yshift=4.75cm]
    \foreach \i in {0,...,14}
      \coordinate (a\i) at (\i,0);

    \path[fill=diagramred!8]
      (a1) .. controls +(.48,1.02) and +(-.48,1.02) .. (a3)
      -- (a2) -- cycle;
    \path[fill=diagramred!8]
      (a3) .. controls +(.48,1.02) and +(-.48,1.02) .. (a5)
      -- (a4) -- cycle;
    \path[fill=diagramred!8]
      (a5) .. controls +(.48,1.02) and +(-.48,1.02) .. (a7)
      -- (a6) -- cycle;
    \path[fill=diagramred!8]
      (a7) .. controls +(.82,1.58) and +(-.82,1.58) .. (a10)
      .. controls +(-.48,.72) and +(.48,.72) .. (a8) -- cycle;
    \path[fill=diagramred!13]
      (a8) .. controls +(.48,.72) and +(-.48,.72) .. (a10)
      -- (a9) -- cycle;
    \path[fill=diagramred!8]
      (a10) .. controls +(.48,1.02) and +(-.48,1.02) .. (a12)
      -- (a11) -- cycle;

    \draw[s23common] (a0)--(a1)--(a2)--(a3)--(a4)--(a5)--(a6)--(a7)
      --(a8)--(a9)--(a10)--(a11)--(a12)--(a13)--(a14);
    \draw[s23upper] (a1) .. controls +(.48,1.02) and +(-.48,1.02) .. (a3);
    \draw[s23upper] (a3) .. controls +(.48,1.02) and +(-.48,1.02) .. (a5);
    \draw[s23upper] (a5) .. controls +(.48,1.02) and +(-.48,1.02) .. (a7);
    \draw[s23upper] (a7) .. controls +(.82,1.58) and +(-.82,1.58) .. (a10);
    \draw[s23upperfaint]
      (a8) .. controls +(.48,.72) and +(-.48,.72) .. (a10);
    \draw[s23upper] (a10) .. controls +(.48,1.02) and +(-.48,1.02) .. (a12);
    \draw[s23upper] (a12)--(a13);
    \draw[s23edge] (a0)--(a1) (a13)--(a14);

    \foreach \i in {0,...,14}
      \node[s23vertex] at (a\i) {};
    \node[s23tiny,anchor=north east] at ($(a0)+(0,-.12)$)
      {$\iota_{\mathcal L}$};
    \node[s23tiny,anchor=north] at ($(a1)+(0,-.12)$) {$v_L$};
    \node[s23tiny,anchor=north] at ($(a5)+(0,-.12)$) {$v_L$};
    \node[s23tiny,anchor=north] at ($(a10)+(0,-.12)$) {$v_R$};
    \node[s23tiny,anchor=north] at ($(a13)+(0,-.12)$) {$v_R$};
    \node[s23tiny,anchor=north west] at ($(a14)+(0,-.12)$)
      {$\tau_{\mathcal L}$};
    \node[s23label,above=4pt] at ($(a0)!.5!(a1)$) {$\ell$};
    \node[s23label,above=4pt] at ($(a13)!.5!(a14)$) {$r$};

    \draw[s23bracket] ($(a1)+(0,2.00)$) -- ++(0,.20) -|
      ($(a5)+(0,2.00)$);
    \draw[s23bracket] ($(a5)+(0,2.00)$) -- ++(0,.20) -|
      ($(a10)+(0,2.00)$);
    \draw[s23bracket] ($(a10)+(0,2.00)$) -- ++(0,.20) -|
      ($(a13)+(0,2.00)$);
    \node[s23tiny] at ($(a1)!.5!(a5)+(0,2.42)$) {$B_L^{x_-}$};
    \node[s23label] at ($(a5)!.5!(a10)+(0,2.42)$) {$Z$};
    \node[s23tiny] at ($(a10)!.5!(a13)+(0,2.42)$) {$B_R^{y_-}$};
    \draw[s23bracket] ($(a1)+(0,-.72)$) -- ++(0,-.20) -|
      ($(a13)+(0,-.72)$);
    \node[s23label,anchor=north] at ($(a1)!.5!(a13)+(0,-1.02)$) {$M$};
    \node[font=\small] at (7,2.95) {\textup{(a)} domain-side refinement};
  \end{scope}

  \begin{scope}
    \foreach \i in {0,...,14}
      \coordinate (b\i) at (\i,0);

    \path[fill=diagramblue!8]
      (b1) .. controls +(.48,1.02) and +(-.48,1.02) .. (b3)
      -- (b2) -- cycle;
    \path[fill=diagramblue!8]
      (b4) .. controls +(.48,1.02) and +(-.48,1.02) .. (b6)
      -- (b5) -- cycle;
    \path[fill=diagramblue!8]
      (b6) .. controls +(.82,1.58) and +(-.82,1.58) .. (b9)
      .. controls +(-.48,.72) and +(.48,.72) .. (b7) -- cycle;
    \path[fill=diagramblue!13]
      (b7) .. controls +(.48,.72) and +(-.48,.72) .. (b9)
      -- (b8) -- cycle;
    \path[fill=diagramblue!8]
      (b9) .. controls +(.48,1.02) and +(-.48,1.02) .. (b11)
      -- (b10) -- cycle;
    \path[fill=diagramblue!8]
      (b11) .. controls +(.48,1.02) and +(-.48,1.02) .. (b13)
      -- (b12) -- cycle;

    \draw[s23common] (b0)--(b1)--(b2)--(b3)--(b4)--(b5)--(b6)--(b7)
      --(b8)--(b9)--(b10)--(b11)--(b12)--(b13)--(b14);
    \draw[s23lower] (b1) .. controls +(.48,1.02) and +(-.48,1.02) .. (b3);
    \draw[s23lower] (b3)--(b4);
    \draw[s23lower] (b4) .. controls +(.48,1.02) and +(-.48,1.02) .. (b6);
    \draw[s23lower] (b6) .. controls +(.82,1.58) and +(-.82,1.58) .. (b9);
    \draw[s23lowerfaint]
      (b7) .. controls +(.48,.72) and +(-.48,.72) .. (b9);
    \draw[s23lower] (b9) .. controls +(.48,1.02) and +(-.48,1.02) .. (b11);
    \draw[s23lower] (b11) .. controls +(.48,1.02) and +(-.48,1.02) .. (b13);
    \draw[s23edge] (b0)--(b1) (b13)--(b14);

    \foreach \i in {0,...,14}
      \node[s23vertex] at (b\i) {};
    \node[s23tiny,anchor=north east] at ($(b0)+(0,-.12)$)
      {$\iota_{\mathcal L}$};
    \node[s23tiny,anchor=north] at ($(b1)+(0,-.12)$) {$v_L$};
    \node[s23tiny,anchor=north] at ($(b4)+(0,-.12)$) {$v_L$};
    \node[s23tiny,anchor=north] at ($(b9)+(0,-.12)$) {$v_R$};
    \node[s23tiny,anchor=north] at ($(b13)+(0,-.12)$) {$v_R$};
    \node[s23tiny,anchor=north west] at ($(b14)+(0,-.12)$)
      {$\tau_{\mathcal L}$};
    \node[s23label,above=4pt] at ($(b0)!.5!(b1)$) {$\ell$};
    \node[s23label,above=4pt] at ($(b13)!.5!(b14)$) {$r$};

    \draw[s23bracket] ($(b1)+(0,2.00)$) -- ++(0,.20) -|
      ($(b4)+(0,2.00)$);
    \draw[s23bracket] ($(b4)+(0,2.00)$) -- ++(0,.20) -|
      ($(b9)+(0,2.00)$);
    \draw[s23bracket] ($(b9)+(0,2.00)$) -- ++(0,.20) -|
      ($(b13)+(0,2.00)$);
    \node[s23tiny] at ($(b1)!.5!(b4)+(0,2.42)$) {$B_L^{x_+}$};
    \node[s23label] at ($(b4)!.5!(b9)+(0,2.42)$) {$Z$};
    \node[s23tiny] at ($(b9)!.5!(b13)+(0,2.42)$) {$B_R^{y_+}$};
    \draw[s23bracket] ($(b1)+(0,-.72)$) -- ++(0,-.20) -|
      ($(b13)+(0,-.72)$);
    \node[s23label,anchor=north] at ($(b1)!.5!(b13)+(0,-1.02)$) {$M$};
    \node[font=\small] at (7,2.95) {\textup{(b)} range-side refinement};
  \end{scope}
\end{tikzpicture}
\caption{The two positive refinements used in
Proposition~\ref{prop:endpoint-relations}, drawn separately.  In
panel~\textup{(a)}, the middle top \(1\)-path is
\(B_L^{x_-}ZB_R^{y_-}\); in panel~\textup{(b)}, it is
\(B_L^{x_+}ZB_R^{y_+}\).  In both panels the outer edges \(\ell\) and
\(r\) are unchanged, and the middle bottom \(1\)-path is the same
labelled path \(M\).  The marked junctions have labels
\(v_L,v_L,v_R,v_R\), so the three middle top blocks are composable.
Every displayed cell is positive, and no cell crosses a block
boundary.  A boundary block whose exponent is zero is omitted.}
\label{fig:endpoint-common-refinement}
\end{figure}

Passing to homotopy classes gives
\[
  \Lambda_L^{x_+}[Z]\Lambda_R^{y_+}
  =
  \Lambda_L^{x_-}[Z]\Lambda_R^{y_-}.
\]

For the converse, the assumed equality means that
\[
  B_L^\alpha ZB_R^\beta
  \simeq
  B_L^\gamma ZB_R^\delta.
\]
Lemma~\ref{lem:common-expansion} supplies positive diagrams from these
two \(1\)-paths to a common positive expansion \(M\).

Beginning with \(\Theta\), develop its first bottom edge through
\(\alpha\) complete left periods and its final bottom edge through
\(\beta\) complete right periods.  Attach the first middle expansion
between the resulting outer edges.  This produces a positive diagram
\[
  \Theta_{\alpha,\beta}\colon
  p\longrightarrow_{\mathcal L} \ell Mr.
\]
The same construction with \(\gamma,\delta\) gives
\[
  \Theta_{\gamma,\delta}\colon
  p\longrightarrow_{\mathcal L} \ell Mr.
\]

Suppose that the underlying tree of \(\Theta\) has leftmost and
rightmost depths \(s_L,s_R\).  The underlying trees of these two
diagrams have respective extreme depths
\[
  s_L+\alpha d_L,
  \qquad
  s_R+\beta d_R,
\]
and
\[
  s_L+\gamma d_L,
  \qquad
  s_R+\delta d_R.
\]
The spherical diagram
\[
  \Theta_{\alpha,\beta}
  \circ
  \Theta_{\gamma,\delta}^{-1}
\]
therefore has endpoint vector
\[
  \bigl((\alpha-\gamma)d_L,\,
        (\beta-\delta)d_R\bigr).
\]
It is a spherical \((p,p)\)-diagram over \(\mathcal L\).  By
Theorem~\ref{thm:core-closure}\textup{(i)} and the closedness of \(H\),
it represents an element of \(H\).  Hence its endpoint vector belongs
to \(A\).
\end{proof}

\begin{corollary}[Rectangular and coupled boundary relations]
\label{cor:boundary-relations}
\begin{enumerate}[label=\textup{(\arabic*)}]
\item If
      \[
        A=d_L\mathbb Z\times d_R\mathbb Z,
      \]
      then there are nonempty inner \(1\)-paths
      \[
        Z_L,Z_R\colon v_L\longrightarrow v_R
      \]
      and positive diagrams
      \[
        p\longrightarrow_{\mathcal L} \ell Z_Lr,
        \qquad
        p\longrightarrow_{\mathcal L} \ell Z_Rr
      \]
      such that
      \[
        \Lambda_L[Z_L]=[Z_L],
        \qquad
        [Z_R]\Lambda_R=[Z_R].
      \]
\item There are an integer \(t\), a nonempty inner \(1\)-path
      \[
        Z\colon v_L\longrightarrow v_R,
      \]
      and a positive diagram
      \[
        p\longrightarrow_{\mathcal L} \ell Zr
      \]
      such that
      \[
        (-d_L,t\,d_R)\in A
      \]
      and
      \[
        \Lambda_L[Z]=[Z]\Lambda_R^t
        \qquad\text{if }t\geq0,
      \]
      whereas
      \[
        \Lambda_L[Z]\Lambda_R^{-t}=[Z]
        \qquad\text{if }t<0.
      \]
      If \(A\neq d_L\mathbb Z\times d_R\mathbb Z\), then every integer
      \(t\) satisfying \((-d_L,t\,d_R)\in A\) is nonzero.
\end{enumerate}
\end{corollary}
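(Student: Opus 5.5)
Both parts follow by applying the first half of Proposition~\ref{prop:endpoint-relations} to suitably chosen vectors of \(A\) and reading off the resulting relation with the conventions \(t=t_+-t_-\).

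For part \textup{(1)}, assume \(A=d_L\mathbb Z\times d_R\mathbb Z\). Then \((-d_L,0)\in A\), which corresponds to \(x=-1\), \(y=0\). Thus \(x_+=0\), \(x_-=1\) and \(y_\pm=0\). Proposition~\ref{prop:endpoint-relations} then supplies a nonempty inner \(1\)-path \(Z_L\colon v_L\to v_R\) and a positive diagram \(p\to\ell Z_Lr\) with
\[
  [Z_L]=\Lambda_L[Z_L].
\]
Similarly, \((0,d_R)\in A\) gives \(x=0\), \(y=1\), so \(y_+=1\) and \(y_-=0\). The proposition then produces \(Z_R\) with \([Z_R]\Lambda_R=[Z_R]\).

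For part \textup{(2)}, I first need an integer \(t\) with \((-d_L,t\,d_R)\in A\). By the definition of \(d_L\) there is some \((-d_L,b)\in A\); this is the same vector used at the start of the proof of Lemma~\ref{lem:boundary-periodicity}. Every second coordinate of an element of \(A\) is divisible by \(d_R\) by the definition of \(d_R\), so \(b=t\,d_R\) for some integer \(t\). Applying Proposition~\ref{prop:endpoint-relations} with \(x=-1\) and \(y=t\) gives \(Z\) and \(\Theta\) satisfying
\[
  \Lambda_L^{0}[Z]\Lambda_R^{t_+}=\Lambda_L^{1}[Z]\Lambda_R^{t_-}.
\]
If \(t\geq0\) this reads \([Z]\Lambda_R^t=\Lambda_L[Z]\). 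If \(t<0\) it reads \([Z]=\Lambda_L[Z]\Lambda_R^{-t}\). These are the two displayed relations, up to the side on which they are written.

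For the last assertion, suppose \((-d_L,0)\in A\). Then \((d_L,0)\in A\) as well. By the definition of \(d_R\), \(A\) contains elements whose second coordinates have gcd \(d_R\), and an integer combination of them gives some \((c,d_R)\in A\). Since every first coordinate in \(A\) is a multiple of \(d_L\), we have \(c=kd_L\) for some integer \(k\), and subtracting \(k(d_L,0)\) yields \((0,d_R)\in A\). Hence \(A\supseteq d_L\mathbb Z\times d_R\mathbb Z\). The reverse inclusion holds by the definitions of \(d_L\) and \(d_R\), so \(A=d_L\mathbb Z\times d_R\mathbb Z\). This is the contrapositive of the claim. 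The only point needing care is this gcd bookkeeping; everything else is bookkeeping of \(t_\pm\).
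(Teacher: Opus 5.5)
Your proposal is correct and follows the paper's proof: apply Proposition~\ref{prop:endpoint-relations} to \((-d_L,0)\) and to a vector on the second axis for part~\textup{(1)}, to \((-d_L,t\,d_R)\) for part~\textup{(2)}, and run the same gcd argument to show that \(t=0\) forces \(A=d_L\mathbb Z\times d_R\mathbb Z\). Your use of \((0,d_R)\) where the paper uses \((0,-d_R)\) yields the identical relation \([Z_R]\Lambda_R=[Z_R]\), so this is not a substantive difference.
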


\begin{proof}
For the first assertion, apply Proposition~\ref{prop:endpoint-relations}
to the vectors
\[
  (-d_L,0)
  \qquad\text{and}\qquad
  (0,-d_R).
\]

For the second, the definition of \(d_L\) gives a vector of \(A\) whose
first coordinate is \(-d_L\).  Its second coordinate belongs to
\(d_R\mathbb Z\), so it has the form
\[
  (-d_L,t\,d_R)
\]
for some \(t\in\mathbb Z\).  Proposition~\ref{prop:endpoint-relations}
gives the displayed relation according to the sign of \(t\).

Finally, suppose that \(t=0\).  By the definition of \(d_R\), there
is an element \((a,d_R)\in A\).  Since \(d_L\mid a\), write
\(a=s\,d_L\), where \(s\in\mathbb Z\).  Together with
\((-d_L,0)\in A\), this gives \((0,d_R)\in A\).  Thus \(A\) equals
\[
  d_L\mathbb Z\times d_R\mathbb Z.
\]
This proves the final assertion.
\end{proof}

\begin{corollary}[Rank-one endpoint image]
\label{cor:rank-one-infinite-paths}
Suppose that \(A\) has rank one, say
\[
  A=\mathbb Z(a_L,a_R),
\]
where \(a_La_R\neq0\).
Then there are infinitely many pairwise
nonhomotopic nonempty inner \(1\)-paths from \(v_L\) to \(v_R\).
Equivalently, \(\Path(\mathcal L)(v_L,v_R)\) is infinite.
\end{corollary}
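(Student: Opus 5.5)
The plan is to produce an explicit infinite family of inner paths from \(v_L\) to \(v_R\), namely \(B_L^{\,i}Z\) for \(i\geq0\). I will then show that they are pairwise nonhomotopic using the converse half of Proposition~\ref{prop:endpoint-relations}. The key point is that the rank-one hypothesis leaves \(A\) with no nonzero vector whose second coordinate vanishes.

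First I need a suitable \(Z\). Apply the first half of Proposition~\ref{prop:endpoint-relations} with \(x=y=0\); this is allowed since \((0,0)\in A\). It gives a nonempty inner \(1\)-path \(Z\colon v_L\to v_R\) and a positive diagram \(\Theta\colon p\longrightarrow_{\mathcal L}\ell Zr\). The relation it asserts is trivial here, but only the existence of \(Z\) and \(\Theta\) is needed. Recall that \(B_L\) is a nonempty inner \(1\)-path from \(v_L\) to \(v_L\), since \(d_L\geq1\). Hence for each \(i\geq0\) the path \(B_L^{\,i}Z\) is a nonempty inner \(1\)-path from \(v_L\) to \(v_R\), and its class is \(\Lambda_L^{i}[Z]\).

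Next, suppose \(\Lambda_L^{i}[Z]=\Lambda_L^{j}[Z]\) for some \(i,j\geq0\). Apply the converse part of Proposition~\ref{prop:endpoint-relations} with \(\alpha=i\), \(\gamma=j\) and \(\beta=\delta=0\). This yields \(\bigl((i-j)d_L,0\bigr)\in A\). Every element of \(A=\mathbb Z(a_L,a_R)\) has the form \(k(a_L,a_R)\), and its second coordinate \(ka_R\) vanishes only when \(k=0\), because \(a_R\neq0\). Therefore \((i-j)d_L=0\), and since \(d_L\geq1\) this gives \(i=j\). So the classes \(\Lambda_L^{i}[Z]\), \(i\geq0\), are pairwise distinct elements of \(\Path(\mathcal L)(v_L,v_R)\), which is therefore infinite.

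There is no serious obstacle once Proposition~\ref{prop:endpoint-relations} is available. The only points to check are that its converse hypotheses are met: we need a positive diagram \(p\to\ell Zr\), which the first half supplies, and nonnegative exponents, which we have. One could equally use \([Z]\Lambda_R^{i}\) together with \(a_L\neq0\).
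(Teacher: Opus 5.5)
Your proof is correct and is essentially the paper's argument: the paper also shows that the classes \(\Lambda_L^n[Z]\) are pairwise distinct, using the converse half of Proposition~\ref{prop:endpoint-relations} and the fact that every nonzero vector of \(\mathbb Z(a_L,a_R)\) has nonzero second coordinate. The only cosmetic difference is where \(Z\) and \(\Theta\) come from: the paper takes them from Corollary~\ref{cor:boundary-relations}\textup{(2)}, while you take them from the trivial vector \((0,0)\), which works equally well since only their existence is used.
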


\begin{proof}
Choose \(Z\colon v_L\to v_R\) and a positive diagram
\[
  \Theta\colon p\longrightarrow_{\mathcal L}\ell Zr
\]
as in Corollary~\ref{cor:boundary-relations}\textup{(2)}.  We claim that
the classes
\[
  \Lambda_L^n[Z]
  \qquad(n\geq1)
\]
are pairwise distinct.  Indeed, if
\[
  \Lambda_L^n[Z]=\Lambda_L^m[Z]
\]
for some \(n>m\geq1\), the converse direction of
Proposition~\ref{prop:endpoint-relations}, with \(\alpha=n\),
\(\gamma=m\), and \(\beta=\delta=0\), gives
\[
  ((n-m)d_L,0)\in A.
\]
This is impossible because every nonzero element of
\(\mathbb Z(a_L,a_R)\) has nonzero second coordinate.  Hence the
displayed classes are pairwise distinct.
\end{proof}

\section{The rank-one endpoint image}
\label{sec:rank-one-endpoint-image}

For \(n\geq0\), a group \(G\) is of type \(\mathrm{FP}_n\) if the
trivial \(\mathbb ZG\)-module \(\mathbb Z\) has a projective resolution
whose modules in dimensions \(0,\ldots,n\) are finitely generated.
It is of type \(\mathrm{FP}_\infty\) if it has such a resolution with
finitely generated modules in every dimension.

For \(f\in F\), let \(\operatorname{Br}(f)\) denote the finite set of
breakpoints of \(f\).  We shall use the following lemma; see
\cite[Lemma~2.2]{BleakBroughHermiller21}.

\begin{lemma}
\label{lem:breakpoint-orbits}
Let \(G\leq F\) be finitely generated.  Then
\[
  \bigcup_{g\in G}\operatorname{Br}(g)
\]
is contained in a finite union of \(G\)-orbits.
\end{lemma}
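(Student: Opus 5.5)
The plan is to bound the breakpoints of a product by the breakpoints of its factors, using the chain rule, and then induct on word length in a finite generating set. Fix a finite symmetric generating set \(S\) of \(G\), and put
\[
  B=\bigcup_{s\in S}\operatorname{Br}(s).
\]
This is a finite set. It suffices to show that every breakpoint of every element of \(G\) lies in the \(G\)-orbit of some point of \(B\). Then \(\bigcup_{g\in G}\operatorname{Br}(g)\) is contained in the union of the at most \(|B|\) orbits \(Gb\), \(b\in B\).

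\emph{Step 1 (chain rule).} For \(f,g\in F\), recall that \(fg\) means \(x\mapsto g(f(x))\). I claim
\[
  \operatorname{Br}(fg)\subseteq\operatorname{Br}(f)\cup f^{-1}\bigl(\operatorname{Br}(g)\bigr).
\]
Take a point \(x\) with \(x\notin\operatorname{Br}(f)\) and \(f(x)\notin\operatorname{Br}(g)\). Then \(f\) is linear on a neighbourhood \(U\) of \(x\), and \(g\) is linear on a neighbourhood of \(f(x)\). Shrinking \(U\) so that \(f(U)\) lies in that neighbourhood, the composite \(g\circ f\) is affine on \(U\). Hence \(x\) is not a breakpoint of \(fg\).

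\emph{Step 2 (induction on word length).} Write an arbitrary \(g\in G\) as \(g=s_1s_2\cdots s_n\) with \(s_i\in S\), and set \(g_k=s_1\cdots s_k\), with \(g_0=1\). Applying Step 1 repeatedly to \(g_k=g_{k-1}s_k\) gives
\[
  \operatorname{Br}(g)\subseteq\bigcup_{k=1}^{n} g_{k-1}^{-1}\bigl(\operatorname{Br}(s_k)\bigr)\subseteq\bigcup_{k=1}^{n} g_{k-1}^{-1}(B).
\]
Each \(g_{k-1}^{-1}\) lies in \(G\), so each point on the right lies in \(Gb\) for some \(b\in B\). This proves the claim, and hence the lemma.

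There is no serious obstacle. The only points requiring care are the left-to-right composition convention, which is what makes the preimage appear as \(f^{-1}(\operatorname{Br}(g))\) rather than \(f(\cdot)\), and the remark that breakpoints are interior points of \([0,1]\), so the local argument in Step 1 applies.
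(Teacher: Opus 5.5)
Your argument is correct and complete: the inclusion \(\operatorname{Br}(fg)\subseteq\operatorname{Br}(f)\cup f^{-1}(\operatorname{Br}(g))\) is right for the paper's left-to-right convention \((fg)(x)=g(f(x))\). The induction on word length then places every breakpoint in \(\bigcup_{b\in B}Gb\). The paper gives no proof of its own and simply cites \cite[Lemma~2.2]{BleakBroughHermiller21}; your chain-rule argument is the standard elementary proof of that fact.
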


\begin{lemma}
\label{lem:nonkernel-no-dyadic-fixed-point}
Let \(H\leq F\) be closed, suppose that \(C(H)\) is finite and full, and
suppose that
\[
  A=\pi_{\mathrm{ab}}(H)
\]
has rank one.  If \(h\in H\) fixes a point of \(\D\), then
\[
  \pi_{\mathrm{ab}}(h)=(0,0).
\]
\end{lemma}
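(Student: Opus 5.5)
The plan is to split \(h\) at its fixed point into two piecewise-\(H\) functions, one carrying each endpoint slope, and then use the shape of the rank-one group \(A\).

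Let \(h\in H\) fix \(a\in\D\). Define two functions \(f_L,f_R\colon[0,1]\to[0,1]\):
\[
  f_L(x)=\begin{cases}h(x),&x\in[0,a],\\ x,&x\in[a,1],\end{cases}
  \qquad
  f_R(x)=\begin{cases}x,&x\in[0,a],\\ h(x),&x\in[a,1].\end{cases}
\]

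First we check that \(f_L,f_R\in F\). Since \(h(a)=a\), the map \(h\) carries \([0,a]\) onto itself and \([a,1]\) onto itself. Hence \(f_L\) and \(f_R\) are increasing homeomorphisms of \([0,1]\). They are piecewise linear with slopes powers of \(2\). Their breakpoints are breakpoints of \(h\) together possibly with \(a\), and all of these are dyadic.

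Next we check that they are piecewise-\(H\). Take the subdivision \(0<a<1\); on each of the two pieces, \(f_L\) and \(f_R\) agree either with \(h\) or with the identity, and both lie in \(H\). Hence \(f_L,f_R\in\Cl(H)\). Since \(H\) is closed, \(f_L,f_R\in H\).

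Now compute their endpoint slopes. Near \(0\), \(f_L\) agrees with \(h\), and near \(1\) it is the identity. So, writing \(\pi_{\mathrm{ab}}(h)=(s,t)\), we get
\[
  \pi_{\mathrm{ab}}(f_L)=(s,0),
  \qquad
  \pi_{\mathrm{ab}}(f_R)=(0,t),
\]
and both vectors lie in \(A\).

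Finally, we use the rank-one hypothesis. Because \(C(H)\) is finite and full, the discussion preceding Lemma~\ref{lem:boundary-periodicity} shows that \(A=\mathbb Z(a_L,a_R)\) with \(a_La_R\neq0\). Every nonzero element \(k(a_L,a_R)\) of \(A\) therefore has both coordinates nonzero. Since \((s,0)\in A\), this forces \(s=0\). Likewise \((0,t)\in A\) forces \(t=0\). Hence \(\pi_{\mathrm{ab}}(h)=(0,0)\).

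The only point requiring care is that the spliced functions really lie in \(F\), which depends on \(a\) being a dyadic fixed point of \(h\). The rest is immediate from closedness and from the structure of \(A\).
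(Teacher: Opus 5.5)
Your proof is correct and is the paper's argument. The paper also splices \(h\) at the dyadic fixed point (your \(f_L\)), uses closedness to place it in \(H\), and notes that \((ma_L,0)\notin\mathbb Z(a_L,a_R)\) unless \(m=0\). Your second splice \(f_R\) is harmless but unnecessary: \(s=ma_L=0\) already forces \(m=0\), and hence \(t=ma_R=0\).
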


\begin{proof}
Since \(C(H)\) is finite, as noted above, there are nonzero integers
\(a_L,a_R\) such that
\[
  A=\mathbb Z(a_L,a_R).
\]
Suppose that \(h\) fixes \(c\in\D\) and that
\[
  \pi_{\mathrm{ab}}(h)=m(a_L,a_R),
  \qquad m\neq0.
\]
Define
\[
  h_c(t)=
  \begin{cases}
    h(t),&0\leq t\leq c,\\
    t,&c\leq t\leq1.
  \end{cases}
\]
The element \(h_c\) is a piecewise-\(H\) function belonging to \(F\).
Since \(H\) is closed, \(h_c\in H\).  On the other hand,
\[
  \pi_{\mathrm{ab}}(h_c)=(ma_L,0),
\]
which does not belong to \(\mathbb Z(a_L,a_R)\).  This contradiction
proves the lemma.
\end{proof}

We shall use the following consequence of the Bieri--Strebel splitting
theorem.

\begin{theorem}[Bieri--Strebel]
\label{thm:bieri-strebel-consequence}
Let \(G\) be of type \(\mathrm{FP}_2\), suppose that \(G\) contains no
nonabelian free subgroup, and let
\[
  \chi\colon G\longrightarrow\mathbb Z
\]
be an epimorphism with nontrivial kernel.  Then there are a nontrivial
finitely generated subgroup
\[
  K\leq\ker\chi
\]
and an element \(s\in G\) such that
\[
  \chi(s)\in\{1,-1\}
  \qquad\text{and}\qquad
  s^{-1}Ks\leq K.
\]
\end{theorem}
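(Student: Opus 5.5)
The plan is to deduce the statement from the Bieri--Strebel splitting theorem (in its version for groups of type \(\mathrm{FP}_2\), i.e.\ almost finitely presented groups), and then use the absence of free subgroups to force the splitting to be ascending.

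First, apply Bieri--Strebel to the epimorphism \(\chi\colon G\to\mathbb Z\). It presents \(G\) as an HNN extension
\[
  G=\langle B,t\mid t^{-1}at=\varphi(a),\ a\in A_1\rangle,
\]
with the following properties:
\begin{itemize}
\item the base group \(B\) is finitely generated;
\item \(B\) is contained in \(\ker\chi\);
\item the associated subgroups \(A_1,A_2=\varphi(A_1)\leq B\) are finitely generated;
\item the stable letter satisfies \(\chi(t)=1\).
\end{itemize}
In particular \(\ker\chi\) is the normal closure of \(B\), namely the union of the conjugates \(t^{n}Bt^{-n}\), \(n\in\mathbb Z\).

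Second, I use the hypothesis that \(G\) contains no nonabelian free subgroup to show the HNN extension is ascending, i.e.\ \(A_1=B\) or \(A_2=B\). Suppose instead that both \(A_1\) and \(A_2\) are proper subgroups of \(B\). Then in the Bass--Serre tree every vertex has at least two edges on each side (incoming and outgoing cosets). The group then acts on this tree without a fixed end and without an invariant line. The standard ping-pong argument, or an explicit Britton's-lemma argument with words like \(tbt^{-1}\) and \(t^{-1}ct\) for \(b\in B\setminus A_1\) and \(c\in B\setminus A_2\), produces a free subgroup of rank two. This contradicts the hypothesis.

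So, say, \(A_1=B\), giving \(t^{-1}Bt=\varphi(B)\leq B\). I take \(s=t\) and \(K=B\) in that case. In the other case, \(A_2=B\), I take \(s=t^{-1}\), so that \(\chi(s)=-1\). I will match this to the paper's left-to-right conjugation convention only at the end, by choosing between \(t\) and \(t^{-1}\) as needed. In either case \(\chi(s)=\pm1\) and \(s^{-1}Ks\leq K\).

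Third, \(K\) is nontrivial. If \(B\) were trivial, every conjugate \(t^nBt^{-n}\) would be trivial, so \(\ker\chi\) would be trivial, contrary to hypothesis. Finite generation of \(K\) is part of the splitting theorem.

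The main obstacle is the second step: making the free-subgroup argument for non-ascending HNN extensions precise. I would cite this standard fact from Bass--Serre theory rather than reprove it, and check only that no hypothesis beyond \(A_1\neq B\neq A_2\) is required.
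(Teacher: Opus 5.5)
Your proposal is correct and follows the paper's route: the paper gives no argument beyond citing Bieri--Strebel's Theorem~A (see also \cite[Theorem~2.7]{BGK}), and your deduction is the standard unpacking of that citation. That deduction consists of an HNN splitting with finitely generated base \(B\leq\ker\chi\) and \(\chi(t)=1\), ascendingness forced by the absence of nonabelian free subgroups, and then the choice \(K=B\), \(s=t^{\pm1}\).

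There are two minor slips. First, until the extension is known to be ascending, \(\ker\chi\) is generated by the conjugates \(t^nBt^{-n}\) but need not equal their union; generation is all your nontriviality argument needs. Second, for the free-subgroup step, rely on the Bass--Serre fact you cite rather than the suggested words \(tbt^{-1}\), \(t^{-1}ct\): these are involutions when \(b,c\) have order two and then generate only a dihedral group, for example when \(B=\mathbb Z/2\times\mathbb Z/2\) with distinct associated subgroups of order two.
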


This follows from \cite[Theorem~A]{BieriStrebel}; see also
\cite[Theorem~2.7]{BGK}.

\begin{theorem}
\label{thm:rank-one-not-FP2}
Let \(H\leq F\) be closed, and suppose that \(C(H)\) is finite and full.
If
\[
  A=\pi_{\mathrm{ab}}(H)
\]
has rank one, then \(H\) is not of type \(\mathrm{FP}_2\).
\end{theorem}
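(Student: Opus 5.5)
Suppose, for a contradiction, that \(H\) is of type \(\mathrm{FP}_2\). Since \(C(H)\) is finite, \(A=\mathbb Z(a_L,a_R)\) with \(a_La_R\neq0\). Define \(\chi\colon H\to\mathbb Z\) by \(\pi_{\mathrm{ab}}(h)=\chi(h)(a_L,a_R)\); this is an epimorphism. Its kernel is \(H\cap[F,F]\). The kernel is nontrivial: \(H\) has finitely many orbits on \(\D\), so some inner vertex is the type of two dyadic points \(a<b\) in the same orbit. Hence some \(h\in H\) fixes no neighbourhood of a dyadic point but moves it, and by closedness a piecewise-\(H\) construction gives a nontrivial element supported in a compact subinterval of \((0,1)\). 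Subgroups of \(F\) contain no nonabelian free subgroups (Brin--Squier). So Theorem~\ref{thm:bieri-strebel-consequence} applies. It gives a nontrivial finitely generated \(K\leq\ker\chi\) and \(s\in H\) with \(\chi(s)=\pm1\) and \(s^{-1}Ks\leq K\). Replacing \(s\) by \(s^{-1}\) changes the direction of the inclusion, so I keep track of both cases.

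Next I use the dynamics of \(s\). By Lemma~\ref{lem:nonkernel-no-dyadic-fixed-point}, \(s\) fixes no dyadic point, since \(\chi(s)\neq0\). The fixed points of an element of \(F\) in \((0,1)\) form a finite union of points and intervals with rational endpoints, and any nondegenerate fixed interval would contain dyadic points. Hence \(\operatorname{Fix}(s)\cap(0,1)\) is a finite set of non-dyadic rationals \(c_1<\dots<c_k\), possibly empty. On each component of \((0,1)\setminus\{c_i\}\), \(s\) moves every point in a fixed direction, toward one endpoint.

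Since \(K\leq[F,F]\cap H\) is finitely generated, each generator of \(K\) is the identity near \(0\) and near \(1\). Lemma~\ref{lem:breakpoint-orbits} says the breakpoints of elements of \(K\) lie in finitely many \(K\)-orbits. The key step is to exploit \(s^{-1}Ks\leq K\), i.e.\ \(s^{n}\)-conjugates of \(K\) lie in \(K\) for all \(n\geq0\) (in the chosen direction). The breakpoint set of \(s^{-n}ks^{n}\) is essentially the image of \(\operatorname{Br}(k)\) under \(s^{\pm n}\), up to breakpoints of \(s\). So \(K\) contains elements whose supports and breakpoints are pushed by powers of \(s\) toward attracting endpoints, which are among \(0\), \(1\) and the \(c_i\). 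I would then show that \(K\) acquires breakpoints accumulating at some \(c_i\), or at \(0\) or \(1\). These breakpoints lie in infinitely many \(K\)-orbits, because \(K\) is finitely generated, hence fixes a neighbourhood of \(0\) and \(1\) and has only finitely many orbitals with rational endpoints. This contradicts Lemma~\ref{lem:breakpoint-orbits}.

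The main obstacle is making this accumulation argument rigorous, in particular handling \(K\)'s own support. If \(K\) had no fixed points near an attracting \(c_i\), then \(c_i\) would lie inside an orbital of \(K\). I would instead argue near a point \(c\) that is an endpoint of an orbital of \(K\) (rational, finite in number). The conjugates \(s^{-n}Ks^{n}\subseteq K\) have orbital endpoints \(s^{\pm n}(c)\). Since \(K\) has finitely many orbitals, this forces \(s\) to permute the finitely many orbital endpoints of \(K\), so these endpoints are fixed by a power of \(s\). A power of \(s\) has \(\chi\neq0\) and is fixed-point-free on dyadics, so these endpoints are non-dyadic rationals. The slope of \(s\) at such a fixed point \(c\) must then be nontrivial: \(s\) is fixed-point-free near \(c\) on at least one side, and we can cut off \(s\) at a nearby dyadic point by closedness, exactly as in Lemma~\ref{lem:nonkernel-no-dyadic-fixed-point}. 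I would then compare germs. Conjugating an element \(k\in K\) whose germ at \(c\) is nontrivial by \(s^n\) scales its support toward \(c\) without bound, while \(K\) is finitely generated and its germs at \(c\) form a cyclic group. This forces infinitely many breakpoint orbits, contradicting Lemma~\ref{lem:breakpoint-orbits}. I expect this final germ and breakpoint bookkeeping to be the delicate part.
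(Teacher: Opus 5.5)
Your setup matches the paper's: the epimorphism \(\chi\), Brin--Squier, Theorem~\ref{thm:bieri-strebel-consequence}, and the use of Lemma~\ref{lem:nonkernel-no-dyadic-fixed-point} to show that \(s\) fixes no dyadic point. The gap is the step that should produce the contradiction. You never obtain it, and the mechanisms you sketch for it do not work.

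\begin{itemize}
\item Accumulation of breakpoints at a point does not place them in infinitely many \(K\)-orbits. A single \(K\)-orbit accumulates at the endpoints of the orbital of \(K\) containing it, and may even be dense in that orbital. So accumulation creates no conflict with Lemma~\ref{lem:breakpoint-orbits}.
\item The claim that \(s\) must permute the orbital endpoints of \(K\) does not follow. The inclusion \(s^{-n}Ks^n\leq K\) only gives \(s^n(\operatorname{Supp}K)\subseteq\operatorname{Supp}K\), and the orbitals of a subgroup need only be contained in orbitals of \(K\).
\item The closing germ argument is never made precise.
\end{itemize}

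The missing idea is to use Lemma~\ref{lem:breakpoint-orbits} as a pigeonhole statement. Its output is then fed back into Lemma~\ref{lem:nonkernel-no-dyadic-fixed-point}, applied to a product of a power of \(s\) with an element of \(K\) rather than to \(s\) itself.

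\begin{itemize}
\item Take \(1\neq k\in K\), and let \(x\) be the left endpoint of its leftmost orbital. Since \(k\in[F,F]\) is the identity on \([0,x]\) but not on any right neighbourhood of \(x\), the point \(x\) is a breakpoint of \(k\), hence dyadic.
\item For \(n\geq0\), the point \(x_n=s^n(x)\) is the left endpoint of the leftmost orbital of \(k_n=s^{-n}ks^n\in K\). So each \(x_n\) is a breakpoint of an element of \(K\).
\item The \(x_n\) are pairwise distinct, because \(\chi(s^m)\neq0\) for \(m\geq1\), so \(s^m\) fixes no dyadic point.
\item Lemma~\ref{lem:breakpoint-orbits} gives \(i<j\) and \(c\in K\) with \(c(x_i)=x_j=s^{j-i}(x_i)\).
\item Then \(g=s^{j-i}c^{-1}\) fixes the dyadic point \(x_i\), while \(\chi(g)=(j-i)\chi(s)\neq0\). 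This contradicts Lemma~\ref{lem:nonkernel-no-dyadic-fixed-point}.
\end{itemize}

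No analysis of the fixed points of \(s\), attracting behaviour, or germs is needed.

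Separately, your argument that \(\ker\chi\neq1\) does not work as written. Splicing an element \(h\) with the identity by closedness requires fixed points of \(h\) at which to cut, and you have not produced any. The direct argument is this: \(\ker\chi=1\) would force \(H\cong\mathbb Z\). A fundamental domain of a generator on one of its orbitals then contains infinitely many dyadic points in distinct \(H\)-orbits, contradicting Proposition~\ref{prop:finite-full-core}.
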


\begin{proof}
Since \(C(H)\) is finite, as noted above, there are nonzero integers
\(a_L,a_R\) such that
\[
  A=\mathbb Z(a_L,a_R).
\]
Define an epimorphism
\[
  \chi\colon H\longrightarrow\mathbb Z
\]
by
\[
  \pi_{\mathrm{ab}}(h)=\chi(h)(a_L,a_R),
\]
and put \(N=\ker\chi\).

The subgroup \(N\) is nontrivial.  Otherwise \(H\cong\mathbb Z\), but a
fundamental domain for a generator on one of its orbitals contains
infinitely many dyadic points belonging to distinct \(H\)-orbits,
contradicting Proposition~\ref{prop:finite-full-core}.

Suppose, for a contradiction, that \(H\) is of type \(\mathrm{FP}_2\).
Since \(H\) contains no nonabelian free subgroup \cite{BrinSquier},
Theorem~\ref{thm:bieri-strebel-consequence} gives a nontrivial finitely
generated subgroup
\[
  K\leq N
\]
and an element \(s\in H\) such that
\[
  \chi(s)\in\{1,-1\}
  \qquad\text{and}\qquad
  s^{-1}Ks\leq K.
\]

Choose \(1\neq k\in K\), and let \((x,y)\) be the leftmost orbital of
\(k\).  Since \(k\in N\), it is the identity on neighborhoods of \(0\)
and \(1\).  Therefore \(x\in(0,1)\), and \(x\) is a dyadic breakpoint
of \(k\).

Since \(\chi(s)\neq0\),
Lemma~\ref{lem:nonkernel-no-dyadic-fixed-point} implies that \(s\) fixes
no point of \(\D\).  Hence
\[
  s(x)\neq x.
\]

For \(n\geq0\), put
\[
  k_n=s^{-n}ks^n,
  \qquad
  x_n=s^n(x).
\]
Then \(k_n\in K\), and \(x_n\) is the left endpoint of the leftmost
orbital of \(k_n\).  In particular, \(x_n\) is a breakpoint of \(k_n\).
The points \(x_n\) are pairwise distinct.

By Lemma~\ref{lem:breakpoint-orbits}, two of these breakpoints belong to
the same \(K\)-orbit.  Hence there are \(i<j\) and \(c\in K\) such that
\[
  c(x_i)=x_j=s^{j-i}(x_i).
\]
Thus
\[
  g=s^{j-i}c^{-1}
\]
fixes the dyadic point \(x_i\).  Since \(c\in K\leq N\),
\[
  \chi(g)=(j-i)\chi(s)\neq0.
\]
Therefore \(\pi_{\mathrm{ab}}(g)\neq(0,0)\), contradicting
Lemma~\ref{lem:nonkernel-no-dyadic-fixed-point}.
\end{proof}

\section{The minimal rank-two case}
\label{sec:minimal-rank-two}

Throughout Subsections~\ref{subsec:stabilizer-orbitals}--\ref{subsec:exact-inner-path-count},
let \(H\leq F\) be closed, suppose that the action of \(H\) on
\((0,1)\) is minimal and that
\[
  A=\pi_{\mathrm{ab}}(H)
\]
has rank two, and write
\[
  \mathcal L=C(H).
\]
By Corollary~\ref{cor:minimal-full}, the core \(\mathcal L\) is full.

\subsection{Orbitals and endpoint cofinality}
\label{subsec:stabilizer-orbitals}

For \(E\subseteq[0,1]\), write
\[
  H_E=\{h\in H:h|_E=\operatorname{id}_E\},
  \qquad
  H_x=H_{\{x\}}.
\]

\begin{lemma}[One-sided endpoint slopes]
\label{lem:one-sided-slopes}
Let \(H\leq F\) be closed, suppose that its action on \((0,1)\) is
minimal and that \(\pi_{\mathrm{ab}}(H)\) has rank two.
For every \(a\in(0,1)\), the subgroup \(H_{[a,1]}\) contains an
element \(g\) with \(g'(0^+)\neq1\), and \(H_{[0,a]}\) contains an
element \(g\) with \(g'(1^-)\neq1\).
\end{lemma}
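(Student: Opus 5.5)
By the reflection symmetry \(x\mapsto1-x\), which interchanges the two coordinates of \(\pi_{\mathrm{ab}}\) and preserves closedness and minimality, it suffices to treat \(H_{[a,1]}\). The plan is to cut an element at a fixed dyadic point, using closedness exactly as in the proof of Lemma~\ref{lem:nonkernel-no-dyadic-fixed-point}. If \(h\in H\) fixes a point \(c\in\D\), then
\[
  h_c(t)=\begin{cases}h(t),&0\le t\le c,\\ t,&c\le t\le1\end{cases}
\]
is a piecewise-\(H\) function in \(F\), hence \(h_c\in H\). Moreover \(h_c\in H_{[c,1]}\subseteq H_{[a,1]}\) whenever \(c\le a\), and \(h_c'(0^+)=h'(0^+)\). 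So it is enough to find some \(h\in H\) with \(h'(0^+)\neq1\) that fixes a dyadic point \(c\in(0,a]\).

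To produce such an \(h\), I will use rank two. Since \(A\) has rank two, there is \(f\in H\) with \(\pi_{\mathrm{ab}}(f)=(m,0)\) and \(m\neq0\). Such an \(f\) is the identity near \(1\) but has slope \(\neq1\) at \(0\). Near \(0\), the map \(f\) is linear with slope \(2^m\neq1\), so it has no fixed points in some interval \((0,\epsilon)\). I therefore need to modify \(f\) so that it fixes a dyadic point close to \(0\). Choose a dyadic point \(c<\min(a,\epsilon)\). Then \(f(c)\neq c\), and both \(c\) and \(f(c)\) are dyadic, lying in \((0,1)\). By minimality, \(H\)-orbits are dense, so the dyadic points \(f(c)\) and \(c\) should lie in the same \(H\)-orbit. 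One way to see this is that a full core gives finitely many orbits, each of which is dense. The cleaner route is via Proposition~\ref{prop:orbit-core}: it suffices that \(v(f(c))=v(c)\), which holds because \(f(c)=c\cdot f\) is in the orbit of \(c\) trivially.

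That observation shows the naive fix is wrong: \(c\) and \(f(c)\) are already in the same orbit, and composing \(f\) with \(f^{-1}\) only returns the identity. What is really needed is an element \(g\in H\) with \(g(f(c))=c\) and \(g'(0^+)=1\). Then \(fg\) fixes \(c\), and \((fg)'(0^+)=2^m\neq1\). So I look for \(g\in H\) mapping \(f(c)\) to \(c\) while being the identity near \(0\). Such a \(g\) comes from \(H_{[0,b]}\) for a small \(b\), so the statement reduces to a transitivity property of germ-trivial elements.

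The main obstacle is this step: showing that the elements of \(H\) that are the identity near \(0\) still move \(f(c)\) to \(c\). I would first try a commutator trick. Take any \(k\in H\) with \(k(f(c))=c\); one exists by minimality together with the orbit correspondence, since the point is in the same orbit. Then replace \(k\) by \(k\,f^{j}\cdots\) to cancel its germ at \(0\). Germs at \(0\) of elements of \(H\) form the cyclic group \(d_L\mathbb Z\) of slopes, so I would multiply by a power of an element \(e\) with \(e'(0^+)=2^{d_L}\), arranged to fix \(c\). This is circular, so the better route is through the core. Using Proposition~\ref{prop:tuple-transport} with the pair \(c<b\) and the pair \(f(c)<b\), where \(b\) is a dyadic point near \(1\) beyond the support of \(f\) at \(1\), I get \(g\) with \(g(f(c))=c\) and \(g(b)=b\), provided \(\omega[f(c),b]=\omega[c,b]\). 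This holds because \(f\) maps \([c,b]\) to \([f(c),b]\), so Proposition~\ref{prop:transport} gives it, though it merely recovers \(f\) again. Hence the decisive input must be the second independent direction of \(A\): I will take \(h_1,h_2\in H\) with independent endpoint vectors, and use the density of orbits to pick \(c\) so that a suitable word \(h=h_1^{p}h_2^{q}\), with \((p,q)\neq0\) and nonzero first coordinate, together with a correcting element coming from the interval-class equality \(\omega[c,\cdot]\), fixes \(c\). Making this correction rigorous is where I expect the real difficulty.
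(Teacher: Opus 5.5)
Your reduction step is sound: if \(h\in H\) fixes a dyadic \(c\le a\), closedness puts the cut \(h_c\) in \(H_{[c,1]}\subseteq H_{[a,1]}\), with the same slope at \(0\). The gap is that you never produce such an \(h\), and you say so yourself at the end. Each route you sketch fails to close the argument. Germ-trivial elements sending \(f(c)\) to \(c\) are never constructed. Tuple transport just recovers \(f\). The word \(h_1^{p}h_2^{q}\) with a ``correcting element'' is left undefined. So the step that carries the content of the lemma is missing. The rank-two hypothesis is also fully used up once you have an element with endpoint vector \((m,0)\), \(m\neq0\); no second independent direction is needed.

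The missing idea is that you already have a dyadic fixed point, only in the wrong place. You should move it rather than try to create one near \(0\). As you note, \(f\) is the identity near \(1\), so it fixes \([d,1]\) pointwise for some \(d\in\D\). Minimality means \(H\) has no fixed point in \((0,1)\), so \((0,1)\) is an orbital and \(\inf Hd=0\). Hence there is \(g_1\in H\) with \(g_1(d)<a\). Then \(g_1^{-1}fg_1\) (left-to-right composition) is the identity on \([g_1(d),1]\supseteq[a,1]\). Its slope at \(0\) is still \(2^{m}\neq1\), because conjugating by an element that fixes \(0\) preserves the one-sided slope there. This is exactly the paper's proof. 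It does not use closedness at all, and it makes your cutting step unnecessary. The statement for \(H_{[0,a]}\) then follows by reflection, as you say.
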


\begin{proof}
Choose \(h\in H\) whose endpoint vector is \((k,0)\), where
\(k\neq0\).  It fixes \([d,1]\) pointwise for some \(d\in\D\).  Since
\((0,1)\) is an orbital of \(H\),
\[
  \inf Hd=0.
\]
Thus there is \(g_1\in H\) such that \(g_1(d)<a\).  Then
\(g_1^{-1}hg_1\) fixes \([g_1(d),1]\) pointwise, so it belongs to
\(H_{[a,1]}\), and its slope at \(0\) is nontrivial.  The other
assertion is symmetric.
\end{proof}

\begin{lemma}[Fixed-point transfer]
\label{lem:fixed-point-transfer}
Let \(H\leq F\) be closed, suppose that its action on \((0,1)\) is
minimal and that \(\pi_{\mathrm{ab}}(H)\) has rank two.
Let \(a\in(0,1)\).
If \(H_{[a,1]}\) fixes a point in \((0,a)\), then \(H_a\) fixes a
point in \((0,a)\).  The symmetric assertion holds at the other
endpoint.
\end{lemma}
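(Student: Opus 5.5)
The plan is to use normality rather than closedness. The point is that \(H_{[a,1]}\) is normal in \(H_a\), so \(H_a\) permutes the fixed points of \(H_{[a,1]}\) inside \((0,a)\). Lemma~\ref{lem:one-sided-slopes} keeps these fixed points away from \(0\), so their infimum is a canonical point of \((0,a)\) that \(H_a\) must fix.

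First, \(H_{[a,1]}\trianglelefteq H_a\). If \(h\in H_a\), then \(h\) is increasing and fixes \(a\) and \(1\), so it maps \([a,1]\) onto itself. Hence for \(g\in H_{[a,1]}\) and \(t\in[a,1]\) we get \(h^{-1}gh(t)=t\), using the left-to-right composition convention. Thus \(h^{-1}H_{[a,1]}h\leq H_{[a,1]}\), and by symmetry in \(h^{\pm1}\) it is an equality. Consequently the set
\[
  X=\{x\in(0,a): g(x)=x\text{ for every }g\in H_{[a,1]}\}
\]
is invariant under \(H_a\). Indeed, if \(x\in X\) and \(h\in H_a\), then \(h(x)\in(0,a)\), and \(h(x)\) is fixed by \(h^{-1}gh\) for every \(g\), which ranges over all of \(H_{[a,1]}\). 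By hypothesis \(X\neq\varnothing\), and \(X\) is closed in \((0,a)\) as an intersection of fixed-point sets of homeomorphisms.

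Next, I show \(X\) stays away from \(0\). By Lemma~\ref{lem:one-sided-slopes}, there is \(g_0\in H_{[a,1]}\) with \(g_0'(0^+)\neq1\). Since \(g_0\) is linear on some interval \([0,\varepsilon]\), with slope different from \(1\) and fixing \(0\), it has no fixed point in \((0,\varepsilon)\). Hence \(X\subseteq[\varepsilon,a)\).

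Now put \(c=\inf X\). Then \(c\geq\varepsilon>0\), and \(c<a\) since \(X\) is nonempty. Because \(X\) is closed in \((0,a)\) and \(c\in(0,a)\), we have \(c\in X\). Every \(h\in H_a\) is an increasing bijection of \((0,a)\) preserving \(X\), so \(h(X)=X\). Applying \(h\) to the relation \(c=\min X\) gives \(h(c)=\min h(X)=\min X=c\). So \(H_a\) fixes \(c\in(0,a)\).

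The symmetric assertion at the other endpoint follows in the same way. One uses \(H_{[0,a]}\trianglelefteq H_a\), the second half of Lemma~\ref{lem:one-sided-slopes} to keep fixed points away from \(1\), and the supremum in place of the infimum.

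The only step needing care is the one I expected to be the obstacle, namely that the extremal fixed point lies strictly inside \((0,a)\). The naive alternative, restricting elements of \(H_a\) to \([0,a]\) and invoking closedness, fails when \(a\) is not dyadic, because the restricted function need not lie in \(F\). Taking the infimum at the \(0\) side avoids this issue, because the slope lemma controls that end. The end at \(a\) is never used.
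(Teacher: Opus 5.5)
Your proof is correct and follows the paper's argument: the paper likewise takes the infimum \(c\) of the fixed-point set of \(H_{[a,1]}\) in \((0,a)\), uses Lemma~\ref{lem:one-sided-slopes} to get \(c>0\) and closedness of that fixed-point set to get that \(c\) is itself fixed, and concludes because \(H_a\) normalizes \(H_{[a,1]}\) and therefore preserves the set and its infimum. You have simply written out the normality and invariance checks in more detail.
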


\begin{proof}
Let \(G=H_{[a,1]}\), and let \(B\) be the set of fixed points of \(G\)
in \((0,a)\).  Assume that \(B\neq\varnothing\).  By
Lemma~\ref{lem:one-sided-slopes}, \(G\) contains an element \(g_0\) with
nontrivial slope at \(0\).  Hence \(g_0\) moves every point in some
interval \((0,\varepsilon)\), and therefore \(c=\inf B>0\).  Since the
fixed-point set of \(G\) is closed, \(c\in B\).

Every \(g\in H_a\) normalizes \(G\), because it preserves \([a,1]\)
setwise.  It therefore preserves \(B\) and fixes its infimum \(c\).
\end{proof}

The following proposition can be viewed as a generalization of
\cite[Lemma~3.5]{GenFn}.

\begin{proposition}[Stabilizer orbitals]
\label{prop:stabilizer-orbitals}
Let \(H\leq F\) be closed, suppose that its action on \((0,1)\) is
minimal and that \(\pi_{\mathrm{ab}}(H)\) has rank two.
For every \(b\in(0,1)\), the interval \((0,b)\) is an orbital of
\(H_{[b,1]}\).  For every \(a\in(0,1)\), the interval \((a,1)\) is an
orbital of \(H_{[0,a]}\).
\end{proposition}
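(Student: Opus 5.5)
We want to show that \(G=H_{[b,1]}\) has no fixed point in \((0,b)\). The plan is to argue by contradiction, using Lemma~\ref{lem:fixed-point-transfer} to replace \(G\) by the point stabilizer \(H_b\). Then we use closedness together with the interval dictionary of Section~\ref{sec:interval-paths} to produce an element of \(G\) that crosses the supposed fixed point. The second assertion follows by conjugating with the reflection \(x\mapsto1-x\), which preserves closedness, minimality and rank two, so I only treat \((0,b)\).

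\emph{Step 1: reduction to dyadic \(b\).} If \(b'<b\), then \(H_{[b',1]}\leq H_{[b,1]}\). So if \((0,b')\) is an orbital of \(H_{[b',1]}\) for dyadic \(b'\) arbitrarily close to \(b\) from below, then \(G\) fixes no point of \(\bigcup_{b'}(0,b')=(0,b)\). Hence I may assume \(b\in\D\).

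\emph{Step 2: restriction and a distinguished fixed point.} For dyadic \(b\) and \(g\in H_b\), the function equal to \(g\) on \([0,b]\) and to the identity on \([b,1]\) is piecewise-\(H\) and lies in \(F\). By closedness it lies in \(G\). So \(G\) and \(H_b\) have the same restrictions to \([0,b]\), and in particular the same fixed points in \((0,b)\). Suppose \(G\) has a fixed point there. As in the proof of Lemma~\ref{lem:fixed-point-transfer}, Lemma~\ref{lem:one-sided-slopes} shows that \(c=\inf(\operatorname{Fix}(G)\cap(0,b))\) is positive and fixed by \(H_b\). Moreover, every point of \((0,c)\) is moved by \(G\).

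\emph{Step 3: crossing \(c\).} I will find dyadic \(x<c<y<b\) and an element of \(H_b\) sending \(x\) to \(y\), which is a contradiction. By Proposition~\ref{prop:tuple-transport} with \(k=2\), applied to the pairs \((x,b)\) and \((y,b)\), such an element exists as soon as \(v(x)=v(y)\) and \(\omega[x,b]=\omega[y,b]\). Fix any dyadic \(y\in(c,b)\). The task is to find dyadic \(x<c\) with \(\omega[x,b]=\omega[y,b]\).

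To do this I would use minimality of \(H\). Choose a dyadic \(x_0<c\) very close to \(c\), and an element \(h\in H\) with \(h(x_0)\) slightly larger than \(c\) and \(h(b)\) close to \(b\). Then I would combine Cutting (Lemma~\ref{lem:cutting}), Proposition~\ref{prop:exact-transport} and Factor placement (Corollary~\ref{cor:interval-placement}) to locate, inside \([x_0,b]\), a translate of \([y,b]\) whose right endpoint is exactly \(b\). This right-endpoint alignment is where the rank-two hypothesis should enter, through the right boundary loop \(\Lambda_R\) and the one-sided slopes of Lemma~\ref{lem:one-sided-slopes}. The idea is to absorb a discrepancy on the right of \(b\) by an element of \(H_{[0,b]}\) that has nontrivial slope at \(1\).

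\emph{Main obstacle.} The hardest step is this alignment. Minimality easily gives elements of \(H\) moving points across \(c\), but they need not fix \(b\). Turning them into elements of \(H_b\) requires equality of the full suffix classes \(\omega[\,\cdot\,,b]\), not just of a factor. If the direct approach fails, my fallback is to work entirely in \((0,c)\). The \(G\)-orbit of a dyadic point \(x\) near \(c\) accumulates at \(c\), since \(G\) moves every point of \((0,c)\) and \(c\) is the first fixed point. On the other hand, the classes \(\omega[x,c']\) for dyadic \(c'>c\) take only finitely many values on such an orbit. Applying Proposition~\ref{prop:exact-transport} to two orbit points with the same class then yields an element of \(H\) mapping one interval with right endpoint \(c'\) onto another. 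Combined with closedness, this should force the group to move \(c\) itself.
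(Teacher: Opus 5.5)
\textbf{There is a genuine gap: Step~3, which carries the whole content of the proposition, is never carried out, and the reduction you set up for it is circular.} Steps~1 and~2 are correct, though not needed. By Proposition~\ref{prop:exact-transport}, the equality \(\omega[x,b]=\omega[y,b]\) holds exactly when some element of \(H\) maps \([y,b]\) onto \([x,b]\). That is, some element of \(H_b\) sends \(y\) to \(x\), which is precisely the contradiction you are trying to reach. So the ``right-endpoint alignment'' is not a technical step inside the argument; it \emph{is} the proposition. Cutting, factor placement and \(\Lambda_R\) give no independent leverage on it.

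The fallback does not close the gap either. Equal classes \(\omega[x,c']\) at two orbit points only produce elements fixing \(c'\) that map one point of \((0,c)\) to another. \(G\) itself already provides such elements, and nothing there moves \(c\). Also, the rank-two hypothesis plays no role at the right endpoint. It enters only through Lemma~\ref{lem:one-sided-slopes}, to guarantee \(c>0\).

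\textbf{The missing idea} is to apply Lemma~\ref{lem:fixed-point-transfer} at the new point \(c\) rather than at \(b\), and then use minimality.
\begin{enumerate}
\item Your Step~2 already gives that \(c\) is fixed by \(G=H_{[b,1]}\) and that \(G\) moves every point of \((0,c)\).
\item Since \(G\leq H_c\), the group \(H_c\) has no fixed point in \((0,c)\).
\item The contrapositive of Lemma~\ref{lem:fixed-point-transfer} at \(c\) then says that \(H_{[c,1]}\) has no fixed point in \((0,c)\). So \((0,c)\) is an orbital of \(H_{[c,1]}\).
\item By minimality, choose \(h\in H\) with \(c<h(c)<b\). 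Conjugating by \(h\), the interval \((0,h(c))\) is an orbital of \(H_{[h(c),1]}\leq H_{[b,1]}=G\).
\item Hence \(G\) moves \(c\), which is a contradiction.
\end{enumerate}

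This is essentially the paper's proof. It first produces one point \(c\), starting from an arbitrary \(a_0\), for which \((0,c)\) is an orbital of \(H_{[c,1]}\). It then transports this to an arbitrary \(b\) by minimality. No reduction to dyadic \(b\), no use of closedness, and no interval classes are needed.
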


\begin{proof}
We prove the first statement.  Fix \(a_0\in(0,1)\).  If
\(H_{[a_0,1]}\) has no fixed point in \((0,a_0)\), put \(c=a_0\);
then \((0,c)\) is already an orbital of \(H_{[c,1]}\).  Otherwise, let
\(B\) be the set of fixed points of \(H_{[a_0,1]}\) in \((0,a_0)\),
and let \(c=\inf B\).

In this case, the proof of Lemma~\ref{lem:fixed-point-transfer} gives
\(0<c<a_0\) and shows that \(H_{a_0}\) fixes \(c\).  Every \(x<c\) is
moved by some element of \(H_{[a_0,1]}\), while this subgroup fixes \(c\).
Consequently,
\(H_c\) has no fixed point in \((0,c)\).  The contrapositive of
Lemma~\ref{lem:fixed-point-transfer}, applied at \(c\), now shows that
\(H_{[c,1]}\) has no fixed point in \((0,c)\).  Thus \((0,c)\) is an
orbital of \(H_{[c,1]}\).

In either case, \((0,c)\) is an orbital of \(H_{[c,1]}\).

Let \(b\in(0,1)\) and \(y\in(0,b)\).  By minimality, there is
\(h\in H\) such that
\[
  y<h(c)<b.
\]
Conjugating the orbital obtained above, we find that \((0,h(c))\) is
an orbital of \(H_{[h(c),1]}\).  Since
\[
  H_{[h(c),1]}\leq H_{[b,1]},
\]
some element of \(H_{[b,1]}\) moves \(y\).  Since \(y\) was arbitrary,
\((0,b)\) is an orbital of \(H_{[b,1]}\).

The other assertion follows by reflection.
\end{proof}

\begin{corollary}[Endpoint cofinality]
\label{cor:endpoint-cofinality}
Let \(H\leq F\) be closed, suppose that its action on \((0,1)\) is
minimal and that \(\pi_{\mathrm{ab}}(H)\) has rank two.
If \(0<a<x<1\), then
\[
  \inf H_{[0,a]}x=a,
  \qquad
  \sup H_{[0,a]}x=1.
\]
If \(0<x<b<1\), then
\[
  \inf H_{[b,1]}x=0,
  \qquad
  \sup H_{[b,1]}x=b.
\]
\end{corollary}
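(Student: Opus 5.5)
The plan is to derive this directly from Proposition~\ref{prop:stabilizer-orbitals} and the remark on orbitals in Subsection~\ref{subsec:minimal-sets}. That remark says: if \((c,d)\) is an orbital of a group \(G\) and \(x\in(c,d)\), then \(\inf Gx=c\) and \(\sup Gx=d\). This holds because an infimum or supremum lying in \((c,d)\) would be a \(G\)-fixed point.

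For the first assertion, fix \(0<a<x<1\). By Proposition~\ref{prop:stabilizer-orbitals}, \((a,1)\) is an orbital of \(H_{[0,a]}\). Since \(x\in(a,1)\), the remark applied with \(G=H_{[0,a]}\) gives the two claimed equalities at once.

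For the second assertion, fix \(0<x<b<1\). The same proposition shows that \((0,b)\) is an orbital of \(H_{[b,1]}\), and \(x\in(0,b)\). The remark then yields \(\inf H_{[b,1]}x=0\) and \(\sup H_{[b,1]}x=b\).

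There is no real obstacle, since all the dynamical content sits in Proposition~\ref{prop:stabilizer-orbitals}. The one point to check is that the orbit \(Gx\) stays inside the orbital \((c,d)\). This is so because orbitals are \(G\)-invariant: each \(g\in G\) fixes the endpoints \(c\) and \(d\) (they are fixed points of \(G\), or equal to \(0\) or \(1\)), and \(g\) is an increasing homeomorphism. Hence the infimum and supremum lie in \([c,d]\), and the fixed-point argument excludes every value except the endpoints.
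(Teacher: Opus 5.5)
Your proposal is correct and follows essentially the same route as the paper: Proposition~\ref{prop:stabilizer-orbitals} gives that \((a,1)\) is an orbital of \(H_{[0,a]}\) and \((0,b)\) is an orbital of \(H_{[b,1]}\), and the infimum/supremum property of orbitals from Subsection~\ref{subsec:minimal-sets} finishes the argument. Your extra check that the orbit stays inside the orbital is a harmless elaboration of that recorded property.
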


\begin{proof}
By Proposition~\ref{prop:stabilizer-orbitals}, \((a,1)\) is an orbital
of \(H_{[0,a]}\) and \((0,b)\) is an orbital of \(H_{[b,1]}\).  The
assertions are the property of orbitals recorded in
Subsection~\ref{subsec:minimal-sets}.
\end{proof}

\begin{corollary}[Endpoint shrinking]
\label{cor:endpoint-shrinking}
Let \(H\leq F\) be closed, suppose that its action on \((0,1)\) is
minimal and that \(\pi_{\mathrm{ab}}(H)\) has rank two, and let
\(\mathcal L=C(H)\).  Let \(0<a<b<1\) be dyadic points and put
\[
  W=\omega[a,b].
\]
For every \(\varepsilon\) satisfying
\[
  0<\varepsilon<\frac{b-a}{2},
\]
there exist dyadic points \(c,d\) such that
\[
  a<c<a+\varepsilon<b-\varepsilon<d<b
\]
and
\[
  W=\omega[a,c]=\omega[d,b].
\]
\end{corollary}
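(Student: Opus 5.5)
The plan is to use Corollary~\ref{cor:endpoint-cofinality} to move \(b\) (respectively \(a\)) close to \(a\) (respectively \(b\)) by an element fixing the other endpoint. Proposition~\ref{prop:transport} then shows that the interval class is unchanged. The two endpoints are handled separately, and both are symmetric.

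For \(c\), consider the stabilizer \(H_{[0,a]}\), which fixes \(a\) and every point to its left. Corollary~\ref{cor:endpoint-cofinality}, applied with \(x=b>a\), gives \(\inf H_{[0,a]}b=a\). Hence there is \(g\in H_{[0,a]}\) with \(a<g(b)<a+\varepsilon\). Put \(c=g(b)\). It is dyadic, since \(F\) preserves \(\D\), and it satisfies \(c>a\) because \(g\) is an increasing homeomorphism fixing \(a\) and \(b>a\). Since \(g(a)=a\), Proposition~\ref{prop:transport} gives
\[
  \omega[a,c]=\omega[g(a),g(b)]=\omega[a,b]=W.
\]

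For \(d\), symmetrically, use \(H_{[b,1]}\) and \(x=a<b\). Corollary~\ref{cor:endpoint-cofinality} gives \(\sup H_{[b,1]}a=b\), so there is \(g'\in H_{[b,1]}\) with \(b-\varepsilon<g'(a)<b\). Put \(d=g'(a)\). Transport again gives \(\omega[d,b]=\omega[g'(a),g'(b)]=W\).

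The inequalities \(a<c<a+\varepsilon<b-\varepsilon<d<b\) then follow from the choices together with \(\varepsilon<(b-a)/2\).

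All the real work is already contained in Proposition~\ref{prop:stabilizer-orbitals}, where the rank-two hypothesis and minimality are used. The only point to check is that the infimum and supremum in Corollary~\ref{cor:endpoint-cofinality} are approached by orbit points. This is automatic, because they are not attained: the points \(a\) and \(b\) lie outside the respective orbits, being fixed endpoints of the orbital. So no genuine obstacle remains.
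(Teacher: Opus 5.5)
Your proposal is correct and matches the paper's proof: both pick \(h_1\in H_{[0,a]}\) with \(c=h_1(b)\in(a,a+\varepsilon)\) and \(h_2\in H_{[b,1]}\) with \(d=h_2(a)\in(b-\varepsilon,b)\) via Corollary~\ref{cor:endpoint-cofinality}, then conclude \(W=\omega[a,c]=\omega[d,b]\) by Proposition~\ref{prop:transport}. Your remark that the infimum and supremum are approached but not attained, since the orbit of \(b\) (respectively \(a\)) stays inside the orbital, is exactly what guarantees the strict inequalities \(c>a\) and \(d<b\).
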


\begin{proof}
By Corollary~\ref{cor:endpoint-cofinality}, choose
\(h_1\in H_{[0,a]}\) such that \(c=h_1(b)\) lies in
\((a,a+\varepsilon)\), and choose \(h_2\in H_{[b,1]}\) such that
\(d=h_2(a)\) lies in \((b-\varepsilon,b)\).  The points \(c,d\) are
dyadic, and Proposition~\ref{prop:transport} gives
\[
  W=\omega[a,c]=\omega[d,b].
\]
\end{proof}

\subsection{Regularity, local divisibility, and the groupoid structure}
\label{subsec:minimal-semigroupoid}

We use the following convention for semigroupoids; see
\cite[pp.~603--604]{Szendrei93}.  A semigroupoid \(\mathcal S\)
consists of a set \(\operatorname{Ob}(\mathcal S)\) of objects and, for
every pair
\[
  u,v\in\operatorname{Ob}(\mathcal S),
\]
a set \(\mathcal S(u,v)\) of arrows from \(u\) to \(v\).  These sets are
pairwise disjoint.

For every three objects \(u,v,w\), the structure includes a composition
rule which assigns to every pair
\[
  P\in\mathcal S(u,v),
  \qquad
  Q\in\mathcal S(v,w)
\]
an arrow
\[
  PQ\in\mathcal S(u,w).
\]
Composition is associative: if \(t,u,v,w\) are objects and
\[
  P\in\mathcal S(t,u),
  \qquad
  Q\in\mathcal S(u,v),
  \qquad
  R\in\mathcal S(v,w),
\]
then
\[
  (PQ)R=P(QR).
\]
Thus products are written in the order in which the arrows are
traversed.  We do not assume the existence of identity arrows.

Let \(u,v\) be objects.  An arrow \(W\in\mathcal S(u,v)\) is called
\emph{regular} if there exists \(X\in\mathcal S(v,u)\) such that
\[
  W=WXW.
\]
The semigroupoid \(\mathcal S\) is called \emph{regular} if every arrow
of \(\mathcal S\) is regular.

Let \(u\) be an object and let \(P,Q\in\mathcal S(u,u)\).  We say that
\(P\) is a \emph{left divisor} of \(Q\) if
\[
  Q=PA
\]
for some \(A\in\mathcal S(u,u)\), and that \(P\) is a \emph{right
divisor} of \(Q\) if
\[
  Q=BP
\]
for some \(B\in\mathcal S(u,u)\).

The following lemma is completely standard.  We include its short proof
for completeness.

\begin{lemma}[A groupoid criterion for regular semigroupoids]
\label{lem:local-cancellation-criterion}
Let \(\mathcal S\) be a regular semigroupoid.  Suppose that, for every
object \(u\),
\[
  \mathcal S(u,u)\neq\varnothing,
\]
and that every arrow in \(\mathcal S(u,u)\) is both a left divisor and a
right divisor of every arrow in \(\mathcal S(u,u)\).  Then, for every
object \(u\), there is a unique idempotent
\[
  e_u\in\mathcal S(u,u).
\]
Moreover, for every pair of objects \(u,v\), every
\[
  W\in\mathcal S(u,v),
\]
and every \(X\in\mathcal S(v,u)\) satisfying
\[
  W=WXW,
\]
one has
\[
  WX=e_u,
  \qquad
  XW=e_v,
\]
and
\[
  e_uW=W=We_v.
\]
Thus the \(e_u\) are identity arrows, \(X\) is the inverse of \(W\), and
\(\mathcal S\) is a groupoid.
\end{lemma}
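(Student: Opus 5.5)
First I produce idempotents at each object. Fix \(u\) and choose any \(W\in\mathcal S(u,u)\), which exists by hypothesis. Regularity gives \(X\) with \(W=WXW\). Then \(WX\) satisfies \((WX)(WX)=(WXW)X=WX\), so it is an idempotent in \(\mathcal S(u,u)\), and existence is settled.

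For uniqueness, let \(e,f\in\mathcal S(u,u)\) be idempotents. Since \(f\) is a left divisor of \(e\), we can write \(e=fA\). Then \(fe=ffA=fA=e\). Since \(f\) is a right divisor of \(e\), we can write \(e=Bf\), so \(ef=Bff=Bf=e\). Exchanging the roles of \(e\) and \(f\) gives \(ef=f\) and \(fe=f\). Hence \(e=fe=f\).

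The same divisor argument shows that \(e_u\) is a two-sided identity on the loop set \(\mathcal S(u,u)\). For \(P\in\mathcal S(u,u)\) write \(P=e_uA\); then \(e_uP=e_ue_uA=e_uA=P\). Writing \(P=Be_u\) gives \(Pe_u=P\) in the same way.

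Now take general objects \(u,v\), an arrow \(W\in\mathcal S(u,v)\), and \(X\in\mathcal S(v,u)\) with \(W=WXW\). As before, \(WX\in\mathcal S(u,u)\) satisfies \((WX)^2=WX\), so \(WX=e_u\) by uniqueness. Likewise \(XW\in\mathcal S(v,v)\) satisfies \((XW)^2=X(WXW)=XW\), so \(XW=e_v\). Then
\[
  e_uW=(WX)W=W,\qquad We_v=W(XW)=W.
\]
These identities hold for every arrow, since regularity supplies such an \(X\) for each \(W\). Hence the \(e_u\) are identities for all composable products, not only for loops. In particular \(X\) is a two-sided inverse of \(W\), so every arrow is invertible and \(\mathcal S\) is a groupoid.

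There is no real obstacle in this argument. The only point needing care is that divisibility is assumed only inside \(\mathcal S(u,u)\). So uniqueness of the idempotent is proved within the loop set first, and the cross-object identities are then deduced through \(WX\) and \(XW\).
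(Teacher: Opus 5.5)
Your proof is correct and follows essentially the same route as the paper: you obtain idempotents $WX$ from regularity, get uniqueness from the divisibility hypothesis, and then show $WX=e_u$, $XW=e_v$, which yields the identities and inverses. The differences are cosmetic: your uniqueness step uses all four divisibility relations, whereas the paper uses only two (it writes $f=eA$ and $e=Bf$ and computes $ef$ both ways), and your separate check that $e_u$ is an identity on $\mathcal S(u,u)$ is not needed.
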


\begin{proof}
Fix an object \(u\) and choose \(P\in\mathcal S(u,u)\).  Since
\(\mathcal S\) is regular, there exists \(X\in\mathcal S(u,u)\) such
that
\[
  P=PXP.
\]
Then
\[
  (PX)^2=(PXP)X=PX,
\]
so \(\mathcal S(u,u)\) contains an idempotent.

Let \(e,f\in\mathcal S(u,u)\) be idempotents.  Since \(e\) is a left
divisor of \(f\), and \(f\) is a right divisor of \(e\), there are
\(A,B\in\mathcal S(u,u)\) such that
\[
  f=eA
  \qquad\text{and}\qquad
  e=Bf.
\]
Therefore
\[
  ef=e(eA)=eA=f
\]
and
\[
  ef=(Bf)f=Bf=e.
\]
Hence \(e=f\).  Denote the unique idempotent in
\(\mathcal S(u,u)\) by \(e_u\).

Now let \(W\in\mathcal S(u,v)\), and let \(X\in\mathcal S(v,u)\)
satisfy \(W=WXW\).  Then
\[
  (WX)^2=(WXW)X=WX
\]
and
\[
  (XW)^2=X(WXW)=XW.
\]
By uniqueness of the local idempotents,
\[
  WX=e_u
  \qquad\text{and}\qquad
  XW=e_v.
\]
Consequently,
\[
  e_uW=(WX)W=W
  \qquad\text{and}\qquad
  We_v=W(XW)=W.
\]
Since every arrow is regular, these equations show that the \(e_u\) are
identity arrows.  In particular,
\[
  XWX=(XW)X=e_vX=X,
\]
so \(X\) is the inverse of \(W\).  Hence \(\mathcal S\) is a groupoid.
\end{proof}

We regard \(\Path_{\mathrm{in}}(\mathcal L)\), introduced in
Corollary~\ref{cor:interval-pair-dictionary}, as a semigroupoid in the
above sense: its objects are the inner vertices of \(\mathcal L\), its
arrows from \(u\) to \(v\) are the elements of
\(\Path(\mathcal L)(u,v)\), and composition is induced by concatenation
of \(1\)-paths.  This is well defined because every \(1\)-path with
inner endpoints is inner, and associativity is inherited from
\(\Path(\mathcal L)\).  We now verify the hypotheses of
Lemma~\ref{lem:local-cancellation-criterion} for this semigroupoid.

\begin{lemma}[Direct regularity]
\label{lem:direct-regularity}
Let \(H\leq F\) be closed, suppose that its action on \((0,1)\) is
minimal and that \(\pi_{\mathrm{ab}}(H)\) has rank two, and let
\(\mathcal L=C(H)\).  For every pair of inner vertices \(u,v\) of
\(\mathcal L\) and every
\[
  W\in\Path(\mathcal L)(u,v),
\]
there exists
\[
  X\in\Path(\mathcal L)(v,u)
\]
such that
\[
  W=WXW.
\]
\end{lemma}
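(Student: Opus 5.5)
The plan is to translate the statement into intervals using the interval-pair dictionary, and then to cut off two small end pieces that are each $H$-translates of the whole interval. Corollary~\ref{cor:endpoint-shrinking} supplies those translates.

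First, I realize $W$ by an interval. By Proposition~\ref{prop:interval-realization}, there are dyadic points $0<a<b<1$ with $v(a)=u$, $v(b)=v$ and $W=\omega[a,b]$.

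Next, I fix any $\varepsilon$ with $0<\varepsilon<(b-a)/2$ and apply Corollary~\ref{cor:endpoint-shrinking}. This gives dyadic points $c,d$ with
\[
  a<c<a+\varepsilon<b-\varepsilon<d<b
\]
and
\[
  W=\omega[a,c]=\omega[d,b].
\]
In particular $c<d$, so $[c,d]$ is a nondegenerate dyadic interval. Put $X=\omega[c,d]$.

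Now I identify the endpoint types of $X$. The equality $\omega[a,c]=\omega[a,b]$ means that both classes lie in the same set $\Path(\mathcal L)(v(a),\,\cdot\,)$, so $v(c)=v(b)=v$. Similarly, $\omega[d,b]=\omega[a,b]$ gives $v(d)=v(a)=u$. Alternatively, $c=h_1(b)$ and $d=h_2(a)$ with $h_1,h_2\in H$, so Proposition~\ref{prop:orbit-core} applies directly. Either way, $X\in\Path(\mathcal L)(v,u)$.

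Finally, applying the cutting lemma (Lemma~\ref{lem:cutting}) twice, to $a<c<d<b$, gives
\[
  W=\omega[a,b]=\omega[a,c]\,\omega[c,d]\,\omega[d,b]=W\,X\,W.
\]

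All the dynamical content is in Corollary~\ref{cor:endpoint-shrinking}, which rests on minimality and the rank-two hypothesis through endpoint cofinality. Given that corollary, the only point needing care is the choice of $\varepsilon$, which guarantees $c<d$ so that the middle interval is nondegenerate and $X$ is a genuine nonempty path class.
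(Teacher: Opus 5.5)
Your proposal is correct and is essentially the paper's proof: you realize \(W=\omega[a,b]\), use Corollary~\ref{cor:endpoint-shrinking} to get \(a<c<d<b\) with \(W=\omega[a,c]=\omega[d,b]\), put \(X=\omega[c,d]\), read off the endpoint types from these equalities, and cut. The only difference is that you spell out the choice of \(\varepsilon\) guaranteeing \(c<d\), which the paper leaves implicit.
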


\begin{proof}
By Proposition~\ref{prop:interval-realization}, choose dyadic points
\(0<a<b<1\) such that
\[
  W=\omega[a,b].
\]
Corollary~\ref{cor:endpoint-shrinking} gives dyadic points \(c,d\)
satisfying
\[
  a<c<d<b,
  \qquad
  W=\omega[a,c]=\omega[d,b].
\]
Put
\[
  X=\omega[c,d].
\]
The equality \(\omega[a,c]=W\) shows that \(c\) has type \(v\), while
the equality \(\omega[d,b]=W\) shows that \(d\) has type \(u\).  Thus
\(X\) is a path class from \(v\) to \(u\).  By
Lemma~\ref{lem:cutting},
\[
  W=\omega[a,b]
   =\omega[a,c]\omega[c,d]\omega[d,b]
   =WXW.
\]
The interval configuration is shown in
Figure~\ref{fig:inward-regularity}.
\end{proof}

\begin{figure}[htbp]
\centering
\begin{tikzpicture}[x=1.25cm,y=.8cm]
  \coordinate (A) at (0,0);
  \coordinate (C) at (2,0);
  \coordinate (D) at (4,0);
  \coordinate (B) at (6,0);

  \draw[s23edge] (A)--(C)--(D)--(B);
  \draw[s23upper]
    (A) .. controls (1.2,2.15) and (4.8,2.15) ..
    node[s23label,above=2pt] {\(W=\omega[a,b]\)} (B);
  \draw[s23lower]
    (A) .. controls +(.45,-.85) and +(-.45,-.85) ..
    node[s23label,below=2pt] {\(W=\omega[a,c]\)} (C);
  \draw[s23return]
    (C) .. controls +(.45,-.85) and +(-.45,-.85) ..
    node[s23label,below=2pt] {\(X=\omega[c,d]\)} (D);
  \draw[s23lower]
    (D) .. controls +(.45,-.85) and +(-.45,-.85) ..
    node[s23label,below=2pt] {\(W=\omega[d,b]\)} (B);

  \foreach \P/\lab in {A/a,C/c,D/d,B/b} {
    \node[s23vertex] at (\P) {};
    \node[s23tiny,above=3pt] at (\P) {\(\lab\)};
  }
\end{tikzpicture}
\caption{The inward interval decomposition used in
Lemma~\ref{lem:direct-regularity}.  The whole interval \([a,b]\) has
class \(W\), while the consecutive subintervals \([a,c]\), \([c,d]\),
and \([d,b]\) have classes \(W\), \(X\), and \(W\), respectively.}
\label{fig:inward-regularity}
\end{figure}

\begin{lemma}[Local divisibility]
\label{lem:local-divisibility}
Let \(H\leq F\) be closed, suppose that its action on \((0,1)\) is
minimal and that \(\pi_{\mathrm{ab}}(H)\) has rank two, and let
\(\mathcal L=C(H)\).  For every inner vertex \(u\), the set
\[
  \Path(\mathcal L)(u,u)
\]
is nonempty.  Moreover, every arrow in \(\Path(\mathcal L)(u,u)\) is
both a left divisor and a right divisor of every arrow in
\(\Path(\mathcal L)(u,u)\).
\end{lemma}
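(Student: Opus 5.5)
Everything will be translated into intervals through the interval-pair dictionary, with minimality and endpoint cofinality doing the rest.

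\emph{Nonemptiness.} Let \(u\) be an inner vertex. By Proposition~\ref{prop:orbit-core}, \(u=v(a)\) for some \(a\in\D\). Minimality makes the orbit \(Ha\) dense, so there is \(b\in Ha\) with \(b>a\). Then \(v(b)=v(a)=u\), and \(\omega[a,b]\in\Path(\mathcal L)(u,u)\).

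\emph{Left divisibility.} Let \(P,Q\in\Path(\mathcal L)(u,u)\); we want \(Q=PA\) with \(A\in\Path(\mathcal L)(u,u)\).
\begin{itemize}
\item By Proposition~\ref{prop:interval-realization}, write \(Q=\omega[a,b]\) and \(P=\omega[a',b']\) with all four points of type \(u\).
\item It suffices to find a dyadic \(c\) with \(a<c<b\) and \(\omega[a,c]=P\). Then \(c\) has type \(u\), and Lemma~\ref{lem:cutting} gives \(Q=\omega[a,b]=P\,\omega[c,b]\). So \(A=\omega[c,b]\) works, since \(c\) and \(b\) both have type \(u\).
\item To find \(c\), note that \(a'\) and \(a\) are in the same \(H\)-orbit. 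Take \(h\in H\) with \(h(a')=a\). Transport (Proposition~\ref{prop:transport}) gives \(P=\omega[a,h(b')]\). Put \(x=h(b')>a\).
\item By Corollary~\ref{cor:endpoint-cofinality}, \(\inf H_{[0,a]}x=a\). Choose \(g\in H_{[0,a]}\) with \(g(x)<b\). Since \(g\) fixes \(a\), we get \(P=\omega[a,g(x)]\), and \(c=g(x)\) is dyadic with \(a<c<b\).
\end{itemize}
Alternatively, use Corollary~\ref{cor:endpoint-shrinking} to shrink the transported interval.

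\emph{Right divisibility.} The argument is symmetric. Transport \(P\) so that its right endpoint is \(b\), and shrink it leftward using \(H_{[b,1]}\) and \(\sup H_{[b,1]}x=b\). This gives \(d\in(a,b)\) with \(\omega[d,b]=P\), so \(Q=\omega[a,d]\,P\).

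The only real obstacle is the extra requirement that the divisor interval fit strictly inside \([a,b]\) while keeping one endpoint fixed. This is exactly what the stabilizer orbital result (Proposition~\ref{prop:stabilizer-orbitals}) gives, and it is where the rank-two and minimality hypotheses are used. The proof must also check that the complementary piece is nonempty, which holds because \(c<b\) strictly. All remaining steps are bookkeeping of types via Proposition~\ref{prop:orbit-core}.
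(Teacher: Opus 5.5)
Your proposal is correct and follows essentially the same route as the paper's proof. The paper also transports the divisor's interval so that it shares an endpoint with the dividend's interval, shrinks it inside via Corollary~\ref{cor:endpoint-shrinking} (which is just your direct use of endpoint cofinality), and cuts; for nonemptiness it uses an element moving \(a\) rather than density of the orbit, an immaterial difference.
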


\begin{proof}
By Proposition~\ref{prop:orbit-core}, choose a dyadic point \(a\) with
\(v(a)=u\).  Since the action of \(H\) is minimal, \(a\) is not fixed
by every element of \(H\).  Choose \(g\in H\) that moves \(a\),
replacing \(g\) by \(g^{-1}\) if necessary so that \(g(a)>a\).  Then
\[
  \omega[a,g(a)]\in\Path(\mathcal L)(u,u),
\]
so this set is nonempty.

Now let
\[
  U,V\in\Path(\mathcal L)(u,u).
\]
By Proposition~\ref{prop:interval-realization}, choose dyadic points
\[
  a<b,\qquad c<d
\]
such that
\[
  U=\omega[a,b],
  \qquad
  V=\omega[c,d].
\]

Since \(v(a)=v(c)=u\), Proposition~\ref{prop:orbit-core} gives
\(h\in H\) with \(h(a)=c\).  Proposition~\ref{prop:transport} then
gives
\[
  U=\omega[c,h(b)].
\]
Applying Corollary~\ref{cor:endpoint-shrinking}, choose a dyadic point
\(x\in(c,d)\) such that
\[
  U=\omega[c,x].
\]
By Lemma~\ref{lem:cutting},
\[
  V=\omega[c,d]
   =\omega[c,x]\omega[x,d]
   =U\omega[x,d].
\]
Thus \(U\) is a left divisor of \(V\).

Similarly, since \(v(b)=v(d)=u\), choose \(k\in H\) with \(k(b)=d\).
Then
\[
  U=\omega[k(a),d].
\]
Corollary~\ref{cor:endpoint-shrinking} gives a dyadic point
\(y\in(c,d)\) such that
\[
  U=\omega[y,d].
\]
Therefore
\[
  V=\omega[c,d]
   =\omega[c,y]\omega[y,d]
   =\omega[c,y]U.
\]
Thus \(U\) is also a right divisor of \(V\).
\end{proof}

\begin{corollary}
\label{cor:minimal-groupoid}
Let \(H\leq F\) be closed, suppose that its action on \((0,1)\) is
minimal and that \(\pi_{\mathrm{ab}}(H)\) has rank two, and let
\[
  \mathcal L=C(H).
\]
Then \(\Path_{\mathrm{in}}(\mathcal L)\) is a groupoid.  More
explicitly, for every inner vertex \(u\), there is a unique idempotent
\[
  e_u\in\Path(\mathcal L)(u,u),
\]
and these idempotents are the identity arrows.  For every pair of inner
vertices \(u,v\) and every
\[
  W\in\Path(\mathcal L)(u,v),
\]
there exists
\[
  X\in\Path(\mathcal L)(v,u)
\]
which is the inverse of \(W\); explicitly,
\[
  WX=e_u,
  \qquad
  XW=e_v.
\]
\end{corollary}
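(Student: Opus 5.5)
This is a direct application of Lemma~\ref{lem:local-cancellation-criterion} to the semigroupoid \(\mathcal S=\Path_{\mathrm{in}}(\mathcal L)\). Its objects are the inner vertices of \(\mathcal L\), and \(\mathcal S(u,v)=\Path(\mathcal L)(u,v)\). As noted before Lemma~\ref{lem:direct-regularity}, composition is well defined and associative on \(\mathcal S\), because every \(1\)-path with inner endpoints is inner.

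The plan is to verify the hypotheses of that lemma one at a time.
\begin{enumerate}[label=\textup{(\roman*)}]
\item Regularity of \(\mathcal S\) is exactly Lemma~\ref{lem:direct-regularity}. For every \(W\in\Path(\mathcal L)(u,v)\) with \(u,v\) inner, it produces \(X\in\Path(\mathcal L)(v,u)\) with \(W=WXW\).
\item The requirement that \(\mathcal S(u,u)\neq\varnothing\) for every inner vertex \(u\) is the first assertion of Lemma~\ref{lem:local-divisibility}.
\item The requirement that every arrow of \(\mathcal S(u,u)\) is both a left and a right divisor of every other is its second assertion.
\end{enumerate}
Both lemmas hold under the standing hypotheses: \(H\) is closed, the action on \((0,1)\) is minimal, and \(\pi_{\mathrm{ab}}(H)\) has rank two. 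Fullness of \(\mathcal L\), which the earlier results in Section~\ref{sec:interval-paths} need, follows from Corollary~\ref{cor:minimal-full}.

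With these hypotheses in hand, Lemma~\ref{lem:local-cancellation-criterion} gives everything in the statement. First, each \(\Path(\mathcal L)(u,u)\) contains a unique idempotent \(e_u\). Second, these idempotents act as two-sided identities on all arrows starting or ending at \(u\). Third, for any \(W\in\Path(\mathcal L)(u,v)\), the element \(X\) supplied by regularity satisfies \(WX=e_u\) and \(XW=e_v\), and so is the inverse of \(W\). Hence \(\mathcal S\) is a groupoid.

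There is essentially no obstacle, since all the work is in the two preceding lemmas. The only point to check is bookkeeping: the semigroupoid in Lemma~\ref{lem:local-cancellation-criterion} has no identities assumed a priori, and its hypotheses quantify over all objects. We apply it to the full semigroupoid of inner path classes, so the objects are exactly the inner vertices, and the lemmas above cover every such vertex.
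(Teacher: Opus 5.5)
Your proposal is correct and is exactly the paper's argument: regularity of \(\Path_{\mathrm{in}}(\mathcal L)\) from Lemma~\ref{lem:direct-regularity}, local nonemptiness and two-sided divisibility from Lemma~\ref{lem:local-divisibility}, and then Lemma~\ref{lem:local-cancellation-criterion} yields the unique idempotents as identities and the regularity witnesses \(X\) as inverses. The only small wording point is that the divisibility hypothesis is needed for every pair \(U,V\in\Path(\mathcal L)(u,u)\), including \(U=V\), not just for ``every other'' arrow; Lemma~\ref{lem:local-divisibility} covers this case too.
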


\begin{proof}
Lemma~\ref{lem:direct-regularity} shows that
\(\Path_{\mathrm{in}}(\mathcal L)\) is regular, and
Lemma~\ref{lem:local-divisibility} verifies the local nonemptiness and
divisibility hypotheses of
Lemma~\ref{lem:local-cancellation-criterion}.  The result follows from
Lemma~\ref{lem:local-cancellation-criterion}.
\end{proof}

\subsection{The exact number of inner path classes}
\label{subsec:exact-inner-path-count}
Let \(H\leq F\) be closed and act minimally on \((0,1)\), and suppose
that
\[
  A=\pi_{\mathrm{ab}}(H)
\]
has rank two.

Let \(d_L\) and \(d_R\) be the positive generators of the two
coordinate projections of \(A\).  Then
\[
  A\leq d_L\mathbb Z\times d_R\mathbb Z.
\]
Set
\[
  q=\bigl[d_L\mathbb Z\times d_R\mathbb Z:A\bigr].
\]
Since \(A\) has rank two, \(q\) is finite.  Moreover, the quotient
\[
  \bigl(d_L\mathbb Z\times d_R\mathbb Z\bigr)/A
\]
is cyclic and is generated by the coset of \((d_L,0)\).  Indeed, since
the second coordinate projection of \(A\) is \(d_R\mathbb Z\), there is
an integer \(k\) such that
\[
  (kd_L,d_R)\in A.
\]
Hence
\[
  (0,d_R)+A=-k\bigl((d_L,0)+A\bigr).
\]
Consequently, \(q\) is the least positive integer \(t\) such that
\[
  (td_L,0)\in A.
\]
In particular,
\[
  [\mathbb Z^2:A]=d_Ld_Rq.
\]

\begin{lemma}[The order of \(\Lambda_L\)]
\label{lem:left-boundary-order}
Let \(H\leq F\) be closed and act minimally on \((0,1)\), and suppose
that \(A=\pi_{\mathrm{ab}}(H)\) has rank two.  Let
\(\mathcal L=C(H)\), and retain the notation \(d_L\), \(v_L\), and
\(\Lambda_L\) from Subsection~\ref{subsec:boundary-periods}.  Then, for
every positive integer \(t\),
\[
  \Lambda_L^t=e_{v_L}
  \quad\Longleftrightarrow\quad
  (td_L,0)\in A.
\]
Thus the order of \(\Lambda_L\) in the group
\(\Path(\mathcal L)(v_L,v_L)\) is \(q\).
\end{lemma}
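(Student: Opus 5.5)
Both directions come from Proposition~\ref{prop:endpoint-relations}, now read inside the groupoid of Corollary~\ref{cor:minimal-groupoid}, where cancellation is available. Fix a nonempty inner \(1\)-path \(Z\colon v_L\to v_R\) together with a positive diagram \(\Theta\colon p\to_{\mathcal L}\ell Zr\). Such a pair exists by Proposition~\ref{prop:endpoint-relations} applied to the zero vector \((0,0)\in A\) (take \(x=y=0\)). Since \(v_L\) and \(v_R\) are inner vertices, \(\Lambda_L\), \([Z]\) and \(\Lambda_R\) are all arrows of the groupoid \(\Path_{\mathrm{in}}(\mathcal L)\).

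For the direction (\(\Leftarrow\)), suppose \((td_L,0)\in A\) with \(t>0\). Proposition~\ref{prop:endpoint-relations} with \(x=t\), \(y=0\) produces a path \(Z'\colon v_L\to v_R\) satisfying
\[
  \Lambda_L^t[Z']=[Z'].
\]
Here the factor \(\Lambda_R^0\) is simply omitted, in accordance with the convention. Right-multiplying by the inverse of \([Z']\) in the groupoid gives \(\Lambda_L^t e_{v_L}=e_{v_L}\), and since \(e_{v_L}\) is an identity arrow this is \(\Lambda_L^t=e_{v_L}\).

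For the direction (\(\Rightarrow\)), suppose \(\Lambda_L^t=e_{v_L}\). Then
\[
  \Lambda_L^t[Z]=e_{v_L}[Z]=[Z],
\]
and the final equality uses that identities act trivially. This is a relation of the form treated in the converse part of Proposition~\ref{prop:endpoint-relations}, with \(\alpha=t\) and \(\beta=\gamma=\delta=0\). The only subtlety is the statement's convention of omitting zero powers, under which \(\Lambda_L^0[Z]\Lambda_R^0\) means \([Z]\). I will check that the proof of that converse still works when exponents vanish: one develops zero periods, and the common expansion given by Lemma~\ref{lem:common-expansion} is still available for \(B_L^tZ\simeq Z\). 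That proof then gives \((td_L,0)\in A\).

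The two directions together identify the exponents \(t>0\) with \(\Lambda_L^t=e_{v_L}\) as exactly those with \((td_L,0)\in A\). Hence the order of \(\Lambda_L\) in the group \(\Path(\mathcal L)(v_L,v_L)\) is the least such \(t\). By the discussion preceding the lemma, this least \(t\) is \(q\), which is finite because \(A\) has rank two. The main care point is the legitimacy of using inverses and identities, and this rests entirely on Corollary~\ref{cor:minimal-groupoid}. Everything else is a direct application of Proposition~\ref{prop:endpoint-relations}, so I expect no real obstacle.
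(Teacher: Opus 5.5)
Your proof is correct and follows the paper's argument. The direction \((td_L,0)\in A\Rightarrow\Lambda_L^t=e_{v_L}\) is proved, as in the paper, by cancelling \([Z']\) in the groupoid of Corollary~\ref{cor:minimal-groupoid}. The converse is the converse part of Proposition~\ref{prop:endpoint-relations} with \(\alpha=t\) and \(\beta=\gamma=\delta=0\).

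The only difference is the source of the auxiliary pair \(Z\), \(\Theta\colon p\to_{\mathcal L}\ell Zr\). You take it from the vector \((0,0)\), whereas the paper takes it from Corollary~\ref{cor:boundary-relations}\textup{(2)}. This is immaterial, since any such pair works.
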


\begin{proof}
Suppose first that \((td_L,0)\in A\).  By
Proposition~\ref{prop:endpoint-relations}, there are a nonempty
\(1\)-path \(Z\) from \(v_L\) to \(v_R\) and a positive diagram from
\(p_H\) to \(\ell Zr\) such that
\[
  \Lambda_L^t[Z]=[Z].
\]
Since \(\Path_{\mathrm{in}}(\mathcal L)\) is a groupoid by
Corollary~\ref{cor:minimal-groupoid}, we may cancel \([Z]\) on the
right.  Hence
\[
  \Lambda_L^t=e_{v_L}.
\]

Conversely, Corollary~\ref{cor:boundary-relations}\textup{(2)} provides a
nonempty
\(1\)-path \(Y\) from \(v_L\) to \(v_R\) for which there is a positive
diagram from \(p_H\) to \(\ell Yr\).  If
\(\Lambda_L^t=e_{v_L}\), then
\[
  \Lambda_L^t[Y]=[Y].
\]
The converse implication in Proposition~\ref{prop:endpoint-relations}
now gives \((td_L,0)\in A\).  The final assertion follows from the
characterization of \(q\) above.
\end{proof}

\begin{proposition}[Exact path count]
\label{prop:exact-inner-path-count}
Let \(H\leq F\) be closed and act minimally on \((0,1)\), and suppose
that \(A=\pi_{\mathrm{ab}}(H)\) has rank two.  Let
\(\mathcal L=C(H)\), and set
\[
  q=\bigl[d_L\mathbb Z\times d_R\mathbb Z:A\bigr].
\]
Then, for every pair of inner vertices \(u,v\) of \(\mathcal L\),
\[
  \bigl|\Path(\mathcal L)(u,v)\bigr|=q.
\]
\end{proposition}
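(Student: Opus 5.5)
Since \(\Path_{\mathrm{in}}(\mathcal L)\) is a groupoid by Corollary~\ref{cor:minimal-groupoid}, I first reduce to a single vertex group. In a connected groupoid, every hom-set \(\Path(\mathcal L)(u,v)\) is in bijection with the vertex group \(\Path(\mathcal L)(v_L,v_L)\): fix \(X\in\Path(\mathcal L)(u,v_L)\) and \(Y\in\Path(\mathcal L)(v_L,v)\) and use \(G\mapsto XGY\). The groupoid is connected because, by Lemma~\ref{lem:accessibility} and Lemma~\ref{lem:boundary-incidence}, there is a path from \(\iota_{\mathcal L}\) to \(\tau_{\mathcal L}\) through any inner vertex, and that path is inner between inner vertices. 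Alternatively, by minimality there exist dyadic \(a<b\) of any prescribed types, so Proposition~\ref{prop:interval-realization} in reverse gives nonempty hom-sets. It therefore suffices to show \(|\Path(\mathcal L)(v_L,v_L)|=q\).

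By Lemma~\ref{lem:left-boundary-order}, \(\Lambda_L\) has order exactly \(q\), so the vertex group contains at least \(q\) elements. The main step is to show that \(\Lambda_L\) generates \(\Path(\mathcal L)(v_L,v_L)\). Take any \(P\in\Path(\mathcal L)(v_L,v_L)\), choose a nonempty path \(Q\colon v_L\to v_R\) (which exists by connectivity), and apply Lemma~\ref{lem:boundary-comparison}(1) to the two paths \(PQ\) and \(Q\) from \(v_L\) to \(v_R\). This gives positive integers \(j,j'\) with \(\Lambda_L^{j}P[Q]=\Lambda_L^{j'}[Q]\). Cancelling \([Q]\) in the groupoid yields \(P=\Lambda_L^{j'-j}\), so \(P\) is a power of \(\Lambda_L\), where negative powers are understood via the group inverse. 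Hence the vertex group is cyclic of order \(q\).

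The apparent obstacle is that Lemma~\ref{lem:boundary-comparison} requires inner paths beginning at \(v_L\). That condition holds here, since \(v_L\) is inner: \(\ell=\ell_{N_L}\) with \(N_L\geq1\), and its terminal vertex is an endpoint of the inner edge \(\lambda_{N_L}\). The lemma's proof also needs nonempty paths, which \(PQ\) and \(Q\) are. Together these give \(|\Path(\mathcal L)(u,v)|=q\) for all inner vertices \(u,v\).
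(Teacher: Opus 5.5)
Your argument is correct and is essentially the paper's proof. Both use the groupoid structure from Corollary~\ref{cor:minimal-groupoid}, Lemma~\ref{lem:left-boundary-order} for the lower bound, and Lemma~\ref{lem:boundary-comparison} with groupoid cancellation for the upper bound, then transport by multiplication; the only difference is that you reduce to the vertex group \(\Path(\mathcal L)(v_L,v_L)\) rather than to \(\Path(\mathcal L)(v_L,v)\).

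One caution: your first justification of connectedness, via Lemma~\ref{lem:accessibility}, does not work. Knowing that every vertex lies on some path from \(\iota_{\mathcal L}\) to \(\tau_{\mathcal L}\) does not give a path from \(u\) to \(v\); accessibility holds even when \(H\) has fixed points, and then Lemma~\ref{lem:connected-fixed-points} shows connectedness fails. You must rely on your second, minimality-based argument, which is the one the paper uses.
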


\begin{proof}
We first count the path classes from \(v_L\) to an arbitrary inner
vertex \(v\).  By Proposition~\ref{prop:orbit-core}, there are dyadic
points \(a,c\in(0,1)\) of types \(v_L\) and \(v\), respectively.  Since
the action of \(H\) is minimal, the \(H\)-orbit of \(c\) is dense.  We
may therefore choose a point \(b\) in this orbit such that \(b>a\).
Then \(b\) has type \(v\), so
\[
  \omega[a,b]\in\Path(\mathcal L)(v_L,v).
\]
Put
\[
  P=\omega[a,b].
\]

We use the convention \(\Lambda_L^0=e_{v_L}\).  The \(q\) classes
\[
  P,\ \Lambda_LP,\ \ldots,\ \Lambda_L^{q-1}P
\]
are distinct.  Indeed, if
\[
  \Lambda_L^iP=\Lambda_L^jP
\]
for some \(0\leq i<j<q\), then cancellation in the groupoid gives
\[
  \Lambda_L^{j-i}=e_{v_L},
\]
contrary to Lemma~\ref{lem:left-boundary-order}.

Now let \(Q\in\Path(\mathcal L)(v_L,v)\).  Represent \(P\) and \(Q\) by
nonempty inner \(1\)-paths from \(v_L\) to \(v\).
By Lemma~\ref{lem:boundary-comparison}, there are positive integers
\(m,n\) such that
\[
  \Lambda_L^{m}Q=\Lambda_L^{n}P.
\]
Since \(\Lambda_L\) is invertible and has order \(q\), it follows that
\[
  Q=\Lambda_L^jP
\]
for some \(j\in\{0,\ldots,q-1\}\).  Thus
\[
  \bigl|\Path(\mathcal L)(v_L,v)\bigr|=q.
\]

Finally, let \(u,v\) be arbitrary inner vertices.  By the first
paragraph of this proof, applied with \(u\) in place of \(v\), the set
\(\Path(\mathcal L)(v_L,u)\) is nonempty.  Choose
\[
  C\in\Path(\mathcal L)(v_L,u).
\]
By Corollary~\ref{cor:minimal-groupoid}, \(C\) has an inverse
\(C^{-1}\).  Left multiplication by \(C\) defines a bijection
\[
  \Path(\mathcal L)(u,v)\longrightarrow
  \Path(\mathcal L)(v_L,v),
  \qquad
  W\longmapsto CW,
\]
whose inverse is \(Z\mapsto C^{-1}Z\).  Hence
\[
  \bigl|\Path(\mathcal L)(u,v)\bigr|
  =
  \bigl|\Path(\mathcal L)(v_L,v)\bigr|
  =q.
\]
\end{proof}

\begin{corollary}[The finitely generated minimal rank-two case]
\label{cor:minimal-rank-two-finfty}
Let \(H\leq F\) be a closed finitely generated subgroup.  Suppose that
\(H\) acts minimally on \((0,1)\) and that
\(\pi_{\mathrm{ab}}(H)\) has rank two.  Let
\(\mathcal L=C(H)\).  Then \(\Path(\mathcal L)\) is finite, and \(H\)
is of type \(F_\infty\).
\end{corollary}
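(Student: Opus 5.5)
The plan is to assemble the corollary from results already established, with no new construction. First, since \(H\) is closed and finitely generated, Theorem~\ref{thm:core-closure}\textup{(ii)} shows that \(\mathcal L=C(H)\) is finite. Since the action on \((0,1)\) is minimal, Corollary~\ref{cor:minimal-full} shows that \(\mathcal L\) is full. Thus the standing hypotheses of Sections~\ref{sec:interval-paths} and~\ref{sec:minimal-rank-two} hold, and in particular Proposition~\ref{prop:exact-inner-path-count} applies.

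Next, we count path classes. Put \(A=\pi_{\mathrm{ab}}(H)\). Because \(A\) has rank two, the index
\[
  q=\bigl[d_L\mathbb Z\times d_R\mathbb Z:A\bigr]
\]
is finite. Proposition~\ref{prop:exact-inner-path-count} then gives \(\bigl|\Path(\mathcal L)(u,v)\bigr|=q<\infty\) for every pair of inner vertices \(u,v\). Since \(V(\mathcal L)\) is finite, Lemma~\ref{lem:inner-reduction}\textup{(2)} upgrades this to finiteness of all of \(\Path(\mathcal L)\). Concretely, the classes beginning at \(\iota_{\mathcal L}\) or ending at \(\tau_{\mathcal L}\) are determined by their other endpoint, by Lemma~\ref{lem:boundary-paths}, so they contribute at most finitely many classes.

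Finally, \(\mathcal L\) is a finite directed \(2\)-complex with finitely many homotopy classes of nonempty \(1\)-paths. Theorem~\ref{thm:finite-path-semigroupoid} therefore shows that every diagram group over \(\mathcal L\) is of type \(F_\infty\), in particular \(D(\mathcal L,p_H)\). By Theorem~\ref{thm:core-closure}\textup{(i)} this group is isomorphic to \(\Cl(H)=H\), using closedness. Hence \(H\) is of type \(F_\infty\).

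There is no real obstacle here, since the substantive work is in Proposition~\ref{prop:exact-inner-path-count}. The one point to check is that the finiteness of \(\mathcal L\) is actually needed. Proposition~\ref{prop:exact-inner-path-count} is stated without a finiteness assumption, but Lemma~\ref{lem:inner-reduction}\textup{(2)} and Theorem~\ref{thm:finite-path-semigroupoid} both require \(\mathcal L\) to be finite, and this is exactly where the finite generation of \(H\) enters.
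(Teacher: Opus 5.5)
Your proposal is correct and follows the paper's proof essentially verbatim: finiteness of the core from Theorem~\ref{thm:core-closure}\textup{(ii)}, the count \(q\) from Proposition~\ref{prop:exact-inner-path-count}, the upgrade to all of \(\Path(\mathcal L)\) via Lemma~\ref{lem:inner-reduction}\textup{(2)}, and then Theorem~\ref{thm:core-closure}\textup{(i)} combined with Theorem~\ref{thm:finite-path-semigroupoid}. Your explicit appeal to Corollary~\ref{cor:minimal-full} for fullness is a harmless addition that the paper leaves to the section's standing assumptions.
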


\begin{proof}
By Theorem~\ref{thm:core-closure}\textup{(ii)}, the core
\(\mathcal L=C(H)\) is finite.
Proposition~\ref{prop:exact-inner-path-count} shows that there are only
finitely many path classes between any two inner vertices of
\(\mathcal L\).  Lemma~\ref{lem:inner-reduction}\textup{(2)} therefore
implies that \(\Path(\mathcal L)\) is finite.  Finally,
Theorem~\ref{thm:core-closure}\textup{(i)} identifies \(H\) with the
diagram group \(D(\mathcal L,p_H)\), and the result follows from
Theorem~\ref{thm:finite-path-semigroupoid}.
\end{proof}

\subsection{Characterizations}
\label{subsec:characterizations}

We call the inner path semigroupoid
\(\Path_{\mathrm{in}}(\mathcal L)\) \emph{connected} if
\[
  \Path(\mathcal L)(u,v)\neq\varnothing
\]
for every ordered pair of inner vertices \(u,v\).  We call it
\emph{locally finite} if every set \(\Path(\mathcal L)(u,v)\) is
finite.  In particular, a locally finite groupoid with finitely many
objects is finite.

\begin{lemma}[Connectedness and fixed points]
\label{lem:connected-fixed-points}
Let \(H\leq F\) be closed, and suppose that \(\mathcal L=C(H)\) is full.
Then \(\Path_{\mathrm{in}}(\mathcal L)\) is connected if and only if
\(H\) has no fixed point in \((0,1)\).
\end{lemma}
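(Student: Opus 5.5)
I would translate connectedness into a statement about points via the interval-pair dictionary. By Corollary~\ref{cor:interval-pair-dictionary} and Proposition~\ref{prop:orbit-core}, \(\Path(\mathcal L)(u,v)\neq\varnothing\) holds if and only if there are dyadic points \(a<b\) with \(v(a)=u\) and \(v(b)=v\). Equivalently, writing \(O_u\subseteq\D\) for the \(H\)-orbit corresponding to \(u\), some point of \(O_u\) lies to the left of some point of \(O_v\). Thus connectedness means that for every ordered pair of orbits \(O,O'\) on \(\D\) there exist \(a\in O\) and \(b\in O'\) with \(a<b\). The statement then reduces to a purely dynamical claim about the action of \(H\) on \(\D\).

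For the direction ``no fixed point \(\Rightarrow\) connected'', I would fix \(a\in O_u\) and \(c\in O_v\). The aim is to find \(b\in Hc\) with \(b>a\). Since \(H\) has no fixed point in \((0,1)\), the whole interval \((0,1)\) is a single orbital of \(H\). By the orbital property recorded in Subsection~\ref{subsec:minimal-sets}, \(\sup Hc=1\). Hence some \(h(c)\) exceeds \(a\), and \(\omega[a,h(c)]\in\Path(\mathcal L)(u,v)\). This step needs only that \((0,1)\) is an orbital, so it does not use minimality.

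For the converse I would argue by contraposition: suppose \(x\in(0,1)\) is fixed by \(H\). I would choose dyadic points \(a<x<b\). If \(x\) is itself dyadic, I would instead take \(a=x\) and \(b\) any dyadic point greater than \(x\). Put \(u=v(b)\) and \(v=v(a)\). Since \(H\) fixes \(x\), it preserves both \((x,1)\) and \((0,x]\). So every point of \(O_u=Hb\) lies strictly to the right of \(x\), and every point of \(O_v=Ha\) lies at or to the left of \(x\). Consequently no point of \(O_u\) is less than a point of \(O_v\), which forces \(\Path(\mathcal L)(u,v)=\varnothing\).

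The main point needing care is the first equivalence, namely that nonemptiness of \(\Path(\mathcal L)(u,v)\) is exactly the existence of an increasing pair with the prescribed types. This follows from the surjectivity part of Corollary~\ref{cor:interval-pair-dictionary} (Proposition~\ref{prop:interval-realization}) together with Definition~\ref{def:interval-class}. One also has to check that the constructed vertices are inner and that the relevant sets are nonempty; both hold because dyadic points in \((0,1)\) have inner types. Beyond that I expect no real obstacle; the lemma is essentially bookkeeping once the dictionary is available.
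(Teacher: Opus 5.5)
Your proposal is correct and follows essentially the same route as the paper. In one direction you use \(\sup Hc=1\) on the orbital \((0,1)\) to produce an increasing pair of the prescribed types. In the other you use a fixed point to separate the two orbits, with interval realization and Proposition~\ref{prop:orbit-core} ruling out a path class from the right-hand type to the left-hand type. Your special handling of a dyadic fixed point, taking \(a=x\), is harmless but unnecessary, since choosing dyadic \(a<x<b\) works in every case.
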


\begin{proof}
Suppose first that \(H\) has no fixed point in \((0,1)\).  Then
\((0,1)\) is an orbital of \(H\).  Consequently, for every
\(x\in(0,1)\),
\[
  \inf Hx=0,
  \qquad
  \sup Hx=1.
\]

Let \(u,v\) be inner vertices.  By Proposition~\ref{prop:orbit-core},
choose \(a,b\in\D\) such that
\[
  v(a)=u,
  \qquad
  v(b)=v.
\]
Since \(\sup Hb=1\), there exists \(h\in H\) such that \(h(b)>a\).
Therefore
\[
  \omega[a,h(b)]\in\Path(\mathcal L)(u,v),
\]
and \(\Path_{\mathrm{in}}(\mathcal L)\) is connected.

Conversely, suppose that \(c\in(0,1)\) is fixed by \(H\).  Choose dyadic
points
\[
  a<c<b,
\]
and put \(u=v(a)\) and \(v=v(b)\).  If the inner path semigroupoid were
connected, there would be a class in \(\Path(\mathcal L)(v,u)\).  By
Proposition~\ref{prop:interval-realization}, this class would have the
form
\[
  \omega[x,y]
\]
for some dyadic \(x<y\) satisfying
\[
  v(x)=v,
  \qquad
  v(y)=u.
\]
Proposition~\ref{prop:orbit-core} would then imply that \(x\) belongs
to the orbit of \(b\) and \(y\) belongs to the orbit of \(a\).  Thus
\(x>c>y\), contradicting \(x<y\).
\end{proof}

\begin{theorem}[Characterization of the minimal rank-two case]
\label{thm:minimal-rank-two-characterization}
Let \(H\leq F\) be closed, put
\[
  \mathcal L=C(H),
  \qquad
  A=\pi_{\mathrm{ab}}(H),
\]
and suppose that \(A\) contains an element \((m,n)\) with \(mn\neq0\).
The following conditions are equivalent.
\begin{enumerate}[label=\textup{(\arabic*)}]
\item The action of \(H\) on \((0,1)\) is minimal and \(A\) has rank
      two.
\item The core \(\mathcal L\) is full, and
      \(\Path_{\mathrm{in}}(\mathcal L)\) is a connected, locally
      finite groupoid.
\end{enumerate}
If \(\mathcal L\) is finite, these conditions are also equivalent to:
\begin{enumerate}[label=\textup{(\arabic*)},start=3]
\item The action of \(H\) on \((0,1)\) is minimal, and \(H\) has
      finitely many orbits on
      \[
        \D_{<}^{2}=\{(a,b)\in\D^2:a<b\}.
      \]
\end{enumerate}
\end{theorem}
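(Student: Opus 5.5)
The proof runs through the cycle (1)\(\Rightarrow\)(2)\(\Rightarrow\)(1), and then (1)\(\Leftrightarrow\)(3) when \(\mathcal L\) is finite.

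\emph{Proof of (1)\(\Rightarrow\)(2).} This assembles results of Section~\ref{sec:minimal-rank-two}.
\begin{enumerate}[label=\textup{(\roman*)}]
\item Minimality gives fullness by Corollary~\ref{cor:minimal-full}.
\item \(\Path_{\mathrm{in}}(\mathcal L)\) is a groupoid by Corollary~\ref{cor:minimal-groupoid}.
\item It is locally finite by Proposition~\ref{prop:exact-inner-path-count}, since every hom-set has exactly \(q<\infty\) elements.
\item A minimal action has no global fixed point in \((0,1)\), so \(\Path_{\mathrm{in}}(\mathcal L)\) is connected by Lemma~\ref{lem:connected-fixed-points}.
\end{enumerate}

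\emph{Proof of (2)\(\Rightarrow\)(1), rank.} Assume (2). Since \(\mathcal L\) is full and \(A\) contains \((m,n)\) with \(mn\ne0\), the standing hypotheses of Subsection~\ref{subsec:boundary-periods} hold; fullness is the only hypothesis used in that subsection besides this element. The assumed element also forces both coordinate projections of \(A\) to be nonzero. So if \(A\) had rank one, then \(A=\mathbb Z(a_L,a_R)\) with \(a_La_R\neq0\). Corollary~\ref{cor:rank-one-infinite-paths} would then make \(\Path(\mathcal L)(v_L,v_R)\) infinite, contradicting local finiteness. Hence \(A\) has rank two.

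\emph{Proof of (2)\(\Rightarrow\)(1), minimality.} Connectedness and Lemma~\ref{lem:connected-fixed-points} give that \(H\) has no fixed point in \((0,1)\). Next I show that every dyadic orbit is dense, using invertibility in the groupoid.
\begin{enumerate}[label=\textup{(\roman*)}]
\item Fix a dyadic \(x\) of type \(u\), dyadic \(a<b\), and a dyadic \(c\in(a,b)\).
\item Connectedness gives \(Y\in\Path(\mathcal L)(v(c),u)\), which has an inverse \(Y^{-1}\). Hence \(\omega[a,c]=(\omega[a,c]Y)\,Y^{-1}\).
\item Proposition~\ref{prop:interval-realization} realizes \(\omega[a,c]Y\) as \(\omega(I)\) for an interval \(I\) whose right endpoint has type \(u\).
\item Corollary~\ref{cor:interval-placement} is applied with \(C\) empty and \(D\) a representative of \(Y^{-1}\). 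Following its proof, \(J'\) is the left block of a refinement of \([a,c]\), so \(J'=[a,t']\) with \(a<t'<c<b\).
\item The point \(t'\) has type \(u\), so by Proposition~\ref{prop:orbit-core} it lies in the orbit of \(x\), and that orbit meets \((a,b)\).
\end{enumerate}

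\emph{The main obstacle.} The hardest step is upgrading density of dyadic orbits to density of all orbits. My first attempt is as follows. Suppose some orbit \(Hy\) is not dense, and let \(K=\overline{Hy}\cap(0,1)\), a proper closed invariant set. Because \(H\) has no fixed points, \(K\) is not a single point. Pick a gap \((\alpha,\beta)\) of \(K\) with \(\alpha\in K\), and dyadic points \(a\in(\alpha,\beta)\) and \(b>\beta\).
\begin{enumerate}[label=\textup{(\roman*)}]
\item Applying regularity to \(\omega[a,b]\) (the equation \(W=WXW\)) together with Corollary~\ref{cor:interval-placement} yields \(h\in H\) with \(h(a)=a\) and \(h(b)<b\).
\item Since \(h\) fixes \(a\), it preserves the gap \((\alpha,\beta)\). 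So the gap containing \(h(b)\) must be compared with \((\alpha,\beta)\).
\item I would combine this with the symmetric element \(g\), fixing \(b\) and moving \(a\) to the right, together with density of the dyadic orbit of \(a\), to force an orbit point of \(y\) into \((\alpha,\beta)\). That gives the contradiction.
\end{enumerate}
If this gets too delicate, the fallback is to restrict to the finitely generated case and invoke Lemma~\ref{lem:minimal-set-alternative}. There, a closed discrete minimal orbit or an exceptional minimal set would contain dyadic-free open gaps, which is incompatible with dense dyadic orbits.

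\emph{Proof of (1)\(\Leftrightarrow\)(3) when \(\mathcal L\) is finite.} Given (1), \(\Path_{\mathrm{in}}(\mathcal L)\) is locally finite and has finitely many objects. Corollary~\ref{cor:interval-pair-dictionary} then gives finitely many \(H\)-orbits on \(\D^2_<\). Conversely, given (3), minimality makes \(\mathcal L\) full (Corollary~\ref{cor:minimal-full}). The dictionary then makes \(\Path_{\mathrm{in}}(\mathcal L)\) finite, and Corollary~\ref{cor:rank-one-infinite-paths} rules out rank one exactly as above, so \(A\) has rank two.
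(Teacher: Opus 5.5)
Your arguments for (1)$\Rightarrow$(2), for the rank half of (2)$\Rightarrow$(1), and for the equivalence with (3) match the paper's. The gap is in the minimality half of (2)$\Rightarrow$(1).

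\textbf{What is missing.} You only prove that orbits of dyadic points are dense, and the passage to arbitrary orbits is never carried out. Your first attempt stops at ``I would combine this\dots''. The fallback fails for two reasons.
\begin{enumerate}
\item It needs $H$ finitely generated, but (1)$\Leftrightarrow$(2) is asserted without assuming $\mathcal L$ finite.
\item Its key claim is wrong. A gap of a minimal set is a nonempty open interval, so it contains dyadic points; there are no ``dyadic-free open gaps''. An exceptional minimal set, or a closed discrete orbit, yields a non-dense dyadic orbit only if it contains a dyadic point, and nothing guarantees that. If the minimal set misses $\D$, every dyadic point lies in a gap, and nothing you have shown prevents those points from having dense orbits.
\end{enumerate}
So density of dyadic orbits alone does not exclude non-minimal dynamics, and you must use hypothesis (2) again.

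\textbf{How to fix it.} Apply your placement argument to a whole interval around an arbitrary point, instead of working through types of dyadic points. Let $x\in(0,1)$ be arbitrary and let $U$ be a nonempty open interval.
\begin{enumerate}
\item Choose dyadic intervals $I=[a,b]$ with $x\in(a,b)$ and $J\subseteq U$. Say $\omega(I)\in\Path(\mathcal L)(u,v)$ and $\omega(J)\in\Path(\mathcal L)(u',v')$.
\item Connectedness gives $C\in\Path(\mathcal L)(u',u)$. Let $X$ be the groupoid inverse of $C\,\omega(I)$ and put $D=X\,\omega(J)$. Then $C\,\omega(I)\,D=e_{u'}\,\omega(J)=\omega(J)$.
\item Corollary~\ref{cor:interval-placement} now gives $h\in H$ mapping all of $I$ onto a subinterval of $J$. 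In particular $h(x)\in U$.
\end{enumerate}
This treats every point at once and is exactly the paper's argument. It makes both your dyadic-density step and the upgrade to arbitrary orbits unnecessary.
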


\begin{proof}
Assume \textup{(1)}.  Corollary~\ref{cor:minimal-full} shows that
\(\mathcal L\) is full, and Corollary~\ref{cor:minimal-groupoid} shows
that \(\Path_{\mathrm{in}}(\mathcal L)\) is a groupoid.  Since a
minimal action has no fixed point,
Lemma~\ref{lem:connected-fixed-points} gives connectedness.
Proposition~\ref{prop:exact-inner-path-count} shows that every set
\[
  \Path(\mathcal L)(u,v)
\]
is finite.  Thus \textup{(2)} holds.

Conversely, assume \textup{(2)}.  Let \(x\in(0,1)\), and let
\(U\subseteq(0,1)\) be a nonempty open interval.  Choose closed
intervals \(I=[a,b]\) and \(J=[c,d]\), with dyadic endpoints, such that
\[
  x\in(a,b),
  \qquad
  J\subseteq U.
\]
Put
\[
  W=\omega(I)\in\Path(\mathcal L)(u,v),
  \qquad
  V=\omega(J)\in\Path(\mathcal L)(u',v').
\]
Connectedness provides an arrow
\[
  C\in\Path(\mathcal L)(u',u).
\]
Let \(X\in\Path(\mathcal L)(v,u')\) be the inverse of \(CW\), and set
\(D=XV\).  Then
\[
  CWD=(CW)XV=e_{u'}V=V.
\]
Corollary~\ref{cor:interval-placement} gives an element of \(H\)
mapping \(I\) onto a closed subinterval of \(J\).  In particular, this
element maps \(x\) into \(U\).  Hence the action is minimal.

The hypothesis on \(A\) implies that its two coordinate directions are
nonzero.  Thus \(A\) has rank one or two.  If \(A\) had rank one,
Corollary~\ref{cor:rank-one-infinite-paths}
would imply that
\[
  \Path(\mathcal L)(v_L,v_R)
\]
is infinite, contrary to local finiteness.  Therefore \(A\) has rank
two, proving \textup{(1)}.

Now suppose that \(\mathcal L\) is finite.  Since a locally finite
groupoid with finitely many objects is finite,
Corollary~\ref{cor:interval-pair-dictionary} gives the equivalence with
\textup{(3)}.  Conversely, if \textup{(3)} holds, that corollary shows
that \(\Path_{\mathrm{in}}(\mathcal L)\) is finite.
Corollary~\ref{cor:rank-one-infinite-paths} again excludes rank one, so
\(A\) has rank two.
\end{proof}

\begin{definition}
\label{def:relative-order-transitivity}
Let \(k\geq2\).  The action of \(H\) on \(\D\) is \emph{relatively
order-\(k\)-transitive} if, whenever
\[
  a_1<\cdots<a_k,
  \qquad
  b_1<\cdots<b_k
\]
are dyadic points such that \(a_i\) and \(b_i\) belong to the same
\(H\)-orbit for every \(i\), there exists \(h\in H\) such that
\[
  h(a_i)=b_i
  \qquad(1\leq i\leq k).
\]
If \(H\) is transitive on \(\D\), this is the usual
order-\(k\)-transitivity of its action on \(\D\).
\end{definition}

\begin{proposition}[Thinness and relative order transitivity]
\label{prop:thin-relative-order}
Let \(H\leq F\) be closed, and suppose that \(\mathcal L=C(H)\) is full.
The following conditions are equivalent.
\begin{enumerate}[label=\textup{(\arabic*)}]
\item The path semigroupoid \(\Path(\mathcal L)\) is thin.
\item The action of \(H\) on \(\D\) is relatively
      order-\(2\)-transitive.
\item The action of \(H\) on \(\D\) is relatively
      order-\(k\)-transitive for every \(k\geq2\).
\end{enumerate}
\end{proposition}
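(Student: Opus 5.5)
I will prove the cycle \textup{(1)}\(\Rightarrow\)\textup{(3)}\(\Rightarrow\)\textup{(2)}\(\Rightarrow\)\textup{(1)}. The implication \textup{(3)}\(\Rightarrow\)\textup{(2)} is trivial.

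For \textup{(1)}\(\Rightarrow\)\textup{(3)}, I apply Proposition~\ref{prop:tuple-transport}. Let \(a_1<\cdots<a_k\) and \(b_1<\cdots<b_k\) be dyadic points with \(a_i\) and \(b_i\) in the same \(H\)-orbit. By Proposition~\ref{prop:orbit-core} this gives \(v(a_i)=v(b_i)\) for every \(i\). For each \(1\leq i<k\), the classes \(\omega[a_i,a_{i+1}]\) and \(\omega[b_i,b_{i+1}]\) then lie in the same set \(\Path(\mathcal L)(v(a_i),v(a_{i+1}))\). By thinness this set has at most one element, so the two classes coincide. Proposition~\ref{prop:tuple-transport} now produces \(h\in H\) with \(h(a_i)=b_i\) for all \(i\).

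For \textup{(2)}\(\Rightarrow\)\textup{(1)}, I use Lemma~\ref{lem:inner-reduction}\textup{(1)}. It reduces thinness to showing that any two nonempty \(1\)-paths with the same inner endpoints \(u,w\) are homotopic, that is, \(|\Path(\mathcal L)(u,w)|\leq1\) for inner \(u,w\).

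Let \(\xi,\eta\in\Path(\mathcal L)(u,w)\). By Proposition~\ref{prop:interval-realization}, write \(\xi=\omega[a,b]\) and \(\eta=\omega[c,d]\) with \(a<b\), \(c<d\), \(v(a)=v(c)=u\) and \(v(b)=v(d)=w\). By Proposition~\ref{prop:orbit-core}, \(a\) and \(c\) lie in the same orbit, and so do \(b\) and \(d\). Relative order-\(2\)-transitivity then gives \(h\in H\) with \(h(a)=c\) and \(h(b)=d\). Proposition~\ref{prop:transport} yields \(\xi=\omega[c,d]=\eta\).

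I expect no real obstacle; this is a direct combination of the interval-pair dictionary and the tuple transport result. The points to check are routine:
\begin{itemize}
\item Types of points are only defined for dyadic points in \((0,1)\); this is harmless because all points involved lie in \(\D\).
\item Thinness also concerns classes with an endpoint at \(\iota_{\mathcal L}\) or \(\tau_{\mathcal L}\). Lemma~\ref{lem:boundary-paths}, as packaged in Lemma~\ref{lem:inner-reduction}, already disposes of these.
\end{itemize}
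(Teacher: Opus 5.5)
Your proof is correct and is essentially the paper's: \textup{(1)}\(\Rightarrow\)\textup{(3)} uses Proposition~\ref{prop:orbit-core}, thinness and Proposition~\ref{prop:tuple-transport} exactly as the paper does, and \textup{(3)}\(\Rightarrow\)\textup{(2)} is trivial. Your \textup{(2)}\(\Rightarrow\)\textup{(1)} just unpacks the realization-plus-transport half of Corollary~\ref{cor:interval-pair-dictionary} together with Lemma~\ref{lem:inner-reduction}\textup{(1)}, which the paper cites for \textup{(1)}\(\Leftrightarrow\)\textup{(2)}; arranging the implications as a cycle only spares you the exact-transport (injectivity) half of that dictionary.
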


\begin{proof}
Corollary~\ref{cor:interval-pair-dictionary} and
Proposition~\ref{prop:orbit-core} show that relative
order-\(2\)-transitivity is equivalent to having at most one path
class between every ordered pair of inner vertices.  By
Lemma~\ref{lem:inner-reduction}\textup{(1)}, this is equivalent to
thinness of \(\Path(\mathcal L)\).  Thus \textup{(1)} and \textup{(2)}
are equivalent.

Assume \textup{(1)}, and let
\[
  a_1<\cdots<a_k,
  \qquad
  b_1<\cdots<b_k
\]
satisfy the assumptions of Definition~\ref{def:relative-order-transitivity}.
Proposition~\ref{prop:orbit-core} gives
\[
  v(a_i)=v(b_i)
  \qquad(1\leq i\leq k).
\]
For every \(i<k\), the two interval classes
\[
  \omega[a_i,a_{i+1}]
  \quad\text{and}\quad
  \omega[b_i,b_{i+1}]
\]
therefore have the same initial and terminal vertices.  Thinness gives
\[
  \omega[a_i,a_{i+1}]
  =
  \omega[b_i,b_{i+1}].
\]
Since \(H\) is closed, Proposition~\ref{prop:tuple-transport} gives a
single element \(h\in H\) such that
\[
  h(a_i)=b_i
  \qquad(1\leq i\leq k).
\]
Thus \textup{(1)} implies \textup{(3)}.  Taking \(k=2\) shows that
\textup{(3)} implies \textup{(2)}.
\end{proof}

\begin{theorem}[The rectangular characterization]
\label{thm:rectangular-characterization}
Let \(H\leq F\) be closed, put
\[
  \mathcal L=C(H),
  \qquad
  A=\pi_{\mathrm{ab}}(H),
\]
and suppose that \(A\) contains an element \((m,n)\) with \(mn\neq0\).
The following conditions are equivalent.
\begin{enumerate}[label=\textup{(\arabic*)}]
\item The action of \(H\) on \((0,1)\) is minimal, and
      \[
        A=m\mathbb Z\times n\mathbb Z
      \]
      for some positive integers \(m,n\).
\item The core \(\mathcal L\) is full and
      \[
        \bigl|\Path(\mathcal L)(u,v)\bigr|=1
      \]
      for every ordered pair of inner vertices \(u,v\).
\item The core \(\mathcal L\) is full, the inner path semigroupoid
      \(\Path_{\mathrm{in}}(\mathcal L)\) is connected, and
      \(\Path(\mathcal L)\) is thin.
\item The core \(\mathcal L\) is full, \(H\) has no fixed point in
      \((0,1)\), and the action on \(\D\) is relatively
      order-\(2\)-transitive.
\item The core \(\mathcal L\) is full, \(H\) has no fixed point in
      \((0,1)\), and the action on \(\D\) is relatively
      order-\(k\)-transitive for every \(k\geq2\).
\end{enumerate}
\end{theorem}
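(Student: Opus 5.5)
The plan is to prove the cycle \((1)\Rightarrow(2)\Rightarrow(3)\Rightarrow(4)\Leftrightarrow(5)\), and then close it with \((3)\Rightarrow(1)\). Almost everything is already available: Proposition~\ref{prop:exact-inner-path-count}, Theorem~\ref{thm:minimal-rank-two-characterization}, Lemma~\ref{lem:connected-fixed-points} and Proposition~\ref{prop:thin-relative-order}.

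\emph{\((1)\Rightarrow(2)\).} If \(A=m\mathbb Z\times n\mathbb Z\), then the coordinate projections give \(d_L=m\) and \(d_R=n\), so \(A=d_L\mathbb Z\times d_R\mathbb Z\) and \(q=1\). Moreover \(A\) has rank two. Corollary~\ref{cor:minimal-full} gives fullness, and Proposition~\ref{prop:exact-inner-path-count} gives \(|\Path(\mathcal L)(u,v)|=q=1\) for all inner \(u,v\).

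\emph{\((2)\Rightarrow(3)\).} Nonemptiness of every \(\Path(\mathcal L)(u,v)\) is exactly connectedness. The bound \(\leq1\) on inner pairs is thinness by Lemma~\ref{lem:inner-reduction}\textup{(1)}.

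\emph{\((3)\Leftrightarrow(4)\Leftrightarrow(5)\).} For a full core, connectedness is equivalent to the absence of fixed points by Lemma~\ref{lem:connected-fixed-points}. Thinness is equivalent to relative order-\(2\)-transitivity, and to relative order-\(k\)-transitivity for all \(k\), by Proposition~\ref{prop:thin-relative-order}.

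\emph{\((3)\Rightarrow(1)\).} This is the one step requiring an argument. First I show that a connected thin \(\Path_{\mathrm{in}}(\mathcal L)\) is a groupoid with exactly one arrow between any two objects.

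For a fixed inner \(u\), the set \(\Path(\mathcal L)(u,u)\) is a single element \(e_u\). Then \(e_ue_u=e_u\), so \(e_u\) is idempotent. For \(W\in\Path(\mathcal L)(u,v)\), thinness forces \(e_uW=W=We_v\). Pick \(X\in\Path(\mathcal L)(v,u)\) by connectedness; then \(WX=e_u\) and \(XW=e_v\), again by thinness. Hence we have a connected groupoid, and it is locally finite.

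Theorem~\ref{thm:minimal-rank-two-characterization}, applied with the standing hypothesis \(mn\neq0\), now gives minimality and rank two. It remains to show that \(A\) is rectangular. By Lemma~\ref{lem:left-boundary-order}, \(\Lambda_L\in\Path(\mathcal L)(v_L,v_L)=\{e_{v_L}\}\) satisfies \(\Lambda_L=e_{v_L}\). This gives \((d_L,0)\in A\), that is, \(q=1\). By the computation preceding that lemma, \(q=1\) means \(A=d_L\mathbb Z\times d_R\mathbb Z\). Alternatively, Proposition~\ref{prop:exact-inner-path-count} directly yields \(q=|\Path(\mathcal L)(v_L,v_L)|=1\).

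This proves \((1)\) with \(m=d_L\) and \(n=d_R\). The only mild subtlety is the groupoid verification in \((3)\Rightarrow(1)\), which lets us invoke the earlier characterization without the regularity machinery; I expect no real obstacle beyond that.
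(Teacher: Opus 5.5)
Your proof is correct and follows essentially the paper's route. You use the same implications \((1)\Rightarrow(2)\Rightarrow(3)\), and the same appeal to Lemma~\ref{lem:connected-fixed-points} and Proposition~\ref{prop:thin-relative-order} for \((3)\Leftrightarrow(4)\Leftrightarrow(5)\). For the return to \((1)\) you make the same elementary check that unique inner path classes form a connected locally finite groupoid, then apply Theorem~\ref{thm:minimal-rank-two-characterization} and get \(q=1\) from Proposition~\ref{prop:exact-inner-path-count}. Your alternative finish via Lemma~\ref{lem:left-boundary-order}, where \(\Lambda_L=e_{v_L}\) forces \((d_L,0)\in A\), is equally valid.
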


\begin{proof}
Assume \textup{(1)}.  Corollary~\ref{cor:minimal-full} shows that
\(\mathcal L\) is full.  Moreover, \(d_L=m\), \(d_R=n\), and
\[
  \bigl[d_L\mathbb Z\times d_R\mathbb Z:A\bigr]=1.
\]
Proposition~\ref{prop:exact-inner-path-count} therefore gives
\[
  \bigl|\Path(\mathcal L)(u,v)\bigr|=1
\]
for every ordered pair of inner vertices.  Thus \textup{(1)} implies
\textup{(2)}.

Suppose \textup{(2)} holds.  Connectedness is immediate, and
Lemma~\ref{lem:inner-reduction}\textup{(1)} shows that the uniqueness
of all inner path classes implies thinness of \(\Path(\mathcal L)\).
Hence \textup{(2)} implies \textup{(3)}.  Conversely, connectedness
supplies at least one inner path class between every ordered pair,
while thinness supplies at most one.  Thus \textup{(3)} implies
\textup{(2)}.

Under \textup{(2)}, uniqueness makes the unique local loops identity
arrows and the unique arrows in the reverse direction inverses.  Indeed,
uniqueness gives
\[
  e_uW=W=We_v,
  \qquad
  WX=e_u,
  \qquad
  XW=e_v.
\]
Thus \(\Path_{\mathrm{in}}(\mathcal L)\) is a connected, locally finite
groupoid.  Theorem~\ref{thm:minimal-rank-two-characterization} shows
that the action is minimal and that \(A\) has rank two.
Proposition~\ref{prop:exact-inner-path-count} now gives
\[
  1=\bigl[d_L\mathbb Z\times d_R\mathbb Z:A\bigr].
\]
Consequently,
\[
  A=d_L\mathbb Z\times d_R\mathbb Z,
\]
which proves \textup{(1)}.

Finally, Lemma~\ref{lem:connected-fixed-points} identifies connectedness
with the absence of fixed points, and
Proposition~\ref{prop:thin-relative-order} identifies thinness with
each of the two relative order-transitivity conditions.  Hence
\textup{(3)}, \textup{(4)}, and \textup{(5)} are equivalent.
\end{proof}

\medskip\noindent\textbf{Remark.}
In either Theorem~\ref{thm:minimal-rank-two-characterization} or
Theorem~\ref{thm:rectangular-characterization}, the hypothesis that
\(A\) contain an element \((m,n)\) with \(mn\neq0\) may instead be
replaced by the assumption that \(\mathcal L\) is finite.  Indeed, each
condition in the two theorems includes or implies fullness, and the
required element exists whenever the core is finite and full.

The hypothesis cannot be omitted without any replacement.  The
subgroup \([F,F]\) is closed, acts minimally on \((0,1)\), and is
order-\(k\)-transitive on \(\D\) for every \(k\geq2\), so that
conditions \textup{(2)}--\textup{(5)} of
Theorem~\ref{thm:rectangular-characterization} hold for it; but
\[
  \pi_{\mathrm{ab}}([F,F])=0,
\]
so \textup{(1)} fails.

\section{Rational \texorpdfstring{\(H\)}{H}-isolated intervals}
\label{sec:rational-isolated-intervals}

From now on, let \(H\leq F\) be closed and suppose that
\[
  \mathcal L=C(H)
\]
is finite and full.  By Theorem~\ref{thm:core-closure}\textup{(ii)},
\(H\) is finitely generated.  Theorem~\ref{thm:rank-one-not-FP2}
treats the rank-one case, while
Corollary~\ref{cor:minimal-rank-two-finfty} treats the case in which
\(\pi_{\mathrm{ab}}(H)\) has rank two and the action of \(H\) on
\((0,1)\) is minimal.  We now prepare for the remaining rank-two
nonminimal case.

\subsection{The normalized restriction}
\label{subsec:normalized-restriction}

\begin{definition}
\label{def:rational-isolated}
Let \(H\leq F\), and let
\[
  I=(a,b),
  \qquad
  0\leq a<b\leq1.
\]
We call \(I\) an \emph{\(H\)-isolated interval} if
\[
  h(I)\cap I\neq\varnothing
  \quad\Longrightarrow\quad
  h(I)=I
\]
for every \(h\in H\).  If, in addition, \(a\) and \(b\) are rational,
we call \(I\) a \emph{rational \(H\)-isolated interval}.
\end{definition}

An interval \(I\) is \(H\)-isolated if and only if any two intervals in
\[
  \{h(I):h\in H\}
\]
are either equal or disjoint.  Write
\[
  \operatorname{Stab}_H(I)=\{h\in H:h(I)=I\}
\]
for the setwise stabilizer of \(I\) in \(H\).  Moreover, if \(x,y\in I\)
and \(h(x)=y\) for some \(h\in H\), then
\(h\in\operatorname{Stab}_H(I)\).  Since the elements of \(F\) are
increasing, every element of \(\operatorname{Stab}_H(I)\) fixes both
endpoints of \(I\).  In the next section, we shall see that if
\(H\leq F\) is closed, \(C(H)\) is finite and full,
\(\pi_{\mathrm{ab}}(H)\) has rank two, and the action of \(H\) on
\((0,1)\) is not minimal, then \(H\) has a proper rational
\(H\)-isolated interval.

We now associate a coordinate map with every rational interval
\(I=(a,b)\).  Choose binary expansions
\[
  a=.\!\alpha,
  \qquad
  b=.\!\beta
\]
as follows: if \(a\) is dyadic, choose its expansion ending in
\(0^\infty\), and if \(b\) is dyadic, choose its expansion ending in
\(1^\infty\).  For nondyadic endpoints, use their unique binary
expansions.

Let \(u\) be the longest common prefix of \(\alpha\) and \(\beta\), and
write
\[
  \alpha=u0\xi_1\xi_2\cdots,
  \qquad
  \beta=u1\eta_1\eta_2\cdots .
\]
Let
\[
  i_1<i_2<\cdots
\]
be the positions for which \(\xi_{i_n}=0\), and let
\[
  j_1<j_2<\cdots
\]
be the positions for which \(\eta_{j_n}=1\).  Define
\[
  s_n=u0\xi_1\cdots\xi_{i_n-1}1,
  \qquad
  t_n=u1\eta_1\cdots\eta_{j_n-1}0
  \qquad(n\geq1).
\]
The intervals
\[
  [s_n],\ [t_n],
  \qquad n\geq1,
\]
are precisely the maximal standard dyadic intervals contained in
\(I\).

Define
\[
  \nu_I:(0,1)\longrightarrow I
\]
by
\[
  \nu_I(.0^n1\zeta)=.s_n\zeta,
  \qquad
  \nu_I(.1^n0\zeta)=.t_n\zeta
\]
for every \(n\geq1\) and every infinite binary word \(\zeta\).  Thus
\(\nu_I\) maps each interval \([0^n1]\) linearly onto \([s_n]\), and
each interval \([1^n0]\) linearly onto \([t_n]\).  The correspondence
is
\[
\begin{array}{ccccccccc}
\cdots &[0^31]&[0^21]&[01]&[10]&[1^20]&[1^30]&\cdots\\
&\downarrow&\downarrow&\downarrow&\downarrow&\downarrow&\downarrow\\
\cdots &[s_3]&[s_2]&[s_1]&[t_1]&[t_2]&[t_3]&\cdots .
\end{array}
\]
These formulas define an increasing homeomorphism, which we call the
\emph{canonical chart} associated with \(I\).  We extend it
continuously by
\[
  \nu_I(0)=a,
  \qquad
  \nu_I(1)=b.
\]
Since \(\nu_I\) only replaces a finite prefix and leaves the remaining
binary tail unchanged, it maps \(\D\) bijectively onto \(\D\cap I\),
and it maps rational points bijectively onto the rational points of
\(I\).

\begin{definition}
\label{def:normalized-restriction}
Let \(H\leq F\), and let \(I\) be a rational \(H\)-isolated interval.
The \emph{normalized restriction of \(H\) to \(I\)} is
\[
  K=K_I(H)
  =\nu_I
  \bigl(\operatorname{Stab}_H(I)|_I\bigr)
  \nu_I^{-1}.
\]
Thus the map \(k\mapsto\nu_I^{-1}k\nu_I\) identifies \(K\) with the
restriction of \(\operatorname{Stab}_H(I)\) to \(I\).
\end{definition}

\begin{lemma}
\label{lem:normalized-restriction}
Let \(H\leq F\) be closed, let \(I=(a,b)\) be a rational
\(H\)-isolated interval, and let \(K=K_I(H)\).  Then \(K\) is a closed
subgroup of \(F\).  Let \(\sigma_a\) and
\(\sigma_b\) be least eventual periods of the chosen expansions
\(\alpha\) and \(\beta\) of \(a\) and \(b\), respectively.  Thus
\(\sigma_a=0\) if \(a\) is dyadic, and \(\sigma_b=1\) if \(b\) is
dyadic.  Put
\[
  L_a=|\sigma_a|,
  \qquad
  z_a=\text{the number of zeros in }\sigma_a,
\]
and
\[
  L_b=|\sigma_b|,
  \qquad
  o_b=\text{the number of ones in }\sigma_b.
\]
If \(h\in\operatorname{Stab}_H(I)\) and
\[
  k=\nu_I(h|_I)\nu_I^{-1},
\]
then
\[
  \log_2 k'(0^+)
  =
  \frac{z_a}{L_a}\log_2 h'(a^+)
\]
and
\[
  \log_2 k'(1^-)
  =
  \frac{o_b}{L_b}\log_2 h'(b^-).
\]

If, in addition, \(C(H)\) is finite and full, then \(K\) contains an
element with nontrivial slope at \(0\) and an element with nontrivial
slope at \(1\).
\end{lemma}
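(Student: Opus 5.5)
The plan is to verify the four assertions in order: \(K\le F\), the two slope formulas, closedness of \(K\), and the existence of elements with nontrivial endpoint slopes. Throughout, compositions are read left to right. Thus \(k=\nu_I(h|_I)\nu_I^{-1}\) first applies \(\nu_I\), then \(h\), then \(\nu_I^{-1}\).

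\emph{Membership in \(F\) and the slope formulas.} Let \(h\in\operatorname{Stab}_H(I)\); by the remark after Definition~\ref{def:rational-isolated}, \(h\) fixes \(a\) and \(b\).
\begin{itemize}
\item On each compact subinterval of \((0,1)\), \(k\) is piecewise linear with dyadic breakpoints and slopes powers of \(2\). This holds because \(\nu_I\) is affine on each \([0^n1]\) and \([1^n0]\) with power-of-\(2\) slope and dyadic data, and \(h\) has finitely many dyadic breakpoints.
\item The real work is near \(0\) and \(1\). Take a tree pair for \(h\) and expand it so that the leaf containing \(a\) from the right is \([\alpha_{\le m}]\), where \(\alpha_{\le m}\) is the prefix of length \(m\), chosen past the preperiod of \(\alpha\). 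The corresponding branch pair is \(\alpha_{\le m}\to v\).
\item Since \(h(a)=a\) and \(h\) substitutes prefixes as in \eqref{eq:prefix-substitution}, \(v\) must be a prefix of \(\alpha\), say \(v=\alpha_{\le m'}\). Also \(\alpha_{>m}=\alpha_{>m'}\), so \(m'-m=jL_a\) for some \(j\in\mathbb Z\) by minimality of the period. Hence \(\log_2h'(a^+)=-jL_a\).
\item Transport this through the chart. A point \(.0^n1\zeta\) with \(n\) large is sent by \(\nu_I\) to \(.s_n\zeta\), where \(s_n\) extends the prefix of \(\alpha\) through its \(n\)-th zero after \(u0\). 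Shifting along \(\alpha\) by \(jL_a\) positions passes exactly \(jz_a\) zeros. So for large \(n\), \(k\) maps \([0^n1]\) linearly onto \([0^{n+jz_a}1]\) with the same tail.
\item Therefore \(k\) is linear near \(0\) with \(\log_2k'(0^+)=-jz_a=\frac{z_a}{L_a}\log_2h'(a^+)\). Note \(z_a\ge1\), because the convention on dyadic \(a\) excludes \(\sigma_a=1\). The computation at \(1\) is symmetric, with ones in \(\sigma_b\).
\item Consequently \(k\) has finitely many breakpoints, all dyadic, so \(k\in F\). That \(K\) is a subgroup is immediate from conjugation.
\end{itemize}

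\emph{Closedness.} Let \(f\in F\) be piecewise-\(K\), with dyadic subdivision points \(c_i\) and pieces \(k_i=\nu_I(h_i|_I)\nu_I^{-1}\), where \(h_i\in\operatorname{Stab}_H(I)\).
\begin{itemize}
\item Define \(g\) on \([0,1]\) by \(g=h_1\) on \([0,\nu_I(c_1)]\), \(g=h_i\) on \([\nu_I(c_{i-1}),\nu_I(c_i)]\), and \(g=h_n\) on \([\nu_I(c_{n-1}),1]\).
\item The points \(\nu_I(c_i)\) are dyadic, and \(g\) agrees with \(\nu_I^{-1}f\nu_I\) on \(I\). It is continuous, because consecutive pieces agree at the cut points, and \(h_1,h_n\) fix \(a,b\).
\item So \(g\) is a piecewise-\(H\) element of \(F\), hence \(g\in H\). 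It lies in \(\operatorname{Stab}_H(I)\), and \(f=\nu_I(g|_I)\nu_I^{-1}\in K\).
\end{itemize}

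\emph{Nontrivial slopes when \(C(H)\) is finite and full.} The argument at \(0\) is as follows; the argument at \(1\) is symmetric.
\begin{itemize}
\item If \(a=0\), use an element of \(H\) with nonzero first endpoint coordinate, which exists by Subsection~\ref{subsec:boundary-periods}. Conjugate it as in Lemma~\ref{lem:one-sided-slopes}, or directly take \(h\) with a branch pair \(0^m\to0^{m'}\) supplied by Lemma~\ref{lem:branch-criterion}.
\item In general, the edges \((\alpha_{\le m})^+\) in the finite full core, sampled along \(m\equiv m_0\pmod{L_a}\) past the preperiod, take finitely many values. So there are \(m<m'\) with \(m'-m\in L_a\mathbb Z_{>0}\) and \((\alpha_{\le m})^+=(\alpha_{\le m'})^+\).
\item Lemma~\ref{lem:branch-criterion} then gives \(h\in H\) with branch pair \(\alpha_{\le m}\to\alpha_{\le m'}\). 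Since \(\alpha_{>m}=\alpha_{>m'}\), this \(h\) fixes \(a\) and has \(h'(a^+)=2^{m-m'}\ne1\).
\item Points of \([\alpha_{\le m}]\) just to the right of \(a\) lie in \(I\) and are mapped into \(I\). So \(h(I)\cap I\neq\varnothing\), and isolation forces \(h\in\operatorname{Stab}_H(I)\).
\item The slope formula with \(z_a\ge1\) then gives \(k'(0^+)\ne1\).
\end{itemize}

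I expect the main obstacle to be the bookkeeping in the slope formula. One must check that the branch pair of \(h\) near \(a\) really shifts along \(\alpha\) by a multiple of the least period, including the dyadic and preperiodic edge cases. One must also check that this shift translates exactly into a change of \(jz_a\) in the index \(n\) of the chart intervals \([s_n]\).
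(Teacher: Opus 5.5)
Your proposal is correct and follows essentially the same route as the paper: read off the slope at $a$ from a branch pair $\alpha_{\le m}\to\alpha_{\le m'}$ with $m'-m\in L_a\mathbb Z$, count zeros to see that $h$ shifts the chart intervals $[s_n]$ by $jz_a$ indices, paste lifts $h_i$ at the dyadic points $\nu_I(c_i)$ for closedness, and use pigeonhole on the finitely many edges $(\alpha_{\le m})^+$ together with Lemma~\ref{lem:branch-criterion} and isolation for the nontrivial slopes. The paper does the last step by indexing the zeros as $a=.c_n0\tau_n$ and pigeonholing on $(c_n^+,\tau_n)$, which is equivalent. Your separate treatment of $a=0$ is unnecessary, since the general pigeonhole argument already covers $\alpha=0^\infty$, and you should drop the appeal to Lemma~\ref{lem:one-sided-slopes} there, because that lemma assumes a minimal rank-two action.
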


\begin{proof}
Let \(h\in\operatorname{Stab}_H(I)\), and consider its action
immediately to the right of \(a\).  If \(a\) is nondyadic, take the
branch \(y\to y'\) whose domain contains \(a\).  If \(a\) is dyadic,
take the branch whose domain interval has \(a\) as its left endpoint;
this agrees with the expansion \(\alpha\) ending in \(0^\infty\).
Refining the branch if necessary, we may assume that \(y\) and \(y'\)
extend beyond the preperiod of \(\alpha\).

Since \(h(a)=a\), there is an infinite word \(\gamma\) such that
\[
  \alpha=y\gamma=y'\gamma.
\]
Therefore
\[
  \log_2 h'(a^+)=|y|-|y'|.
\]
The equality of the two tails implies that
\[
  |y|-|y'|=jL_a
\]
for some \(j\in\mathbb Z\).

Each copy of \(\sigma_a\) contains \(z_a\) zeros.  Since the intervals
\([s_n]\) are indexed by the successive zeros of \(\alpha\), for every
sufficiently large \(n\), the map \(h\) sends \([s_n]\) linearly onto
\[
  [s_{n-jz_a}].
\]
Hence \(k\) sends \([0^n1]\) linearly onto
\[
  [0^{\,n-jz_a}1]
\]
for all sufficiently large \(n\).  It follows that, for some
sufficiently large \(N\), \(k\) has the branch
\[
  0^N\longrightarrow0^{\,N-jz_a}.
\]
Consequently,
\[
  \log_2 k'(0^+)
  =
  jz_a
  =
  \frac{z_a}{L_a}\log_2 h'(a^+).
\]

The proof at \(b\) is symmetric.  If \(b\) is dyadic, we use the branch
whose domain interval has \(b\) as its right endpoint, in accordance
with the expansion \(\beta\) ending in \(1^\infty\).  Counting the
successive ones in \(\beta\) gives
\[
  \log_2 k'(1^-)
  =
  \frac{o_b}{L_b}\log_2 h'(b^-).
\]

The branches obtained at the two endpoints show that \(k\) is affine
on neighborhoods of \(0\) and \(1\).  Between these neighborhoods,
only finitely many intervals used in the definition of \(\nu_I\)
occur.  Hence \(k\) has finitely many dyadic breakpoints and belongs
to \(F\).

We now prove that \(K\) is closed.  Let \(f\in F\) be piecewise \(K\)
on a dyadic subdivision
\[
  0=x_0<x_1<\cdots<x_m=1.
\]
Choose \(k_i\in K\) agreeing with \(f\) on
\([x_{i-1},x_i]\), and choose
\[
  h_i\in\operatorname{Stab}_H(I)
\]
whose normalized restriction is \(k_i\).  The points
\[
  \nu_I(x_1),\ldots,\nu_I(x_{m-1})
\]
are dyadic.  Paste the \(h_i\) at these points, using \(h_1\) on all of
\([0,\nu_I(x_1)]\) and \(h_m\) on all of \([\nu_I(x_{m-1}),1]\).  The
resulting function
belongs to \(F\), is piecewise \(H\), and therefore belongs to \(H\).
Its restriction to \(I\) is
\[
  \nu_I^{-1}f\nu_I,
\]
so it stabilizes \(I\).  Hence \(f\in K\), proving that \(K\) is
closed.

Finally, suppose that \(\mathcal L=C(H)\) is finite and full.  For the
left endpoint, recall that \(\xi_{i_n}=0\), and write
\[
  c_n=u0\xi_1\cdots\xi_{i_n-1},
  \qquad
  s_n=c_n1,
  \qquad
  a=.c_n0\tau_n.
\]
Past the preperiod of \(\alpha\), the tails \(\tau_n\) take only
finitely many values, and \(E(\mathcal L)\) is finite.  Hence there
exist \(m<n\), both beyond the preperiod, such that
\[
  c_m^+=c_n^+
  \qquad\text{and}\qquad
  \tau_m=\tau_n.
\]
By Lemma~\ref{lem:branch-criterion}, some \(h\in H\) has the branch
\[
  c_m\longrightarrow c_n.
\]
The equality \(\tau_m=\tau_n\) implies that \(h(a)=a\), while expansion
to the right child gives the branch
\[
  s_m\longrightarrow s_n.
\]
Since \([s_m]\) and \([s_n]\) are contained in \(I\), isolation implies
that \(h\in\operatorname{Stab}_H(I)\).  Moreover,
\[
  \log_2 h'(a^+)=|c_m|-|c_n|\neq0.
\]
The first formula of the lemma shows that the corresponding element of
\(K\) has nontrivial slope at \(0\).

The symmetric argument at \(b\) produces an element of \(K\) with
nontrivial slope at \(1\).
\end{proof}

For any core \(\mathcal M\), let
\[
  E_{\mathrm{in}}(\mathcal M)
\]
denote its set of inner edges.  For a rational \(H\)-isolated interval
\(I\), define
\[
  E_H(I)=
  \left\{
    w^+\in E(\mathcal L):
    [w]\subseteq I
  \right\}.
\]
Every edge in \(E_H(I)\) is inner by
Lemma~\ref{lem:boundary-incidence}.

If \(w\) is a finite binary word containing both digits \(0\) and
\(1\), it has a unique expression of one of the forms
\[
  w=0^n1z
  \qquad\text{or}\qquad
  w=1^n0z.
\]
Define the \emph{physical lift} of \(w\) by
\[
  \widehat w=
  \begin{cases}
    s_nz,&w=0^n1z,\\
    t_nz,&w=1^n0z.
  \end{cases}
\]
Then
\[
  \nu_I([w])=[\widehat w]\subseteq I.
\]

\begin{proposition}
\label{prop:normalized-core}
Let \(H\leq F\) be closed, suppose that \(\mathcal L=C(H)\) is finite
and full, and let \(I\) be a rational \(H\)-isolated interval.  Let
\(K=K_I(H)\).  Then \(C(K)\) is finite and full, and the canonical
chart induces a bijection
\[
  E_{\mathrm{in}}\bigl(C(K)\bigr)
  \longrightarrow
  E_H(I).
\]
More precisely, writing \(w_K^+\) for the state reached by \(w\) in
\(C(K)\) and \(v_H^+\) for the state reached by \(v\) in
\(\mathcal L\), this bijection is induced by
\[
  w_K^+\longmapsto\widehat w{}_H^+
\]
for words \(w\) containing both digits \(0\) and \(1\).
\end{proposition}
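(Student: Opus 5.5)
The plan is to compare the two tree automata directly through the physical lift, using Lemma~\ref{lem:branch-criterion} on both sides. By Lemma~\ref{lem:normalized-restriction}, \(K\) is closed, so Lemma~\ref{lem:branch-criterion} applies to \(K\) as well.

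The first step is a branch-pair dictionary. Let \(w,w'\) be words containing both digits. Since \(\nu_I\) maps \([w]\) linearly onto \([\widehat w]\), conjugation by \(\nu_I\) gives the following equivalence. An element \(k\in K\) has the branch pair \(w\to w'\) if and only if the corresponding \(h\in\operatorname{Stab}_H(I)\) has the branch pair \(\widehat w\to\widehat{w'}\).

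In the converse direction, suppose some \(h\in H\) has the branch pair \(\widehat w\to\widehat{w'}\). Both \([\widehat w]\) and \([\widehat{w'}]\) lie in \(I\), so \(h(I)\cap I\neq\varnothing\), and isolation puts \(h\) in \(\operatorname{Stab}_H(I)\). Combining this with Lemma~\ref{lem:branch-criterion} for \(H\) and for \(K\) yields
\[
  w_K^+=w'^{\,+}_K
  \quad\Longleftrightarrow\quad
  \widehat w{}_H^+=\widehat{w'}{}_H^+ ,
\]
provided \(w\) and \(w'\) are readable in \(C(K)\). This makes the map \(w_K^+\mapsto\widehat w{}_H^+\) well defined and injective on states reached by two-digit words.

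Readability and finiteness come next. Fullness of \(C(K)\) follows from Proposition~\ref{prop:finite-full-core}. \(K\) has finitely many orbits on \(\D\): \(\nu_I\) maps \(\D\) onto \(\D\cap I\), and by isolation two points of \(I\) are \(H\)-equivalent exactly when they are \(\operatorname{Stab}_H(I)\)-equivalent. Hence the \(K\)-orbits correspond to \(H\)-orbits meeting \(I\), of which there are finitely many by Proposition~\ref{prop:orbit-core}.

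Finiteness of \(C(K)\) is the delicate part, since Theorem~\ref{thm:core-closure}(ii) requires finite generation, which we do not yet have. I would instead argue via the automaton. By the dictionary, the states reached by two-digit words inject into the finite set \(E_H(I)\). The remaining states \((0^n)_K^+\) and \((1^n)_K^+\) must be shown eventually periodic. For this I would use the elements with nontrivial endpoint slopes from Lemma~\ref{lem:normalized-restriction}, together with the branch-criterion argument of Lemma~\ref{lem:boundary-periodicity}. This gives periodicity along the boundary rays once one pair \(0^m\to0^n\) with \(m\neq n\) is realized, and the nontrivial slope at \(0\) supplies such a pair. Hence the automaton is finite, so \(C(K)\) is finite.

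The main step is identifying inner edges with two-digit words and checking surjectivity. By Lemma~\ref{lem:boundary-incidence}, \(w_K^+\) is inner if and only if \(w\) contains both digits, so the map is defined exactly on \(E_{\mathrm{in}}(C(K))\). For surjectivity, take \(e=v_H^+\) with \([v]\subseteq I\). Every standard interval in \(I\) lies in some maximal one \([s_n]\) or \([t_n]\), so \(v=s_nz\) or \(v=t_nz\), and then \(v=\widehat w\) with \(w=0^n1z\) or \(w=1^n0z\).

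The obstacle I expect is the subtle point that the branch criterion needs the relevant words readable. It also needs the refinement in the dictionary argument to be compatible with the fact that \(\nu_I\) is only piecewise defined on two-digit words. Since any branch of \(k\) can be refined to lie inside the pieces \([0^n1]\) and \([1^n0]\), this should be routine.
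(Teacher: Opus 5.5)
Your proposal is correct and matches the paper's proof essentially step for step. Both arguments use the same four ingredients:
\begin{enumerate}
\item the dictionary \(w_K^+=z_K^+\iff\widehat w{}_H^+=\widehat z{}_H^+\), obtained from the branch criterion applied to \(H\) and to \(K\) together with isolation;
\item fullness, deduced from \(K\) having finitely many orbits on \(\D\);
\item finiteness, deduced from the injection of the two-digit states into \(E_H(I)\) together with eventual periodicity of the boundary rays, which is forced by elements of \(K\) with nontrivial slope at \(0\) and at \(1\);
\item surjectivity, obtained via the maximal intervals \([s_n]\) and \([t_n]\).
\end{enumerate}
One citation needs adjusting: Proposition~\ref{prop:finite-full-core} assumes the core is already finite, which you have not shown at that stage. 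For fullness you should cite Lemma~\ref{lem:leaf-intervals} directly, as the paper does. The implication you need does not use finiteness, so the argument is unaffected.
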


\begin{proof}
We first prove that \(C(K)\) is full.  Let \(x,y\in\D\).  Since
\(\nu_I\) maps \(\D\) bijectively onto \(\D\cap I\),
\[
  y\in Kx
  \quad\Longleftrightarrow\quad
  \nu_I(y)\in H\nu_I(x).
\]
The forward implication is immediate.  Conversely, if an element of
\(H\) maps \(\nu_I(x)\) to \(\nu_I(y)\), then its image of \(I\) meets
\(I\); isolation forces it to stabilize \(I\), and its normalized
restriction maps \(x\) to \(y\).

Since \(\mathcal L\) is finite and full,
Proposition~\ref{prop:finite-full-core} shows that \(H\) has finitely
many orbits on \(\D\).  Hence \(K\) also has finitely many orbits on
\(\D\).  If \(C(K)\) had a leaf, Lemma~\ref{lem:leaf-intervals} would
provide an open interval containing infinitely many dyadic points
belonging to distinct \(K\)-orbits.  Therefore \(C(K)\) is full.

We claim that, whenever \(w\) and \(z\) each contain both digits,
\[
  w_K^+=z_K^+
  \quad\Longleftrightarrow\quad
  \widehat w_H^+=\widehat z_H^+.
\]
Suppose first that \(w_K^+=z_K^+\).  Since \(K\) is closed,
Lemma~\ref{lem:branch-criterion} gives \(k\in K\) with the branch
\(w\to z\).  Lifting \(k\) to an element of
\(\operatorname{Stab}_H(I)\) gives the branch
\(\widehat w\to\widehat z\), so
Lemma~\ref{lem:branch-criterion} gives
\(\widehat w_H^+=\widehat z_H^+\).

Conversely, suppose that \(\widehat w_H^+=\widehat z_H^+\).  By
Lemma~\ref{lem:branch-criterion}, some \(h\in H\) has the branch
\(\widehat w\to\widehat z\).  Both associated standard dyadic
intervals are contained in \(I\), so \(h(I)\cap I\neq\varnothing\) and
\(h\) stabilizes \(I\).  Its normalized restriction belongs to \(K\)
and has the branch \(w\to z\).  Another application of
Lemma~\ref{lem:branch-criterion} gives \(w_K^+=z_K^+\).  This proves
the claim.

Every standard dyadic interval contained in \(I\) lies in one of the
maximal intervals \([s_n]\) or \([t_n]\).  The claim therefore gives
the asserted bijection; in particular \(C(K)\) has only finitely many
inner edges.

It remains to prove that \(C(K)\) is finite.  By
Lemma~\ref{lem:normalized-restriction}, \(K\) contains an element with
nontrivial slope at \(0\).  Such an element has a branch
\(0^m\to0^n\) with \(m\neq n\), so
Lemma~\ref{lem:branch-criterion} gives \((0^m)_K^+=(0^n)_K^+\), and
taking left children repeatedly shows that the sequence
\((0^r)_K^+\), \(r\geq1\), is eventually periodic.  Thus only finitely
many states occur along the left boundary ray, and symmetrically along
the right one.  Since every state of \(C(K)\) is reached by the empty
word, by a word \(0^n\), by a word \(1^n\), or by a word containing
both digits, \(C(K)\) is finite.
\end{proof}

\begin{lemma}[The endpoint invariants of the normalized restriction]
\label{lem:normalized-endpoint-invariants}
Let \(H\leq F\) be closed, suppose that \(\mathcal L=C(H)\) is finite
and full, let \(I\) be a rational \(H\)-isolated interval and let
\(K=K_I(H)\).  Write
\[
  \alpha=\mu_a\sigma_a^\infty,
  \qquad
  \beta=\mu_b\sigma_b^\infty,
\]
where \(\mu_a\) extends \(u0\), \(\mu_b\) extends \(u1\), and
\(\sigma_a,\sigma_b,z_a,o_b\) are as in
Lemma~\ref{lem:normalized-restriction}.  For \(j\geq0\) put
\[
  X_j=(\mu_a\sigma_a^{\,j})_H^+,
  \qquad
  Y_j=(\mu_b\sigma_b^{\,j})_H^+.
\]
Since \(\mathcal L\) is finite, both sequences are eventually
periodic; let \(m_a\) and \(m_b\) be their least eventual periods, and
choose \(J_a,J_b\) with
\[
  X_{j+m_a}=X_j
  \quad(j\geq J_a),
  \qquad
  Y_{j+m_b}=Y_j
  \quad(j\geq J_b).
\]
Let \(d_L^K\) and \(d_R^K\) be the positive generators of the two
coordinate projections of \(\pi_{\mathrm{ab}}(K)\), which exist by
Lemma~\ref{lem:normalized-restriction}.  Then
\[
  d_L^K=m_az_a,
  \qquad
  d_R^K=m_bo_b.
\]
\end{lemma}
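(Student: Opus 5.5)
By symmetry it suffices to prove \(d_L^K=m_az_a\).  The plan is to use Lemma~\ref{lem:normalized-restriction}, which gives \(\log_2 k'(0^+)=\frac{z_a}{L_a}\log_2 h'(a^+)\) for the normalized restriction \(k\) of \(h\in\operatorname{Stab}_H(I)\).  Since \(h(a)=a\), the quantity \(\log_2 h'(a^+)\) is always a multiple \(jL_a\) of \(L_a\), and then \(\log_2 k'(0^+)=jz_a\).  So I must show that the set of integers \(j\) arising in this way is exactly \(m_a\mathbb Z\).  Each \(j\) means that \(h\) shifts the periodic tail of \(\alpha\) by \(j\) copies of \(\sigma_a\).

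First I show that every \(j\) is a multiple of \(m_a\).  Take \(h\in\operatorname{Stab}_H(I)\) with \(\log_2h'(a^+)=jL_a\).  As in the proof of Lemma~\ref{lem:normalized-restriction}, refine a branch of \(h\) at \(a^+\) until it has the form
\[
  \mu_a\sigma_a^{\,i}\longrightarrow\mu_a\sigma_a^{\,i-j}
\]
with \(i\) and \(i-j\) both \(\geq J_a\).  This uses \(h(a)=a\): the domain and range words are prefixes of \(\alpha\), and after extending both by the same tail we may cut at copies of \(\sigma_a\).  Lemma~\ref{lem:branch-criterion} (or acceptance) then gives \(X_i=X_{i-j}\).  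Since \(m_a\) is the least eventual period and both indices lie in the periodic range, \(m_a\mid j\).

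Conversely, since \(X_{J_a}=X_{J_a+m_a}\), Lemma~\ref{lem:branch-criterion} gives \(h\in H\) with the branch
\[
  \mu_a\sigma_a^{J_a}\longrightarrow\mu_a\sigma_a^{J_a+m_a}.
\]
By \eqref{eq:prefix-substitution} this \(h\) fixes \(a=.\mu_a\sigma_a^\infty\).  Expanding the branch shows that it maps some standard interval \([s_n]\subseteq I\), with \(n\) large, onto another such interval inside \(I\).  Isolation then forces \(h\in\operatorname{Stab}_H(I)\).  Its slope satisfies \(\log_2h'(a^+)=-m_aL_a\), so the image in \(K\) has left coordinate \(-m_az_a\).

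Here is the main obstacle.  When \(a\) is dyadic, \(\alpha\) ends in \(0^\infty\), and the interval \([\mu_a\sigma_a^{J}]\) has \(a\) as its left endpoint rather than in its interior.  The branch then describes \(h\) only on the right of \(a\), which is all that is needed, and \(z_a=L_a=1\).  When \(a\) is nondyadic, the branch interval contains \(a\) in its interior.  In both cases I must also check that the refined branch meets some \([s_n]\) inside \(I\); this holds because \(s_n\) extends \(\mu_a\sigma_a^{i}\) for large \(n\).  A further check is that the branch chosen in the first step may be taken from an accepted tree pair of \(h\), so that Lemma~\ref{lem:acceptance-branches} applies.  Combining the two directions, the left coordinates of \(\pi_{\mathrm{ab}}(K)\) form exactly \(m_az_a\mathbb Z\), so \(d_L^K=m_az_a\).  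The right endpoint is handled by the mirror argument, counting ones in \(\sigma_b\).
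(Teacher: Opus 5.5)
Your proposal is correct and follows the paper's proof. The paper proves the same two inclusions in the opposite order: the element with branch \(\mu_a\sigma_a^{J_a}\to\mu_a\sigma_a^{J_a+m_a}\) fixes \(a\), lies in \(\operatorname{Stab}_H(I)\) by isolation and contributes \(-m_az_a\); conversely, the branch at \(a\) of any \(h\in\operatorname{Stab}_H(I)\) shifts whole copies of \(\sigma_a\), which forces \(m_a\) to divide the shift. The one step to state explicitly is why \(X_i=X_{i-j}\) gives \(m_a\mid j\). It holds because \(X_{k+1}=\delta(X_k,\sigma_a)\); equivalently, expanding the branch gives the equality for all larger indices, so \(|j|\) is an eventual period, which is how the paper phrases it.
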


\begin{proof}
The equality \(X_{J_a+m_a}=X_{J_a}\), together with
Lemma~\ref{lem:branch-criterion}, gives an element \(h\in H\) with the
branch
\[
  \mu_a\sigma_a^{J_a}
  \longrightarrow
  \mu_a\sigma_a^{J_a+m_a}.
\]
Evaluating this branch at the tail \(\sigma_a^\infty\) shows that
\(h\) fixes \(a\); being increasing, \(h\) then maps points of \(I\)
near \(a\) into \(I\), so isolation gives \(h(I)=I\).  Its slope at
\(a\) has logarithm \(-m_a|\sigma_a|\), so
Lemma~\ref{lem:normalized-restriction} shows that its normalized
restriction has first endpoint coordinate \(-m_az_a\).  Hence
\(d_L^K\mid m_az_a\).

Conversely, let \(k\in K\), and let
\(h\in\operatorname{Stab}_H(I)\) be the corresponding element.  Once
\(j\) is sufficiently large, the branch of \(h\) at \(a\) sends
\(\mu_a\sigma_a^{\,j}\) to \(\mu_a\sigma_a^{\,j+t}\) for some
\(t\in\mathbb Z\).  Thus \(X_{j+t}=X_j\) for all sufficiently large
\(j\).  If \(t\neq0\), then \(|t|\) is an eventual period of
\((X_j)\), and the minimality of \(m_a\) gives \(m_a\mid t\).
If \(t=0\), this divisibility is automatic.  By
Lemma~\ref{lem:normalized-restriction} the first endpoint coordinate
of \(k\) is \(-tz_a\), and is therefore divisible by \(m_az_a\).  It
follows that \(d_L^K=m_az_a\); the proof of \(d_R^K=m_bo_b\) is
symmetric.
\end{proof}

\begin{proposition}[Effective construction of the normalized core]
\label{prop:normalized-core-effective}
Let \(H\leq F\) be closed, suppose that \(\mathcal L=C(H)\) is finite
and full, let \(I\) be a rational \(H\)-isolated interval and let
\(K=K_I(H)\).  Then the child automaton \(\mathcal A(C(K))\), and
hence the based directed \(2\)-complex \(C(K)\), can be constructed
from the child automaton \(\mathcal A(\mathcal L)\) together with the
preperiods and the periods of the expansions \(\alpha\) and \(\beta\).
\end{proposition}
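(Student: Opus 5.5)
We build a finite full tree automaton \(\mathcal B\) from \(\mathcal A(\mathcal L)\) and the data \((\mu_a,\sigma_a,\mu_b,\sigma_b)\). We then show that its folded quotient is isomorphic to \(\mathcal A(C(K))\), and finally recover \(C(K)\) by Proposition~\ref{prop:finite-full-core-automaton}.

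\emph{Inner part.} Proposition~\ref{prop:normalized-core} identifies the inner states of \(C(K)\) with \(E_H(I)\) via \(w_K^+\mapsto\widehat w{}_H^+\). Every standard interval inside \(I\) lies in some \([s_n]\) or \([t_n]\), and \(\widehat{w0}=\widehat w0\), \(\widehat{w1}=\widehat w1\). Hence \(E_H(I)\) is the closure of \(\{(s_n)_H^+,(t_n)_H^+:n\geq1\}\) under the transitions of \(\mathcal A(\mathcal L)\), and the transitions among inner states of \(C(K)\) are exactly those of \(\mathcal A(\mathcal L)\) restricted to \(E_H(I)\). The words \(s_n\) are computable: \(s_n\) is the prefix of \(\alpha=\mu_a\sigma_a^\infty\) ending just before its \(n\)-th zero after \(u0\), followed by \(1\). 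Since \(\mathcal L\) is finite and the tail of \(\alpha\) is periodic, the pair (state reached by the prefix, position within \(\sigma_a\)) is eventually periodic in \(n\). So the set \(\{(s_n)_H^+\}\) is found by simulating \(\mathcal A(\mathcal L)\) along \(\alpha\) until such a pair repeats; the same holds for the \(t_n\). Closing under transitions then gives \(E_H(I)\) effectively.

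\emph{Boundary rays.} The remaining states of \(C(K)\) are \(\varnothing_K^+\), \((0^r)_K^+\) and \((1^r)_K^+\) for \(r\geq1\). The right child of \((0^r)_K^+\) is \((0^r1)_K^+\), which corresponds to \((s_r)_H^+\). By Lemma~\ref{lem:normalized-endpoint-invariants} and Lemma~\ref{lem:boundary-periodicity} applied to \(K\), the left ray is eventually periodic with least period \(d_L^K=m_az_a\), and \(m_a\) is computable by running \(\mathcal A(\mathcal L)\) along \(\mu_a\sigma_a^j\). We therefore form a provisional automaton \(\mathcal B\) as follows. Its states are \(E_H(I)\), a root \(\rho\), left-ray states \(\ell'_1,\dots,\ell'_{N}\) and right-ray states \(r'_1,\dots,r'_{N'}\). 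The transitions are \(\rho\to(\ell'_1,r'_1)\), \(\ell'_r\to(\ell'_{r+1},(s_r)_H^+)\) and \(r'_r\to((t_r)_H^+,r'_{r+1})\), with the last ray state looping back by the computed period. Here \(N\) is chosen beyond both the preperiod of \(r\mapsto(s_r)_H^+\) and the index where the \(X_j\) become periodic. By construction \(\mathcal B\) is finite and full.

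\emph{Identification.} We verify that \(\mathcal D(\mathcal B)=K\), using the branch criterion together with the claim in the proof of Proposition~\ref{prop:normalized-core}. Two words reach the same state of \(\mathcal B\) exactly when they either contain both digits and have equal physical-lift states, or are boundary words \(0^m,0^n\) with \(m\equiv n\) modulo the period and both large. In the first case \(K\) realizes the branch by the claim. In the second case an element realizing the branch is supplied by the element constructed in the proof of Lemma~\ref{lem:normalized-endpoint-invariants}, after expanding. By Proposition~\ref{prop:finite-full-core-automaton}, the folded quotient \(\overline{\mathcal B}\) is then a core automaton isomorphic to \(\mathcal A(C(K))\), since \(\mathcal D(\overline{\mathcal B})=\mathcal D(\mathcal B)\).

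The main obstacle is the boundary step. We must prove that the chosen ray identifications are neither too coarse nor too fine, so that the folded quotient accepts exactly \(K\) and satisfies the core criterion. The coarse direction follows from the divisibility \(d_L^K\mid t\) in Lemma~\ref{lem:normalized-endpoint-invariants}. For the fine direction we rely on folding: any ray states that coincide in \(C(K)\) have equal ordered children once the inner part is correct, so folding identifies them.
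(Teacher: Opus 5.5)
Your automaton $\mathcal B$ is the paper's $\mathcal A_I^0$: a root, two rays that close up with the least periods $d_L^K=m_az_a$ and $d_R^K$, and a copy of $E_H(I)$ carrying the transitions of $\mathcal A(\mathcal L)$. Your argument that $\mathcal D(\mathcal B)=K$ also follows the same lines, so the route is essentially the paper's.

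The step that does not go through as written is the passage from $\mathcal B$ to $\overline{\mathcal B}$. You check that two words reaching the same state of $\mathcal B$ are related by a branch of an element of $K$. But Proposition~\ref{prop:finite-full-core-automaton} must be applied to $\overline{\mathcal B}$, and folding creates new coincidences. For example, when your threshold exceeds the true preperiod of the left ray of $C(K)$, a ray state before the loop is merged with a loop state. The missing observation is exactly how the paper proceeds:
\begin{itemize}
\item What you verified says precisely that $w\mapsto w_K^+$ induces a well-defined morphism $\Pi\colon\mathcal B\to\mathcal A(C(K))$.
\item Since $\mathcal A(C(K))$ is folded, so is $\mathcal B/\ker\Pi$. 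Hence the least folding congruence lies in $\ker\Pi$, and $\Pi$ factors through $\overline{\mathcal B}$.
\item So words reaching the same state of $\overline{\mathcal B}$ reach the same state of $C(K)$, and Lemma~\ref{lem:branch-criterion} supplies the branch.
\end{itemize}
After this, either your appeal to Proposition~\ref{prop:finite-full-core-automaton} or the paper's use of \cite[Lemma~4.9]{GolanMaximal} together with uniqueness of morphisms completes the identification.

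Your ``fine direction'' paragraph then becomes unnecessary, and as stated it is not a proof. Two ray states of $\mathcal B$ with the same image in $C(K)$ do not have equal ordered children in $\mathcal B$, because their left children are again distinct ray states. Folding merges them only by backward induction from the loop. That works only because the loop length is the \emph{least} period; a loop of length $2d_L^K$ would never fold.

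Note also that the divisibility $d_L^K\mid t$ does not show the ray identifications are valid. It gives $K\subseteq\mathcal D(\mathcal B)$, and it shows the loop length is the least period. Validity comes from the element $k_L$ with branches $0^n\to0^{n+d_L^K}$. State the threshold in those terms, not as ``the index where the $X_j$ become periodic'':
\begin{itemize}
\item the branches hold only once $s_n$ extends $\mu_a\sigma_a^{J_a}$, which is the paper's explicit $N_L$;
\item passing from $0^n1\to0^{n+d_L^K}1$ to $0^n\to0^{n+d_L^K}$ uses that the intervals $[0^j1]$, $j\geq n$, tile $[0^n]$ apart from the fixed point $0$.
\end{itemize}
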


\begin{proof}
Write \(d_L=d_L^K\) and \(d_R=d_R^K\), and let \(m_a,m_b,J_a,J_b\) be
as in Lemma~\ref{lem:normalized-endpoint-invariants}.

Put
\[
\begin{aligned}
  N_L&=\min\{n\geq1:|u|+1+i_n>|\mu_a\sigma_a^{J_a}|\},\\
  N_R&=\min\{n\geq1:|u|+1+j_n>|\mu_b\sigma_b^{J_b}|\}.
\end{aligned}
\]

Let \(h\) be the element of \(\operatorname{Stab}_H(I)\) with the
branch \(\mu_a\sigma_a^{J_a}\to\mu_a\sigma_a^{J_a+m_a}\) produced in
the proof of Lemma~\ref{lem:normalized-endpoint-invariants}.  For
\(n\geq N_L\) the word \(s_n\) extends \(\mu_a\sigma_a^{J_a}\), and
replacing that prefix by \(\mu_a\sigma_a^{J_a+m_a}\) inserts \(m_a\)
copies of \(\sigma_a\), hence exactly \(m_az_a=d_L\) zeros of the
enumeration defining the intervals \([s_n]\); the resulting word is
therefore \(s_{\,n+d_L}\).  Expanding the branch of \(h\) accordingly,
we obtain the branches
\[
  s_n\longrightarrow s_{\,n+d_L}
  \qquad(n\geq N_L).
\]
Let \(k_L\in K\) be the normalized restriction of \(h\).  Since
\(\nu_I\) maps \([0^n1]\) onto \([s_n]\), the element \(k_L\) has the
branches
\[
  0^n1\longrightarrow0^{\,n+d_L}1
  \qquad(n\geq N_L).
\]
Moreover the intervals \([0^j1]\) with \(j\geq n\) cover \([0^n]\)
apart from the point \(0\), which \(k_L\) fixes, and \(k_L\) carries
each of them onto \([0^{\,j+d_L}1]\), with slope \(2^{-d_L}\).  Hence
\(k_L\) maps \([0^n]\) linearly onto \([0^{\,n+d_L}]\); that is, it
also has the branches
\[
  0^n\longrightarrow0^{\,n+d_L}
  \qquad(n\geq N_L).
\]
The symmetric argument at \(b\) gives an element \(k_R\in K\) with the
branches \(1^n0\to1^{\,n+d_R}0\) and \(1^n\to1^{\,n+d_R}\) for every
\(n\geq N_R\).

For \(n\geq1\) put
\[
  \ell_n=(0^n)_K^+,
  \quad
  r_n=(1^n)_K^+,
  \quad
  S_n=(s_n)_H^+,
  \quad
  T_n=(t_n)_H^+ .
\]
By Lemma~\ref{lem:branch-criterion}, the branches of \(k_L\) and of
\(h\) displayed above give
\[
  \ell_{\,n+d_L}=\ell_n,
  \qquad
  S_{\,n+d_L}=S_n
  \qquad(n\geq N_L),
\]
and symmetrically \(r_{\,n+d_R}=r_n\) and \(T_{\,n+d_R}=T_n\) for
\(n\geq N_R\).  Moreover \(E_H(I)\) is the closure, under the two
child transitions of \(\mathcal L\), of the finite set
\[
  \{S_1,\ldots,S_{\,N_L+d_L-1},\;
    T_1,\ldots,T_{\,N_R+d_R-1}\},
\]
since the intervals \([s_n]\) and \([t_n]\) are the maximal standard
dyadic intervals contained in \(I\), and every standard dyadic
interval contained in \(I\) is a descendant of one of them.

We now construct a tree automaton \(\mathcal A_I^0\).  Its states are
a root state \(p_I\), one state \(\overline e\) for each
\(e\in E_H(I)\), left boundary states
\(L_1,\ldots,L_{\,N_L+d_L-1}\) and right boundary states
\(R_1,\ldots,R_{\,N_R+d_R-1}\); in the rules below the left indices
are read periodically after \(N_L\) and the right indices
periodically after \(N_R\).  Define
\[
  p_I\xrightarrow{\ 0\ }L_1,
  \qquad
  p_I\xrightarrow{\ 1\ }R_1,
\]
\[
  L_n\xrightarrow{\ 0\ }L_{n+1},
  \qquad
  L_n\xrightarrow{\ 1\ }\overline{S_n},
  \qquad
  R_n\xrightarrow{\ 0\ }\overline{T_n},
  \qquad
  R_n\xrightarrow{\ 1\ }R_{n+1},
\]
and \(\overline e\xrightarrow{\ i\ }\overline{e_i}\) whenever the
children of \(e\) in \(\mathcal L\) are \(e_0\) and \(e_1\).  All
these transitions are defined, since the children of an edge of
\(E_H(I)\) again lie in \(E_H(I)\), and every state is reached from
\(p_I\), since \(L_n\), \(R_n\), \(\overline{S_n}\) and
\(\overline{T_n}\) are reached by \(0^n\), \(1^n\), \(0^n1\) and
\(1^n0\), and the remaining states by longer words.  Thus
\(\mathcal A_I^0\) is a finite full tree automaton, in which \(0^n\)
reaches \(L_n\), \(1^n\) reaches \(R_n\), and a word \(w\) containing
both digits reaches \(\overline{\widehat w{}_H^+}\).

Sending \(p_I\) to the root state of \(\mathcal A(C(K))\), \(L_n\) to
\(\ell_n\), \(R_n\) to \(r_n\), and \(\overline e\) to the inner edge
of \(C(K)\) corresponding to \(e\), defines a morphism
\[
  \Pi\colon\mathcal A_I^0\longrightarrow\mathcal A(C(K)).
\]
It is well defined by the periodicity relations above, and it commutes
with the transitions: the children of \(\ell_n\) are \(\ell_{n+1}\)
and \((0^n1)_K^+\), the latter corresponding to \(S_n\); the children
of \(r_n\) are \((1^n0)_K^+\), corresponding to \(T_n\), and
\(r_{n+1}\); and \(\widehat{wi}=\widehat w\,i\) for every word \(w\)
containing both digits.  In particular \(\Pi\) carries the state
reached by a word to the state reached by the same word.

We claim that \(\mathcal D(\mathcal A_I^0)=K\).  A tree-pair diagram
accepted by \(\mathcal A_I^0\) is accepted by \(\mathcal A(C(K))\)
after applying \(\Pi\), so
\(\mathcal D(\mathcal A_I^0)\subseteq\Cl(K)=K\).  Conversely, let
\(k\in K\) and expand its reduced tree pair, first so that both trees
have at least three leaves and then along the two extreme leaves,
until the leftmost branch pair is \(0^m\to0^n\) with \(m,n\geq N_L\)
and the rightmost is \(1^{m'}\to1^{n'}\) with \(m',n'\geq N_R\).
Every other branch pair \(v\to w\) has both words containing both
digits; lifting it to \(\operatorname{Stab}_H(I)\) gives the branch
\(\widehat v\to\widehat w\), so
\(\widehat v{}_H^+=\widehat w{}_H^+\) by
Lemma~\ref{lem:branch-criterion} and \(v,w\) reach the same state of
\(\mathcal A_I^0\).  For the leftmost pair, \(m-n\) is the first
endpoint coordinate of \(k\), so \(d_L\mid m-n\) and \(L_m=L_n\); the
rightmost pair is symmetric.  Thus the expanded diagram is accepted by
\(\mathcal A_I^0\), and \(k\in\mathcal D(\mathcal A_I^0)\).

Let \(\mathcal A_I\) be the folded quotient of \(\mathcal A_I^0\).  It
is a finite full folded tree automaton, and
\(\mathcal D(\mathcal A_I)=\mathcal D(\mathcal A_I^0)=K\).  Since
\(\mathcal A_I\) is the quotient of \(\mathcal A_I^0\) by the least
congruence with folded quotient, and the quotient of
\(\mathcal A_I^0\) by \(\ker\Pi\) is folded, being a subautomaton of
\(\mathcal A(C(K))\), the morphism \(\Pi\) factors through
\(\mathcal A_I\).  Let
\[
  \overline\Pi\colon\mathcal A_I\longrightarrow\mathcal A(C(K))
\]
be the induced morphism.  Since \(\mathcal A_I\) is folded and accepts
\(K\), \cite[Lemma~4.9]{GolanMaximal} provides a morphism
\[
  \varphi\colon\mathcal A(C(K))\longrightarrow\mathcal A_I .
\]
The composites \(\varphi\overline\Pi\) and \(\overline\Pi\varphi\) are
morphisms from \(\mathcal A(C(K))\) and from \(\mathcal A_I\) to
themselves, and so are the two identity maps; by the uniqueness of
morphisms of tree automata, both composites are the identity.  Hence
\[
  \mathcal A_I\cong\mathcal A(C(K)),
\]
and Proposition~\ref{prop:finite-full-core-automaton} recovers \(C(K)\)
from \(\mathcal A_I\) as a based directed \(2\)-complex.
\end{proof}

For every closed subgroup \(G\leq F\) with finite full core, define
its \emph{inner-edge complexity} by
\[
  \kappa(G)=
  \left|
    E_{\mathrm{in}}\bigl(C(G)\bigr)
  \right|.
\]

\begin{corollary}
\label{cor:strict-inner-edge-descent}
Let \(H\leq F\) be closed, suppose that \(C(H)\) is finite and full,
and let
\[
  I=(a,b)\neq(0,1)
\]
be a rational \(H\)-isolated interval.  If \(K=K_I(H)\), then
\[
  \kappa(K)<\kappa(H).
\]
\end{corollary}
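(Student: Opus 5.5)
By Proposition~\ref{prop:normalized-core}, \(\kappa(K)=|E_H(I)|\).  It therefore suffices to show that \(E_H(I)\) is a proper subset of \(E_{\mathrm{in}}(\mathcal L)\), where \(\mathcal L=C(H)\).  The inclusion \(E_H(I)\subseteq E_{\mathrm{in}}(\mathcal L)\) was noted after its definition, via Lemma~\ref{lem:boundary-incidence}.  So the whole task is to exhibit an inner edge of \(\mathcal L\) which is not of the form \(w^+\) with \([w]\subseteq I\).

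Since \(I\neq(0,1)\), we may assume by symmetry that \(a>0\).  The plan is to take an inner edge whose interval contains \(a\) in its interior.  Choose a finite word \(w\) containing both digits such that \(a\in\operatorname{int}[w]\); if \(a\) is dyadic, take \(w\) so that \(a\) is an interior subdivision point, for instance a suitable prefix-based word.  Then \(e=w^+\) is inner.  Suppose that \(e\in E_H(I)\), so that \(e=v^+\) for some \(v\) with \([v]\subseteq I\).  By Lemma~\ref{lem:branch-criterion}, some \(h\in H\) has the branch \(v\to w\), and hence maps \([v]\subseteq I\) onto \([w]\).  Since \([w]\) meets \(I\), we get \(h(I)\cap I\neq\varnothing\), so isolation gives \(h(I)=I\).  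On the other hand, \(h\) maps \([v]\subseteq I\) onto \([w]\), which contains \(a\notin I\), and this is impossible because \(h(I)=I\).  Hence \(e\notin E_H(I)\), and the inclusion is strict.

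The only delicate point is choosing \(w\) with both digits and \(a\) in its interior, and making sure \(h([v])\subseteq I\) really conflicts with \(a\in\operatorname{int}[w]\).  If \(0<a\), a sufficiently long prefix \(w'\) of the binary expansion of \(a\) satisfies \([w']\subseteq(0,1)\) around \(a\), and it contains a \(1\) because \(a>0\).  If \(w'\) has no \(0\), pass to a longer prefix or a neighbour; when \(a<1\) such prefixes contain both digits eventually.  One then needs \(a\) to be interior to \([w]\).  If \(a\) is an endpoint of every such standard interval, as happens when \(a\) is dyadic, we instead pick two adjacent standard intervals \([w_1],[w_2]\) meeting at \(a\) and take \(w\) to be their common parent.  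That parent contains both digits if it is deep enough, since \(0<a<1\).

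An alternative that sidesteps interiority is to use the edge \(w^+\) with \([w]\) straddling \(a\): it suffices that \([w]\not\subseteq\overline I\) while \(\operatorname{int}[w]\cap I\neq\varnothing\).  Then \(h([v])=[w]\) with \([v]\subseteq I\) and \(h(I)=I\) yields \([w]\subseteq\overline I\), a contradiction.  This is the construction to carry out.
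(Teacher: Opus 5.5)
Your reduction to finding an inner edge outside \(E_H(I)\) via Proposition~\ref{prop:normalized-core} is right, and so is your use of Lemma~\ref{lem:branch-criterion} plus isolation. There is, however, a genuine gap: the construction of \(w\) fails for some dyadic endpoints.

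\textbf{Where the construction fails.} Write the dyadic endpoint as \(a=.u1\). The only standard intervals containing \(a\) in their interior are the \([w]\) with \(w\) a prefix of \(u\). Indeed, when \(|w|>|u|\), \(a\) is a multiple of \(2^{-|w|}\), so it is never interior to \([w]\). In particular, your ``common parent'' is forced to be \([u]\), and there is no freedom to make it ``deep enough''.

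When \(a=2^{-k}\) or \(a=1-2^{-k}\), the word \(u\) is empty or a single repeated digit. Then no admissible \(w\) contains both digits, so \(w^+\) is not inner.

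Your alternative does not escape this. Since \([w]\) is connected, the conditions \([w]\not\subseteq\overline I\) and \(\operatorname{int}[w]\cap I\neq\varnothing\) force an endpoint of \(I\) into \(\operatorname{int}[w]\), which is the same obstruction.

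\textbf{The case occurs.} Take \(H=\operatorname{Stab}_F(1/4)\), which is closed with finite full core. Its orbital \(I=(1/4,1)\) is a proper rational \(H\)-isolated interval. Neither of your constructions yields an inner edge here, and \(b=1\) offers no second endpoint to use.

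\textbf{The repair is small.} In your own argument, isolation gives \(h(I)=I\), hence \([w]=h([v])\subseteq h(I)=I\) with \(I\) open. Weakening this to \(\overline I\) is what forced the interiority requirement. So it suffices that \(a\in[w]\) and \([w]\cap I\neq\varnothing\).

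For dyadic \(a=.u1\), take \(w=u10^j\) with \(j\geq1\). This word contains both digits, and \([w]\) has left endpoint \(a\). For \(a=1/4\), even the prefix \(01\) you set aside already works.

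The paper does essentially this, in the other direction:
\begin{enumerate}
\item It picks \([w]\ni c\) on the side of the endpoint \(c\) opposite \(I\).
\item It uses the branch \(w\to v\) to get \(h(c)\in[v]\subseteq I\).
\item By continuity, \(h(I)\cap I\neq\varnothing\).
\item This contradicts the fact that elements of \(\operatorname{Stab}_H(I)\) fix \(c\).
\end{enumerate}
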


\begin{proof}
By Proposition~\ref{prop:normalized-core},
\[
  \kappa(K)=|E_H(I)|.
\]
We prove that
\[
  E_H(I)\subsetneq E_{\mathrm{in}}(\mathcal L).
\]

Choose an endpoint \(c\in(0,1)\) of \(I\): take \(c=a\) if \(a>0\),
and take \(c=b\) otherwise.  If \(c\) is nondyadic, choose a
sufficiently long prefix \(w\) of its unique binary expansion.  If
\(c\) is dyadic, choose a sufficiently small standard dyadic interval
\([w]\) lying on the side of \(c\) opposite \(I\), with \(c\) as one
of its endpoints.  In either case, we may choose \(w\) so that it is a
finite binary word containing both digits \(0\) and \(1\),
\[
  c\in[w],
  \qquad
  [w]\not\subseteq I,
\]
and \([w]\) does not contain the other endpoint of \(I\).

By Lemma~\ref{lem:boundary-incidence}, \(w^+\) is an inner edge of
\(\mathcal L\).  We claim that
\[
  w^+\notin E_H(I).
\]
Otherwise, there would be a finite binary word \(v\) such that
\[
  [v]\subseteq I
  \qquad\text{and}\qquad
  v^+=w^+.
\]
By Lemma~\ref{lem:branch-criterion}, some \(h\in H\) would have the
branch
\[
  w\longrightarrow v.
\]
Since \(c\in[w]\), this gives
\[
  h(c)\in[v]\subseteq I.
\]
Continuity implies that \(h(I)\cap I\neq\varnothing\).  Isolation
therefore gives \(h(I)=I\), so \(h\) must fix the endpoint \(c\).
This contradicts \(h(c)\in I\).

Thus \(w^+\notin E_H(I)\), and therefore
\[
  E_H(I)\subsetneq E_{\mathrm{in}}(\mathcal L).
\]
It follows that
\[
  \kappa(K)<\kappa(H).
\]
\end{proof}

\subsection{Path classes inside an isolated interval}
\label{subsec:path-classes-isolated-interval}

Let \(H\leq F\) be closed, suppose that \(C(H)\) is finite and full,
and let \(I=(a,b)\) be a rational \(H\)-isolated interval.  Let
\[
  K=K_I(H)
\]
be the normalized restriction of \(\operatorname{Stab}_H(I)\) defined
in Definition~\ref{def:normalized-restriction}.

Since the two cores \(C(H)\) and \(C(K)\) occur simultaneously, we use
subscripts to distinguish the corresponding notation.  Thus
\(v_H(x)\) and \(v_K(x)\) are the types of the dyadic point \(x\) in
\(C(H)\) and in \(C(K)\), respectively, and \(\omega_H[x,y]\) and
\(\omega_K[x,y]\) denote the interval classes associated with
\([x,y]\) in the two cores.

We write
\[
  V_H(I)=\{v_H(x):x\in\D\cap I\}
\]
for the set of vertices of \(C(H)\) associated with dyadic points
lying in \(I\).  For \(u,v\in V_H(I)\), let
\[
  \Path_I(H)(u,v)
  =
  \left\{
    \omega_H[x,y]:
    x,y\in\D,\ x<y,\ [x,y]\subseteq I,\
    v_H(x)=u,\ v_H(y)=v
  \right\}.
\]
Thus \(\Path_I(H)(u,v)\) consists of the path classes in \(C(H)\) from
\(u\) to \(v\) which arise from intervals with dyadic endpoints
contained in \(I\).  We also write
\[
  \Path_I(H)
  =
  \left\{
    \omega_H[x,y]:
    x,y\in\D,\ x<y,\ [x,y]\subseteq I
  \right\}
\]
for the set of all such classes, without prescribing their initial and
terminal vertices.

\begin{proposition}[The isolated-interval dictionary]
\label{prop:isolated-interval-path-correspondence}
Let \(H\leq F\) be closed, suppose that \(C(H)\) is finite and full,
let \(I\) be a rational \(H\)-isolated interval, and let
\(K=K_I(H)\).  Then the following statements hold.
\begin{enumerate}[label=\textup{(\arabic*)}]
\item The rule
      \[
        \theta_I\bigl(v_K(c)\bigr)
        =
        v_H\bigl(\nu_I(c)\bigr)
      \]
      defines a bijection
      \[
        \theta_I\colon
        V_{\mathrm{in}}\bigl(C(K)\bigr)
        \longrightarrow
        V_H(I).
      \]
\item For all \(u',v'\in V_{\mathrm{in}}(C(K))\), the rule
      \[
        \Phi_I\bigl(\omega_K[c,d]\bigr)
        =
        \omega_H\bigl[\nu_I(c),\nu_I(d)\bigr]
      \]
      defines a bijection
      \[
        \Phi_I\colon
        \Path\bigl(C(K)\bigr)(u',v')
        \longrightarrow
        \Path_I(H)\bigl(\theta_I(u'),\theta_I(v')\bigr).
      \]
\end{enumerate}
\end{proposition}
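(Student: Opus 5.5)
The plan is to reduce both parts to orbit statements and then apply the dictionaries already available for full cores: Proposition~\ref{prop:orbit-core} for vertices and Proposition~\ref{prop:exact-transport} for interval classes. Both apply to \(C(H)\) and to \(C(K)\). By Lemma~\ref{lem:normalized-restriction} and Proposition~\ref{prop:normalized-core}, \(K\) is closed with finite full core. The basic transfer fact is the one used at the start of the proof of Proposition~\ref{prop:normalized-core}. For \(x,y\in\D\), an element of \(H\) maps \(\nu_I(x)\) to \(\nu_I(y)\) if and only if some element of \(K\) maps \(x\) to \(y\). This holds because any \(h\in H\) carrying a point of \(I\) into \(I\) satisfies \(h(I)\cap I\neq\varnothing\), hence stabilizes \(I\) by isolation. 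I would first record the analogous statement for pairs: for dyadic \(c<d\), some \(h\in H\) maps \([\nu_I(c),\nu_I(d)]\) onto \([\nu_I(c'),\nu_I(d')]\) if and only if some \(k\in K\) maps \([c,d]\) onto \([c',d']\). The proof is the same. The forward direction uses isolation, and the backward direction lifts \(k\) to \(\operatorname{Stab}_H(I)\) and conjugates by \(\nu_I\).

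For part (1), every inner vertex of \(C(K)\) has the form \(v_K(c)\) for some \(c\in\D\) by Proposition~\ref{prop:orbit-core}. Hence \(\theta_I\) is well defined and injective exactly when \(v_K(c)=v_K(c')\iff v_H(\nu_I(c))=v_H(\nu_I(c'))\). Both sides are orbit equalities, so the transfer fact gives this equivalence. Surjectivity onto \(V_H(I)\) follows because \(\nu_I\) maps \(\D\) bijectively onto \(\D\cap I\).

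For part (2), the key step is well-definedness together with injectivity. By Proposition~\ref{prop:exact-transport}, applied in both cores, we have \(\omega_K[c,d]=\omega_K[c',d']\) if and only if some \(k\in K\) maps \([c,d]\) onto \([c',d']\), and similarly for \(\omega_H\). The pair transfer fact then makes the two equalities equivalent. This gives a well-defined injective map on the level of realizing intervals. Every class in \(\Path(C(K))(u',v')\) is realized as some \(\omega_K[c,d]\) by Proposition~\ref{prop:interval-realization}. The image lands in the prescribed set: \([\nu_I(c),\nu_I(d)]\subseteq I\) because \(c,d\in(0,1)\), and its endpoint types are \(\theta_I(u')\) and \(\theta_I(v')\) by part (1). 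For surjectivity, take a class \(\omega_H[x,y]\) with \([x,y]\subseteq I\) and endpoint types \(\theta_I(u'),\theta_I(v')\). Then \(x=\nu_I(c)\) and \(y=\nu_I(d)\) with \(c<d\) dyadic. By part (1), \(v_K(c)=u'\) and \(v_K(d)=v'\), so \(\omega_K[c,d]\) is a preimage.

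The main obstacle is mostly bookkeeping. One has to check carefully that \(\nu_I\) is an increasing bijection \(\D\to\D\cap I\), which is recorded in the text. One also has to check the conjugation convention: the lift of \(k\in K\) maps \(\nu_I(x)\) to \(\nu_I(k(x))\), whatever the left-to-right composition convention. Under the stated definition \(K=\nu_I(\operatorname{Stab}_H(I)|_I)\nu_I^{-1}\) this needs only a moment's attention and no new ideas.
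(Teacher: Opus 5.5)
Your proposal is correct and follows the paper's proof essentially step for step. Part (1) transfers orbits through \(\nu_I\) using isolation and then applies Proposition~\ref{prop:orbit-core} in both cores. Part (2) obtains well-definedness and injectivity from Proposition~\ref{prop:exact-transport} in both cores, combined with isolation and lifting, covers the domain via Proposition~\ref{prop:interval-realization}, and gets surjectivity from the injectivity of \(\theta_I\).
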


\begin{proof}
As observed in the proof of Proposition~\ref{prop:normalized-core},
for all \(c,c'\in\D\),
\[
  c'\in c\cdot K
  \quad\Longleftrightarrow\quad
  \nu_I(c')\in \nu_I(c)\cdot H.
\]
Together with Proposition~\ref{prop:orbit-core}, this shows that
\[
  v_K(c)=v_K(c')
  \quad\Longleftrightarrow\quad
  v_H\bigl(\nu_I(c)\bigr)
  =
  v_H\bigl(\nu_I(c')\bigr).
\]
Therefore \(\theta_I\) is well defined and injective.  It is
surjective because \(\nu_I\) maps \(\D\) bijectively onto
\(\D\cap I\).

We now consider \(\Phi_I\).  By
Proposition~\ref{prop:interval-realization}, every class in
\(\Path(C(K))(u',v')\) is of the form \(\omega_K[c,d]\) for some
dyadic points \(c<d\).  The points \(\nu_I(c)\) and \(\nu_I(d)\) are
dyadic points of \(I\), the interval \([\nu_I(c),\nu_I(d)]\) is
contained in \(I\), and the associated vertices are
\(\theta_I(u')\) and \(\theta_I(v')\).  Hence
\[
  \omega_H\bigl[\nu_I(c),\nu_I(d)\bigr]
  \in
  \Path_I(H)\bigl(\theta_I(u'),\theta_I(v')\bigr).
\]

Suppose that
\[
  \omega_K[c,d]=\omega_K[c',d'].
\]
By Proposition~\ref{prop:exact-transport}, there exists \(k\in K\)
with \(k(c)=c'\) and \(k(d)=d'\).  Let
\(h\in\operatorname{Stab}_H(I)\) be the element whose normalized
restriction is \(k\).  Then
\[
  h\bigl(\nu_I(c)\bigr)=\nu_I(c'),
  \qquad
  h\bigl(\nu_I(d)\bigr)=\nu_I(d'),
\]
so Proposition~\ref{prop:exact-transport} gives
\[
  \omega_H\bigl[\nu_I(c),\nu_I(d)\bigr]
  =
  \omega_H\bigl[\nu_I(c'),\nu_I(d')\bigr].
\]
Thus \(\Phi_I\) is well defined.

Conversely, suppose that these two path classes in \(C(H)\) are
equal.  By Proposition~\ref{prop:exact-transport}, there exists
\(h\in H\) such that
\[
  h\bigl(\nu_I(c)\bigr)=\nu_I(c'),
  \qquad
  h\bigl(\nu_I(d)\bigr)=\nu_I(d').
\]
In particular, \(h(I)\cap I\neq\varnothing\).  Since \(I\) is
\(H\)-isolated, \(h(I)=I\).  The normalized restriction of \(h\)
therefore belongs to \(K\) and maps \(c\) to \(c'\) and \(d\) to
\(d'\).  Proposition~\ref{prop:exact-transport} now gives
\[
  \omega_K[c,d]=\omega_K[c',d'].
\]
Hence \(\Phi_I\) is injective.

Finally, let \(\omega_H[x,y]\) belong to
\(\Path_I(H)(\theta_I(u'),\theta_I(v'))\).  The points
\[
  c=\nu_I^{-1}(x),
  \qquad
  d=\nu_I^{-1}(y)
\]
are dyadic and satisfy \(c<d\).  Since \(\theta_I\) is injective,
\(v_K(c)=u'\) and \(v_K(d)=v'\).  Therefore
\[
  \omega_K[c,d]\in\Path\bigl(C(K)\bigr)(u',v'),
\]
and
\[
  \Phi_I\bigl(\omega_K[c,d]\bigr)=\omega_H[x,y].
\]
Thus \(\Phi_I\) is surjective.
\end{proof}

\begin{corollary}[Rank one inside an isolated interval]
\label{cor:rank-one-isolated-interval}
Let \(H\leq F\) be closed, suppose that \(C(H)\) is finite and full,
and let \(I\) be a rational \(H\)-isolated interval.  If
\(\pi_{\mathrm{ab}}(K_I(H))\) has rank one, then there exist
\(u,v\in V_H(I)\) such that
\[
  \Path_I(H)(u,v)
\]
is infinite.  In particular, \(\Path(C(H))\) is infinite.
\end{corollary}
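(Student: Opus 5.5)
The plan is to transfer the question to the normalized restriction \(K=K_I(H)\) and then carry the answer back through the isolated-interval dictionary.

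First, by Proposition~\ref{prop:normalized-core}, \(C(K)\) is finite and full, and \(K\) is closed by Lemma~\ref{lem:normalized-restriction}. The hypothesis is that \(\pi_{\mathrm{ab}}(K)\) has rank one. The remarks opening Subsection~\ref{subsec:boundary-periods} show that any closed subgroup with finite full core has an endpoint image containing a vector \((m,n)\) with \(mn\neq0\). So Corollary~\ref{cor:rank-one-infinite-paths} applies to \(K\), and \(\Path(C(K))(v_L^K,v_R^K)\) is infinite, where \(v_L^K,v_R^K\) are the vertices attached to the boundary periods of \(C(K)\). These vertices are inner vertices of \(C(K)\), since \(B_L\) and \(B_R\) are inner loops based at them.

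Next, put \(u=\theta_I(v_L^K)\) and \(v=\theta_I(v_R^K)\), both in \(V_H(I)\) by Proposition~\ref{prop:isolated-interval-path-correspondence}\textup{(1)}. Part \textup{(2)} of that proposition gives a bijection
\(\Phi_I\colon\Path(C(K))(v_L^K,v_R^K)\to\Path_I(H)(u,v)\). Hence \(\Path_I(H)(u,v)\) is infinite.

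Finally, \(\Path_I(H)(u,v)\subseteq\Path(C(H))(u,v)\) by definition, so \(\Path(C(H))\) is infinite.

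The only point requiring care is confirming that Corollary~\ref{cor:rank-one-infinite-paths} really applies to \(K\). That corollary needs a full core and the standing assumption of Subsection~\ref{subsec:boundary-periods}. Both are supplied by the finiteness and fullness of \(C(K)\). One can also verify the standing assumption directly: Lemma~\ref{lem:normalized-restriction} gives elements of \(K\) with nontrivial slope at each endpoint, and in rank one these force both coordinates of the generator to be nonzero. Beyond this check, the argument is a direct composition of earlier results.
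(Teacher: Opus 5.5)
Your proposal is correct and follows essentially the same route as the paper. You pass to \(K=K_I(H)\), apply Corollary~\ref{cor:rank-one-infinite-paths} to the finite full core \(C(K)\), and carry the infinite set of classes back through the bijection \(\Phi_I\) of Proposition~\ref{prop:isolated-interval-path-correspondence}; your extra checks, that \(v_L^K,v_R^K\) are inner and that the rank-one generator has both coordinates nonzero, are the same points the paper settles by citing Lemma~\ref{lem:normalized-restriction}.
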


\begin{proof}
Write \(K=K_I(H)\).  By Proposition~\ref{prop:normalized-core}, the
core \(C(K)\) is finite and full, and by
Lemma~\ref{lem:normalized-restriction} the group \(K\) is closed and
both coordinate projections of \(\pi_{\mathrm{ab}}(K)\) are nonzero.
Since \(\pi_{\mathrm{ab}}(K)\) has rank one, it is generated by a
single vector with both coordinates nonzero.
Corollary~\ref{cor:rank-one-infinite-paths} therefore gives inner
vertices \(u',v'\) of \(C(K)\) such that
\[
  \Path\bigl(C(K)\bigr)(u',v')
\]
is infinite.  By
Proposition~\ref{prop:isolated-interval-path-correspondence}, this set
is in bijection with
\(\Path_I(H)(\theta_I(u'),\theta_I(v'))\), which is therefore
infinite.  The latter set is contained in
\(\Path(C(H))(\theta_I(u'),\theta_I(v'))\).
\end{proof}

\begin{corollary}[Minimal rank two inside an isolated interval]
\label{cor:rank-two-isolated-interval}
Let \(H\leq F\) be closed, suppose that \(C(H)\) is finite and full,
and let \(I\) be a rational \(H\)-isolated interval.  Suppose that
\(K=K_I(H)\) acts minimally on \((0,1)\) and that
\[
  A_K=\pi_{\mathrm{ab}}(K)
\]
has rank two.  Let \(d_L^K\) and \(d_R^K\) be the positive generators
of the two coordinate projections of \(A_K\), and set
\[
  q_I=
  \bigl[
    d_L^K\mathbb Z\times d_R^K\mathbb Z:A_K
  \bigr].
\]
Then, for all \(u,v\in V_H(I)\),
\[
  \bigl|\Path_I(H)(u,v)\bigr|=q_I.
\]
\end{corollary}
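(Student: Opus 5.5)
This is the exact analogue of Corollary~\ref{cor:rank-one-isolated-interval}, with Proposition~\ref{prop:exact-inner-path-count} in place of Corollary~\ref{cor:rank-one-infinite-paths}. The plan is to transfer the count from \(C(K)\) to \(C(H)\) through the dictionary of Proposition~\ref{prop:isolated-interval-path-correspondence}.

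First we check that \(K\) satisfies the hypotheses of Proposition~\ref{prop:exact-inner-path-count}. By Lemma~\ref{lem:normalized-restriction}, \(K\) is a closed subgroup of \(F\). By assumption it acts minimally on \((0,1)\) and \(A_K\) has rank two. By Proposition~\ref{prop:normalized-core}, \(C(K)\) is finite and full; fullness also follows from Corollary~\ref{cor:minimal-full}. Proposition~\ref{prop:exact-inner-path-count}, applied to \(K\), then gives
\[
  \bigl|\Path\bigl(C(K)\bigr)(u',v')\bigr|=q_I
\]
for every pair of inner vertices \(u',v'\) of \(C(K)\). Here \(d_L^K\) and \(d_R^K\) are precisely the positive generators of the coordinate projections of \(A_K\), as that proposition requires.

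Next, let \(u,v\in V_H(I)\). By Proposition~\ref{prop:isolated-interval-path-correspondence}\textup{(1)}, \(\theta_I\) is a bijection from \(V_{\mathrm{in}}(C(K))\) onto \(V_H(I)\), so we may write \(u=\theta_I(u')\) and \(v=\theta_I(v')\) for unique inner vertices \(u',v'\) of \(C(K)\). Part \textup{(2)} of the same proposition gives a bijection
\[
  \Phi_I\colon\Path\bigl(C(K)\bigr)(u',v')\longrightarrow\Path_I(H)(u,v).
\]
Combining this with the count above yields \(|\Path_I(H)(u,v)|=q_I\).

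There is no real obstacle here. The only point needing care is the bookkeeping: \(V_H(I)\) must match the inner vertices of \(C(K)\) exactly, and the minimality and rank hypotheses must be stated for \(K\), not for \(H\). Both are already settled by the cited results, so the proof is a direct composition of them.
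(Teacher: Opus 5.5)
Your proof is correct and matches the paper's argument exactly: you verify the hypotheses of Proposition~\ref{prop:exact-inner-path-count} for \(K\) using Lemma~\ref{lem:normalized-restriction} and Proposition~\ref{prop:normalized-core}. You then transfer the count \(q_I\) to \(\Path_I(H)(u,v)\) through the bijections \(\theta_I\) and \(\Phi_I\) of Proposition~\ref{prop:isolated-interval-path-correspondence}.
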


\begin{proof}
By Proposition~\ref{prop:normalized-core}, the core \(C(K)\) is finite
and full, and by Lemma~\ref{lem:normalized-restriction} the group
\(K\) is closed.  Let \(u',v'\in V_{\mathrm{in}}(C(K))\) be the unique
vertices with
\[
  \theta_I(u')=u,
  \qquad
  \theta_I(v')=v.
\]
Proposition~\ref{prop:exact-inner-path-count}, applied to \(K\), gives
\[
  \bigl|\Path\bigl(C(K)\bigr)(u',v')\bigr|=q_I,
\]
and the result follows from
Proposition~\ref{prop:isolated-interval-path-correspondence}.
\end{proof}

\subsection{The rank-one obstruction inside an isolated interval}
\label{subsec:rank-one-obstruction-isolated-interval}

In this subsection we show that if the normalized restriction of \(H\)
to a rational \(H\)-isolated interval has rank-one endpoint image, then
\(H\) is not of type \(\mathrm{FP}_2\).  The argument exhibits that
restriction as a quasi-retract of \(H\).

Choose word metrics on the finitely generated groups under
consideration.  A map
\[
  f\colon G\longrightarrow Q
\]
is \emph{Lipschitz} if there is a constant \(C\geq0\) such that
\[
  d_Q\bigl(f(g_1),f(g_2)\bigr)
  \leq
  C\,d_G(g_1,g_2)
\]
for all \(g_1,g_2\in G\).  Two maps
\[
  f_1,f_2\colon G\longrightarrow Q
\]
are \emph{at bounded distance} if there is a constant \(C\geq0\) such
that
\[
  d_Q\bigl(f_1(g),f_2(g)\bigr)\leq C
\]
for every \(g\in G\).

A finitely generated group \(Q\) is a \emph{quasi-retract} of a
finitely generated group \(G\) if there are Lipschitz maps
\[
  \sigma\colon Q\longrightarrow G
  \qquad\text{and}\qquad
  \rho\colon G\longrightarrow Q
\]
such that \(\sigma\circ\rho\) is at bounded distance from the identity
map of \(Q\).  Since any two word metrics arising from finite
generating sets are bi-Lipschitz equivalent, this definition does not
depend on the chosen word metrics.

We shall use the following theorem of Alonso.

\begin{theorem}[Alonso]
\label{thm:alonso-quasi-retract}
Let \(G\) and \(Q\) be finitely generated groups, and suppose that
\(Q\) is a quasi-retract of \(G\).  If \(G\) is of type
\(\mathrm{FP}_n\) for some \(n\geq2\), then \(Q\) is of type
\(\mathrm{FP}_n\).
\end{theorem}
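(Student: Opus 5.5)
The plan is to characterize type \(\mathrm{FP}_n\) coarsely and then transport that characterization along the two Lipschitz maps. For a finitely generated group \(G\) with a word metric and \(r\geq1\), let \(P_r(G)\) be the Rips complex: its simplices are the finite subsets of \(G\) of diameter at most \(r\). The group \(G\) acts freely on \(P_r(G)\) with finitely many orbits of cells in each dimension, and \(P_r(G)\subseteq P_{r'}(G)\) for \(r\leq r'\). The union over all \(r\) is contractible, being the full simplex on \(G\).

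Brown's criterion, applied to this filtration, gives the characterization I would use. The group \(G\) is of type \(\mathrm{FP}_n\) if and only if the filtration \(\{P_r(G)\}\) is essentially \((n-1)\)-acyclic over \(\mathbb Z\). That is, for every \(r\) there is \(r'\geq r\) such that the map \(\widetilde H_k(P_r(G);\mathbb Z)\to\widetilde H_k(P_{r'}(G);\mathbb Z)\) induced by inclusion is zero for all \(k\leq n-1\). This is the step I expect to need the most care. I would invoke Brown's criterion in its homological form for filtrations by cocompact free \(G\)-complexes, rather than reprove it.

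Next come two elementary coarse facts. First, a \(C\)-Lipschitz map \(f\colon X\to Y\) between groups sends every simplex of \(P_r(X)\) to a simplex of \(P_{Cr}(Y)\). So it induces a simplicial map, and hence a chain map \(f_\#\colon C_*(P_r(X))\to C_*(P_{Cr}(Y))\). Second, suppose two maps \(f_1,f_2\colon X\to Y\) are \(C\)-Lipschitz and at distance at most \(D\). Then for every simplex \(\{x_0,\dots,x_k\}\) of \(P_r(X)\), the union \(\{f_1(x_i)\}\cup\{f_2(x_i)\}\) has diameter at most \(Cr+D\). The standard prism construction therefore gives a chain homotopy between \(f_{1\#}\) and \(f_{2\#}\) as maps into \(C_*(P_{Cr+D}(Y))\). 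This works at least in degrees up to \(n\), which is all we need.

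Now assume \(G\) is of type \(\mathrm{FP}_n\). Let \(\sigma\colon Q\to G\) and \(\rho\colon G\to Q\) be \(C\)-Lipschitz, with the composite \(Q\to G\to Q\) at distance at most \(D\) from \(\mathrm{id}_Q\). Fix \(r\) and a reduced \(k\)-cycle \(z\) in \(P_r(Q)\) with \(k\leq n-1\).
\begin{enumerate}[label=\textup{(\arabic*)}]
\item Push forward: \(\sigma_\#z\) is a cycle in \(P_{Cr}(G)\).
\item Kill it in \(G\): by essential acyclicity of \(G\) there is \(r'\) with \(\sigma_\#z=\partial c\) for some chain \(c\) in \(P_{r'}(G)\).
\item Pull back: then \(\rho_\#\sigma_\#z=\partial\rho_\#c\) is a boundary in \(P_{Cr'}(Q)\).
\item Compare with \(z\): the chain-homotopy fact, applied to \(\rho\sigma\) and \(\mathrm{id}_Q\), shows that \(z-\rho_\#\sigma_\#z\) is a boundary in \(P_{C^2r+D}(Q)\).
\end{enumerate}
Hence \(z\) bounds in \(P_R(Q)\) with \(R=\max\{Cr',C^2r+D\}\). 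This \(R\) depends only on \(r\), since \(r'\) depends only on \(Cr\). So \(Q\) is essentially \((n-1)\)-acyclic, and by Brown's criterion \(Q\) is of type \(\mathrm{FP}_n\).

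Two bookkeeping points remain. Reduced homology handles \(k=0\) uniformly. In degree \(0\), the augmentation must commute with the induced chain maps, which holds because they come from maps of vertex sets.
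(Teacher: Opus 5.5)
The paper gives no proof of its own here---it simply quotes Theorem~8 of Alonso. Your argument is correct and is essentially Alonso's own proof: Brown's criterion for the Rips filtration, Lipschitz maps inducing non-equivariant chain maps between the filtered complexes, and bounded distance giving a prism or acyclic-carrier chain homotopy.

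The one thing to fix is your claim that \(G\) acts freely on \(P_r(G)\), which fails when \(G\) has torsion: an involution \(g\) stabilizes the edge \(\{1,g\}\). You can repair this in either of two ways. Invoke Brown's criterion in the form that allows finite cell stabilizers, which are of type \(\mathrm{FP}_\infty\). Alternatively, filter the homogeneous bar resolution by the ordered tuples of diameter at most \(r\), which gives finitely generated free pieces. For the torsion-free subgroups of \(F\) to which the paper applies the theorem, the issue does not arise.
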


This is \cite[Theorem~8]{Alonso}.

\begin{lemma}
\label{lem:isolated-interval-lipschitz-retraction}
Let \(H\leq F\) be closed, suppose that \(C(H)\) is finite and full,
let \(I\) be a rational \(H\)-isolated interval, and put
\[
  K=K_I(H).
\]
Both \(H\) and \(K\) are finitely generated, by
Theorem~\ref{thm:core-closure}\textup{(ii)} and
Proposition~\ref{prop:normalized-core}.  There is a Lipschitz map
\[
  \rho\colon H\longrightarrow K
\]
such that, for every \(h\in\operatorname{Stab}_H(I)\),
\[
  \rho(h)=\nu_I(h|_I)\nu_I^{-1}.
\]
\end{lemma}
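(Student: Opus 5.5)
We will build \(\rho\) by cutting each element of \(H\) at a fixed point of \(I\) and then repairing it with a bounded correction.  The aim is a map that agrees with normalized restriction on \(\operatorname{Stab}_H(I)\) and moves by a bounded amount under right multiplication by a generator.

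\textbf{Definition of \(\rho\).}  Fix a dyadic base point \(x_0\in I\), and fix a finite generating set \(S\) of \(H\).  For \(h\in H\), the point \(h^{-1}(x_0)\) lies in a unique translate \(g(I)\), \(g\in H\), of \(I\), because \(I\) is \(H\)-isolated.  Then \(hg\) stabilizes \(I\), up to the left-to-right composition convention.  The group \(\operatorname{Stab}_H(I)\) acts on the set of translates, so we choose a transversal: for each translate \(J\) of \(I\) in the orbit, fix one \(g_J\in H\) with \(I\cdot g_J=J\), taking \(g_I=1\).  Define \(\rho(h)\) to be the normalized restriction of the element of \(\operatorname{Stab}_H(I)\) obtained by composing \(h\) with \(g_J^{\pm1}\), where \(J\) is the translate determined by \(h\).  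For \(h\in\operatorname{Stab}_H(I)\) we have \(J=I\), so \(\rho(h)=\nu_I(h|_I)\nu_I^{-1}\), as the statement requires.

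\textbf{Lipschitz estimate.}  The translate attached to \(hs\), for \(s\in S\), is the image of the translate attached to \(h\) under a single generator.  Hence \(\rho(h)^{-1}\rho(hs)\) is the normalized restriction of an element of the form \(g_J^{-1}s^{\pm1}g_{J'}\).  This element is still unbounded, because the transversal is infinite.

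The transversal must therefore be chosen coherently, and this is the main obstacle.  We expect to use the fact that \(H\) is finitely generated and of finite full core.  We would try the following construction.  Let \(\Gamma\) be the Schreier graph of \(H\) acting on the orbit of \(I\).  Choose a spanning tree of \(\Gamma\), and let \(g_J\) be the product of edge labels along the tree path from \(I\) to \(J\).  Then the correction element attached to an edge \((J,J s)\) is a Schreier generator of \(\operatorname{Stab}_H(I)\).  Tree edges give the identity.  For non-tree edges, infinitely many Schreier generators may arise, so the Lipschitz bound would fail.

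The more robust alternative avoids Schreier generators by defining \(\rho(h)\) locally.  Near the cut points, piecewise modifications, justified by closedness as in the proof of Lemma~\ref{lem:normalized-restriction}, replace \(h\) by the element that agrees with \(h\) on a fixed compact subinterval \([c,d]\subset I\) and is corrected near \(a\) and \(b\).  Concretely, we would use the orbit--core correspondence and the periodicity at the endpoints, Lemma~\ref{lem:normalized-endpoint-invariants}, to show three things:
\begin{itemize}
\item[(i)] there are finitely many ``germ types'' of elements of \(H\) near \(a\) and near \(b\), coming from the finite core;
\item[(ii)] for each pair of germ types there is a fixed element of \(\operatorname{Stab}_H(I)\) realizing it;
\item[(iii)] \(\rho(h)\) can be defined by pasting \(h\) on the interior with these fixed pieces.
\end{itemize}
Multiplying by a generator changes the germ data and the interior by a bounded amount.

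To verify the Lipschitz bound, it then suffices to show that \(\rho(h)^{-1}\rho(hs)\) ranges over a finite subset of \(K\).  That subset is determined by the finitely many states of the child automaton of \(C(H)\) involved in \(s\) and in the pasting pieces.  Its finiteness would follow from Proposition~\ref{prop:normalized-core}, since \(C(K)\) is finite.
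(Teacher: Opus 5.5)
\textbf{There is a genuine gap.} You correctly locate the real difficulty: the elements \(g_J\) carrying \(I\) onto its translates \(J\in I\cdot H\) must be chosen coherently. For an arbitrary transversal, or for a Schreier spanning tree, the corrections \(g_J s\,g_{s(J)}^{-1}\) can range over an infinite subset of \(\operatorname{Stab}_H(I)\). The proposal never supplies a coherent choice, and the fallback ``local'' construction does not work, for three reasons.
\begin{itemize}
\item If \(h\notin\operatorname{Stab}_H(I)\), then \(h(I)\cap I=\varnothing\) by isolation. So no element of \(\operatorname{Stab}_H(I)\) agrees with \(h\) on any \([c,d]\subset I\). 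To define \(\rho(h)\) you must first move \(h(I)\) back to \(I\), which is exactly the transversal problem again.
\item Even for \(h\in\operatorname{Stab}_H(I)\), replacing \(h\) on \((a,c)\) and \((d,b)\) by pieces from a fixed finite family changes \(h|_I\). A germ at \(a\) does not determine \(h\) on \((a,c]\) for fixed \(c\), and \(h(c)\) varies with \(h\). So the required identity \(\rho(h)=\nu_I(h|_I)\nu_I^{-1}\) fails.
\item The claim that \(\rho(h)^{-1}\rho(hs)\) ranges over a finite set ``because \(C(K)\) is finite'' is a non sequitur. \(K\) is infinite, and finiteness of its core bounds no set of elements.
\end{itemize}

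The missing idea is to choose the transversal compatibly with affine maps. Unlike tree paths in a Schreier graph, these carry no holonomy, because the increasing affine map between two intervals is unique. Call two translates equivalent if some element of \(H\) maps one affinely onto the other. In each class fix \(J_0\) and \(g_0\in H\) with \(g_0(I)=J_0\), taking \(J_0=I\) and \(g_0=1\) for the class of \(I\). Put \(\zeta_J=\nu_I g_0 a_J\), where \(a_J\colon J_0\to J\) is the increasing affine map.

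Then each \(\zeta_J\) is \(\nu_I\) followed by the restriction to \(I\) of an element of \(H\) carrying \(I\) onto \(J\), and \(\zeta_{h(J)}=\zeta_J h\) whenever \(h\) is affine on \(J\). The maps \(A_J(h)=\zeta_J h\,\zeta_{h(J)}^{-1}\) lie in \(K\), satisfy the cocycle identity \(A_J(hg)=A_J(h)A_{h(J)}(g)\), and are trivial whenever \(h\) is affine on \(J\). Since the translates are pairwise disjoint and each generator \(s\) has finitely many breakpoints, the set \(\{A_J(s)\}_J\) is finite.

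Setting \(\rho(h)=A_I(h)\) gives \(\rho(hs)=\rho(h)A_{h(I)}(s)\), hence the Lipschitz bound, and \(\rho(h)=\nu_I(h|_I)\nu_I^{-1}\) on \(\operatorname{Stab}_H(I)\). This argument uses neither finiteness of the core nor endpoint periodicity, only finite generation and finiteness of breakpoint sets.
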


\begin{proof}
Put
\[
  \mathcal X=I\cdot H.
\]
The intervals in \(\mathcal X\) are pairwise disjoint.  We first choose
compatible coordinate maps
\[
  \zeta_J\colon(0,1)\longrightarrow J,
  \qquad J\in\mathcal X,
\]
such that:
\begin{enumerate}[label=\textup{(\arabic*)}]
\item \(\zeta_I=\nu_I\);
\item for every \(J\in\mathcal X\), there is an element \(g_J\in H\)
      satisfying
      \[
        g_J(I)=J
        \qquad\text{and}\qquad
        \zeta_J=\nu_Ig_J;
      \]
\item whenever \(h\in H\) is affine on \(J\),
      \[
        \zeta_{h(J)}=\zeta_Jh.
      \]
\end{enumerate}

To construct these maps, call two intervals in \(\mathcal X\) affinely
equivalent if an element of \(H\) maps one affinely onto the other.
Choose one representative \(J_0\) of each affine-equivalence class and
an element \(g_0\in H\) mapping \(I\) onto \(J_0\).  For the class
containing \(I\), take \(J_0=I\) and \(g_0=1\).  Define
\[
  \zeta_{J_0}=\nu_Ig_0.
\]

For every \(J\) affinely equivalent to \(J_0\), let
\[
  a_J\colon J_0\longrightarrow J
\]
be the unique increasing affine map.  By the definition of affine
equivalence, there is an element \(f_J\in H\) whose restriction to
\(J_0\) is \(a_J\).  Put
\[
  \zeta_J=\zeta_{J_0}a_J.
\]
Since \(\zeta_{J_0}=\nu_Ig_0\), we have
\[
  \zeta_J=\nu_I(g_0f_J),
\]
which proves property \textup{(2)}.

If \(h\in H\) is affine on \(J\), then \(a_Jh\) is the unique
increasing affine map from \(J_0\) onto \(h(J)\).  Hence
\[
  \zeta_{h(J)}=\zeta_Jh,
\]
proving property \textup{(3)}.

For \(J\in\mathcal X\) and \(h\in H\), define the increasing homeomorphism
\[
  A_J(h)\colon(0,1)\longrightarrow(0,1)
\]
by
\[
  A_J(h)=\zeta_Jh\zeta_{h(J)}^{-1}.
\]
By property \textup{(2)}, choose \(g_J,g_{h(J)}\in H\) such that
\[
  \zeta_J=\nu_Ig_J,
  \qquad
  \zeta_{h(J)}=\nu_Ig_{h(J)}.
\]
Then
\[
  A_J(h)
  =
  \nu_I
  \bigl((g_Jh g_{h(J)}^{-1})|_I\bigr)
  \nu_I^{-1}.
\]
The element \(g_Jh g_{h(J)}^{-1}\) stabilizes \(I\): it maps \(I\)
first onto \(J\), then onto \(h(J)\), and finally back onto \(I\).
Thus \(A_J(h)\) is its normalized restriction, and therefore
\[
  A_J(h)\in K.
\]

These maps satisfy the cocycle identity.  Indeed,
\[
\begin{aligned}
  A_J(h)A_{h(J)}(g)
  &=
  \zeta_Jh\zeta_{h(J)}^{-1}
  \zeta_{h(J)}g\zeta_{g(h(J))}^{-1}
  \\
  &=
  \zeta_Jhg\zeta_{(hg)(J)}^{-1}
  \\
  &=
  A_J(hg).
\end{aligned}
\]

If \(h\) is affine on \(J\), then property \textup{(3)} gives
\[
  A_J(h)=1.
\]
Consequently, \(A_J(h)\neq1\) only if the interior of \(J\) contains a
breakpoint of \(h\).  Since the intervals in \(\mathcal X\) are pairwise
disjoint and \(h\) has only finitely many breakpoints,
\[
  A_J(h)=1
\]
for all but finitely many \(J\in\mathcal X\).

Define
\[
  \rho(h)=A_I(h).
\]
If \(h\in\operatorname{Stab}_H(I)\), then \(h(I)=I\), and
therefore
\[
  \rho(h)
  =A_I(h)
  =\nu_I(h|_I)\nu_I^{-1}.
\]

We now prove that \(\rho\) is Lipschitz.  Let \(S\) be a finite
symmetric generating set of \(H\), let \(d_H\) be the corresponding
left-invariant word metric, and fix a left-invariant word metric \(d_K\)
on \(K\).  For each of these groups \(G\), we use the convention
\[
  d_G(g,h)=|g^{-1}h|_G.
\]
Write \(|\cdot|_K\) for the corresponding word length.  Fix \(s\in S\).  If
an interval \(J\in\mathcal X\) contains no breakpoint of \(s\), then \(s\) is
affine on \(J\).  The compatibility of the coordinate maps therefore
gives
\[
  A_J(s)=1.
\]
The intervals in \(\mathcal X\) are pairwise disjoint, and \(s\) has only
finitely many breakpoints.  Hence only finitely many intervals
\(J\in\mathcal X\) contain a breakpoint of \(s\), and therefore
\[
  \{A_J(s):J\in\mathcal X\}
\]
is finite.

Since \(S\) is finite, there is a constant \(M\) such that
\[
  |A_J(s)|_K\leq M
\]
for every \(s\in S\) and every \(J\in\mathcal X\).

Let \(h\in H\) and \(s\in S\).  Applying the cocycle identity with
initial interval \(I\) gives
\[
\begin{aligned}
  \rho(hs)
  &=A_I(hs)
  \\
  &=A_I(h)A_{h(I)}(s)
  \\
  &=\rho(h)A_{h(I)}(s).
\end{aligned}
\]
Consequently,
\[
\begin{aligned}
  d_K\bigl(\rho(h),\rho(hs)\bigr)
  &=|\rho(h)^{-1}\rho(hs)|_K
  \\
  &=|A_{h(I)}(s)|_K
  \\
  &\leq M.
\end{aligned}
\]
Applying this estimate successively along a shortest word from
\(h_1\) to \(h_2\) gives
\[
  d_K\bigl(\rho(h_1),\rho(h_2)\bigr)
  \leq
  M\,d_H(h_1,h_2).
\]
Thus \(\rho\) is Lipschitz.
\end{proof}

\begin{lemma}
\label{lem:rank-one-normalized-restriction-embedding}
Let \(H\leq F\) be closed, suppose that \(C(H)\) is finite and full,
and let \(I=(a,b)\) be a rational \(H\)-isolated interval.  Put
\[
  K=K_I(H),
\]
and suppose that \(\pi_{\mathrm{ab}}(K)\) has rank one.  Then there is
an injective homomorphism
\[
  \sigma\colon K\longrightarrow H
\]
such that every \(\sigma(k)\) stabilizes \(I\) and has normalized
restriction \(k\) on \(I\).
\end{lemma}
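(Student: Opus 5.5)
The plan is to use the rank-one hypothesis to split \(K\) as a semidirect product \(N\rtimes\langle k_0\rangle\). Here \(N\) consists of elements that are trivial near both endpoints of \((0,1)\), and \(k_0\) is a single element. Elements of \(N\) can be lifted canonically by extending by the identity outside \(I\). For \(k_0\) we use any preimage in \(\operatorname{Stab}_H(I)\); because every lift of an element of \(N\) is supported in \(I\), the conjugation relations are automatically respected.

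First, by Lemma~\ref{lem:normalized-restriction} both coordinate projections of \(\pi_{\mathrm{ab}}(K)\) are nonzero. So \(\pi_{\mathrm{ab}}(K)=\mathbb Z(c_L,c_R)\) with \(c_Lc_R\neq0\), and we define the epimorphism \(\chi\colon K\to\mathbb Z\) by \(\pi_{\mathrm{ab}}(k)=\chi(k)(c_L,c_R)\). Its kernel \(N=\ker\chi\) equals \(\ker\pi_{\mathrm{ab}}\cap K\). This is the one place where rank one is used: in rank two, \(\ker\chi\) would contain elements with nontrivial endpoint slopes. Hence every \(n\in N\) is the identity on neighbourhoods of \(0\) and \(1\). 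Choose \(k_0\in K\) with \(\chi(k_0)=1\). Then every \(k\in K\) can be written uniquely as \(k=n\,k_0^m\) with \(n\in N\) and \(m=\chi(k)\), and \(k_0\) normalizes \(N\).

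Next, for \(n\in N\) let \(\sigma(n)\) be the map equal to \(\nu_I^{-1}n\nu_I\) on \(I\) and the identity outside \(I\). Since \(n\) is trivial near \(0\) and \(1\), this map is the identity on neighbourhoods of both endpoints of \(I\), even though \(a,b\) may be nondyadic. On the compact part of \(I\) where \(\nu_I^{-1}n\nu_I\) is nontrivial, only finitely many pieces of the chart \(\nu_I\) occur. The chart replaces prefixes and keeps binary tails, so it maps dyadic points to dyadic points. Hence \(\sigma(n)\in F\).

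To see that \(\sigma(n)\in H\), pick \(h\in\operatorname{Stab}_H(I)\) whose normalized restriction is \(n\). Choose dyadic points \(a<x<y<b\) such that \(\sigma(n)\) is the identity on \([a,x]\cup[y,b]\). Then \(\sigma(n)\) agrees with \(h\) on \([x,y]\) and with the identity elsewhere, so it is piecewise-\(H\), and closedness gives \(\sigma(n)\in H\). Clearly \(\sigma|_N\) is a homomorphism with \(\operatorname{Supp}\sigma(n)\subseteq I\).

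Then fix \(h_0\in\operatorname{Stab}_H(I)\) whose normalized restriction is \(k_0\), and set
\[
  \sigma(n k_0^m)=\sigma(n)\,h_0^{\,m}.
\]
The key identity to check is
\[
  h_0^{-1}\sigma(n)h_0=\sigma\bigl(k_0^{-1}nk_0\bigr).
\]
Both sides are supported in \(I\), since \(h_0\) preserves \(I\), and on \(I\) both are conjugated by \(\nu_I\) to \(k_0^{-1}nk_0\). Given this identity, the multiplication rule in \(N\rtimes\langle k_0\rangle\) shows that \(\sigma\) is a homomorphism. Every \(\sigma(k)\) stabilizes \(I\), and its normalized restriction is \(n k_0^m=k\), which also gives injectivity. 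The main obstacle is the membership \(\sigma(n)\in F\cap H\) in the second step, that is, controlling breakpoints near the possibly nondyadic endpoints. This is exactly why the argument needs \(N\) to consist of elements trivial near \(0\) and \(1\), which rank one guarantees.
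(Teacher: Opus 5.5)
Your proposal is correct and matches the paper's proof essentially step for step. Both arguments:
\begin{itemize}
\item split \(K=N\rtimes\langle t\rangle\) with \(N=\ker\chi\);
\item lift \(u\in N\) to \(\overline u\) by extending by the identity outside \(I\);
\item show \(\overline u\in H\) by pasting a lift \(h\in\operatorname{Stab}_H(I)\) with the identity at dyadic points \(x<y\) of \(I\), and then invoking closedness;
\item set \(\sigma(ut^n)=\overline u\,g^n\), which is justified by the conjugation identity \(g\overline u g^{-1}=\overline{tut^{-1}}\).
\end{itemize}
Your separate chart-based argument that \(\sigma(n)\in F\) is harmless but unnecessary. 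The pasting of \(h\) and the identity at the dyadic points \(x,y\) already shows \(\sigma(n)\in F\) directly.
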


\begin{proof}
Write
\[
  \pi_{\mathrm{ab}}(K)=\mathbb Z(a_L,a_R)
\]
and define an epimorphism
\[
  \chi\colon K\longrightarrow\mathbb Z
\]
by
\[
  \pi_{\mathrm{ab}}(k)=\chi(k)(a_L,a_R).
\]
Put \(N=\ker\chi\).

Let \(u\in N\).  Then \(u\) has slope \(1\) at both endpoints of
\([0,1]\), and is therefore the identity on one-sided neighborhoods of
\(0\) and of \(1\).  Since \(\nu_I\) is an increasing homeomorphism of
\((0,1)\) onto \(I\), the map
\[
  u_I=\nu_I^{-1}u\nu_I\colon I\longrightarrow I
\]
is the identity on one-sided neighborhoods of \(a\) and of \(b\).

Let \(\overline u\) be the extension of \(u_I\) by the identity outside
\(I\).  By the definition of \(K\), there is
\(h\in\operatorname{Stab}_H(I)\) with \(h|_I=u_I\).  Choose dyadic
points \(x<y\) in \(I\) such that \(u_I\) is the identity on \((a,x]\)
and on \([y,b)\).  Then \(\overline u\) is the identity on \([0,x]\)
and on \([y,1]\) and agrees with \(h\) on \([x,y]\); in particular
\(\overline u\in F\), and \(\overline u\) is piecewise \(H\).  Since
\(H\) is closed,
\[
  \overline u\in H.
\]
The map \(u\mapsto\overline u\) is an injective homomorphism from \(N\)
to \(H\).

Choose \(t\in K\) satisfying \(\chi(t)=1\), and choose
\[
  g\in\operatorname{Stab}_H(I)
\]
whose normalized restriction is \(t\).  We have
\[
  K=N\rtimes\langle t\rangle,
\]
so every element of \(K\) has a unique expression \(ut^n\), where
\(u\in N\) and \(n\in\mathbb Z\).  Moreover,
\[
  g\,\overline u\,g^{-1}
  =
  \overline{tut^{-1}}
  \qquad(u\in N).
\]
Indeed, the two sides agree on \(I\) and are the identity outside
\(I\).  Therefore the formula
\[
  \sigma(ut^n)=\overline u\,g^n
\]
defines a homomorphism from \(K\) to \(H\).  Its normalized restriction
to \(I\) is \(ut^n\).  Thus \(\sigma\) is injective and has the required
properties.
\end{proof}

\begin{theorem}
\label{thm:rank-one-isolated-not-FP2}
Let \(H\leq F\) be closed, suppose that \(C(H)\) is finite and full,
and let \(I\) be a rational \(H\)-isolated interval.  If
\[
  \operatorname{rank}\pi_{\mathrm{ab}}\bigl(K_I(H)\bigr)=1,
\]
then \(H\) is not of type \(\mathrm{FP}_2\).
\end{theorem}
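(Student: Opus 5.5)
The plan is to exhibit \(K=K_I(H)\) as a quasi-retract of \(H\), then combine Theorem~\ref{thm:alonso-quasi-retract} with Theorem~\ref{thm:rank-one-not-FP2} applied to \(K\).

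First, \(K\) satisfies the hypotheses of Theorem~\ref{thm:rank-one-not-FP2}. By Lemma~\ref{lem:normalized-restriction}, \(K\) is closed. By Proposition~\ref{prop:normalized-core}, \(C(K)\) is finite and full. By hypothesis, \(\pi_{\mathrm{ab}}(K)\) has rank one. Hence \(K\) is not of type \(\mathrm{FP}_2\).

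Next we construct the quasi-retraction. Lemma~\ref{lem:isolated-interval-lipschitz-retraction} provides a Lipschitz map \(\rho\colon H\to K\) that agrees with the normalized restriction on \(\operatorname{Stab}_H(I)\). Lemma~\ref{lem:rank-one-normalized-restriction-embedding}, which uses the rank-one hypothesis, provides an injective homomorphism \(\sigma\colon K\to H\) with image in \(\operatorname{Stab}_H(I)\), such that the normalized restriction of \(\sigma(k)\) is \(k\). Therefore \(\rho(\sigma(k))=k\) for every \(k\in K\). So the composite \(K\to H\to K\) (first \(\sigma\), then \(\rho\)) is exactly the identity, not merely at bounded distance from it. The map \(\sigma\) is Lipschitz because it is a homomorphism between finitely generated groups: with word metrics, \(d_H(\sigma(k_1),\sigma(k_2))\le C\,d_K(k_1,k_2)\), where \(C\) is the maximum word length in \(H\) of the images of the generators of \(K\). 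Both groups are finitely generated, by Theorem~\ref{thm:core-closure}\textup{(ii)} and Proposition~\ref{prop:normalized-core}.

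One point needs care: the composition convention. The paper composes maps left to right, and the definition of quasi-retract writes \(\sigma\circ\rho\). What is actually required is that first applying \(\sigma\) and then \(\rho\) returns to \(Q=K\), and that is exactly the identity computed above.

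Finally, suppose for contradiction that \(H\) is of type \(\mathrm{FP}_2\). Then Theorem~\ref{thm:alonso-quasi-retract} with \(n=2\) shows that \(K\) is of type \(\mathrm{FP}_2\), contradicting the first step. Hence \(H\) is not of type \(\mathrm{FP}_2\).

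All the substantive work is already contained in the two lemmas. The only remaining obstacle is checking that the hypotheses align: the rank-one hypothesis is what produces the splitting \(\sigma\), and \(C(K)\) being finite and full is what feeds Theorem~\ref{thm:rank-one-not-FP2}.
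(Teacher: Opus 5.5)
Your proposal is correct and follows essentially the same argument as the paper: you exhibit \(K=K_I(H)\) as a quasi-retract of \(H\) using the Lipschitz map \(\rho\) of Lemma~\ref{lem:isolated-interval-lipschitz-retraction} and the splitting homomorphism \(\sigma\) of Lemma~\ref{lem:rank-one-normalized-restriction-embedding}, with \(\rho(\sigma(k))=k\) holding exactly, and then combine Alonso's theorem with Theorem~\ref{thm:rank-one-not-FP2} applied to \(K\). Your reading of the left-to-right composition convention in the definition of quasi-retract also matches the paper's use of \(\sigma\circ\rho=\operatorname{id}_K\).
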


\begin{proof}
Put
\[
  K=K_I(H).
\]
By Lemma~\ref{lem:normalized-restriction}, \(K\) is closed, and by
Proposition~\ref{prop:normalized-core}, \(C(K)\) is finite and full.  In
particular, both \(H\) and \(K\) are finitely generated.

Let
\[
  \rho\colon H\longrightarrow K
\]
be the map supplied by
Lemma~\ref{lem:isolated-interval-lipschitz-retraction}, and let
\[
  \sigma\colon K\longrightarrow H
\]
be the homomorphism supplied by
Lemma~\ref{lem:rank-one-normalized-restriction-embedding}.  Since
\(\sigma\) is a homomorphism between finitely generated groups, it is
Lipschitz.

Moreover, \(\sigma(k)\in\operatorname{Stab}_H(I)\), so the defining
property of \(\rho\) in
Lemma~\ref{lem:isolated-interval-lipschitz-retraction} gives
\[
  \rho\bigl(\sigma(k)\bigr)
  =\nu_I\bigl(\sigma(k)|_I\bigr)\nu_I^{-1}
  =k.
\]
Therefore
\[
  \sigma\circ\rho=\operatorname{id}_K.
\]
Thus \(K\) is a quasi-retract of \(H\).

If \(H\) were of type \(\mathrm{FP}_2\),
Theorem~\ref{thm:alonso-quasi-retract} would imply that \(K\) is of
type \(\mathrm{FP}_2\).  This contradicts
Theorem~\ref{thm:rank-one-not-FP2}, because \(K\) is closed, its core is
finite and full, and its image in the abelianization of \(F\) has rank
one.
\end{proof}

\begin{corollary}
\label{cor:FP2-isolated-restriction-rank-two}
Let \(H\leq F\) be closed, suppose that \(C(H)\) is finite and full,
and suppose that \(H\) is of type \(\mathrm{FP}_2\).  Then, for every
rational \(H\)-isolated interval \(I\),
\[
  \pi_{\mathrm{ab}}\bigl(K_I(H)\bigr)
\]
has rank two.
\end{corollary}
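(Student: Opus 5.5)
This is the contrapositive of Theorem~\ref{thm:rank-one-isolated-not-FP2}, combined with the fact that the rank of \(\pi_{\mathrm{ab}}(K_I(H))\) can only be one or two. Let \(I\) be a rational \(H\)-isolated interval and put \(K=K_I(H)\).

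The first step is to show that the rank is at least one, and in fact to locate it in the range \(\{1,2\}\). By Lemma~\ref{lem:normalized-restriction}, \(K\) is a closed subgroup of \(F\), and it contains an element \(k_0\) with \(k_0'(0^+)\neq1\) and an element \(k_1\) with \(k_1'(1^-)\neq1\). So both coordinate projections of \(\pi_{\mathrm{ab}}(K)\subseteq\mathbb Z^2\) are nonzero. In particular \(\pi_{\mathrm{ab}}(K)\neq0\), and its rank is one or two.

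The second step rules out rank one. Suppose \(\pi_{\mathrm{ab}}(K)\) had rank one. The standing hypotheses of Theorem~\ref{thm:rank-one-isolated-not-FP2} hold: \(H\) is closed, \(C(H)\) is finite and full, and \(I\) is a rational \(H\)-isolated interval. That theorem would then say \(H\) is not of type \(\mathrm{FP}_2\), contradicting our assumption. Hence the rank is two.

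There is no genuine obstacle here. The only point to check is that rank zero is really excluded, so that "not rank one" forces rank two. This is exactly the final clause of Lemma~\ref{lem:normalized-restriction}, which uses the finiteness and fullness of \(C(H)\); it applies to every rational \(H\)-isolated interval, including \(I=(0,1)\).
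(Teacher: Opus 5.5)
Your proposal is correct and is essentially the paper's own proof: Lemma~\ref{lem:normalized-restriction} makes both coordinate projections of \(\pi_{\mathrm{ab}}(K_I(H))\) nonzero, so the rank is one or two. Theorem~\ref{thm:rank-one-isolated-not-FP2} then excludes rank one under the \(\mathrm{FP}_2\) hypothesis.
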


\begin{proof}
By Lemma~\ref{lem:normalized-restriction}, both coordinate projections
of this subgroup are nonzero.  Its rank is therefore either one or two,
and Theorem~\ref{thm:rank-one-isolated-not-FP2} excludes rank one.
\end{proof}

\section{The nonminimal rank-two case}
\label{sec:nonminimal-rank-two}

Throughout this section, let \(H\leq F\) be closed, suppose that
\[
  \mathcal L=C(H)
\]
is finite and full, and suppose that \(\pi_{\mathrm{ab}}(H)\) has rank
two.  We assume that the action of \(H\) on \((0,1)\) is not minimal.
We shall prove that \(H\) has a proper rational \(H\)-isolated interval.

\subsection{Fixed points and exceptional minimal sets}
\label{subsec:fixed-points-exceptional-minimal-sets}

\begin{proposition}
\label{prop:nonminimal-rank-two-alternative}
Let \(H\leq F\) be closed.  Suppose that \(C(H)\) is finite and full,
that \(\pi_{\mathrm{ab}}(H)\) has rank two, and that the action of \(H\)
on \((0,1)\) is not minimal.  Then exactly one of the following holds.
\begin{enumerate}[label=\textup{(\arabic*)}]
\item The group \(H\) has a fixed point in \((0,1)\), and every orbital
      of \(H\) is a proper rational \(H\)-isolated interval.
\item The group \(H\) has no fixed point in \((0,1)\) and has a unique
      exceptional minimal set \(M\).  Every gap of \(M\) is
      \(H\)-isolated.
\end{enumerate}
\end{proposition}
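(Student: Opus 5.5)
The plan is to split according to whether \(\operatorname{Fix}_{(0,1)}(H)\) is empty; the two alternatives are then mutually exclusive by definition.

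\emph{Case of fixed points.} By Lemma~\ref{lem:finite-fixed-set}, \(\operatorname{Fix}_{(0,1)}(H)\) is finite, nonempty and consists of rational points. The orbitals are therefore the finitely many open intervals between consecutive points of \(\{0,1\}\cup\operatorname{Fix}_{(0,1)}(H)\), and each has rational endpoints. Each orbital is proper, since at least one fixed point lies in \((0,1)\). Each orbital is \(H\)-isolated, because every \(h\in H\) fixes its endpoints and hence maps it onto itself. This gives alternative (1).

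\emph{Case of no fixed points.} Here Lemma~\ref{lem:minimal-set-alternative} applies to \(J=(0,1)\), using that \(H\) is finitely generated by Theorem~\ref{thm:core-closure}(ii). Since the action is not minimal, a minimal set \(M\) is either a closed discrete orbit or exceptional.

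\emph{Excluding a closed discrete orbit.} This is the step I expect to need care. In that case the lemma gives a nonzero homomorphism \(\psi\colon H\to\mathbb Z\), namely the shift on the ordered orbit \(M=\{x_n\}_{n\in\mathbb Z}\). I would show that \(\psi\) is determined by the endpoint germs, so that rank two leads to a contradiction. By Lemma~\ref{lem:one-sided-slopes}-style reasoning, but directly: pick \(h\in H\) with \(\pi_{\mathrm{ab}}(h)=(k,0)\), \(k\neq0\), which exists because the rank is two. Then \(h\) is the identity on some \([d,1]\). Since \(M\) accumulates at \(1\) (its supremum is \(1\), as \(\sup Hx=1\) in the absence of fixed points), \(h\) fixes points of \(M\), so \(\psi(h)=0\). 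Similarly \(\psi(h')=0\) for every \(h'\) with \(\pi_{\mathrm{ab}}(h')=(0,k')\).

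Next I would show \(\psi\) vanishes on \([H,H]\supseteq H\cap[F,F]\): such elements are the identity near \(1\), so they fix points of \(M\) and have \(\psi=0\). Hence \(\psi\) factors through \(A=\pi_{\mathrm{ab}}(H)\), which has rank two. Since \(\psi\) kills \((k,0)\) and \((0,k')\), which span a finite-index subgroup of \(A\), it follows that \(\psi=0\), a contradiction. This argument must be checked carefully, but it uses only that an element acting trivially near an endpoint fixes points of \(M\) near that endpoint, and hence has zero shift.

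\emph{Exceptional case.} We get \(M\) exceptional and unique, and \(H\) permutes its gaps by Lemma~\ref{lem:minimal-set-alternative}. A gap \(G\) is a connected component of the \(H\)-invariant open set \((0,1)\setminus M\). Hence \(h(G)\) is again a component, so it either equals \(G\) or is disjoint from it. Therefore every gap is \(H\)-isolated, which gives alternative (2).
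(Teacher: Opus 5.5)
Your proof is correct and has the same structure as the paper's: Lemma~\ref{lem:finite-fixed-set} handles the fixed-point case, Lemma~\ref{lem:minimal-set-alternative} handles the fixed-point-free case, and gaps are isolated because \(H\) permutes the components of \((0,1)\setminus M\).

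The one variation is in excluding a closed discrete orbit. The paper proves \(\ker\chi=\ker\pi_{\mathrm{ab}}\) on \(H\), so that \(\pi_{\mathrm{ab}}(H)\cong\chi(H)\leq\mathbb Z\). Its inclusion \(\ker\chi\subseteq\ker\pi_{\mathrm{ab}}\) uses affineness near \(0\) and \(1\) together with accumulation of the orbit at both endpoints. You use only \(H\cap[F,F]\subseteq\ker\psi\). You then kill \(\psi\) on the finite-index subgroup of \(\pi_{\mathrm{ab}}(H)\) spanned by \((k,0)\) and \((0,k')\). This is equally short and avoids the slope argument.

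Your parenthetical \([H,H]\supseteq H\cap[F,F]\) has the inclusion backwards. It plays no role, though, since you check vanishing on \(H\cap[F,F]\) directly.
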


\begin{proof}
Suppose first that \(H\) has a fixed point in \((0,1)\).  By
Lemma~\ref{lem:finite-fixed-set}, the set
\[
  \operatorname{Fix}_{(0,1)}(H)
\]
is finite and consists of rational points.  Consequently, every orbital
\(I\) of \(H\) is a proper interval with rational endpoints.  Every
element of \(H\) fixes the endpoints of \(I\), and therefore preserves
\(I\).  Hence \(I\) is a rational \(H\)-isolated interval.

Now suppose that \(H\) has no fixed point in \((0,1)\).  By
Theorem~\ref{thm:core-closure}\textup{(ii)}, the group \(H\) is finitely
generated, so Lemma~\ref{lem:minimal-set-alternative} applies.  The
action is not minimal by assumption.  We claim that it cannot have a
closed discrete orbit in \((0,1)\).

Indeed, let
\[
  \mathcal O=\{x_n:n\in\mathbb Z\},
  \qquad
  x_n<x_{n+1},
\]
be a closed discrete orbit in \((0,1)\).  Since \((0,1)\) is an orbital of \(H\),
\[
  \lim_{n\to-\infty}x_n=0,
  \qquad
  \lim_{n\to\infty}x_n=1.
\]
The action of \(H\) on the ordered set \(\mathcal O\) induces a
homomorphism
\[
  \chi\colon H\longrightarrow\mathbb Z
\]
characterized by
\[
  h(x_n)=x_{n+\chi(h)}.
\]
Every element of \(\ker\chi\) fixes every \(x_n\).  Since elements of
\(F\) are affine on sufficiently small one-sided neighborhoods of \(0\)
and \(1\), and the sequence \((x_n)\) accumulates at both endpoints,
every element of \(\ker\chi\) has slope \(1\) at both endpoints.  Thus
\[
  \ker\chi\subseteq\ker\pi_{\mathrm{ab}}.
\]
Conversely, every element of \(\ker\pi_{\mathrm{ab}}\) fixes
neighborhoods of both \(0\) and \(1\).  It therefore fixes \(x_n\) for
all sufficiently negative \(n\).  Since \(h(x_n)=x_{n+\chi(h)}\),
this forces \(\chi(h)=0\).  Hence
\[
  \ker\chi=\ker\pi_{\mathrm{ab}}.
\]
Consequently,
\[
  \pi_{\mathrm{ab}}(H)\cong\chi(H)\leq\mathbb Z,
\]
contradicting the assumption that \(\pi_{\mathrm{ab}}(H)\) has rank two.

Lemma~\ref{lem:minimal-set-alternative} now implies that \(H\) has a
unique exceptional minimal set \(M\), and that \(H\) permutes the gaps
of \(M\).  Since distinct gaps are disjoint, if \(I\) is a gap and
\[
  h(I)\cap I\neq\varnothing,
\]
then \(h(I)=I\).  Hence every gap of \(M\) is \(H\)-isolated.
\end{proof}

In the first case, Proposition~\ref{prop:nonminimal-rank-two-alternative}
already provides a proper rational \(H\)-isolated interval.  In the
second case, the next subsection will use the finite child automaton of
\(C(H)\) to construct finitely many rational gaps meeting every
\(H\)-orbit of gaps.

\subsection{Orbit closures in the child automaton}
\label{subsec:orbit-closures-child-automaton}

When discussing reachability and strongly connected components, we
regard the child automaton \(\mathcal A(\mathcal L)\) as a labelled
directed graph.  Denote its underlying graph by
\[
  \Gamma(\mathcal L).
\]
Its vertices are all the edges of \(\mathcal L\), and its labelled arrows
are
\[
  e\xrightarrow{0}e_0,
  \qquad
  e\xrightarrow{1}e_1,
\]
whenever \(e_0\) and \(e_1\) are the two children of \(e\).

The subgraph induced by the inner edges of \(\mathcal L\) is the
\emph{inner child graph}, denoted by
\[
  \Gamma_{\mathrm{in}}(\mathcal L).
\]
By Lemma~\ref{lem:boundary-incidence}, the children of an inner edge are
again inner edges.  Thus a directed path which begins at an inner edge
remains in \(\Gamma_{\mathrm{in}}(\mathcal L)\), whereas a path beginning
at a non-inner edge may later enter the inner child graph.

A strongly connected component of
\(\Gamma_{\mathrm{in}}(\mathcal L)\) is called \emph{cyclic} if it
contains a directed cycle; we abbreviate ``cyclic strongly connected
component'' to \emph{cyclic component}.  For vertices or subsets of vertices \(A\)
and \(B\) of \(\Gamma(\mathcal L)\), write
\[
  A\leadsto B
\]
if there is a directed path in the full graph \(\Gamma(\mathcal L)\)
from a vertex of \(A\) to a vertex of \(B\).  Paths of length zero are
allowed.

Every infinite directed path in a finite graph eventually remains in a
cyclic component.  Indeed, once such a path leaves
a strongly connected component, it cannot return to it, and the graph
has only finitely many strongly connected components.  Consequently,
if \(x\in(0,1)\) and
\[
  x=.\!x_1x_2x_3\cdots
\]
is a binary expansion of \(x\), then its state path
\[
  p,
  \quad x_1^+,
  \quad (x_1x_2)^+,
  \quad (x_1x_2x_3)^+,
  \ldots
\]
enters \(\Gamma_{\mathrm{in}}(\mathcal L)\) and eventually remains in a
cyclic component of it.  Indeed, since \(x\in(0,1)\), the expansion
contains both digits, so all but finitely many of its prefixes contain
both digits, and Lemma~\ref{lem:boundary-incidence} shows that the
states they reach are inner edges.

\begin{definition}
\label{def:cyclic-component-limit-set}
Let \(H\leq F\) be closed, suppose that \(\mathcal L=C(H)\) is finite
and full, and let \(\mathcal C\) be a cyclic component of
\(\Gamma_{\mathrm{in}}(\mathcal L)\).  Its \emph{predecessor set} is
\[
  \operatorname{Pred}(\mathcal C)=\{e\in E(\mathcal L):e\leadsto\mathcal C\}.
\]
Thus \(\operatorname{Pred}(\mathcal C)\) may contain non-inner edges.

For \(n\geq0\), let
\[
  X_n(\mathcal C)
  =
  \bigcup_{\substack{|u|=n\\u^+\in \operatorname{Pred}(\mathcal C)}}[u],
\]
and put
\[
  X(\mathcal C)=\bigcap_{n\geq0}X_n(\mathcal C).
\]
The sets \(X_n(\mathcal C)\) are finite unions of closed standard dyadic
intervals, and
\[
  X_{n+1}(\mathcal C)\subseteq X_n(\mathcal C).
\]
Hence \(X(\mathcal C)\) is compact.
\end{definition}

\begin{lemma}
\label{lem:cyclic-component-membership}
Let \(H\), \(\mathcal L\) and \(\mathcal C\) be as in
Definition~\ref{def:cyclic-component-limit-set}.  A point
\(x\in[0,1]\) belongs to \(X(\mathcal C)\) if and only if it has a
binary expansion
\[
  x=.\!x_1x_2x_3\cdots
\]
such that
\[
  (x_1\cdots x_n)^+\in \operatorname{Pred}(\mathcal C)
  \qquad\text{for every }n\geq0.
\]
\end{lemma}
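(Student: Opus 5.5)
The plan is to prove both implications directly from the definitions. The one structural fact needed is that \(\operatorname{Pred}(\mathcal C)\) is closed under passing to parents in the child automaton. Indeed, if \(u\) is a word and \((ui)^+\in\operatorname{Pred}(\mathcal C)\) for some digit \(i\), then \(u^+\xrightarrow{i}(ui)^+\) is an arrow of \(\Gamma(\mathcal L)\). Hence \(u^+\leadsto(ui)^+\leadsto\mathcal C\), and so \(u^+\in\operatorname{Pred}(\mathcal C)\). By induction on length, \(u^+\in\operatorname{Pred}(\mathcal C)\) implies \(w^+\in\operatorname{Pred}(\mathcal C)\) for every prefix \(w\) of \(u\). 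Since \(\mathcal L\) is full, every word is readable, so these states are always defined.

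\emph{Sufficiency.} Suppose \(x=.\!x_1x_2\cdots\) with every \((x_1\cdots x_n)^+\in\operatorname{Pred}(\mathcal C)\). For each \(n\), the standard interval \([x_1\cdots x_n]\) contains \(x\). It is one of the intervals in the union defining \(X_n(\mathcal C)\). Thus \(x\in X_n(\mathcal C)\) for all \(n\), and so \(x\in X(\mathcal C)\).

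\emph{Necessity.} This is a compactness (König's lemma) argument. Let \(x\in X(\mathcal C)\), and let \(\mathcal T\) be the set of finite words \(u\) with \(x\in[u]\) and \(u^+\in\operatorname{Pred}(\mathcal C)\).

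First, \(\mathcal T\) is closed under prefixes. If \(w\) is a prefix of \(u\), then \([w]\supseteq[u]\ni x\), and \(w^+\in\operatorname{Pred}(\mathcal C)\) by the parent-closure observation. So \(\mathcal T\) is a rooted subtree of the binary tree; it is automatically finitely branching.

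Second, \(\mathcal T\) is infinite. For each \(n\), membership \(x\in X_n(\mathcal C)\) supplies a word of length \(n\) in \(\mathcal T\).

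By König's lemma, \(\mathcal T\) has an infinite branch \(x_1x_2x_3\cdots\). Then \(x\in\bigcap_n[x_1\cdots x_n]=\{.\!x_1x_2\cdots\}\), so this infinite word is a binary expansion of \(x\). Every prefix of it, including the empty one, lies in \(\mathcal T\), which gives the required expansion. When \(x\) is dyadic, the branch may correspond to either of its two expansions, and this is exactly why the statement says ``a binary expansion''.

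I do not expect a serious obstacle. The only point needing care is the parent-closure of \(\operatorname{Pred}(\mathcal C)\), which makes \(\mathcal T\) prefix-closed so that König's lemma applies. The same observation also justifies the inclusion \(X_{n+1}(\mathcal C)\subseteq X_n(\mathcal C)\) asserted in Definition~\ref{def:cyclic-component-limit-set}.
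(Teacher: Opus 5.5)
Your proof is correct and takes essentially the same approach as the paper: both rest on the closure of \(\operatorname{Pred}(\mathcal C)\) under passing to prefixes, and your K\"onig's lemma argument on the tree of admissible prefixes is a uniform version of the paper's case split (unique expansion for nondyadic points and for \(0,1\); pigeonhole over the two expansions of a dyadic point). You also explicitly justify the parent-closure of \(\operatorname{Pred}(\mathcal C)\) via the arrow \(u^+\xrightarrow{i}(ui)^+\), which the paper only asserts.
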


\begin{proof}
One implication follows immediately from the definition of
\(X(\mathcal C)\).  Conversely, suppose that \(x\in X(\mathcal C)\).  If \(x\) is
nondyadic or \(x\in\{0,1\}\), it has a unique binary expansion, and membership in
\(X_n(\mathcal C)\) shows that the state reached by its prefix of length \(n\)
belongs to \(\operatorname{Pred}(\mathcal C)\) for every \(n\).

If \(x\in(0,1)\) is dyadic, it has two binary expansions.  For every \(n\), at
least one of their prefixes of length \(n\) reaches a state in
\(\operatorname{Pred}(\mathcal C)\).  One of the two expansions has this property for arbitrarily
large values of \(n\).  Since every prefix of a word whose state belongs
to \(\operatorname{Pred}(\mathcal C)\) also reaches a state in \(\operatorname{Pred}(\mathcal C)\), all the prefixes of that
expansion have the required property.
\end{proof}

\begin{proposition}
\label{prop:orbit-closure-cyclic-component}
Let \(H\leq F\) be closed, suppose that \(\mathcal L=C(H)\) is finite
and full, and let \(\mathcal C\) be a cyclic component of
\(\Gamma_{\mathrm{in}}(\mathcal L)\).  Suppose that a binary expansion
of \(x\in(0,1)\) has a state path which eventually remains in \(\mathcal C\).
Then the closure in \([0,1]\) of the orbit of \(x\) under \(H\) is
\(X(\mathcal C)\); that is,
\[
  \overline{Hx}^{\,[0,1]}=X(\mathcal C).
\]
\end{proposition}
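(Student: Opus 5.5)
The plan is to prove the two inclusions \(\overline{Hx}^{\,[0,1]}\subseteq X(\mathcal C)\) and \(X(\mathcal C)\subseteq\overline{Hx}^{\,[0,1]}\) separately. The first will come from the invariance of states under branch pairs (Lemma~\ref{lem:acceptance-branches}). The second will come from the branch-pair criterion (Lemma~\ref{lem:branch-criterion}) together with strong connectivity of \(\mathcal C\). Throughout, fix a binary expansion \(x=.\!x_1x_2\cdots\) whose state path eventually remains in \(\mathcal C\). Every prefix state of this path lies in \(\operatorname{Pred}(\mathcal C)\), since it reaches a later state lying in \(\mathcal C\). In particular \(x\in X(\mathcal C)\) by Lemma~\ref{lem:cyclic-component-membership}.

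\emph{First inclusion.} Since \(X(\mathcal C)\) is compact, hence closed in \([0,1]\), it suffices to show \(Hx\subseteq X(\mathcal C)\). Let \(h\in H\). Choose an accepted tree pair for \(h\) (its reduced tree pair is accepted because \(H\) is closed). Take the branch pair \(u\to v\) for which \(u\) is a prefix of the chosen expansion of \(x\). If \(x\) is dyadic, this means taking the leaf whose standard interval contains \(x\) on the side dictated by the chosen expansion, refining the tree pair first if \(x\) is a subdivision point. Write \(x=.\!u\zeta\). Then \(h(x)=.\!v\zeta\) by \eqref{eq:prefix-substitution}, and \(u^+=v^+\) by Lemma~\ref{lem:acceptance-branches}.

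Consequently \((v\zeta_1\cdots\zeta_k)^+=(u\zeta_1\cdots\zeta_k)^+\) for all \(k\), so these states eventually lie in \(\mathcal C\) and all of them belong to \(\operatorname{Pred}(\mathcal C)\). The states reached by prefixes of \(v\) lead to \(v^+=u^+\in\operatorname{Pred}(\mathcal C)\), so they also lie in \(\operatorname{Pred}(\mathcal C)\). Lemma~\ref{lem:cyclic-component-membership}, applied to the expansion \(v\zeta\), now gives \(h(x)\in X(\mathcal C)\).

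\emph{Second inclusion.} Let \(y\in X(\mathcal C)\). Using Lemma~\ref{lem:cyclic-component-membership}, choose an expansion \(y=.\!y_1y_2\cdots\) all of whose prefix states lie in \(\operatorname{Pred}(\mathcal C)\). Fix \(n\) and put \(w=y_1\cdots y_n\). Since \(w^+\leadsto\mathcal C\), there is a word \(z\) with \((wz)^+\in\mathcal C\). Next choose \(m\) so large that \(x_1\cdots x_m\) contains both digits and \(e=(x_1\cdots x_m)^+\in\mathcal C\). Because \(\mathcal C\) is strongly connected, there is a word \(z'\) with \((wzz')^+=e\).

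Lemma~\ref{lem:branch-criterion} then yields \(h\in H\) with the branch pair \(x_1\cdots x_m\to wzz'\). Hence \(h(x)\in[wzz']\subseteq[w]\). Since \(y\in[w]\) and \([w]\) has length \(2^{-n}\), we get \(|h(x)-y|\leq2^{-n}\). Letting \(n\to\infty\) shows \(y\in\overline{Hx}\).

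The argument is short. The only point needing care is the bookkeeping for dyadic points in the first inclusion. There the branch \(u\to v\) must be chosen compatibly with the particular expansion of \(x\) whose state path stays in \(\mathcal C\), and it is the expansion \(v\zeta\) of \(h(x)\), not an arbitrary one, that witnesses membership in \(X(\mathcal C)\). I will handle this by first refining the tree pair so that \(x\) is an endpoint of a leaf interval on the correct side.
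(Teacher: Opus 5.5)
Your proposal is correct and follows the paper's proof essentially step for step. The forward inclusion uses the branch pair $u\to v$ whose domain word is a prefix of the chosen expansion, together with $u^+=v^+$; the reverse inclusion reaches $\mathcal C$ from $[w]$, uses strong connectivity, and applies Lemma~\ref{lem:branch-criterion}. The only difference is cosmetic: refining the tree pair for dyadic $x$ is unnecessary, because the domain leaves form a complete prefix code, so exactly one of them is already a prefix of the chosen infinite expansion.
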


\begin{proof}
Let \(h\in H\).  The domain branches of a tree-pair diagram for \(h\)
form a finite complete prefix code.  Hence there is a unique branch pair
\[
  u\longrightarrow v
\]
for which \(u\) is a prefix of the chosen binary expansion of \(x\).
Writing
\[
  x=.\!u\xi,
\]
we have
\[
  h(x)=.\!v\xi.
\]
By Lemma~\ref{lem:branch-criterion},
\[
  u^+=v^+.
\]
The two state paths therefore coincide after the prefixes \(u\) and
\(v\).  In particular, the state path of the displayed expansion of
\(h(x)\) eventually remains in \(\mathcal C\).  Every state on that path can
consequently reach \(\mathcal C\), and hence
\[
  h(x)\in X(\mathcal C).
\]
Since \(X(\mathcal C)\) is closed,
\[
  \overline{Hx}^{\,[0,1]}\subseteq X(\mathcal C).
\]

Conversely, let \(z\in X(\mathcal C)\), and choose a binary expansion
\[
  z=.\!z_1z_2z_3\cdots
\]
such that every prefix state belongs to \(\operatorname{Pred}(\mathcal C)\).  For \(n\geq1\), put
\[
  u=z_1\cdots z_n.
\]
Since \(u^+\in \operatorname{Pred}(\mathcal C)\), there is a finite word \(r\) such that
\[
  (ur)^+=c
\]
for some state \(c\in\mathcal C\).

Choose a sufficiently long prefix \(v\) of the selected expansion of
\(x\) such that
\[
  v^+=d\in\mathcal C.
\]
Since \(\mathcal C\) is strongly connected, there is a finite word \(s\)
satisfying
\[
  \delta(c,s)=d.
\]
It follows that
\[
  (urs)^+=v^+.
\]
By Lemma~\ref{lem:branch-criterion}, some \(h_n\in H\) has the branch
pair
\[
  v\longrightarrow urs.
\]
Therefore
\[
  h_n(x)\in[urs]\subseteq[u].
\]
Both \(z\) and \(h_n(x)\) belong to the standard dyadic interval
\([u]\), whose length is \(2^{-n}\).  Hence
\[
  |h_n(x)-z|\leq2^{-n}.
\]
Thus \(h_n(x)\to z\), and so
\[
  X(\mathcal C)\subseteq\overline{Hx}^{\,[0,1]}.
\]

The two inclusions prove the proposition.
\end{proof}

\subsection{Top components and minimal sets}
\label{subsec:top-components-minimal-sets}

Throughout this subsection, let \(H\leq F\) be closed, and suppose that
its core \(\mathcal L=C(H)\) is finite and full.

A cyclic component \(\mathcal C\) of \(\Gamma_{\mathrm{in}}(\mathcal L)\) is
called \emph{top} if there is no distinct cyclic component \(\mathcal C'\) such
that \(\mathcal C'\leadsto\mathcal C\).

For every top cyclic component \(\mathcal C\), let
\[
  M_{\mathcal C}=X(\mathcal C)\cap(0,1).
\]

\begin{proposition}
\label{prop:top-component-minimal-set}
Let \(H\leq F\) be closed, suppose that \(\mathcal L=C(H)\) is finite
and full, and let \(\mathcal C\) be a top cyclic component of
\(\Gamma_{\mathrm{in}}(\mathcal L)\).  Then \(M_{\mathcal C}\) is a nonempty
minimal closed \(H\)-invariant subset of \((0,1)\).  Moreover,
\[
  X(\mathcal C)=\overline{M_{\mathcal C}}^{\,[0,1]}.
\]
\end{proposition}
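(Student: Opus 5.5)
The plan is to combine Proposition~\ref{prop:orbit-closure-cyclic-component} with the observation that, for a top component, every point of \(M_{\mathcal C}\) has a state path that eventually remains in \(\mathcal C\) itself, and not merely in some cyclic component below \(\mathcal C\). Once this is known, every orbit in \(M_{\mathcal C}\) has closure \(X(\mathcal C)\), and minimality follows.

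\emph{Nonemptiness.} Choose an inner edge \(c\in\mathcal C\) and a word \(u\) with \(u^+=c\). Since \(c\) is inner, \(u\) contains both digits. Because \(\mathcal C\) contains a directed cycle through some state \(d\), there are words \(r,s\) with \(s\) nonempty, \(\delta(c,r)=d\) and \(\delta(d,s)=d\). The point \(x=.\!urs^\infty\) then lies in \([u]\), and its state path eventually remains in \(\mathcal C\). To make sure \(x\in(0,1)\), we may choose the words so that \(urs^\infty\) is not \(0^\infty\) or \(1^\infty\) from the start; this holds automatically, since \(u\) contains both digits. Proposition~\ref{prop:orbit-closure-cyclic-component} gives \(\overline{Hx}^{[0,1]}=X(\mathcal C)\), so \(x\in M_{\mathcal C}\neq\varnothing\).

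\emph{Invariance and closedness.} The set \(M_{\mathcal C}\) is closed in \((0,1)\), being the trace of a compact set. For invariance, take \(y\in M_{\mathcal C}\) and \(h\in H\). By Lemma~\ref{lem:cyclic-component-membership}, \(y\) has an expansion all of whose prefix states lie in \(\operatorname{Pred}(\mathcal C)\). The branch pair \(u\to v\) of \(h\) at this expansion satisfies \(u^+=v^+\) by Lemma~\ref{lem:branch-criterion}. Consequently the prefix states of \(.\!v\xi\) beyond \(v\) coincide with those of \(y\), and hence lie in \(\operatorname{Pred}(\mathcal C)\). The prefixes of \(v\) itself reach states that lead to \(v^+\in\operatorname{Pred}(\mathcal C)\), so they lie in \(\operatorname{Pred}(\mathcal C)\) as well. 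Therefore \(h(y)\in X(\mathcal C)\cap(0,1)\).

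\emph{Key step (minimality).} Let \(y\in M_{\mathcal C}\) with a good expansion as above. Its state path eventually stays in some cyclic component \(\mathcal C'\). Every state on the path lies in \(\operatorname{Pred}(\mathcal C)\), so \(\mathcal C'\leadsto\mathcal C\), and topness forces \(\mathcal C'=\mathcal C\). Proposition~\ref{prop:orbit-closure-cyclic-component} then yields \(\overline{Hy}^{[0,1]}=X(\mathcal C)\). Hence every orbit in \(M_{\mathcal C}\) is dense in \(M_{\mathcal C}\), and any nonempty closed invariant subset of \(M_{\mathcal C}\) equals \(M_{\mathcal C}\).

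\emph{Closure in \([0,1]\).} Taking the point \(x\) constructed above, we get \(X(\mathcal C)=\overline{Hx}\subseteq\overline{M_{\mathcal C}}\subseteq X(\mathcal C)\), which gives \(X(\mathcal C)=\overline{M_{\mathcal C}}^{\,[0,1]}\).

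I expect the main subtlety to lie in the key step. For dyadic \(y\), one must use the particular expansion supplied by Lemma~\ref{lem:cyclic-component-membership}, and one must check that its eventual component, which a priori could sit "after" \(\mathcal C\), is genuinely reached from \(\mathcal C\) in the wrong direction. The argument resolves this through the predecessor relation: membership in \(\operatorname{Pred}(\mathcal C)\) gives \(\mathcal C'\leadsto\mathcal C\), and topness rules out any \(\mathcal C'\neq\mathcal C\).
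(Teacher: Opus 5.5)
Your proof is correct and follows essentially the same route as the paper: nonemptiness via an eventually periodic point through \(\mathcal C\), topness forcing the tail component of any expansion with all prefix states in \(\operatorname{Pred}(\mathcal C)\) to be \(\mathcal C\) itself, and then Proposition~\ref{prop:orbit-closure-cyclic-component} giving \(\overline{Hy}^{\,[0,1]}=X(\mathcal C)\) for every \(y\in M_{\mathcal C}\). The differences are only cosmetic. You verify \(H\)-invariance separately, whereas the paper gets it for free from \(\overline{Hy}^{\,(0,1)}=M_{\mathcal C}\). You also obtain \(x\in M_{\mathcal C}\) via the orbit-closure proposition, whereas the paper reads it off directly from the prefix states of \(.\!uw^\infty\).
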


\begin{proof}
Choose a state \(c\in\mathcal C\), a finite binary word \(u\) such that
\(u^+=c\), and a nonempty word \(w\) labeling a directed loop at
\(c\).  Since \(c\) is an inner edge, \(u\) contains both digits by
Lemma~\ref{lem:boundary-incidence}.  Hence the point with binary
expansion
\[
  .\!uw^\infty
\]
belongs to \((0,1)\).  Every state encountered while reading this
expansion belongs to \(\operatorname{Pred}(\mathcal C)\), so this point lies in \(M_{\mathcal C}\).  Thus
\(M_{\mathcal C}\neq\varnothing\).

Let \(x\in M_{\mathcal C}\), and choose a binary expansion of \(x\) all of whose
prefix states belong to \(\operatorname{Pred}(\mathcal C)\).  Since the child automaton is finite,
its state path eventually remains in a cyclic component \(\mathcal C'\) of
\(\Gamma_{\mathrm{in}}(\mathcal L)\).  Every state of \(\mathcal C'\) reaches
\(\mathcal C\), so \(\mathcal C'\leadsto\mathcal C\).  Since \(\mathcal C\) is top, \(\mathcal C'=\mathcal C\).
Proposition~\ref{prop:orbit-closure-cyclic-component} therefore gives
\[
  \overline{Hx}^{\,[0,1]}=X(\mathcal C).
\]
Consequently,
\[
  \overline{Hx}^{\,(0,1)}
  =X(\mathcal C)\cap(0,1)
  =M_{\mathcal C}.
\]
Thus every \(H\)-orbit in \(M_{\mathcal C}\) is dense in \(M_{\mathcal C}\), and \(M_{\mathcal C}\)
is minimal.  Since \(Hx\) is dense in \(M_{\mathcal C}\), its closure in
\([0,1]\) is \(\overline{M_{\mathcal C}}^{\,[0,1]}\).  Comparing this with the
preceding displayed equality gives
\[
  X(\mathcal C)=\overline{M_{\mathcal C}}^{\,[0,1]}.
\]
\end{proof}

\begin{proposition}
\label{prop:minimal-set-top-component}
Let \(H\leq F\) be closed, and suppose that \(\mathcal L=C(H)\) is
finite and full.  Every minimal closed \(H\)-invariant subset \(M\) of
\((0,1)\) is equal to \(M_{\mathcal C}\) for some top cyclic component \(\mathcal C\) of
\(\Gamma_{\mathrm{in}}(\mathcal L)\).  If \(M\) contains a nondyadic
point, then \(\mathcal C\) is unique.
\end{proposition}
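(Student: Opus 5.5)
Take a minimal closed \(H\)-invariant set \(M\subseteq(0,1)\) and a point \(x\in M\), with a chosen binary expansion. As noted before Definition~\ref{def:cyclic-component-limit-set}, the state path of this expansion eventually stays in some cyclic component \(\mathcal C'\) of \(\Gamma_{\mathrm{in}}(\mathcal L)\). By Proposition~\ref{prop:orbit-closure-cyclic-component}, minimality gives
\[
M=\overline{Hx}^{\,(0,1)}=X(\mathcal C')\cap(0,1).
\]
Since \(\mathcal C'\) need not be top, choose a top cyclic component \(\mathcal C\) with \(\mathcal C\leadsto\mathcal C'\). Such a \(\mathcal C\) exists: the relation \(\leadsto\) on the finitely many cyclic components is a partial order, because two components reaching each other are the same strongly connected component.

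\textbf{Step 1: \(M_{\mathcal C}\subseteq M\).} Every edge of \(\operatorname{Pred}(\mathcal C)\) reaches \(\mathcal C\) and hence reaches \(\mathcal C'\). Therefore \(\operatorname{Pred}(\mathcal C)\subseteq\operatorname{Pred}(\mathcal C')\), so \(X(\mathcal C)\subseteq X(\mathcal C')\). Intersecting with \((0,1)\) gives \(M_{\mathcal C}\subseteq M\).

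\textbf{Step 2: equality.} By Proposition~\ref{prop:top-component-minimal-set}, \(M_{\mathcal C}\) is a nonempty closed \(H\)-invariant subset of \((0,1)\). Minimality of \(M\) then forces \(M=M_{\mathcal C}\). This proves existence.

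\textbf{Step 3: uniqueness.} Suppose \(M\) contains a nondyadic point \(y\), and \(M=M_{\mathcal C_1}=M_{\mathcal C_2}\) for top cyclic components \(\mathcal C_1,\mathcal C_2\). The point \(y\) has a unique binary expansion. Since \(y\in X(\mathcal C_i)\), Lemma~\ref{lem:cyclic-component-membership} shows that every prefix state of this expansion lies in \(\operatorname{Pred}(\mathcal C_i)\), for both \(i\). The state path eventually remains in a single cyclic component \(\mathcal C''\), so \(\mathcal C''\leadsto\mathcal C_1\) and \(\mathcal C''\leadsto\mathcal C_2\). Topness then gives \(\mathcal C''=\mathcal C_1\) and \(\mathcal C''=\mathcal C_2\), hence \(\mathcal C_1=\mathcal C_2\).

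The main subtlety is the dyadic case in Step 3. A dyadic point has two expansions, which can end in different components (for instance, the cyclic components containing the loops of the \(0^\infty\) and \(1^\infty\) tails). So the same set could a priori be \(M_{\mathcal C}\) for two different top components. This is why uniqueness is claimed only when \(M\) contains a nondyadic point; the argument above uses the unique expansion at exactly that point. The rest of the proof is bookkeeping with earlier results.
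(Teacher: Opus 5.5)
Your proof is correct and follows the paper's argument essentially step by step. You take the cyclic component \(\mathcal C'\) where the state path of a point of \(M\) settles, use Proposition~\ref{prop:orbit-closure-cyclic-component} and minimality to identify \(M\) with \(X(\mathcal C')\cap(0,1)\), pick a top component \(\mathcal C\leadsto\mathcal C'\) to get \(M_{\mathcal C}\subseteq M\), and conclude equality by minimality; uniqueness via the unique expansion of a nondyadic point is also exactly the paper's argument, and your justification that \(\leadsto\) is a partial order on cyclic components (so such a top \(\mathcal C\) exists) is a small detail the paper leaves implicit.
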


\begin{proof}
Choose \(x\in M\) and a binary expansion of \(x\).  The corresponding
state path eventually remains in a cyclic component \(\mathcal C_0\) of
\(\Gamma_{\mathrm{in}}(\mathcal L)\).
Proposition~\ref{prop:orbit-closure-cyclic-component} gives
\[
  X(\mathcal C_0)=\overline{Hx}^{\,[0,1]}
        =\overline{M}^{\,[0,1]}.
\]
Among the cyclic components which reach \(\mathcal C_0\), choose one, say
\(\mathcal C\), which has no distinct cyclic predecessor.  Then \(\mathcal C\) is top
and \(\mathcal C\leadsto\mathcal C_0\).  Hence
\[
  \operatorname{Pred}(\mathcal C)\subseteq \operatorname{Pred}(\mathcal C_0),
  \qquad\text{and therefore}\qquad
  X(\mathcal C)\subseteq X(\mathcal C_0).
\]
It follows that \(M_{\mathcal C}\subseteq M\).  By
Proposition~\ref{prop:top-component-minimal-set}, \(M_{\mathcal C}\) is a nonempty
minimal closed \(H\)-invariant set.  The minimality of \(M\) therefore
gives \(M_{\mathcal C}=M\).

Now suppose that \(M=M_{\mathcal C}=M_{\mathcal C'}\), where \(\mathcal C\) and \(\mathcal C'\) are top cyclic
components, and let \(x\in M\) be nondyadic.  Since
\[
  x\in M_{\mathcal C}=X(\mathcal C)\cap(0,1)
  \quad\text{and}\quad
  x\in M_{\mathcal C'}=X(\mathcal C')\cap(0,1),
\]
the unique binary expansion of \(x\) has all its prefix states in both
\(\operatorname{Pred}(\mathcal C)\) and \(\operatorname{Pred}(\mathcal C')\).  Let \(\mathcal C''\) be the cyclic component in which its
state path eventually remains.  Every state of \(\mathcal C''\) belongs to
\(\operatorname{Pred}(\mathcal C)\), and hence reaches \(\mathcal C\); therefore \(\mathcal C''\leadsto\mathcal C\).
Similarly, \(\mathcal C''\leadsto\mathcal C'\).  Since \(\mathcal C\) and \(\mathcal C'\) are top, it follows
that \(\mathcal C''=\mathcal C=\mathcal C'\).
\end{proof}

We next give a finite-state criterion for deciding which top components
correspond to fixed points.  Because \(\mathcal C\) is cyclic, for every
\(e\in \operatorname{Pred}(\mathcal C)\) there is at least one digit \(i\in\{0,1\}\) such that
\(e_i\in \operatorname{Pred}(\mathcal C)\).  We say that \(\operatorname{Pred}(\mathcal C)\) is \emph{forced} if
\[
  \left|
    \left\{i\in\{0,1\}:e_i\in \operatorname{Pred}(\mathcal C)\right\}
  \right|=1
\]
for every \(e\in \operatorname{Pred}(\mathcal C)\).

\begin{lemma}
\label{lem:forced-predecessor-singleton}
Let \(\mathcal C\) be a cyclic component of
\(\Gamma_{\mathrm{in}}(\mathcal L)\).  Then \(X(\mathcal C)\) is a singleton if
and only if \(\operatorname{Pred}(\mathcal C)\) is forced.  If these conditions hold, then \(\mathcal C\)
is top and the unique point of \(X(\mathcal C)\) is rational.
\end{lemma}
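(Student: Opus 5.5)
Identify \(X(\mathcal C)\) with the set of infinite binary words all of whose prefix states lie in \(\operatorname{Pred}(\mathcal C)\), using Lemma~\ref{lem:cyclic-component-membership}. Since \(\operatorname{Pred}(\mathcal C)\) is closed under taking parents along reading paths and \(p\in\operatorname{Pred}(\mathcal C)\) (every state is reached from \(p\)), these words are exactly the infinite paths from the root of the automaton that stay in the subgraph induced on \(\operatorname{Pred}(\mathcal C)\). Each state of \(\operatorname{Pred}(\mathcal C)\) has at least one child in \(\operatorname{Pred}(\mathcal C)\).

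\emph{Forced implies singleton.} If \(\operatorname{Pred}(\mathcal C)\) is forced, then from \(p\) there is exactly one infinite path staying in \(\operatorname{Pred}(\mathcal C)\). Its label \(\eta\) is therefore the unique such word, so \(X(\mathcal C)=\{.\eta\}\). The word \(\eta\) is produced by a deterministic walk in a finite graph, so it is eventually periodic and \(.\eta\) is rational.

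\emph{Singleton implies forced.} Suppose some \(e\in\operatorname{Pred}(\mathcal C)\), reached by a word \(u\), has both children in \(\operatorname{Pred}(\mathcal C)\). Extend \(u0\) and \(u1\) to infinite admissible words \(u0\xi\) and \(u1\zeta\). These give points of \([u0]\) and \([u1]\), which can coincide only at the dyadic point \(.u1=.u01^\infty\).
\begin{itemize}
\item If the two points differ, \(X(\mathcal C)\) is not a singleton.
\item Otherwise \(\xi=1^\infty\) and \(\zeta=0^\infty\). Since \(e\) reaches \(\mathcal C\), some path from \(e\) reaches \(\mathcal C\), and from there it can follow a cycle in \(\mathcal C\). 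That cycle is inside \(\Gamma_{\mathrm{in}}(\mathcal L)\), and an inner loop word must contain both digits (by Lemma~\ref{lem:boundary-incidence}, since inner edges stay inner). So there is an admissible word extending \(u\) that, past \(u\), is not of the form \(01^\infty\) or \(10^\infty\).
\end{itemize}
This word gives a third point, which differs from the dyadic point \(.u1\). This is the main obstacle: the word must be chosen so its value genuinely differs. I would handle it by taking the extension through \(\mathcal C\) and then repeating a cycle word \(w\) forever, where \(w\) contains both digits. The resulting tail \(w^\infty\) is neither \(1^\infty\) nor \(0^\infty\), so the point is not dyadic and cannot equal \(.u1\). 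Either way \(X(\mathcal C)\) has two points.

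\emph{Forced implies top.} Suppose another cyclic component \(\mathcal C'\neq\mathcal C\) satisfies \(\mathcal C'\leadsto\mathcal C\). Then every state of \(\mathcal C'\) lies in \(\operatorname{Pred}(\mathcal C)\). A state of \(\mathcal C'\) on the connecting path has a child continuing toward \(\mathcal C\) and, as \(\mathcal C'\) is cyclic, a child continuing around its cycle in \(\mathcal C'\). If these are different digits, forcedness fails. If they are the same digit, move along \(\mathcal C'\) to the last state of \(\mathcal C'\) on a path to \(\mathcal C\): its exit child (outside \(\mathcal C'\) but in \(\operatorname{Pred}(\mathcal C)\)) and its cycle child (in \(\mathcal C'\)) are distinct edges. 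Distinct children use distinct digits, so both children lie in \(\operatorname{Pred}(\mathcal C)\), contradicting forcedness. Hence \(\mathcal C\) is top.
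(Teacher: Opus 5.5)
Your proofs that a forced \(\operatorname{Pred}(\mathcal C)\) gives a single rational point and forces \(\mathcal C\) to be top are correct, and they are essentially the paper's. The paper goes straight to the state at which a path from \(\mathcal C'\) to \(\mathcal C\) first leaves \(\mathcal C'\), as your second case does. The gap is in the converse, at the step you flagged.

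Your claim that a loop word in \(\Gamma_{\mathrm{in}}(\mathcal L)\) must contain both digits is false. Lemma~\ref{lem:boundary-incidence} concerns words read from the root \(p\). Once a prefix contains both digits, every continuation stays inner whatever its letters. Two examples:
\begin{itemize}
\item In the core of \(F\) itself, the unique inner edge \(x\) satisfies \(x_0=x_1=x\).
\item In the core of \(\operatorname{Stab}_F(1/2)\), the inner edge \((01)^+\) equals its own right child \((011)^+\). Its left child \((010)^+\) can never return to it, since an element fixing \(1/2\) cannot carry a point \(\leq 3/8\) to \(1/2\). So this cyclic component admits only the loop words \(1^k\).
\end{itemize}
When every cycle of \(\mathcal C\) is labelled by a single digit, there is no loop word \(w\) containing both digits. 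Your third word \(u\,r\,w^\infty\) then expands a dyadic point, which may coincide with \(.\!u1\). For instance, if your path from \(e\) to \(\mathcal C\) is \(r=01^j\) and the loops of \(\mathcal C\) are powers of \(1\), the third word is just \(u01^\infty\) again. So in the coincidence case you have not produced a second point of \(X(\mathcal C)\).

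The missing idea is to make the two competing words share a tail, as the paper does. Choose words \(r_0,r_1\) carrying \(e_0,e_1\) to the same state \(c\in\mathcal C\), fix a single nonempty loop word \(w\) at \(c\), and compare \(u0r_0w^\infty\) with \(u1r_1w^\infty\). Both words stay in \(\operatorname{Pred}(\mathcal C)\). Two expansions that first differ by \(0\) versus \(1\) represent the same point only if their remaining tails are \(1^\infty\) and \(0^\infty\), respectively. That is impossible for two words with the same eventual tail \(w^\infty\), with no assumption on the digits of \(w\). Equivalently, within your coincidence case, route the third word through \(e_1\) if the loops of \(\mathcal C\) are powers of \(1\), and through \(e_0\) if they are powers of \(0\).
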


\begin{proof}
Suppose first that \(\operatorname{Pred}(\mathcal C)\) is forced.  Beginning at the root edge,
there is a unique choice of a digit which keeps the state in \(\operatorname{Pred}(\mathcal C)\)
at every step.  Hence there is a unique infinite binary word all of
whose prefix states belong to \(\operatorname{Pred}(\mathcal C)\), and therefore \(X(\mathcal C)\) consists
of a single point.  Since the state sequence lies in a finite graph, it
is eventually periodic.  The corresponding binary expansion is
therefore eventually periodic, so this point is rational.

We claim that \(\mathcal C\) is top.  Otherwise, let \(\mathcal C'\neq\mathcal C\) be a cyclic
component such that \(\mathcal C'\leadsto\mathcal C\).  Choose a directed path from \(\mathcal C'\)
to \(\mathcal C\), and let
\[
  d\longrightarrow d_i
\]
be the first edge of this path which leaves \(\mathcal C'\).  Since \(d\) belongs
to the cyclic component \(\mathcal C'\), there is a digit \(j\neq i\) such that
\(d_j\in\mathcal C'\).  Now \(d_i\in \operatorname{Pred}(\mathcal C)\) because it lies on a path to \(\mathcal C\),
while \(d_j\in \operatorname{Pred}(\mathcal C)\) because \(d_j\in\mathcal C'\) and \(\mathcal C'\leadsto\mathcal C\).
Thus both digits \(i\) and \(j\) keep \(d\) inside \(\operatorname{Pred}(\mathcal C)\),
contradicting the assumption that \(\operatorname{Pred}(\mathcal C)\) is forced.  Hence \(\mathcal C\) is
top.

Conversely, suppose that \(\operatorname{Pred}(\mathcal C)\) is not forced.  Then there is some
\(e\in \operatorname{Pred}(\mathcal C)\) such that
\[
  e_0,e_1\in \operatorname{Pred}(\mathcal C).
\]
Choose a word \(u\) with \(u^+=e\).  For \(i=0,1\), choose a word
\(r_i\) taking \(e_i\) to a fixed state \(c\in\mathcal C\), extending the paths
inside \(\mathcal C\) if necessary.  Finally, choose a nonempty word \(w\)
labeling a directed loop at \(c\).  The two infinite words
\[
  u0r_0w^\infty
  \qquad\text{and}\qquad
  u1r_1w^\infty
\]
remain in \(\operatorname{Pred}(\mathcal C)\), and hence represent points of \(X(\mathcal C)\).  These
points are distinct.  Indeed, two binary expansions which first differ
by the digits \(0\) and \(1\) can represent the same point only when
their remaining tails are \(1^\infty\) and \(0^\infty\), respectively,
whereas the two words above eventually have the same periodic tail
\(w^\infty\).  Thus \(X(\mathcal C)\) is not a singleton.
\end{proof}

\begin{proposition}
\label{prop:fixed-points-cyclic-components}
Let \(H\leq F\) be closed, and suppose that \(\mathcal L=C(H)\) is
finite and full.  The fixed points of \(H\) in \((0,1)\) are precisely
the unique points of the sets \(X(\mathcal C)\), where \(\mathcal C\) ranges over the
cyclic components for which \(\operatorname{Pred}(\mathcal C)\) is forced.
\end{proposition}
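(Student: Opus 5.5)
The plan is to prove the two inclusions separately, using the orbit-closure description of Proposition~\ref{prop:orbit-closure-cyclic-component} together with the singleton criterion of Lemma~\ref{lem:forced-predecessor-singleton}.

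\emph{Fixed points give forced components.} Let \(c\in(0,1)\) be fixed by \(H\). Choose a binary expansion of \(c\); if \(c\) is dyadic, choose either of its two expansions. By the discussion preceding Definition~\ref{def:cyclic-component-limit-set}, the state path of this expansion eventually remains in some cyclic component \(\mathcal C\) of \(\Gamma_{\mathrm{in}}(\mathcal L)\). Proposition~\ref{prop:orbit-closure-cyclic-component} then gives
\[
  X(\mathcal C)=\overline{Hc}^{\,[0,1]}=\{c\},
\]
since the orbit of \(c\) is \(\{c\}\). So \(X(\mathcal C)\) is a singleton, and Lemma~\ref{lem:forced-predecessor-singleton} shows that \(\operatorname{Pred}(\mathcal C)\) is forced. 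Hence \(c\) is the unique point of \(X(\mathcal C)\) for a forced component \(\mathcal C\).

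\emph{Forced components give fixed points.} Conversely, let \(\mathcal C\) be a cyclic component with \(\operatorname{Pred}(\mathcal C)\) forced. By Lemma~\ref{lem:forced-predecessor-singleton}, \(X(\mathcal C)=\{c\}\) for a rational point \(c\), and \(\mathcal C\) is top. I first check that \(c\in(0,1)\). Take the word \(.uw^\infty\) from the proof of Proposition~\ref{prop:top-component-minimal-set}: here \(u^+\in\mathcal C\) is an inner edge and \(w\) labels a loop at \(u^+\). By Lemma~\ref{lem:boundary-incidence}, \(u\) contains both digits, so this word represents a point of \((0,1)\). All its prefix states lie in \(\operatorname{Pred}(\mathcal C)\), so the point lies in \(X(\mathcal C)\), and it must therefore equal \(c\).

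Moreover, the state path of \(uw^\infty\) eventually remains in \(\mathcal C\). Proposition~\ref{prop:orbit-closure-cyclic-component} therefore gives
\[
  \overline{Hc}^{\,[0,1]}=X(\mathcal C)=\{c\},
\]
so \(h(c)=c\) for every \(h\in H\), and \(c\) is a fixed point of \(H\).

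\emph{Main obstacle.} The only delicate point is the first direction when \(c\) is dyadic. Proposition~\ref{prop:orbit-closure-cyclic-component} needs an expansion whose state path eventually remains in a cyclic component, and that holds for either expansion, so the argument goes through as written. The one thing to watch is that the argument does not assume both expansions of \(c\) lead to the same component; it never needs this, because it only produces some forced component whose singleton is \(\{c\}\).
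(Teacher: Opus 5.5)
Your proof is correct and is essentially the paper's argument: both directions rest on the orbit-closure identity \(\overline{Hx}^{\,[0,1]}=X(\mathcal C)\) together with the singleton criterion of Lemma~\ref{lem:forced-predecessor-singleton}. The only difference is that you apply Proposition~\ref{prop:orbit-closure-cyclic-component} directly, to an expansion of the fixed point and to the word \(uw^\infty\), whereas the paper obtains the same facts through Propositions~\ref{prop:top-component-minimal-set} and~\ref{prop:minimal-set-top-component}; this makes your version slightly more self-contained and removes any dependence on topness.
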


\begin{proof}
Let \(\mathcal C\) be a cyclic component for which \(\operatorname{Pred}(\mathcal C)\) is forced.  By
Lemma~\ref{lem:forced-predecessor-singleton}, \(\mathcal C\) is top and
\[
  X(\mathcal C)=\{x_{\mathcal C}\}
\]
for some rational point \(x_{\mathcal C}\).
Proposition~\ref{prop:top-component-minimal-set} gives
\[
  \overline{Hx_{\mathcal C}}^{\,[0,1]}=X(\mathcal C)=\{x_{\mathcal C}\},
\]
so \(x_{\mathcal C}\) is fixed by \(H\).

Conversely, let \(x\in(0,1)\) be fixed by \(H\).  Then \(\{x\}\) is a
minimal closed \(H\)-invariant set.  By
Proposition~\ref{prop:minimal-set-top-component}, there is a top cyclic
component \(\mathcal C\) such that \(M_{\mathcal C}=\{x\}\).
Proposition~\ref{prop:top-component-minimal-set} gives
\[
  X(\mathcal C)=\overline{M_{\mathcal C}}^{\,[0,1]}=\{x\}.
\]
Hence \(\operatorname{Pred}(\mathcal C)\) is forced by
Lemma~\ref{lem:forced-predecessor-singleton}.  The same lemma also shows
directly that \(x\) is rational.
\end{proof}

The fixed points of \(H\) can therefore be found effectively from the
child automaton.  One computes its cyclic components and tests which of
them have forced predecessor sets.  By
Lemma~\ref{lem:forced-predecessor-singleton}, every such component is
automatically top.  Following the unique permitted digit eventually
repeats a state and gives the corresponding fixed point as an
eventually periodic binary expansion.

The same finite graph also detects whether the action is minimal.

\begin{theorem}
\label{thm:minimality-child-automaton}
Let \(H\leq F\) be closed, and suppose that \(\mathcal L=C(H)\) is
finite and full.  The following conditions are equivalent:
\begin{enumerate}[label=\textup{(\arabic*)}]
\item The action of \(H\) on \((0,1)\) is minimal.
\item There is a top cyclic component \(\mathcal C\) such that
      \[
        X(\mathcal C)=[0,1].
      \]
\item There is a top cyclic component \(\mathcal C\) such that every edge of
      \(\mathcal L\) belongs to \(\operatorname{Pred}(\mathcal C)\).
\item The graph \(\Gamma_{\mathrm{in}}(\mathcal L)\) has exactly one
      cyclic component.
\end{enumerate}
\end{theorem}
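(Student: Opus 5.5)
I will prove the equivalences along the cycle \((1)\Rightarrow(2)\Rightarrow(3)\Rightarrow(4)\Rightarrow(1)\). The main tools are Propositions~\ref{prop:orbit-closure-cyclic-component}, \ref{prop:top-component-minimal-set} and~\ref{prop:minimal-set-top-component}, together with the elementary fact that every infinite state path eventually settles in a cyclic component.

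\((1)\Rightarrow(2)\): If the action is minimal, then \((0,1)\) itself is a minimal closed \(H\)-invariant subset of \((0,1)\). Proposition~\ref{prop:minimal-set-top-component} then gives a top cyclic component \(\mathcal C\) with \(M_{\mathcal C}=(0,1)\). Proposition~\ref{prop:top-component-minimal-set} yields \(X(\mathcal C)=\overline{M_{\mathcal C}}^{\,[0,1]}=[0,1]\).

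\((2)\Rightarrow(3)\): Suppose \(X(\mathcal C)=[0,1]\), and let \(e\) be an edge. Pick a word \(u\) with \(u^+=e\), and a nondyadic point \(x\) in the interior of \([u]\). Since \(x\in X(\mathcal C)\), Lemma~\ref{lem:cyclic-component-membership} applies, and because \(x\) has a unique expansion, every prefix state of that expansion lies in \(\operatorname{Pred}(\mathcal C)\). That expansion begins with \(u\), so \(e\in\operatorname{Pred}(\mathcal C)\).

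\((3)\Rightarrow(4)\): Let \(\mathcal C'\) be any cyclic component. Its states are edges, so they belong to \(\operatorname{Pred}(\mathcal C)\), which gives \(\mathcal C'\leadsto\mathcal C\). Conversely, pick \(c\in\mathcal C\). We have \(p\in\operatorname{Pred}(\mathcal C)\), but we need \(\mathcal C\leadsto\mathcal C'\). To get it, take a point \(y=.\!u w^\infty\) with \(u^+\in\mathcal C'\) and \(w\) a loop in \(\mathcal C'\), together with any \(x\) whose state path settles in \(\mathcal C\). Proposition~\ref{prop:orbit-closure-cyclic-component} gives \(\overline{Hx}=X(\mathcal C)=[0,1]\ni y\) by \((3)\) (every prefix of the expansion of \(x\) is accepted). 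Since \(\mathcal C\) is top, \(M_{\mathcal C}=(0,1)\) is minimal by Proposition~\ref{prop:top-component-minimal-set}. Hence \(\overline{Hy}^{\,(0,1)}=(0,1)\), so \(\overline{Hy}^{\,[0,1]}=[0,1]=X(\mathcal C')\) by Proposition~\ref{prop:orbit-closure-cyclic-component}. This holds for every cyclic component, so each \(\mathcal C'\) satisfies \(X(\mathcal C')=[0,1]\). By the argument of \((2)\Rightarrow(3)\) every edge then lies in \(\operatorname{Pred}(\mathcal C')\), so \(\mathcal C\leadsto\mathcal C'\). Mutual reachability of \(\mathcal C\) and \(\mathcal C'\) puts them in the same strongly connected component, so \(\mathcal C'=\mathcal C\).

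\((4)\Rightarrow(1)\): If \(\mathcal C\) is the only cyclic component, it is top. For any \(x\in(0,1)\), every state path settles in \(\mathcal C\), so Proposition~\ref{prop:orbit-closure-cyclic-component} gives \(\overline{Hx}^{\,[0,1]}=X(\mathcal C)\) for every \(x\). It remains to show \(X(\mathcal C)=[0,1]\). Since every infinite path from every state eventually enters \(\mathcal C\), every edge lies in \(\operatorname{Pred}(\mathcal C)\). Indeed, by fullness every edge has an infinite forward path, and that path must end in a cyclic component, which can only be \(\mathcal C\). Hence \(X_n(\mathcal C)=[0,1]\) for all \(n\), so \(X(\mathcal C)=[0,1]\). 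Every orbit is therefore dense, i.e.\ the action is minimal.

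The main obstacle is the step \((3)\Rightarrow(4)\), which requires ruling out extra cyclic components. If the argument above proves awkward, I will instead route through \((1)\): show \((3)\Rightarrow(1)\) by noting that \((3)\) gives \(X(\mathcal C)=[0,1]\), so \(M_{\mathcal C}=(0,1)\) is minimal by Proposition~\ref{prop:top-component-minimal-set}. Then prove \((1)\Rightarrow(4)\) using the argument above: minimality forces \(X(\mathcal C')=[0,1]\) for every cyclic \(\mathcal C'\), hence every edge reaches every cyclic component, and any two cyclic components coincide.
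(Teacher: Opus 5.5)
There is a false step in your $(4)\Rightarrow(1)$. To show that every edge lies in $\operatorname{Pred}(\mathcal C)$, you assert that every infinite forward path from every state eventually enters $\mathcal C$, because it must end in a cyclic component. But cyclic components are, by definition, strongly connected components of the \emph{inner} child graph $\Gamma_{\mathrm{in}}(\mathcal L)$. An infinite path in $\Gamma(\mathcal L)$ that starts at a non-inner edge need never become inner.

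For instance, reading $0^\infty$ from $\ell_n=(0^n)^+$ visits only the edges $\ell_m=(0^m)^+$. These are incident to $\iota_{\mathcal L}$ by Lemma~\ref{lem:boundary-incidence}. Since $\mathcal L$ is finite, this path cycles among non-inner edges forever and never meets $\mathcal C$.

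The conclusion is still true, but you must choose the path. Given $e=u^+$, append digits so that the word contains both $0$ and $1$. The state reached is then inner by Lemma~\ref{lem:boundary-incidence}, and every continuation stays in $\Gamma_{\mathrm{in}}(\mathcal L)$. An infinite continuation therefore settles in a cyclic component of $\Gamma_{\mathrm{in}}(\mathcal L)$, which can only be $\mathcal C$. This is exactly how the paper handles it. With this repair, your route $(4)\Rightarrow(1)$ via Proposition~\ref{prop:orbit-closure-cyclic-component} and $X(\mathcal C)=[0,1]$ works.

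Separately, the step you flag as the main obstacle, $(3)\Rightarrow(4)$, is immediate. Your first sentence already gives $\mathcal C'\leadsto\mathcal C$ for every cyclic component $\mathcal C'$. Since $\mathcal C$ is top by hypothesis $(3)$, the definition of top forces $\mathcal C'=\mathcal C$; this is the paper's one-line argument. Your orbit-closure detour establishing $\mathcal C\leadsto\mathcal C'$ is correct: paths from inner edges stay inner, so mutual reachability in $\Gamma(\mathcal L)$ does place $\mathcal C$ and $\mathcal C'$ in one component of $\Gamma_{\mathrm{in}}(\mathcal L)$. It is simply unnecessary, and so is the fallback route through $(1)$.
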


\begin{proof}
Suppose first that the action of \(H\) is minimal, and let \(\mathcal C\) be any
top cyclic component.  Proposition~\ref{prop:top-component-minimal-set}
shows that \(M_{\mathcal C}\) is a nonempty minimal closed \(H\)-invariant subset
of \((0,1)\).  Hence \(M_{\mathcal C}=(0,1)\), and therefore
\[
  X(\mathcal C)=\overline{M_{\mathcal C}}^{\,[0,1]}=[0,1].
\]
Thus \textup{(1)} implies \textup{(2)}.

Conversely, suppose that \textup{(2)} holds.  Then
\[
  M_{\mathcal C}=X(\mathcal C)\cap(0,1)=(0,1).
\]
By Proposition~\ref{prop:top-component-minimal-set}, \(M_{\mathcal C}\) is
minimal, so the action of \(H\) on \((0,1)\) is minimal.  Thus
\textup{(2)} implies \textup{(1)}.

Suppose that \(X(\mathcal C)=[0,1]\), and let \(e\) be an edge of
\(\mathcal L\).  Choose a finite binary word \(u\) such that
\(u^+=e\), and choose a nondyadic point in the interior of \([u]\).
Its unique binary expansion begins with \(u\).  Since the point belongs
to \(X(\mathcal C)\), every prefix state of this expansion belongs to \(\operatorname{Pred}(\mathcal C)\).
In particular, \(e\in \operatorname{Pred}(\mathcal C)\).  Thus \textup{(2)} implies
\textup{(3)}.

Conversely, if every edge of \(\mathcal L\) belongs to \(\operatorname{Pred}(\mathcal C)\), then
every binary expansion has all its prefix states in \(\operatorname{Pred}(\mathcal C)\).  Hence
every point of \([0,1]\) belongs to \(X(\mathcal C)\), and \textup{(3)} implies
\textup{(2)}.

Now suppose that \textup{(3)} holds, and let \(\mathcal C'\) be any cyclic
component of \(\Gamma_{\mathrm{in}}(\mathcal L)\).  Every state of
\(\mathcal C'\) belongs to \(\operatorname{Pred}(\mathcal C)\), so \(\mathcal C'\leadsto\mathcal C\).  Since \(\mathcal C\) is top,
\(\mathcal C'=\mathcal C\).  Thus \(\mathcal C\) is the unique cyclic component of
\(\Gamma_{\mathrm{in}}(\mathcal L)\), proving \textup{(4)}.

Finally, suppose that \(\Gamma_{\mathrm{in}}(\mathcal L)\) has a unique
cyclic component \(\mathcal C\).  Then \(\mathcal C\) is top.  Starting at any inner edge
and following child transitions indefinitely produces an infinite path
in the finite graph \(\Gamma_{\mathrm{in}}(\mathcal L)\).  Some state
repeats, so the initial edge reaches a cyclic component, which must be
\(\mathcal C\).

Every remaining edge has an inner descendant.  Indeed, if \(u^+=e\),
one may append digits so that the resulting word contains both \(0\)
and \(1\); its state is then inner by
Lemma~\ref{lem:boundary-incidence}.  Thus every edge of \(\mathcal L\)
reaches an inner edge and hence reaches \(\mathcal C\).  Therefore every edge
belongs to \(\operatorname{Pred}(\mathcal C)\), proving \textup{(3)}.
\end{proof}

Thus minimality can be decided directly from the finite child automaton
by computing the cyclic components of its inner transition graph.

We now return to the rank-two case.  Combining
Proposition~\ref{prop:top-component-minimal-set} with the rank-two
alternative established in Subsection~\ref{subsec:fixed-points-exceptional-minimal-sets}
gives the following description.

\begin{corollary}
\label{cor:unique-top-component-exceptional-set}
Suppose, in addition, that the image of \(H\) in the abelianization of
\(F\) has rank two, that \(H\) has no fixed point in \((0,1)\), and
that its action is not minimal.  Let \(M\) be its unique minimal
exceptional set.  Then \(\Gamma_{\mathrm{in}}(\mathcal L)\) has a
unique top cyclic component \(\mathcal C_*\), and
\[
  M=X(\mathcal C_*)\cap(0,1),
  \qquad
  X(\mathcal C_*)=\overline{M}^{\,[0,1]}.
\]
\end{corollary}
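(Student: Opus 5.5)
The plan is to combine Proposition~\ref{prop:nonminimal-rank-two-alternative}, which gives the unique exceptional minimal set \(M\), with Propositions~\ref{prop:minimal-set-top-component} and~\ref{prop:top-component-minimal-set}, which describe minimal sets through top cyclic components.

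First, existence and the two displayed equalities. Under the standing hypotheses (closed, finite full core, rank two, no fixed point, not minimal), Proposition~\ref{prop:nonminimal-rank-two-alternative} puts us in case (2): \(H\) has a unique exceptional minimal set \(M\). Since \(M\) is a minimal closed \(H\)-invariant subset of \((0,1)\), Proposition~\ref{prop:minimal-set-top-component} gives a top cyclic component \(\mathcal C_*\) with \(M=M_{\mathcal C_*}=X(\mathcal C_*)\cap(0,1)\). Proposition~\ref{prop:top-component-minimal-set} then gives \(X(\mathcal C_*)=\overline{M_{\mathcal C_*}}^{\,[0,1]}=\overline M^{\,[0,1]}\).

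Second, uniqueness of the top component, which I expect to be the main point. Let \(\mathcal C\) be any top cyclic component. By Proposition~\ref{prop:top-component-minimal-set}, \(M_{\mathcal C}\) is a nonempty minimal closed \(H\)-invariant subset of \((0,1)\). By Lemma~\ref{lem:minimal-set-alternative}, applied to the finitely generated group \(H\) acting without fixed points on \(J=(0,1)\), every minimal set is either all of \((0,1)\), a closed discrete orbit, or an exceptional minimal set; moreover, in the exceptional case it is the unique minimal set. The first option is excluded because the action is not minimal. The second is excluded by the argument in the proof of Proposition~\ref{prop:nonminimal-rank-two-alternative}, which shows that rank two rules out closed discrete orbits. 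Hence \(M_{\mathcal C}\) is exceptional, and uniqueness gives \(M_{\mathcal C}=M\).

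It remains to pass from \(M_{\mathcal C}=M_{\mathcal C_*}\) to \(\mathcal C=\mathcal C_*\), using the uniqueness clause of Proposition~\ref{prop:minimal-set-top-component}. That clause requires \(M\) to contain a nondyadic point. This holds because \(M\) is perfect and nonempty, hence uncountable, while the dyadic rationals are countable. Therefore the top cyclic component is unique, which completes the proof.
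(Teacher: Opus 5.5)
Your proof is correct and takes essentially the same route as the paper: every top cyclic component yields a minimal set, which must be the exceptional set \(M\) because the full interval and closed discrete orbits are excluded, and the uniqueness clause of Proposition~\ref{prop:minimal-set-top-component} applies because \(M\) contains nondyadic points. Your explicit existence step, and your justification of the nondyadic point by uncountability of the nonempty perfect set \(M\), spell out what the paper leaves implicit.
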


\begin{proof}
Every top cyclic component \(\mathcal C\) gives a minimal set
\(M_{\mathcal C}\) by
Proposition~\ref{prop:top-component-minimal-set}.  Since \(H\) has no
fixed point, \(M_{\mathcal C}\) is not a singleton, and since the
action is not minimal, \(M_{\mathcal C}\neq(0,1)\); a closed discrete orbit in \((0,1)\) is
excluded in the proof of
Proposition~\ref{prop:nonminimal-rank-two-alternative}.  Hence
\(M_{\mathcal C}\) is exceptional, and
Proposition~\ref{prop:nonminimal-rank-two-alternative} gives
\(M_{\mathcal C}=M\).  Since the exceptional set \(M\) contains nondyadic points,
Proposition~\ref{prop:minimal-set-top-component} shows that the
corresponding top cyclic component is unique.  Finally,
Proposition~\ref{prop:top-component-minimal-set} gives
\[
  X(\mathcal C_*)=\overline{M_{\mathcal C_*}}^{\,[0,1]}
        =\overline{M}^{\,[0,1]}.
\]
\end{proof}

The next subsection treats the case in which \(H\) has fixed points and
reduces the finiteness of the path sets to the corresponding questions
for its finitely many orbitals.  We shall then turn to the
fixed-point-free nonminimal case and the gaps of its exceptional
minimal set.

\subsection{Fixed points and reduction to the orbitals}
\label{subsec:fixed-points-orbital-reduction}

Throughout this subsection, let \(H\leq F\) be closed and suppose that
its core \(\mathcal L=C(H)\) is finite and full.  Assume that \(H\) has
a fixed point in \((0,1)\).  By
Proposition~\ref{prop:fixed-points-cyclic-components}, the fixed points
of \(H\) in \((0,1)\) form a finite set of rational points.  Write them
as
\[
  a_1<\cdots<a_m,
\]
and put \(a_0=0\) and \(a_{m+1}=1\).  The orbitals of \(H\) are
precisely the intervals
\[
  I_i=(a_i,a_{i+1}),
  \qquad 0\leq i\leq m.
\]
Indeed, every element of \(H\) fixes the endpoints of each \(I_i\),
while \(H\) has no fixed point in \(I_i\).  In particular, every
\(I_i\) is a proper rational \(H\)-isolated interval.

For \(0\leq i\leq m\), let
\[
  K_i=K_{I_i}(H)
  \qquad\text{and}\qquad
  \mathcal L_i=C(K_i).
\]
The action of \(K_i\) on \((0,1)\) has no fixed point.  Indeed, if
\(t\in(0,1)\) were fixed by every element of \(K_i\), then
\(\nu_{I_i}(t)\) would be fixed by every element of
\(\operatorname{Stab}_H(I_i)=H\), contrary to the fact that \(I_i\)
contains no fixed point of \(H\).  By
Proposition~\ref{prop:normalized-core}, every \(\mathcal L_i\) is
finite and full, and by
Proposition~\ref{prop:normalized-core-effective} it can be constructed
effectively from \(\mathcal L\).
Moreover, Corollary~\ref{cor:strict-inner-edge-descent} shows that
\(\mathcal L_i\) has fewer inner edges than \(\mathcal L\).

\begin{proposition}
\label{prop:fixed-points-orbital-reduction-inner}
Let \(H\leq F\) be closed, suppose that \(\mathcal L=C(H)\) is finite
and full, and suppose that \(H\) has a fixed point in \((0,1)\).  With
the notation above,
\[
  \Path_{\mathrm{in}}(\mathcal L)
\]
is finite if and only if
\[
  \Path_{\mathrm{in}}(\mathcal L_i)
\]
is finite for every \(0\leq i\leq m\).
\end{proposition}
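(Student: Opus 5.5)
The plan is to decompose the inner path classes of \(\mathcal L\) according to the position of the interval \([x,y]\) relative to the fixed points, using the interval-pair dictionary (Corollary~\ref{cor:interval-pair-dictionary}) together with the isolated-interval dictionary (Proposition~\ref{prop:isolated-interval-path-correspondence}). By Proposition~\ref{prop:interval-realization}, every class in \(\Path_{\mathrm{in}}(\mathcal L)\) has the form \(\omega_H[x,y]\) for dyadic \(0<x<y<1\). Either \([x,y]\) is contained in a single orbital \(I_i\), or it contains at least one of the fixed points \(a_1,\dots,a_m\).

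\emph{First case.} The class lies in \(\Path_{I_i}(H)\). By Proposition~\ref{prop:isolated-interval-path-correspondence}, this set is in bijection with the disjoint union of the sets \(\Path(C(K_i))(u',v')\) over inner vertices \(u',v'\) of \(\mathcal L_i\), that is, with \(\Path_{\mathrm{in}}(\mathcal L_i)\). This gives the forward implication directly: \(\Path_{\mathrm{in}}(\mathcal L_i)\) injects into \(\Path_{\mathrm{in}}(\mathcal L)\), so finiteness of the latter forces finiteness of each \(\Path_{\mathrm{in}}(\mathcal L_i)\).

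\emph{Second case.} For the converse, it remains to show that the classes of intervals containing fixed points are finite in number, given finiteness in each orbital. Let \(a_j,\dots,a_k\) be the fixed points in \((x,y)\). Since \(a_j\) is rational and dyadic fixed points are excluded, the points \(a_i\) are non-dyadic or dyadic, so I will handle both cases as follows. If some \(a_i\) is dyadic, cut there using Lemma~\ref{lem:cutting}. Otherwise choose dyadic cut points \(c_i\in I_i\) for the intermediate orbitals, giving
\[
\omega[x,y]=\omega[x,c_j]\,\omega[c_j,c_{j+1}]\cdots\omega[c_{k},y],
\]
with \(c_j\in I_j\) to the right of \(a_j\), and so on. The middle factors \(\omega[c_i,c_{i+1}]\) lie in \(\Path_{I_i}(H)\) and are therefore finite in number. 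However, \([x,c_j]\) still straddles \(a_j\), so this splitting alone is not enough.

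The key observation for the straddling pieces is that, since \(H\) fixes \(a_j\) and every orbit inside an orbital accumulates at its endpoints, Proposition~\ref{prop:tuple-transport} should allow \(x\) to be normalized within the orbital to the left of \(a_j\) and \(c_j\) within the orbital to the right. Concretely, I want to show that the class \(\omega[x,c]\) with \(x<a_j<c\) depends only on the types \(v(x)\), \(v(c)\), plus a bounded amount of data near \(a_j\). To do this, I would use the finite fixed-point structure in the child automaton: \(a_j\) corresponds to a forced cyclic component (Proposition~\ref{prop:fixed-points-cyclic-components}) with eventually periodic expansion. Then \([x,c]\) can be subdivided into a dyadic piece \([x,x']\) lying in \(I_{j-1}\), a piece \([x',c']\) that is a finite union of standard intervals around \(a_j\) drawn from finitely many ``comb'' shapes along the periodic expansion of \(a_j\), and a piece \([c',c]\) lying in \(I_j\). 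The comb pieces should have only finitely many labels, because the states along the expansion of \(a_j\) are eventually periodic, and one can shift \(x'\) and \(c'\) by whole periods by absorbing the difference into the orbital pieces.

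Combining these, every inner class becomes a product of boundedly many factors, each drawn from a finite set: the \(\Path_{\mathrm{in}}(\mathcal L_i)\) via \(\Phi_{I_i}\), and finitely many comb classes. Hence \(\Path_{\mathrm{in}}(\mathcal L)\) is finite. I expect the straddling comb step to be the main obstacle, in particular making rigorous that the comb classes near a non-dyadic rational fixed point form a finite set independent of how deep one cuts.
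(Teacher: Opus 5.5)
Your forward implication is correct and is the paper's argument: \(\theta_{I_i}\) is a bijection and each \(\Phi_{I_i}\) maps \(\Path(\mathcal L_i)(u',v')\) bijectively onto \(\Path_{I_i}(H)(\theta_{I_i}(u'),\theta_{I_i}(v'))\). Hence \(\Path_{\mathrm{in}}(\mathcal L_i)\) injects into \(\Path_{\mathrm{in}}(\mathcal L)\).

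The converse has a genuine gap, at the step you flag yourself.

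First, your displayed decomposition is set up wrongly. With one cut point \(c_i\in I_i\) per orbital, each middle factor \(\omega[c_i,c_{i+1}]\) contains \(a_{i+1}\) in its interior. So it does \emph{not} lie in \(\Path_{I_i}(H)\), and in fact every factor of your product straddles a fixed point.

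Second, the straddling factors are never controlled. The comb/periodicity argument near \(a_j\) is only sketched: normalizing via Proposition~\ref{prop:tuple-transport} and shifting \(x'\), \(c'\) by whole periods. It is not made rigorous.

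The missing idea is that no normalization near the fixed points is needed. Choose a dyadic subdivision of \([x,y]\) fine enough that each closed subdivision interval meets at most one \(a_i\).
\begin{itemize}
\item A subdivision interval \([w]\) containing a fixed point contributes the \emph{single edge} \(w^+\) to the subdivision path. Since \(\mathcal L\) is finite, there are only finitely many such edges. Each fixed point lies in at most two subdivision intervals, so there are at most \(2m\) such one-edge factors.
\item The remaining intervals form at most \(m+1\) maximal consecutive blocks. The union of each block is a closed interval with dyadic endpoints containing no fixed point, so it lies in a single orbital \(I_i\). Its class therefore lies in \(\Path_{I_i}(H)\), which is finite by Proposition~\ref{prop:isolated-interval-path-correspondence} and the hypothesis.
\end{itemize}
Thus every class \(\omega_H[x,y]\) is a product of at most \(3m+1\) factors drawn from a fixed finite set, which gives finiteness of \(\Path_{\mathrm{in}}(\mathcal L)\).

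Incidentally, dyadic fixed points are not excluded; \(\operatorname{Stab}_F(1/2)\) is closed with finite full core. This argument treats dyadic and nondyadic fixed points uniformly, so no case split is needed.
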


\begin{proof}
Suppose first that \(\Path_{\mathrm{in}}(\mathcal L)\) is finite.  For
every pair \(r,s\) of inner vertices of \(\mathcal L_i\),
Proposition~\ref{prop:isolated-interval-path-correspondence} gives a
bijection
\[
  \Phi_{I_i}\colon
  \Path(\mathcal L_i)(r,s)
  \longrightarrow
  \Path_{I_i}(H)
  \bigl(\theta_{I_i}(r),\theta_{I_i}(s)\bigr).
\]
The set on the right consists of inner path classes in \(\mathcal L\),
and is therefore finite.  Since \(\mathcal L_i\) has only finitely many
inner vertices, it follows that
\(\Path_{\mathrm{in}}(\mathcal L_i)\) is finite.

Conversely, suppose that \(\Path_{\mathrm{in}}(\mathcal L_i)\) is
finite for every \(i\), and let
\[
  \mathfrak p\in\Path_{\mathrm{in}}(\mathcal L).
\]
By Proposition~\ref{prop:interval-realization}, there are dyadic points
\(x<y\) in \((0,1)\) such that
\[
  \mathfrak p=\omega_H[x,y].
\]
Choose a dyadic subdivision of \([x,y]\) fine enough that each closed
subdivision interval meets at most one fixed point of \(H\).  Mark every
subdivision interval which contains one of the points
\(a_1,\ldots,a_m\).  Each fixed point belongs to at most two marked
intervals: it either lies in the interior of one subdivision interval
or is a common endpoint of two consecutive subdivision intervals.
Hence there are at most \(2m\) marked intervals.

Expand the path defining \(\omega_H[x,y]\) according to this
subdivision.  Every marked interval contributes a single edge to the
expanded path.  Group the unmarked intervals into maximal consecutive
blocks.  The union of each such block is a closed interval with dyadic
endpoints contained in a unique orbital \(I_i\).  Consequently, the
expanded path is a concatenation of:
\begin{enumerate}[label=\textup{(\arabic*)}]
\item at most \(m+1\) path classes belonging to sets of the form
      \[
        \Path_{I_i}(H)(u,v);
      \]
\item at most \(2m\) one-edge paths in \(\mathcal L\).
\end{enumerate}

By Proposition~\ref{prop:isolated-interval-path-correspondence}, the
possible classes of the first kind correspond to path classes in
\(\Path_{\mathrm{in}}(\mathcal L_i)\).  There are only finitely many
such possibilities because the sets
\(\Path_{\mathrm{in}}(\mathcal L_i)\) are finite.  There are also only
finitely many possibilities for the one-edge factors, since
\(\mathcal L\) has finitely many edges.  The number of factors is
bounded independently of \(\mathfrak p\).  It follows that only
finitely many concatenations can occur, and hence
\(\Path_{\mathrm{in}}(\mathcal L)\) is finite.
\end{proof}

Combining Proposition~\ref{prop:fixed-points-orbital-reduction-inner}
with the boundary-path results of Subsection~\ref{subsec:inner-paths}
gives the formulation for all nonempty path classes which will be used
below.

\begin{corollary}
\label{cor:fixed-points-orbital-reduction}
Let \(H\leq F\) be closed, suppose that \(\mathcal L=C(H)\) is finite
and full, and suppose that \(H\) has a fixed point in \((0,1)\).  Let
\[
  a_1<\cdots<a_m
\]
be the fixed points of \(H\) in \((0,1)\), put \(a_0=0\) and
\(a_{m+1}=1\), and, for \(0\leq i\leq m\), let
\[
  I_i=(a_i,a_{i+1}),
  \qquad
  \mathcal L_i=C\bigl(K_{I_i}(H)\bigr).
\]
Then \(\Path(\mathcal L)\) is finite if and only if
\(\Path(\mathcal L_i)\) is finite for every \(0\leq i\leq m\).
\end{corollary}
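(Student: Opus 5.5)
The plan is to deduce the corollary from Proposition~\ref{prop:fixed-points-orbital-reduction-inner}, applied once to \(\mathcal L\) and once to each \(\mathcal L_i\). Lemma~\ref{lem:inner-reduction}\textup{(2)} converts between finiteness of the full path semigroupoid and finiteness of its inner part.

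First I would note that, for any finite core \(\mathcal M\), finiteness of \(\Path(\mathcal M)\) is equivalent to finiteness of \(\Path_{\mathrm{in}}(\mathcal M)\). One direction is trivial, since \(\Path_{\mathrm{in}}(\mathcal M)\subseteq\Path(\mathcal M)\). For the other, if \(\Path_{\mathrm{in}}(\mathcal M)\) is finite, then every \(\Path(\mathcal M)(v,w)\) with \(v,w\) inner is finite. Since \(V(\mathcal M)\) is finite, Lemma~\ref{lem:inner-reduction}\textup{(2)} then gives finiteness of \(\Path(\mathcal M)\). The point behind that lemma is Lemma~\ref{lem:boundary-paths}: a nonempty path beginning at \(\iota\) or ending at \(\tau\) is determined up to homotopy by its other endpoint, so there are at most \(2|V(\mathcal M)|\) classes of that kind.

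Next I would apply this equivalence to \(\mathcal M=\mathcal L\), which is finite by hypothesis, and to each \(\mathcal M=\mathcal L_i\). Each \(\mathcal L_i\) is finite and full by Proposition~\ref{prop:normalized-core}, since \(I_i\) is a rational \(H\)-isolated interval; this was already recorded before Proposition~\ref{prop:fixed-points-orbital-reduction-inner}. Chaining the equivalences then gives the corollary:
\[
\Path(\mathcal L)\text{ finite}\iff\Path_{\mathrm{in}}(\mathcal L)\text{ finite}\iff\forall i,\ \Path_{\mathrm{in}}(\mathcal L_i)\text{ finite}\iff\forall i,\ \Path(\mathcal L_i)\text{ finite}.
\]
The middle equivalence is exactly Proposition~\ref{prop:fixed-points-orbital-reduction-inner}.

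There is no real obstacle here. The only point needing care is that Lemma~\ref{lem:inner-reduction} is stated for an arbitrary core \(C(G)\). It therefore applies to \(\mathcal L_i=C(K_{I_i}(H))\), provided \(K_{I_i}(H)\) is a subgroup of \(F\). This holds because Lemma~\ref{lem:normalized-restriction} shows \(K_{I_i}(H)\) is a closed subgroup of \(F\).
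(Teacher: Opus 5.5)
Your proposal is correct and follows the paper's proof essentially verbatim. Lemma~\ref{lem:inner-reduction}\textup{(2)}, applied to \(\mathcal L\) and to each \(\mathcal L_i\) (finite by Proposition~\ref{prop:normalized-core}), converts the statement into Proposition~\ref{prop:fixed-points-orbital-reduction-inner}.
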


\begin{proof}
Since \(\Path_{\mathrm{in}}(\mathcal L)\subseteq\Path(\mathcal L)\),
finiteness of \(\Path(\mathcal L)\) implies that of
\(\Path_{\mathrm{in}}(\mathcal L)\), and the converse holds by
Lemma~\ref{lem:inner-reduction}\textup{(2)}, since \(\mathcal L\) is
finite.  The same applies to each \(\mathcal L_i\), which is finite by
Proposition~\ref{prop:normalized-core}.  The result therefore follows
from Proposition~\ref{prop:fixed-points-orbital-reduction-inner}.
\end{proof}

Thus, when \(H\) has fixed points, the finiteness problem reduces to the
finitely many normalized groups \(K_i\), whose cores have fewer inner
edges.  Each \(K_i\) acts without fixed points.  If its image in the
abelianization has rank one, the results of Section~\ref{sec:rational-isolated-intervals}
apply; if its image has rank two, its action is either minimal or has an
exceptional minimal set.  The latter case is considered in the next
subsection.

\subsection{The exceptional minimal set and its gaps}
\label{subsec:exceptional-minimal-set-gaps}

Throughout this subsection, let \(H\leq F\) be closed, suppose that
its core \(\mathcal L=C(H)\) is finite and full, and suppose that
\(\pi_{\mathrm{ab}}(H)\) has rank two.  Assume that \(H\) has no fixed
point in \((0,1)\) and that its action is not minimal.  Let \(M\) be
its unique exceptional minimal set.

By Corollary~\ref{cor:unique-top-component-exceptional-set}, the inner
child graph has a unique top cyclic component \(\mathcal C_*\), and
\[
  M=X(\mathcal C_*)\cap(0,1),
  \qquad
  X(\mathcal C_*)=\overline{M}^{\,[0,1]}.
\]
Put
\[
  \operatorname{Pred}_*=\operatorname{Pred}(\mathcal C_*).
\]
Since \(\inf M=0\) and \(\sup M=1\), every gap \(I=(a,b)\) of \(M\)
satisfies
\[
  0<a<b<1,
  \qquad
  a,b\in M.
\]
We first construct a finite collection of gaps meeting every
\(H\)-orbit of gaps.

For \(e\in \operatorname{Pred}_*\), a \emph{continuation from \(e\) in \(\operatorname{Pred}_*\)} is an
infinite binary word such that, when the word is read starting at
\(e\), every state reached belongs to \(\operatorname{Pred}_*\).  Every state in \(\operatorname{Pred}_*\)
has at least one child in \(\operatorname{Pred}_*\), so such continuations exist.

Define \(\lambda(e)\) as follows.  Starting at \(e\), choose \(0\) if
the left child belongs to \(\operatorname{Pred}_*\), and choose \(1\) otherwise.  Move
to the chosen child and repeat the same rule.  Define \(\rho(e)\)
symmetrically, choosing \(1\) whenever the right child belongs to
\(\operatorname{Pred}_*\), and choosing \(0\) otherwise.

By construction, \(\lambda(e)\) is the lexicographically least
continuation from \(e\) in \(\operatorname{Pred}_*\), and \(\rho(e)\) is the greatest.
Both words are eventually periodic.  Indeed, in each construction the
next digit and state are determined by the current state.  Since there
are finitely many states, a state eventually repeats, and all
subsequent choices then repeat periodically.

For every \(e\in \operatorname{Pred}_*\), choose a finite binary word \(w_e\) such that
\[
  w_e^+=e.
\]
Whenever both children \(e_0,e_1\) belong to \(\operatorname{Pred}_*\), put
\[
  a_e=.\!w_e0\,\rho(e_0),
  \qquad
  b_e=.\!w_e1\,\lambda(e_1).
\]
These are rational numbers, and \(a_e\leq b_e\).  The following
proposition describes when they determine a gap.

\begin{proposition}
\label{prop:gap-orbit-representatives}
Let \(e\in \operatorname{Pred}_*\), and suppose that \(e_0,e_1\in \operatorname{Pred}_*\).  If
\(a_e=b_e\), the two expansions defining these numbers are the two
binary expansions of one dyadic point, and they do not determine a
gap.  If \(a_e<b_e\), then
\[
  I_e=(a_e,b_e)
\]
is a gap of \(M\).

Moreover, the finite collection
\[
  \mathcal I
  =
  \left\{
    I_e:
    e\in \operatorname{Pred}_*,\ e_0,e_1\in \operatorname{Pred}_*,\ a_e<b_e
  \right\}
\]
meets every \(H\)-orbit of gaps and can be constructed effectively
from the child automaton of \(\mathcal L\).  Every gap of \(M\) has
rational endpoints.
\end{proposition}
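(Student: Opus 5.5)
We work throughout with the description \(\overline M^{\,[0,1]}=X(\mathcal C_*)\) from Corollary~\ref{cor:unique-top-component-exceptional-set}, together with the prefix characterization of \(X(\mathcal C_*)\) in Lemma~\ref{lem:cyclic-component-membership}. The main tool is lexicographic order. Fix \(e\in\operatorname{Pred}_*\) with both children in \(\operatorname{Pred}_*\). Every point of \(X(\mathcal C_*)\cap[w_e]\) has an expansion \(w_e\zeta\) in which \(\zeta\) is a continuation from \(e\) in \(\operatorname{Pred}_*\). The continuations from \(e\) beginning with \(0\) are bounded above lexicographically by \(0\rho(e_0)\), and those beginning with \(1\) are bounded below by \(1\lambda(e_1)\). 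Hence \(a_e\) is the largest point of \(X(\mathcal C_*)\cap[w_e0]\) and \(b_e\) is the smallest point of \(X(\mathcal C_*)\cap[w_e1]\). Both belong to \(X(\mathcal C_*)\), since their prefix states lie in \(\operatorname{Pred}_*\). They are rational because \(\rho(e_0)\) and \(\lambda(e_1)\) are eventually periodic.

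For the first claim, suppose \(a_e=b_e\). The two expansions differ first at the digit following \(w_e\), so by the standard fact already used in the proof of Lemma~\ref{lem:forced-predecessor-singleton} the tails must be \(1^\infty\) and \(0^\infty\). This is exactly the case of the two expansions of one dyadic point. Now suppose \(a_e<b_e\). Then \((a_e,b_e)\) contains no point of \(X(\mathcal C_*)\): such a point lies in \([w_e]\), so it lies in \([w_e0]\) or \([w_e1]\), and the extremality above excludes both. Both endpoints lie in \(\overline M\). We still need them in \((0,1)\), hence in \(M\), so that the interval is a gap rather than a piece of the boundary. Since \(w_e\) contains both digits is not guaranteed, we argue instead from the facts that \(\inf M=0\), \(\sup M=1\), and \(a_e<b_e\). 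If \(a_e=0\), then \(X(\mathcal C_*)\) would have no points in \((0,b_e)\), contradicting \(\inf M=0\); the case \(b_e=1\) is symmetric. So \(I_e\) is a maximal interval in the complement of \(M\), that is, a gap.

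For the covering claim, let \((a,b)\) be any gap. Then \(a,b\in M\) and \(a<b\). Choose \(w\) to be the longest common prefix of suitable expansions of \(a\) and \(b\) whose prefix states lie in \(\operatorname{Pred}_*\); for a dyadic endpoint, take the expansion given by Lemma~\ref{lem:cyclic-component-membership}. Then \(a=.w0\zeta\) and \(b=.w1\eta\), and \(w^+=f\) has both children in \(\operatorname{Pred}_*\). Because \((a,b)\) contains no point of \(X(\mathcal C_*)\), the extremality argument gives \(0\zeta=0\rho(f_0)\) and \(1\eta=1\lambda(f_1)\). The main obstacle is the comparison between \(w\) and \(w_f\). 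Since \(w^+=w_f^+=f\), Lemma~\ref{lem:branch-criterion} provides \(h\in H\) with branch \(w\to w_f\). This \(h\) maps \([w]\) affinely onto \([w_f]\) by prefix replacement, so \(h(a)=a_f\) and \(h(b)=b_f\). Thus \((a,b)\) is carried to \(I_f\), and the inequality \(a<b\) transfers to \(a_f<b_f\). Consequently \(I_f\in\mathcal I\), and every gap has rational endpoints.

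For effectiveness, we compute \(\operatorname{Pred}_*\) by determining the cyclic components, identifying the unique top one, and taking backward reachability. The sequences \(\lambda,\rho\) are produced by following the deterministic rules until a state repeats, and the words \(w_e\) are found by breadth-first search. The comparison \(a_e<b_e\) is decidable because equality holds exactly when \(\rho(e_0)=1^\infty\) and \(\lambda(e_1)=0^\infty\).
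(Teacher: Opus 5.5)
Your route is the paper's: lexicographically extremal continuations \(\rho(e_0)\) and \(\lambda(e_1)\), absence of intermediate \(\operatorname{Pred}_*\)-words, \(\inf M=0\) and \(\sup M=1\) to get \(0<a_e<b_e<1\), and prefix replacement \(w\to w_f\) via Lemma~\ref{lem:branch-criterion}. The gap is in the covering step, at the choice of expansions.

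You take ``suitable'' expansions of \(a\) and \(b\) whose prefix states lie in \(\operatorname{Pred}_*\), and for a dyadic endpoint ``the expansion given by Lemma~\ref{lem:cyclic-component-membership}''. That lemma only asserts that some such expansion exists, so if both expansions of a dyadic endpoint stay in \(\operatorname{Pred}_*\), your choice is arbitrary. Your deduction of \(\zeta=\rho(f_0)\) runs as follows: otherwise \(y=w0\rho(f_0)\) is a \(\operatorname{Pred}_*\)-word lexicographically strictly between your two chosen words, so \(.y\in X(\mathcal C_*)\cap[a,b]\). This contradicts \((a,b)\) being a gap only if \(.y\neq a,b\). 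Nothing in your argument rules out that \(y\) is just the other expansion of a dyadic \(a\) or \(b\).

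Concretely, suppose \(a=.v01^\infty=.v10^\infty\) with both expansions in \(\operatorname{Pred}_*\), and you picked \(v01^\infty\). Then \(v10^\infty\) lies strictly between your two words, and one of your identities \(\zeta=\rho(f_0)\), \(\eta=\lambda(f_1)\) is false. If \(w=v\), then \(\lambda(f_1)=0^\infty\), and \(\eta=\lambda(f_1)\) would give \(b=a\). If \(w\) is a proper prefix of \(v\), then \(\rho(f_0)\) is at least the corresponding tail of \(v10^\infty\), which is lexicographically larger than \(\zeta\).

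The paper closes this by taking \(\xi\) to be the lexicographically \emph{greatest} \(\operatorname{Pred}_*\)-expansion of \(a\) and \(\eta\) the \emph{least} one of \(b\). An intermediate \(\operatorname{Pred}_*\)-word with value \(a\) or \(b\) then contradicts that choice, and one with value in \((a,b)\) contradicts the gap. Alternatively, you could show that a gap endpoint has only one \(\operatorname{Pred}_*\)-expansion, using that \(\mathcal C_*\) is top and \(M\) is perfect, but that needs an argument you did not give.

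A smaller inaccuracy of the same kind sits in your first paragraph. It is not true that every point of \(X(\mathcal C_*)\cap[w_e]\) has a \(\operatorname{Pred}_*\)-expansion beginning with \(w_e\); endpoints of \([w_e]\) can fail this. Nor is \(a_e\) always the largest point of \(X(\mathcal C_*)\cap[w_e0]\): if \(\lambda(e_1)=0^\infty\) and \(\rho(e_0)\neq1^\infty\), then \(b_e=.w_e10^\infty=.w_e01^\infty\) lies in \([w_e0]\) and exceeds \(a_e\).

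Your conclusion \(X(\mathcal C_*)\cap(a_e,b_e)=\varnothing\) is still correct, because a point strictly between \(a_e\) and \(b_e\) is interior to \([w_e]\). The cleanest way to say this, as the paper does, is that a strict numerical inequality between two points forces a strict lexicographic inequality between any of their expansions. So it suffices to rule out \(\operatorname{Pred}_*\)-words lexicographically strictly between \(w_e0\rho(e_0)\) and \(w_e1\lambda(e_1)\). The rest of your argument (the case \(a_e=b_e\), rationality, transport by the branch pair, effectiveness) matches the paper.
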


\begin{proof}
The two expansions defining \(a_e\) and \(b_e\), when read from the
root, stay in \(\operatorname{Pred}_*\).  Hence
Lemma~\ref{lem:cyclic-component-membership} gives
\[
  a_e,b_e\in X(\mathcal C_*).
\]
If \(a_e=b_e\), their distinct binary expansions represent the same
number, which is therefore dyadic.

Suppose that \(a_e<b_e\).  There is no infinite binary word strictly
between
\[
  w_e0\rho(e_0)
  \quad\text{and}\quad
  w_e1\lambda(e_1)
\]
in lexicographic order which, when read from the root, stays in
\(\operatorname{Pred}_*\).  Indeed, a word between them must begin with \(w_e\).  If its
next digit is \(0\), its remaining suffix would be a continuation from
\(e_0\) greater than \(\rho(e_0)\).  If its next digit is \(1\), its
remaining suffix would be a continuation from \(e_1\) smaller than
\(\lambda(e_1)\).  Both are impossible.

By Lemma~\ref{lem:cyclic-component-membership}, it follows that
\[
  X(\mathcal C_*)\cap(a_e,b_e)=\varnothing.
\]
Moreover \(a_e>0\): otherwise \(M\cap(0,b_e)=\varnothing\),
contradicting \(\inf M=0\).  Symmetrically \(b_e<1\).  Hence
\(a_e,b_e\in M\), and \(I_e=(a_e,b_e)\) is a gap of \(M\).

Conversely, let \(I=(a,b)\) be a gap of \(M\).  Let \(\xi\) be the
lexicographically greatest of the at most two binary expansions of
\(a\) which stay in \(\operatorname{Pred}_*\) when read from the root, and let \(\eta\)
be the least such expansion of \(b\).  These expansions exist by
Lemma~\ref{lem:cyclic-component-membership}.

There is no infinite binary word strictly between \(\xi\) and \(\eta\)
which stays in \(\operatorname{Pred}_*\) when read from the root.  Indeed, its value
would belong to
\[
  X(\mathcal C_*)\cap(a,b),
\]
contradicting the fact that \((a,b)\) is a gap of \(M\).

Let \(u\) be the longest common prefix of \(\xi\) and \(\eta\).  Then
\[
  \xi=u0\alpha,
  \qquad
  \eta=u1\beta.
\]
Put \(e=u^+\).  Both children of \(e\) belong to \(\operatorname{Pred}_*\).  The absence
of an intermediate word implies that
\[
  \alpha=\rho(e_0),
  \qquad
  \beta=\lambda(e_1).
\]
Otherwise, replacing \(\alpha\) by \(\rho(e_0)\), or replacing
\(\beta\) by \(\lambda(e_1)\), would produce such a word.  Therefore
\[
  a=.\!u0\,\rho(e_0),
  \qquad
  b=.\!u1\,\lambda(e_1).
\]
Since \(\rho(e_0)\) and \(\lambda(e_1)\) are eventually periodic,
both \(a\) and \(b\) are rational.

Since \(u^+=w_e^+\), Lemma~\ref{lem:branch-criterion} gives an element
\(h\in H\) with the branch pair
\[
  u\longrightarrow w_e.
\]
This element replaces the prefix \(u\) by \(w_e\) and preserves the
remaining suffix.  Hence
\[
  h(a)=a_e,
  \qquad
  h(b)=b_e,
\]
so
\[
  h(I)=I_e\in\mathcal I.
\]
Thus \(\mathcal I\) meets every \(H\)-orbit of gaps.

The representative words and the eventual periods of \(\lambda(e)\)
and \(\rho(e)\) can be found in the finite child automaton.  Comparing
the resulting rational endpoints therefore constructs \(\mathcal I\)
effectively.
\end{proof}

The collection \(\mathcal I\) may contain several gaps from the same
orbit; this will not affect the arguments below.  By
Proposition~\ref{prop:nonminimal-rank-two-alternative}, every gap is
\(H\)-isolated.  Proposition~\ref{prop:gap-orbit-representatives}
therefore shows that every gap is a proper rational \(H\)-isolated
interval.

We shall show that \(\Path(\mathcal L)\) is finite if and only if the
path semigroupoids of the normalized cores associated with the gaps in
\(\mathcal I\) are finite.  For this, we separate the inner path
classes according to whether their representing intervals meet \(M\)
in their interiors.  An inner path class \(W\) is called
\emph{essential} if
\[
  W=\omega_H[a,b]
\]
for dyadic points \(a<b\) such that
\[
  M\cap(a,b)\neq\varnothing.
\]
This is independent of the representing interval: by
Proposition~\ref{prop:exact-transport}, two intervals representing the
same class are carried onto one another by an element of \(H\), and
every element of \(H\) preserves \(M\).

A nonessential class is represented by an interval \([a,b]\) such that
\((a,b)\) lies in a single gap of \(M\).  We first show that there are
only finitely many nonessential classes if and only if every normalized
core associated with a gap in \(\mathcal I\) has finitely many path
classes.

Write
\[
  \mathcal I=\{I_1,\ldots,I_k\},
\]
and put
\[
  K_i=K_{I_i}(H),
  \qquad
  \mathcal L_i=C(K_i).
\]
By Proposition~\ref{prop:normalized-core}, each \(\mathcal L_i\) is
finite and full, and by
Proposition~\ref{prop:normalized-core-effective} it can be constructed
effectively from \(\mathcal L\).  By Corollary~\ref{cor:strict-inner-edge-descent}, it
has fewer inner edges than \(\mathcal L\).

\begin{proposition}
\label{prop:nonessential-gap-reduction}
The set of nonessential inner path classes differs from
\[
  \bigcup_{i=1}^{k}\Path_{I_i}(H)
\]
by a finite set.  Consequently, there are only finitely many
nonessential inner path classes if and only if \(\Path(\mathcal L_i)\)
is finite for every \(1\leq i\leq k\).
\end{proposition}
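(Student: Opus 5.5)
The plan is to sort nonessential classes by whether their representing interval touches the boundary of its gap. Those that do not will land in \(\bigcup_i\Path_{I_i}(H)\) after transport. For those that do, I show there are only finitely many, using that stabilizers of isolated intervals fix the endpoints, together with the finiteness of \(K_i\)-orbits on \(\D\).

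\emph{Step 1: reduction to a closed gap in \(\mathcal I\).} Let \(W\) be a nonessential inner class, represented by dyadic \(a<b\) with \((a,b)\) contained in a gap \(J\) of \(M\). Then \([a,b]\subseteq\overline J\). By Proposition~\ref{prop:gap-orbit-representatives}, some \(h\in H\) maps \(J\) onto some \(I_i\in\mathcal I\). By Proposition~\ref{prop:transport}, \(W=\omega_H[h(a),h(b)]\) with \([h(a),h(b)]\subseteq\overline{I_i}\). If neither \(h(a)\) nor \(h(b)\) is an endpoint of \(I_i\), then \([h(a),h(b)]\subseteq I_i\), so \(W\in\Path_{I_i}(H)\). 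Conversely, every class in \(\Path_{I_i}(H)\) is nonessential, since \(I_i\cap M=\varnothing\). So it remains to show that the classes \(\omega_H[x,y]\) with \([x,y]\subseteq\overline{I_i}\), where \(x\) or \(y\) is an endpoint of \(I_i\), form a finite set. Since \(k\) is finite, it suffices to treat a single \(i\). This matters only when that endpoint is dyadic.

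\emph{Step 2: counting the boundary classes.} Write \(I_i=(c,d)\) and suppose \(c\) is dyadic. I claim that \(\omega_H[c,y]=\omega_H[c,y']\), for dyadic \(y,y'\in I_i\), holds if and only if \(y'\in\operatorname{Stab}_H(I_i)\,y\). For the forward direction, Proposition~\ref{prop:exact-transport} gives \(h\in H\) with \(h(c)=c\) and \(h(y)=y'\). Then \(h(I_i)\cap I_i\neq\varnothing\), so isolation (Proposition~\ref{prop:nonminimal-rank-two-alternative}) gives \(h\in\operatorname{Stab}_H(I_i)\). For the converse, every element of the stabilizer fixes \(c\), so Proposition~\ref{prop:transport} applies. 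Hence the classes \(\omega_H[c,y]\) with \(y\in I_i\) are in bijection with the \(\operatorname{Stab}_H(I_i)\)-orbits on \(\D\cap I_i\). Via the chart \(\nu_{I_i}\), these correspond to the \(K_i\)-orbits on \(\D\), exactly as in the proof of Proposition~\ref{prop:normalized-core}. Since \(C(K_i)\) is finite and full, Proposition~\ref{prop:finite-full-core} shows there are finitely many such orbits.

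The right endpoint \(d\) is handled symmetrically. The case \(x=c\), \(y=d\), with both endpoints dyadic, contributes the single class \(\omega_H[c,d]\). This proves the first assertion.

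\emph{Step 3: the equivalence.} The set of nonessential classes is finite if and only if \(\bigcup_i\Path_{I_i}(H)\) is finite. That set is a finite union of the sets \(\Path_{I_i}(H)(u,v)\) with \(u,v\in V_H(I_i)\), and \(V_H(I_i)\) is finite. By Proposition~\ref{prop:isolated-interval-path-correspondence}, \(\Path_{I_i}(H)\) is finite exactly when \(\Path_{\mathrm{in}}(\mathcal L_i)\) is finite. Because \(\mathcal L_i\) is finite, Lemma~\ref{lem:inner-reduction}\textup{(2)} shows this is equivalent to \(\Path(\mathcal L_i)\) being finite.

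The main obstacle is Step 2. These boundary classes genuinely lie outside the union, since no element of \(H\) moves a point of \(M\) into a gap. The key step is to count them as stabilizer orbits rather than to attempt any placement argument.
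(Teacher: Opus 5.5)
Your proof is correct and follows the paper's argument essentially step by step: reduce to a closed representative gap, dispose of interior intervals via \(\Path_{I_i}(H)\), count the boundary classes using isolation, and finish with Proposition~\ref{prop:isolated-interval-path-correspondence} and Lemma~\ref{lem:inner-reduction}\textup{(2)}. The only difference is in Step~2. You count the boundary classes through \(K_i\)-orbits on \(\D\), via the chart and Proposition~\ref{prop:finite-full-core}. The paper instead notes that two points of \(\D\cap I_i\) of the same type in \(\mathcal L\) are related by an element of \(\operatorname{Stab}_H(I_i)\), so the finitely many inner vertices of \(\mathcal L\) already bound the count.
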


\begin{proof}
Every class in the displayed union is nonessential, since it is
represented by an interval contained in a gap.

Conversely, let
\[
  W=\omega_H[a,b]
\]
be a nonessential inner path class.  Then
\[
  M\cap(a,b)=\varnothing,
\]
so \((a,b)\) lies in a unique gap \(I\), and
\[
  [a,b]\subseteq\overline I.
\]
By Proposition~\ref{prop:gap-orbit-representatives}, some element of
\(H\) carries \(I\) onto one of the representative gaps \(I_i\).
Applying that element does not change \(W\).  Thus it is enough to
consider intervals with dyadic endpoints contained in the closure of
a fixed representative gap
\[
  I_i=(\alpha_i,\beta_i).
\]
If both endpoints lie in \(I_i\), the interval class belongs to
\(\Path_{I_i}(H)\).

Suppose that the left endpoint is \(\alpha_i\), which must then be
dyadic, and the right endpoint lies in \(I_i\).  If
\(b,b'\in\D\cap I_i\) have the same type in \(\mathcal L\),
Proposition~\ref{prop:orbit-core} gives \(h\in H\) with
\[
  h(b)=b'.
\]
Since \(h(I_i)\cap I_i\neq\varnothing\) and \(I_i\) is
\(H\)-isolated, this element stabilizes \(I_i\).  It therefore fixes
\(\alpha_i\), and Proposition~\ref{prop:exact-transport} gives
\[
  \omega_H[\alpha_i,b]
  =
  \omega_H[\alpha_i,b'].
\]
There are only finitely many dyadic types, so only finitely many
classes of this form occur.  The case in which the right endpoint is
\(\beta_i\) and the left endpoint lies in \(I_i\) is symmetric.

If both endpoints are endpoints of \(I_i\), there is at most one
interval to consider, namely
\[
  [\alpha_i,\beta_i],
\]
and it occurs only when both endpoints are dyadic.

Thus each representative gap contributes only finitely many classes
beyond those represented by intervals contained in the gap.  Since
there are finitely many representative gaps, the first assertion
follows.

By Proposition~\ref{prop:isolated-interval-path-correspondence}, the
isolated-interval bijections combine to give a bijection
\[
  \Path_{\mathrm{in}}(\mathcal L_i)
  \longrightarrow
  \Path_{I_i}(H)
\]
for every \(i\).  Thus
\[
  \bigcup_{i=1}^{k}\Path_{I_i}(H)
\]
is finite if and only if \(\Path_{\mathrm{in}}(\mathcal L_i)\) is
finite for every \(i\).  Lemma~\ref{lem:inner-reduction}\textup{(2)},
applied to each \(\mathcal L_i\), gives the final assertion.
\end{proof}

Next we prove that the essential inner path classes form a finite set.
We follow the argument of Subsection~\ref{subsec:stabilizer-orbitals},
with the necessary adaptations.

Recall that
\[
  \inf M=0,
  \qquad
  \sup M=1.
\]
Thus, for every \(x\in(0,1)\), the sets \(M\cap(0,x)\) and
\(M\cap(x,1)\) are nonempty.  Put
\[
  x_-=\sup\bigl(M\cap(0,x)\bigr),
  \qquad
  x_+=\inf\bigl(M\cap(x,1)\bigr).
\]
These points belong to \(M\) and satisfy
\[
  0<x_-\leq x\leq x_+<1.
\]
By their definitions,
\[
  M\cap(x_-,x)=M\cap(x,x_+)=\varnothing.
\]
We use the pointwise stabilizers \(H_E\) introduced in
Subsection~\ref{subsec:stabilizer-orbitals}.

\begin{lemma}
\label{lem:exceptional-slopes-fixed-point-transfer}
The following statements hold.
\begin{enumerate}[label=\textup{(\arabic*)}]
\item For every \(a\in(0,1)\), the subgroup \(H_{[a,1]}\) contains an
      element \(g\) with \(g'(0^+)\neq1\), and \(H_{[0,a]}\) contains
      an element \(g\) with \(g'(1^-)\neq1\).
\item If \(H_{[a,1]}\) fixes a point in \((0,a)\), then \(H_a\) fixes
      a point in \((0,a)\).  Similarly, for \(b\in(0,1)\), if
      \(H_{[0,b]}\) fixes a point in \((b,1)\), then \(H_b\) fixes a
      point in \((b,1)\).
\end{enumerate}
\end{lemma}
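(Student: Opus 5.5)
Both parts are proved by rerunning the arguments of Lemma~\ref{lem:one-sided-slopes} and Lemma~\ref{lem:fixed-point-transfer}. The only thing to check is where those proofs used minimality. Part (1) needs only the orbital property of \((0,1)\). Part (2) needs only part (1) and the normalization argument.

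For (1), start from rank two: since \(\pi_{\mathrm{ab}}(H)\) has rank two, it contains a vector \((k,0)\) with \(k\neq0\). Take \(h\in H\) with this image. Then \(h\) has slope \(1\) near \(1\), so it fixes \([d,1]\) pointwise for some dyadic \(d<1\). Since \(H\) has no fixed point in \((0,1)\), the interval \((0,1)\) is an orbital of \(H\), so \(\inf Hd=0\) by the orbital property recorded in Subsection~\ref{subsec:minimal-sets}. Choose \(g_1\in H\) with \(g_1(d)<a\). The conjugate \(g_1^{-1}hg_1\) (in the paper's left-to-right convention) fixes \([g_1(d),1]\supseteq[a,1]\) pointwise. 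It still has slope \(2^k\neq1\) at \(0\), since conjugation does not change the endpoint vector. The assertion about \(H_{[0,a]}\) follows in the same way from a vector \((0,k')\in\pi_{\mathrm{ab}}(H)\) with \(k'\neq0\) and from \(\sup Hd=1\). The reflection \(x\mapsto1-x\) can also be used here, since all hypotheses are symmetric. So minimality was never needed; only the absence of fixed points and rank two were used.

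For (2), set \(G=H_{[a,1]}\) and let \(B\) be the set of its fixed points in \((0,a)\), assumed nonempty. By (1), some \(g_0\in G\) has \(g_0'(0^+)\neq1\). Such a \(g_0\) is affine with nontrivial slope on some \([0,\varepsilon)\), so it moves every point of \((0,\varepsilon)\). Hence \(c=\inf B\geq\varepsilon>0\). Fixed sets are closed, so \(c\in B\) and \(c\in(0,a)\). Every \(g\in H_a\) preserves \([a,1]\) setwise, so it normalizes \(G\). It therefore permutes \(B\), and being increasing it fixes \(c=\inf B\). Thus \(H_a\) fixes \(c\in(0,a)\). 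The statement at \(b\) is the mirror image: use \(H_{[0,b]}\), the element with nontrivial slope at \(1\) from (1), and \(\sup\) of the fixed set in \((b,1)\).

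The only potential obstacle is confirming that nothing beyond \(\inf Hd=0\) and \(\sup Hd=1\) was used from minimality in Section~\ref{sec:minimal-rank-two}. These two facts hold because, under the standing assumptions of this subsection, \(H\) has no fixed point in \((0,1)\). So the proof should be a short transcription of the earlier one.
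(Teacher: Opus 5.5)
Your proposal is correct and follows essentially the same route as the paper. The paper's proof likewise notes that the proof of Lemma~\ref{lem:one-sided-slopes} used minimality only through the fact that \((0,1)\) is an orbital of \(H\), which still holds here because \(H\) has no fixed point in \((0,1)\). It then reruns the proof of Lemma~\ref{lem:fixed-point-transfer} with part (1) in place of Lemma~\ref{lem:one-sided-slopes}, exactly as you do.
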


\begin{proof}
The proof of Lemma~\ref{lem:one-sided-slopes} applies to the present
action, since \((0,1)\) is an orbital of \(H\), and gives
\textup{(1)}.  The proof of Lemma~\ref{lem:fixed-point-transfer}, using
\textup{(1)} in place of Lemma~\ref{lem:one-sided-slopes}, gives
\textup{(2)}.
\end{proof}

\begin{proposition}[Stabilizer orbitals at points of \(M\)]
\label{prop:exceptional-stabilizer-orbitals}
If \(s\in M\) is accumulated by \(M\) from the left, then \((0,s)\)
is an orbital of \(H_{[s,1]}\).  If \(s\) is accumulated by \(M\)
from the right, then \((s,1)\) is an orbital of \(H_{[0,s]}\).
\end{proposition}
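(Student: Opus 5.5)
The plan is to follow the proof of Proposition~\ref{prop:stabilizer-orbitals} almost verbatim. Minimality of the action is used there only once, at the very end, and I will replace it by the fact that every \(H\)-orbit accumulates on \(M\). I treat the first assertion; the second follows by reflection, using the right-hand halves of Lemma~\ref{lem:exceptional-slopes-fixed-point-transfer}.

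\emph{Step 1: an anchor point \(c\).} Fix any \(a_0\in(0,1)\). If \(H_{[a_0,1]}\) has no fixed point in \((0,a_0)\), put \(c=a_0\). Otherwise let \(B\) be the set of fixed points of \(H_{[a_0,1]}\) in \((0,a_0)\). By Lemma~\ref{lem:exceptional-slopes-fixed-point-transfer}\textup{(1)}, some element of \(H_{[a_0,1]}\) has nontrivial slope at \(0\), so \(c=\inf B>0\) and \(c\in B\). As in Lemma~\ref{lem:fixed-point-transfer}, \(H_{a_0}\) normalizes \(H_{[a_0,1]}\) and therefore fixes \(c\). Every \(x\in(0,c)\) is moved by some element of \(H_{[a_0,1]}\leq H_c\), so \(H_c\) has no fixed point in \((0,c)\). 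The contrapositive of Lemma~\ref{lem:exceptional-slopes-fixed-point-transfer}\textup{(2)} at \(c\) then shows that \(H_{[c,1]}\) has no fixed point in \((0,c)\). In either case, \((0,c)\) is an orbital of \(H_{[c,1]}\), and conjugating gives that \((0,h(c))\) is an orbital of \(H_{[h(c),1]}\) for every \(h\in H\).

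\emph{Step 2: pushing \(c\) just below \(s\).} Let \(y\in(0,s)\). Since \(s\) is accumulated by \(M\) from the left, there is \(m\in M\cap(y,s)\). I claim that the orbit \(Hc\) accumulates at every point of \(M\). Granting this, some \(h\in H\) satisfies \(y<h(c)<s\). Since \(h(c)<s\), we have \(H_{[h(c),1]}\leq H_{[s,1]}\), and \(y\) is moved by some element of \(H_{[h(c),1]}\). Since \(y<s\) was arbitrary and \(H_{[s,1]}\) fixes \(s\), the interval \((0,s)\) is an orbital of \(H_{[s,1]}\).

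\emph{Main obstacle: the claim that \(M\subseteq\overline{Hc}\).} The point \(c\) produced in Step~1 need not lie in \(M\), so I cannot simply use density of orbits inside the minimal set. The first thing I would try is to let \(Y\) be the closure of \(Hc\) in \((0,1)\). This is a nonempty closed \(H\)-invariant set, and since \((0,1)\) is an orbital of \(H\) we have \(\inf Y=0\) and \(\sup Y=1\).

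I then need \(Y\) to contain a minimal set. For this I would invoke the standard result underlying Lemma~\ref{lem:minimal-set-alternative} for finitely generated groups without fixed points (cf.\ \cite{DNR}): every nonempty closed invariant subset contains a minimal set. By Lemma~\ref{lem:minimal-set-alternative} and Proposition~\ref{prop:nonminimal-rank-two-alternative}, \(M\) is the unique minimal set, so \(Y\supseteq M\).

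If a more self-contained argument is preferred, I would use the automaton instead. Take a binary expansion of \(c\); its state path eventually lies in a cyclic component \(\mathcal C\). By Proposition~\ref{prop:orbit-closure-cyclic-component}, \(\overline{Hc}^{\,[0,1]}=X(\mathcal C)\). Choose a cyclic component \(\mathcal C'\) reaching \(\mathcal C\) with no distinct cyclic predecessor; it is top. Since \(\mathcal C_*\) is the unique top cyclic component by Corollary~\ref{cor:unique-top-component-exceptional-set}, we get \(\mathcal C'=\mathcal C_*\), hence \(\mathcal C_*\leadsto\mathcal C\). Then \(\operatorname{Pred}_*\subseteq\operatorname{Pred}(\mathcal C)\), so \(M\subseteq X(\mathcal C_*)\subseteq X(\mathcal C)=\overline{Hc}^{\,[0,1]}\). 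This second route is the one I would actually write, since it relies only on results already proved in the paper.
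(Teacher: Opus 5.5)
Your proof is correct, and its skeleton is the paper's: an anchor point \(c\) with \((0,c)\) an orbital of \(H_{[c,1]}\), then conjugating \(c\) into \((y,s)\). You diverge at one step, namely how you guarantee that \(Hc\) meets \((y,s)\).

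\emph{The paper's route.} It proves directly that \(c\in M\), and even that \(M\) accumulates at \(c\) from the left. If \(c\) lay in a gap, or were the right endpoint of a gap, then every element of \(H_{[c,1]}\) would preserve that gap (respectively \(M\cap(0,c)\) and its supremum). It would therefore fix the gap's left endpoint, a point of \((0,c)\), contradicting that \((0,c)\) is an orbital of \(H_{[c,1]}\). Minimality of \(M\) then makes \(Hc\) dense in \(M\).

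\emph{Your route.} You make no claim about where \(c\) lies. Instead you prove the stronger fact that \(M\subseteq\overline{Hx}\) for every \(x\in(0,1)\). You do this via Proposition~\ref{prop:orbit-closure-cyclic-component} together with the uniqueness of the top cyclic component \(\mathcal C_*\) from Corollary~\ref{cor:unique-top-component-exceptional-set}. Alternatively you use the general fact that every nonempty closed invariant set of a finitely generated fixed-point-free group contains a minimal set, and \(M\) is the only one.

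\emph{Comparison.} Both routes are valid. Yours yields a reusable statement and avoids the gap case analysis, but it relies on the automaton machinery or an external dynamical result. The paper's argument is purely elementary interval dynamics and additionally shows that the anchor point itself lies in \(M\).

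A minor point: in Step~1 the remark that \(H_{a_0}\) fixes \(c\) is unnecessary. The inclusion \(H_{[a_0,1]}\leq H_c\) holds simply because \(c\) is a common fixed point of \(H_{[a_0,1]}\).
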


\begin{proof}
We prove the first assertion.  Fix \(a_0\in(0,1)\) and put
\[
  B=H_{[a_0,1]}.
\]
The group \(B\) fixes \(a_0\), and
Lemma~\ref{lem:exceptional-slopes-fixed-point-transfer}\textup{(1)}
shows that there is a right neighborhood of \(0\) containing no fixed
point of \(B\) other than \(0\).  Its fixed-point set in \((0,1)\)
therefore has a least element \(c\), with
\[
  0<c\leq a_0.
\]
Since \(B\leq H_c\) and \(B\) has no fixed point in \((0,c)\), neither
does \(H_c\).  The contrapositive of
Lemma~\ref{lem:exceptional-slopes-fixed-point-transfer}\textup{(2)},
applied at \(c\), shows that \(H_{[c,1]}\) has no fixed point in
\((0,c)\).  Hence \((0,c)\) is an orbital of \(H_{[c,1]}\).

We claim that \(c\in M\) and that \(M\) accumulates at \(c\) from the
left.  If \(c\) belonged to a gap, every element of \(H_{[c,1]}\)
would stabilize that gap and fix its left endpoint, which lies in
\((0,c)\).  If \(c\in M\) were not accumulated from the left, it would
be the right endpoint of a gap, and every element of \(H_{[c,1]}\)
would again fix the left endpoint of that gap.  Both possibilities
contradict the fact that \((0,c)\) is an orbital.

Now let \(s\in M\) be accumulated from the left, and let
\(y\in(0,s)\).  Since \(Hc\) is dense in \(M\), there is \(h\in H\)
such that
\[
  y<h(c)<s.
\]
Conjugating the orbital obtained above, we find that \((0,h(c))\) is
an orbital of
\[
  h^{-1}H_{[c,1]}h=H_{[h(c),1]}.
\]
Since
\[
  H_{[h(c),1]}\leq H_{[s,1]},
\]
some element of \(H_{[s,1]}\) moves \(y\).  Thus \(H_{[s,1]}\) has no
fixed point in \((0,s)\), proving that this interval is an orbital.

The second assertion follows by the symmetric argument.
\end{proof}

\begin{corollary}[Endpoint cofinality]
\label{cor:exceptional-endpoint-cofinality}
For every \(x\in(0,1)\), the interval \((0,x_-)\) is an orbital of
\(H_{[x,1]}\), and \((x_+,1)\) is an orbital of \(H_{[0,x]}\).
Consequently, if \(0<a<b<1\) and \(M\cap(a,b)\neq\varnothing\), then
\[
\begin{aligned}
  \inf H_{[0,a]}b&=a_+,
  &
  \sup H_{[0,a]}b&=1,
  \\
  \inf H_{[b,1]}a&=0,
  &
  \sup H_{[b,1]}a&=b_-.
\end{aligned}
\]
\end{corollary}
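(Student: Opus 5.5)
The plan is to prove the first sentence from Proposition~\ref{prop:exceptional-stabilizer-orbitals} by locating a point of \(M\) that is accumulated from the appropriate side and lies at the right place. The displayed infima and suprema then follow from the general property of orbitals recorded in Subsection~\ref{subsec:minimal-sets}.

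\emph{First claim.} Fix \(x\in(0,1)\) and consider \(s=x_-\in M\). I first show that \(M\) accumulates at \(x_-\) from the left. Since \(M\) is perfect, \(x_-\) is accumulated by \(M\) from some side. Points of \(M\) in \((x_-,x]\) exist at most at \(x\) itself, so \(M\) cannot accumulate at \(x_-\) from the right unless \(x_-=x\). If \(x_-=x\), then \(x\) is a limit of points of \(M\cap(0,x)\) by the definition of the supremum, unless \(x\) is an isolated-from-left point of \(M\); but the supremum of \(M\cap(0,x)\) equal to \(x\) forces left accumulation. If \(x_-<x\), then \(x_-\) is the sup of a set not containing it, or it belongs to the set. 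In the latter case \(x_-\) is the left endpoint of the gap \((x_-,x_+)\) (or \(x_-<x\) with \(x\in M\)), and perfectness gives accumulation from the left. So in all cases \(M\) accumulates at \(x_-\) from the left.

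Proposition~\ref{prop:exceptional-stabilizer-orbitals} then says that \((0,x_-)\) is an orbital of \(H_{[x_-,1]}\). The subtle step is passing from \(H_{[x_-,1]}\) to \(H_{[x,1]}\) when \(x_-<x\). It suffices to show \(H_{[x_-,1]}\leq H_{[x,1]}\), which is trivial because \([x,1]\subseteq[x_-,1]\). Conversely, \(H_{[x,1]}\) fixes \(x_-\): any \(g\in H_{[x,1]}\) preserves \(M\) and fixes \(x\), hence preserves \(M\cap(0,x)\) and fixes its supremum. So \((0,x_-)\) is contained in an orbital of \(H_{[x,1]}\) whose right end is at most \(x_-\), and it is moved entirely because the subgroup \(H_{[x_-,1]}\) already moves every point of it. Hence \((0,x_-)\) is an orbital of \(H_{[x,1]}\). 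The statement for \((x_+,1)\) and \(H_{[0,x]}\) is symmetric.

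\emph{Consequences.} Let \(0<a<b<1\) with \(M\cap(a,b)\neq\varnothing\). Then \(a_+<b\), since some point of \(M\) lies in \((a,b)\), so \(b\) lies in the orbital \((a_+,1)\) of \(H_{[0,a]}\). The orbital property gives \(\inf H_{[0,a]}b=a_+\) and \(\sup H_{[0,a]}b=1\). Symmetrically, \(b_->a\), so \(a\in(0,b_-)\), which is an orbital of \(H_{[b,1]}\). This yields \(\inf H_{[b,1]}a=0\) and \(\sup H_{[b,1]}a=b_-\).

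The only real obstacle is the accumulation check at \(x_-\). I expect to handle it by the case split above (whether \(x\in M\), and whether \(x_-=x\)) using only that \(M\) is perfect and closed in \((0,1)\).
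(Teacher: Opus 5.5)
Your proposal is correct and follows essentially the same route as the paper. You show that \(M\) accumulates at \(x_-\) from the left, apply Proposition~\ref{prop:exceptional-stabilizer-orbitals}, and then pass from \(H_{[x_-,1]}\) to \(H_{[x,1]}\) using \(H_{[x_-,1]}\leq H_{[x,1]}\) together with the fact that \(H_{[x,1]}\) fixes \(x_-=\sup(M\cap(0,x))\); finally you apply the infimum/supremum property of orbitals to \(b\in(a_+,1)\) and \(a\in(0,b_-)\), exactly as the paper does. Your case analysis for the accumulation step is more elaborate than needed: the ``unless'' clause and the sub-case on whether the supremum is attained are superfluous, since \(M\cap(x_-,x)=\varnothing\) and perfectness already settle it. It nonetheless reaches the right conclusion.
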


\begin{proof}
The point \(x_-\) is accumulated by \(M\) from the left.  If
\(x_-=x\), this follows from its definition.  If \(x_-<x\), there are
no points of \(M\) immediately to its right, so perfectness gives
accumulation from the left.
Proposition~\ref{prop:exceptional-stabilizer-orbitals} therefore shows
that \((0,x_-)\) is an orbital of
\[
  H_{[x_-,1]}\leq H_{[x,1]}.
\]
Every element of \(H_{[x,1]}\) fixes \(x_-\), since it preserves
\(M\cap(0,x)\) and its supremum.  Thus \((0,x_-)\) is also an orbital
of \(H_{[x,1]}\).  The argument for \((x_+,1)\) is symmetric.

If \(M\cap(a,b)\neq\varnothing\), then
\[
  a_+<b,
  \qquad
  a<b_-.
\]
Hence \(b\) lies in the orbital \((a_+,1)\) of \(H_{[0,a]}\), and
\(a\) lies in the orbital \((0,b_-)\) of \(H_{[b,1]}\).  The displayed
equalities follow from the infimum and supremum property of orbitals.
\end{proof}

For inner vertices \(u,v\), let
\[
  \mathcal E(u,v)
  =
  \left\{
    W\in\Path(\mathcal L)(u,v):
    W\text{ is essential}
  \right\}.
\]

\begin{proposition}[The essential groupoid]
\label{prop:essential-groupoid}
Let
\[
  U\in\Path(\mathcal L)(u,v),
  \qquad
  V\in\Path(\mathcal L)(v,w)
\]
be inner path classes.  If either \(U\) or \(V\) is essential, then
\(UV\) is essential.  Moreover, the essential inner path classes form
a groupoid whose objects are the inner vertices of \(\mathcal L\), and
\[
  \mathcal E(u,v)\neq\varnothing
\]
for every pair of inner vertices \(u,v\).
\end{proposition}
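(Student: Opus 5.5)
Closure under composition is immediate from cutting. Realize \(U=\omega_H[x,y]\) and \(V=\omega_H[y',z]\) by Proposition~\ref{prop:interval-realization}. Since \(v(y)=v(y')=v\), Proposition~\ref{prop:orbit-core} gives \(h\in H\) with \(h(y')=y\), and Proposition~\ref{prop:transport} gives \(V=\omega_H[y,h(z)]\). Lemma~\ref{lem:cutting} then yields \(UV=\omega_H[x,h(z)]\). If \((x,y)\) or \((y,h(z))\) meets \(M\), so does \((x,h(z))\). The same argument shows nonemptiness of \(\mathcal E(u,v)\). Take dyadic \(a,b\) of types \(u,v\), and a point \(m\in M\) with \(m>a\) and \((a,m)\cap M\ne\varnothing\); this is possible since \(\sup M=1\) and \(M\) is perfect. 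Since \((0,1)\) is an orbital of \(H\), \(\sup Hb=1\), so we may choose \(h\) with \(h(b)>m\). Then \(\omega_H[a,h(b)]\in\mathcal E(u,v)\).

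For the groupoid structure, apply Lemma~\ref{lem:local-cancellation-criterion} to the semigroupoid of essential classes. It is closed under composition by the first step, and it has nonempty local sets \(\mathcal E(u,u)\). The regularity and divisibility hypotheses are verified by the arguments of Lemmas~\ref{lem:direct-regularity} and~\ref{lem:local-divisibility}, with Corollary~\ref{cor:exceptional-endpoint-cofinality} in place of Corollary~\ref{cor:endpoint-cofinality}.

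For \textbf{essential shrinking}, let \(W=\omega_H[a,b]\) be essential, so \(a_+<b_-\) or at least \(a<a_+\le b_-<b\). Choose \(h_1\in H_{[0,a]}\) with \(c=h_1(b)\) close to \(a_+\), slightly above it. Then \(\omega[a,c]=W\), and \((a,c)\) still meets \(M\) because \(h_1\) preserves \(M\) and \(h_1((a,b))=(a,c)\). Symmetrically, choose \(d=h_2(a)\) close to \(b_-\) from below. When \(M\cap(a,b)\) contains more than one point, we can arrange \(c<d\); then \(W=WXW\) with \(X=\omega[c,d]\). If \(c\) and \(d\) cannot be separated, replace the middle choice by using the orbitals at \(a_+\) and \(b_-\).

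For divisibility, given essential \(U=\omega[a,b]\) and \(V=\omega[c,d]\) at \(u\), move \(a\) to \(c\) by some \(h\), and shrink \(U\) into \([c,d]\) via an element of \(H_{[0,c]}\). Lemma~\ref{lem:cutting} then gives \(V=U\,\omega[x,d]\). Here \(\omega[x,d]\) need not be essential, but \(V\) and \(U\) are, and the criterion only requires division within the local set. To stay in the essential semigroupoid, choose \(x\) so that \((x,d)\) still meets \(M\); this is possible since \(c_+<d_-\) leaves room.

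The main obstacle is these endpoint inequalities. Cofinality gives \(\inf H_{[0,a]}b=a_+\) rather than \(a\), so the shrunken interval must keep room inside \(M\). The delicate case is when \(M\cap(a,b)\) lies in a thin region, for instance \(a_+=b_-\), which cannot occur by perfectness, or when points of \(M\) accumulate only at one side. I would first try handling this using that \(a_+\) is accumulated by \(M\) from the right, via Proposition~\ref{prop:exceptional-stabilizer-orbitals}. This lets us place \(c\) and \(d\) strictly inside \((a_+,b_-)\) with \(M\) in between. The local-cancellation lemma then finishes the proof.
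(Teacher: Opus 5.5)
Your proposal is correct and follows essentially the paper's proof. It obtains closure and nonemptiness by transport and cutting, and it verifies regularity and local divisibility by rerunning the minimal-case arguments with Corollary~\ref{cor:exceptional-endpoint-cofinality}, making sure that $X=\omega_H[c,d]$ and the quotient $\omega_H[x,d]$ are themselves essential, before applying Lemma~\ref{lem:local-cancellation-criterion}. Your hedging about ``delicate cases'' is unnecessary. As in the paper, perfectness of $M$ gives three points $z_1<z<z_2$ of $M\cap(a,b)$, hence $a_+\le z_1<z<z_2\le b_-$, and one simply chooses $c<z<d$. Note also that in general only $a\le a_+<b_-\le b$ holds, not the strict outer inequalities you wrote.
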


\begin{proof}
We first describe how essentiality behaves under concatenation.  Let
\[
  U\in\Path(\mathcal L)(u,v),
  \qquad
  V\in\Path(\mathcal L)(v,w),
\]
and choose representing intervals
\[
  U=\omega_H[x_0,x_1],
  \qquad
  V=\omega_H[y_0,y_1].
\]
The points \(x_1\) and \(y_0\) have the same type \(v\), so
Proposition~\ref{prop:orbit-core} gives \(h\in H\) with
\[
  h(y_0)=x_1.
\]
By Proposition~\ref{prop:transport} and Lemma~\ref{lem:cutting},
\[
\begin{aligned}
  UV
  &=
  \omega_H[x_0,x_1]\,
  \omega_H[x_1,h(y_1)]
  \\
  &=
  \omega_H[x_0,h(y_1)].
\end{aligned}
\]
If \(U\) is essential, then \(M\) meets \((x_0,x_1)\).  If \(V\) is
essential, then \(M\) meets \((x_1,h(y_1))\), since \(h\) preserves
\(M\).  In either case, \(UV\) is essential.  Thus a product of two
composable inner classes is essential whenever either factor is
essential.  In particular, the essential classes form a semigroupoid.

We next show that every \(\mathcal E(u,v)\) is nonempty.  Choose dyadic
points \(x,y\) of types \(u,v\), respectively, and choose \(z\in M\)
with \(z>x\).  Since \(\sup Hy=1\), there is \(h\in H\) with
\(h(y)>z\).  Then
\[
  \omega_H[x,h(y)]\in\mathcal E(u,v).
\]

Next we prove that the semigroupoid of essential inner path classes is
regular.  Take an arbitrary
\[
  W\in\mathcal E(u,v),
\]
and choose a dyadic interval \([a,b]\) representing \(W\):
\[
  W=\omega_H[a,b].
\]
Since \(M\) has no isolated points, the nonempty set \(M\cap(a,b)\)
contains three distinct points
\[
  z_1<z<z_2.
\]
By the definitions of \(a_+\) and \(b_-\),
\[
  a_+\leq z_1<z<z_2\leq b_-.
\]
In particular,
\[
  a_+<z<b_-.
\]
Corollary~\ref{cor:exceptional-endpoint-cofinality} gives
\[
  \inf H_{[0,a]}b=a_+,
  \qquad
  \sup H_{[b,1]}a=b_-.
\]
Choose
\[
  h_1\in H_{[0,a]},
  \qquad
  h_2\in H_{[b,1]}
\]
so that, on putting
\[
  c=h_1(b),
  \qquad
  d=h_2(a),
\]
we have
\[
  a\leq a_+<c<z<d<b_-\leq b.
\]
The points \(c,d\) are dyadic.  Since \(h_1\) fixes \(a\) and
\(h_2\) fixes \(b\), Proposition~\ref{prop:transport} gives
\[
  W=\omega_H[a,c]=\omega_H[d,b].
\]
Put
\[
  X=\omega_H[c,d].
\]
The preceding equalities show that \(c\) has type \(v\) and \(d\) has
type \(u\), and \(X\) is essential because \(z\in M\cap(c,d)\).  Thus
\[
  X\in\mathcal E(v,u).
\]
Cutting at \(c\) and \(d\), we obtain
\[
  W
  =
  \omega_H[a,c]\omega_H[c,d]\omega_H[d,b]
  =
  WXW.
\]
Hence the semigroupoid of essential classes is regular.

Finally, let
\[
  U,V\in\mathcal E(u,u).
\]
We show that \(U\) is both a left divisor and a right divisor of \(V\)
within \(\mathcal E(u,u)\).  Write
\[
  V=\omega_H[a,b].
\]
The left endpoint of a representative of \(U\) has the same type as
\(a\).  Using Propositions~\ref{prop:orbit-core}
and~\ref{prop:transport}, we may therefore represent \(U\) as
\[
  U=\omega_H[a,t].
\]
Since \(U\) and \(V\) are essential,
\[
  t>a_+,
  \qquad
  b>a_+.
\]
As above, perfectness of \(M\) allows us to choose
\[
  z\in M\cap\bigl(a_+,\min\{b,t\}\bigr).
\]
Corollary~\ref{cor:exceptional-endpoint-cofinality} gives
\[
  \inf H_{[0,a]}t=a_+.
\]
Choose \(h\in H_{[0,a]}\) such that
\[
  a_+<h(t)<z<\min\{b,t\},
\]
and put \(t'=h(t)\).  Then
\[
  U=\omega_H[a,t']
\]
and
\[
  V=U\,\omega_H[t',b].
\]
The second factor is an essential loop at \(u\), since its interval
contains \(z\) in its interior.  Thus \(U\) is a left divisor of \(V\)
in \(\mathcal E(u,u)\).

The symmetric argument, aligning the right endpoints and using the
pointwise stabilizer of the interval to their right, shows that \(U\)
is also a right divisor of \(V\).

All the hypotheses of Lemma~\ref{lem:local-cancellation-criterion} are
now satisfied.  Hence the essential classes form a groupoid.
\end{proof}

For every inner vertex \(u\), denote the identity of this groupoid at
\(u\) by
\[
  e_u\in\mathcal E(u,u).
\]
Retain the notation
\[
  d_L,d_R,
  \quad N_L,N_R,
  \quad \ell,r,
  \quad v_L,v_R,
  \quad \Lambda_L
\]
from Subsection~\ref{subsec:boundary-periods}, and put
\[
  A=\pi_{\mathrm{ab}}(H),
  \qquad
  q=\bigl[d_L\mathbb Z\times d_R\mathbb Z:A\bigr].
\]
As in Subsection~\ref{subsec:exact-inner-path-count}, \(q\) is the
least positive integer \(t\) such that
\[
  (td_L,0)\in A.
\]

\begin{proposition}[Exact number of essential path classes]
\label{prop:essential-exact-count}
For every pair of inner vertices \(u,v\) of \(\mathcal L\),
\[
  |\mathcal E(u,v)|=q.
\]
In particular, there are only finitely many essential inner path
classes.
\end{proposition}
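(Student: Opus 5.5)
The plan is to mirror the proof of Proposition~\ref{prop:exact-inner-path-count}, with the groupoid of Corollary~\ref{cor:minimal-groupoid} replaced by the essential groupoid of Proposition~\ref{prop:essential-groupoid}. The argument has three steps.

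First, I will show that \(\Lambda_L\) lies in \(\mathcal E(v_L,v_L)\) and that, in this group, \(\Lambda_L^t=e_{v_L}\) if and only if \((td_L,0)\in A\).

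For essentiality, note that \(\Lambda_L=[B_L]\) comes from the left boundary comb. Concretely, \(B_L\) is the subdivision word of the interval \([.0^{N_L+d_L},.0^{N_L}]\). This interval is a fundamental domain for an element \(g\in H\) that has the branch \(0^{N_L}\to0^{N_L+d_L}\), which exists by Lemma~\ref{lem:boundary-periodicity} and Lemma~\ref{lem:branch-criterion}. Since \(\inf M=0\) and \(M\) is \(g\)-invariant, \(M\) meets the half-open fundamental domain of \(g\) near \(0\). Because \(M\) is perfect, I expect to arrange that it meets the open interval after a harmless translation by a power of \(g\). If only the endpoint lies in \(M\), I will replace the domain by a shifted copy and use Proposition~\ref{prop:transport}; this is a detail to check.

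The order characterization then follows the proof of Lemma~\ref{lem:left-boundary-order}. Proposition~\ref{prop:endpoint-relations} gives a relation \(\Lambda_L^t[Z]=[Z]\), where \([Z]\) is represented by a central interval. The class \([Z]\) is essential: it sits between \(\ell\) and \(r\), and by Corollary~\ref{cor:boundary-relations} it is absorbed by \(\Lambda_L\) or \(\Lambda_R\), which are essential. More simply, \(Z\) corresponds to an interval \([.0^{m}1^\infty\text{-ish},\,.1^{m'}0\ldots]\) that contains most of \((0,1)\), hence contains points of \(M\). Cancelling \([Z]\) in the essential groupoid gives \(\Lambda_L^t=e_{v_L}\). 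Conversely, \(\Lambda_L^t=e_{v_L}\) gives \(\Lambda_L^t[Y]=[Y]\) for the \(Y\) of Corollary~\ref{cor:boundary-relations}(2), and the converse part of Proposition~\ref{prop:endpoint-relations} yields \((td_L,0)\in A\). So \(\Lambda_L\) has order exactly \(q\).

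Second, I will count \(\mathcal E(v_L,v)\). Fix \(P\in\mathcal E(v_L,v)\), which is possible since this set is nonempty by Proposition~\ref{prop:essential-groupoid}. The classes \(\Lambda_L^jP\) for \(0\le j<q\) are essential, since a product with an essential factor is essential, and they are distinct by cancellation and the first step. Given any \(Q\in\mathcal E(v_L,v)\), Lemma~\ref{lem:boundary-comparison} gives \(\Lambda_L^{m}Q=\Lambda_L^{n}P\). All terms are essential, so cancelling in the essential groupoid gives \(Q=\Lambda_L^{n-m}P\) with the exponent read mod \(q\). Hence \(|\mathcal E(v_L,v)|=q\).

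Third, for arbitrary inner \(u,v\), I choose \(C\in\mathcal E(v_L,u)\). Left multiplication by \(C\) maps \(\mathcal E(u,v)\) bijectively onto \(\mathcal E(v_L,v)\), with inverse given by multiplication by \(C^{-1}\); essentiality is preserved because \(C\) is essential. Finiteness follows since there are finitely many inner vertices.

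The main obstacle is the first step. One must verify that \(\Lambda_L\) and the central classes \([Z]\) and \([Y]\) are essential, so that cancellation in the essential groupoid is legitimate. For \([Z]\) the cleanest route is to note that its representing interval is \([.0^{k}1,\,.1^{k'}0\ldots]\) with \(k,k'\) large. Such an interval contains any fixed point of \(M\) once \(k,k'\) exceed the depth of that point, and the extreme depths can be increased freely in Proposition~\ref{prop:endpoint-relations}.
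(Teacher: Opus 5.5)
Your proof is correct and has the same skeleton as the paper's. First you determine the order of \(\Lambda_L\), then you count \(\mathcal E(v_L,v)\) via Lemma~\ref{lem:boundary-comparison}, and finally you transport by left multiplication with some \(U\in\mathcal E(v_L,u)\). The real difference is in how you handle essentiality.

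\textbf{Your route.} You first show that \(\Lambda_L=\omega_H[2^{-N_L-d_L},2^{-N_L}]\) is itself essential; your interval \([.0^{N_L+d_L},.0^{N_L}]\) should be read this way. The claim is true:
\begin{itemize}
\item The element \(g\in H\) with branch \(0^{N_L}\to0^{N_L+d_L}\) acts as \(x\mapsto2^{-d_L}x\) on \([0,2^{-N_L}]\). So \(\inf M=0\) and invariance of \(M\) give a point of \(M\) in \([2^{-N_L-d_L},2^{-N_L})\).
\item If that point is the left endpoint and \(M\) accumulates there only from the left, then \(g^{-1}\) carries those accumulating points into the open interval.
\item Your fallback of passing to a translate of the representing interval cannot help, since essentiality is a property of the class.
\end{itemize}
Once \(\Lambda_L\in\mathcal E(v_L,v_L)\), the class \([Z]=\Lambda_L^t[Z]\) is automatically essential. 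This relation itself, not Corollary~\ref{cor:boundary-relations}, is the absorption you want. Lemma~\ref{lem:left-boundary-order} and Proposition~\ref{prop:exact-inner-path-count} then transcribe verbatim. You also get the sharper fact that \(\Lambda_L\) has order exactly \(q\) in the vertex group.

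\textbf{The paper's route.} The paper never needs the extra dynamical lemma and never asserts that \(\Lambda_L\) or \([Z]\) is essential. It sets \(Q=[Z]e_{v_R}\) to get \(\Lambda_L^qe_{v_L}=e_{v_L}\), hence \(\Lambda_L^qW=W\) for every essential \(W\). It tests distinctness by right multiplication with \(W_0^{-1}[Y]\). Its only geometric input is a test path \(Y\) with \([Y]=\omega_H[2^{-n},1-2^{-m}]\), where \(n\equiv N_L\), \(m\equiv N_R\) are large enough that \(M\) meets \((2^{-n},1-2^{-m})\).

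\textbf{One point to tighten.} In your converse step you need \([Y]\) essential, so that \(e_{v_L}[Y]=[Y]\). This is not evident for the \(Y\) of Corollary~\ref{cor:boundary-relations}(2) when its \(t\) is positive. Either fix works:
\begin{itemize}
\item deepen the extreme leaves, as you propose for \(Z\); or
\item replace \(Y\) by \(B_LY\). It still admits a positive diagram \(p_H\to\ell B_LYr\) (develop \(\ell\) through one period), and its class \(\Lambda_L[Y]\) is essential.
\end{itemize}
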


\begin{proof}
Since \((qd_L,0)\in A\), Proposition~\ref{prop:endpoint-relations}
gives a nonempty inner \(1\)-path
\[
  Z\colon v_L\longrightarrow v_R
\]
such that
\[
  \Lambda_L^q[Z]=[Z].
\]
By Proposition~\ref{prop:essential-groupoid}, the class
\[
  Q=[Z]e_{v_R}
\]
is essential, and it satisfies
\[
  \Lambda_L^qQ=Q.
\]
Multiplying on the right by the inverse of \(Q\) in the essential
groupoid gives
\[
  \Lambda_L^q e_{v_L}=e_{v_L}.
\]
Consequently, for every \(W\in\mathcal E(v_L,v)\),
\[
\begin{aligned}
  \Lambda_L^qW
  &=\Lambda_L^q e_{v_L}W
  \\
  &=e_{v_L}W
  \\
  &=W.
\end{aligned}
\]

To apply the converse part of Proposition~\ref{prop:endpoint-relations},
we next choose a nonempty inner \(1\)-path
\[
  Y\colon v_L\longrightarrow v_R
\]
such that \([Y]\) is essential and there is a positive diagram
\[
  p_H\longrightarrow_{\mathcal L}\ell Yr.
\]
Choose sufficiently large integers
\[
  n\equiv N_L\pmod{d_L},
  \qquad
  m\equiv N_R\pmod{d_R},
\]
such that
\[
  M\cap(2^{-n},1-2^{-m})\neq\varnothing.
\]
A binary tree whose extreme leaves are \(0^n\) and \(1^m\) gives a
positive diagram
\[
  p_H\longrightarrow_{\mathcal L}\ell Yr,
\]
where
\[
  [Y]=\omega_H[2^{-n},1-2^{-m}]
  \in\mathcal E(v_L,v_R).
\]
Here the labels of the extreme edges are \(\ell,r\) by
Lemma~\ref{lem:boundary-periodicity}.

Fix an inner vertex \(v\) and choose
\[
  W_0\in\mathcal E(v_L,v).
\]
We claim that the \(q\) classes
\[
  W_0,\ \Lambda_LW_0,\ \ldots,\ \Lambda_L^{q-1}W_0
\]
are distinct.  Throughout this argument, \(\Lambda_L^0W\) means
\(W\).  Suppose that
\[
  \Lambda_L^iW_0=\Lambda_L^jW_0
\]
for some \(0\leq i<j<q\).  Multiplying on the right by
\(W_0^{-1}[Y]\), we obtain
\[
  \Lambda_L^i[Y]=\Lambda_L^j[Y].
\]
The converse part of Proposition~\ref{prop:endpoint-relations} now
gives
\[
  ((i-j)d_L,0)\in A.
\]
Hence \(((j-i)d_L,0)\in A\), contradicting the choice of \(q\).  This
proves the claim.

Now let \(W\in\mathcal E(v_L,v)\).  By
Lemma~\ref{lem:boundary-comparison}, there are positive integers \(i,j\)
such that
\[
  \Lambda_L^iW=\Lambda_L^jW_0.
\]
Choose a positive integer \(k\) such that \(k+i\) is divisible by
\(q\).  Multiplying on the left by \(\Lambda_L^k\) and using the
periodicity proved above gives
\[
  W=\Lambda_L^tW_0
\]
for some \(t\in\{0,\ldots,q-1\}\).  Therefore
\[
  |\mathcal E(v_L,v)|=q.
\]

Finally, let \(u,v\) be arbitrary inner vertices.  Choose
\[
  U\in\mathcal E(v_L,u).
\]
Left multiplication by \(U\) defines a bijection
\[
  \mathcal E(u,v)
  \longrightarrow
  \mathcal E(v_L,v),
  \qquad
  W\longmapsto UW,
\]
whose inverse is left multiplication by \(U^{-1}\).  Hence
\[
  |\mathcal E(u,v)|=q.
\]
Since \(\mathcal L\) has finitely many inner vertices, the set of
essential inner path classes is finite.
\end{proof}

We can now characterize finiteness of \(\Path(\mathcal L)\) in terms
of the normalized cores associated with the finitely many gaps in
\(\mathcal I\).

\begin{corollary}
\label{cor:exceptional-gap-reduction}
Let \(H\leq F\) be closed, suppose that \(\mathcal L=C(H)\) is finite
and full and that \(\pi_{\mathrm{ab}}(H)\) has rank two, and assume
that \(H\) has no fixed point in \((0,1)\) and does not act minimally.
Let
\[
  I_1,\ldots,I_k
\]
be the gaps in the finite collection of
Proposition~\ref{prop:gap-orbit-representatives}, and put
\[
  \mathcal L_i=C\bigl(K_{I_i}(H)\bigr).
\]
Then
\[
  \Path(\mathcal L)\text{ is finite}
  \quad\Longleftrightarrow\quad
  \Path(\mathcal L_i)\text{ is finite for every }1\leq i\leq k.
\]
Moreover, the intervals \(I_i\) and the cores \(\mathcal L_i\) can be
constructed effectively from \(\mathcal L\).  Each \(I_i\) is a
proper rational \(H\)-isolated interval, and each \(\mathcal L_i\) is
finite and full and has fewer inner edges than \(\mathcal L\).
\end{corollary}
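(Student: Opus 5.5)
The plan is to assemble the corollary from the two counting results of this subsection, in the same way that Corollary~\ref{cor:fixed-points-orbital-reduction} was assembled from Proposition~\ref{prop:fixed-points-orbital-reduction-inner}. Throughout we use that \(\mathcal L\) is finite.

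The first step is to reduce everything to inner path classes. By Lemma~\ref{lem:inner-reduction}\textup{(2)}, \(\Path(\mathcal L)\) is finite if and only if \(\Path_{\mathrm{in}}(\mathcal L)\) is finite. The reverse implication is trivial, since \(\Path_{\mathrm{in}}(\mathcal L)\subseteq\Path(\mathcal L)\).

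The second step is to split \(\Path_{\mathrm{in}}(\mathcal L)\) into essential and nonessential classes. This dichotomy is well defined, as noted before Proposition~\ref{prop:nonessential-gap-reduction}, because essentiality does not depend on the representing interval. By Proposition~\ref{prop:essential-exact-count}, the essential classes number exactly \(q\) per ordered pair of inner vertices, so there are finitely many of them. Hence \(\Path_{\mathrm{in}}(\mathcal L)\) is finite if and only if the set of nonessential classes is finite. By Proposition~\ref{prop:nonessential-gap-reduction}, this holds if and only if every \(\Path(\mathcal L_i)\) is finite. This chain gives the displayed equivalence.

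The third step collects the remaining assertions, each from an earlier result:
\begin{enumerate}[label=\textup{(\roman*)}]
\item The intervals \(I_i\) are effectively constructible by Proposition~\ref{prop:gap-orbit-representatives}, which also shows that they are gaps with rational endpoints.
\item They are \(H\)-isolated by Proposition~\ref{prop:nonminimal-rank-two-alternative}.
\item They are proper because \(0<a_e<b_e<1\), as shown in that proof; equivalently, a gap cannot equal \((0,1)\) since \(M\neq\varnothing\).
\item Each \(\mathcal L_i\) is finite and full by Proposition~\ref{prop:normalized-core}.
\item Each \(\mathcal L_i\) is effectively constructible by Proposition~\ref{prop:normalized-core-effective}.
\item Each \(\mathcal L_i\) has fewer inner edges than \(\mathcal L\) by Corollary~\ref{cor:strict-inner-edge-descent}.
\end{enumerate}

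The only subtle point is effectivity. Proposition~\ref{prop:normalized-core-effective} requires the preperiods and periods of the binary expansions of the endpoints of \(I_i\). These are available because the endpoints are given explicitly as \(w_e0\rho(e_0)\) and \(w_e1\lambda(e_1)\), with eventually periodic tails read off from the finite automaton. Two details need checking. First, the expansions in the chart convention (dyadic \(a\) ending in \(0^\infty\), dyadic \(b\) ending in \(1^\infty\)) may differ from these words, but converting between the two expansions of a dyadic number is effective. Second, reducing an eventually periodic word to its least preperiod and period is a finite computation. Apart from this bookkeeping, the proof is a direct combination of the cited results.
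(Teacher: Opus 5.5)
Your proposal is correct and follows the paper's proof essentially verbatim. You reduce to \(\Path_{\mathrm{in}}(\mathcal L)\) via Lemma~\ref{lem:inner-reduction}\textup{(2)}, dispose of the essential classes with Proposition~\ref{prop:essential-exact-count}, handle the nonessential ones with Proposition~\ref{prop:nonessential-gap-reduction}, and collect the remaining assertions from Propositions~\ref{prop:gap-orbit-representatives}, \ref{prop:nonminimal-rank-two-alternative}, \ref{prop:normalized-core}, \ref{prop:normalized-core-effective} and Corollary~\ref{cor:strict-inner-edge-descent}; your extra remark on converting gap endpoints to the chart convention and computing least preperiods and periods is a correct elaboration of an effectivity point the paper leaves implicit.
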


\begin{proof}
By Lemma~\ref{lem:inner-reduction}\textup{(2)}, finiteness of
\(\Path(\mathcal L)\) is equivalent to finiteness of
\(\Path_{\mathrm{in}}(\mathcal L)\).  The essential inner path classes
form a finite set by Proposition~\ref{prop:essential-exact-count}.
Hence \(\Path_{\mathrm{in}}(\mathcal L)\) is finite if and only if the
nonessential classes form a finite set.  By
Proposition~\ref{prop:nonessential-gap-reduction}, this is equivalent
to finiteness of every \(\Path(\mathcal L_i)\).

The remaining assertions follow from
Proposition~\ref{prop:gap-orbit-representatives},
Proposition~\ref{prop:normalized-core},
Proposition~\ref{prop:isolated-interval-path-correspondence}, and
Corollary~\ref{cor:strict-inner-edge-descent}.
\end{proof}

Thus, for a nonminimal rank-two action, determining whether \(C(H)\)
has finitely many path classes reduces to the same question for
finitely many effectively constructed finite full cores with fewer
inner edges.  If \(H\) has fixed points, these are the normalized cores
associated with its orbitals, as in
Corollary~\ref{cor:fixed-points-orbital-reduction}.  Otherwise, they are
the normalized cores associated with the gaps in \(\mathcal I\), as in
Corollary~\ref{cor:exceptional-gap-reduction}.

\section{Finiteness properties of groups with finite full core}
\label{sec:finiteness-properties-finite-full-core}

\subsection{Iterated restrictions and rank-one obstructions}
\label{subsec:iterated-restrictions-rank-one-obstructions}

Throughout this subsection, let \(H\leq F\) be closed and suppose that
its core \(\mathcal L=C(H)\) is finite and full.

The reductions in Section~\ref{sec:nonminimal-rank-two} replace \(H\)
by normalized restrictions to proper rational \(H\)-isolated intervals.
Repeating these reductions may produce a group with rank-one image in
the abelianization of \(F\).  To apply the obstruction from
Section~\ref{sec:rational-isolated-intervals} to \(H\), we first show
that such a group corresponds to a normalized restriction of \(H\) to
a single rational \(H\)-isolated interval.

\begin{lemma}[Iterated normalized restrictions]
\label{lem:iterated-normalized-restrictions}
Let \(I\) be a rational \(H\)-isolated interval, put
\[
  G=K_I(H),
\]
and let \(J\) be a rational \(G\)-isolated interval.  Then
\[
  I'=\nu_I(J)
\]
is a rational \(H\)-isolated interval.

Moreover, the increasing homeomorphism
\[
  \phi=\nu_J(\nu_I|_J)\nu_{I'}^{-1}
  \colon(0,1)\longrightarrow(0,1),
\]
extended continuously to \([0,1]\), satisfies
\[
  \phi^{-1}K_J(G)\phi=K_{I'}(H).
\]
In particular,
\[
  \operatorname{rank}\pi_{\mathrm{ab}}\bigl(K_J(G)\bigr)
  =
  \operatorname{rank}\pi_{\mathrm{ab}}\bigl(K_{I'}(H)\bigr).
\]
\end{lemma}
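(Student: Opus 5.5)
The plan has three parts: show that \(I'\) is rational and \(H\)-isolated, identify the two groups by an explicit conjugation, and compare endpoint slopes to get the rank equality.

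\textbf{Isolation and rationality.} Write \(J=(c,d)\), so \(I'=(\nu_I(c),\nu_I(d))\), where \(\nu_I\) is extended continuously by \(\nu_I(0)=a\) and \(\nu_I(1)=b\). Since \(\nu_I\) maps rational points bijectively onto the rational points of \(I\), the endpoints of \(I'\) are rational.

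For isolation, let \(h\in H\) with \(h(I')\cap I'\neq\varnothing\). Since \(I'\subseteq I\), also \(h(I)\cap I\neq\varnothing\), and \(H\)-isolation of \(I\) gives \(h\in\operatorname{Stab}_H(I)\). Let \(g=\nu_I(h|_I)\nu_I^{-1}\in G\). Then \(g(J)\cap J\neq\varnothing\), so \(g(J)=J\) by \(G\)-isolation of \(J\), and transporting back by \(\nu_I\) gives \(h(I')=I'\).

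The same argument shows
\[
  \operatorname{Stab}_H(I')=\{h\in\operatorname{Stab}_H(I):\nu_I(h|_I)\nu_I^{-1}\in\operatorname{Stab}_G(J)\}.
\]
In words, \(h\mapsto \nu_I(h|_I)\nu_I^{-1}\) maps \(\operatorname{Stab}_H(I')\) onto \(\operatorname{Stab}_G(J)\).

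\textbf{Conjugacy.} Take \(h\in\operatorname{Stab}_H(I')\) and let \(g\) be as above. With the paper's left-to-right composition, \(g|_J=(\nu_I|_J)\,h|_{I'}\,(\nu_I|_J)^{-1}\). Therefore the element of \(K_J(G)\) coming from \(h\) is
\[
  \nu_J\,g|_J\,\nu_J^{-1}=\nu_J(\nu_I|_J)\,h|_{I'}\,(\nu_I|_J)^{-1}\nu_J^{-1}.
\]
Substituting \(h|_{I'}=\nu_{I'}^{-1}k\,\nu_{I'}\) with \(k\in K_{I'}(H)\), this element equals \(\phi k\phi^{-1}\) for \(\phi=\nu_J(\nu_I|_J)\nu_{I'}^{-1}\). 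Since \(h\mapsto g\) is onto \(\operatorname{Stab}_G(J)\), this gives \(K_J(G)=\phi K_{I'}(H)\phi^{-1}\), which is the claimed identity. Only the bookkeeping of the composition order needs care here; the content is formal.

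\textbf{Ranks: the main obstacle.} A conjugation by the homeomorphism \(\phi\) need not preserve \(\pi_{\mathrm{ab}}\), so I would compare endpoint slopes directly. Take \(h\in\operatorname{Stab}_H(I')\) with images \(g\in\operatorname{Stab}_G(J)\) and \(k\in K_{I'}(H)\), and let \(k'=\phi k\phi^{-1}\in K_J(G)\) be the element corresponding to \(g\).

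By Lemma~\ref{lem:normalized-restriction} applied to \(I'\), \(\log_2k'(0^+)\) is a fixed positive rational multiple of \(\log_2h'(\nu_I(c)^+)\). The same holds at \(1^-\) with \(\nu_I(d)^-\).

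By the same lemma applied to \(G\) and \(J\), \(\log_2 (k')'(0^+)\) is a fixed positive rational multiple of \(\log_2g'(c^+)\); the symmetric statement holds at \(d^-\).

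It remains to check \(\log_2 g'(c^+)=\log_2 h'(\nu_I(c)^+)\), up to a fixed positive rational factor if needed. If \(c\in(0,1)\), then \(\nu_I\) is affine on a right neighbourhood of \(c\), since it is affine on each \([0^n1]\) and \([1^n0]\); this includes the dyadic case, using the piece that starts at \(c\). Hence the one-sided slopes agree exactly. If \(c=0\), then \(\nu_I(c)=a\), and the factor is supplied by Lemma~\ref{lem:normalized-restriction} for \(I\).

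So \(\pi_{\mathrm{ab}}(K_J(G))\) and \(\pi_{\mathrm{ab}}(K_{I'}(H))\) are images of one abelian group under invertible diagonal rational linear maps of \(\mathbb Q^2\), namely the one-sided log-slope vectors of \(\operatorname{Stab}_H(I')\) at the endpoints of \(I'\). Such maps preserve rank, which proves the last assertion.
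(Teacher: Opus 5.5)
Your proposal is correct. The rationality and isolation argument and the conjugacy computation are essentially the paper's; your identity \(K_J(G)=\phi K_{I'}(H)\phi^{-1}\) is the same as the stated one.

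Your route differs only in the rank comparison. The paper computes no slopes. It observes that \(f\in F\) lies in \(\ker\pi_{\mathrm{ab}}\) exactly when \(f\) is the identity on neighbourhoods of \(0\) and \(1\). This is a topological property, preserved by conjugation by any homeomorphism \(\phi\) of \([0,1]\). Hence \(\phi\) carries \(K_J(G)\cap\ker\pi_{\mathrm{ab}}\) onto \(K_{I'}(H)\cap\ker\pi_{\mathrm{ab}}\), and the two images, being the corresponding quotients, are isomorphic. So the obstacle you flag, that \(\phi\) does not preserve the values of \(\pi_{\mathrm{ab}}\), disappears once one passes to kernels.

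Your argument instead applies Lemma~\ref{lem:normalized-restriction} twice and uses the chain rule on the one-sided affine pieces of \(\nu_I\) at interior endpoints \(c,d\). This is valid. The factors \(z/L\) and \(o/L\) are positive, because the period of a nondyadic expansion contains both digits and the ratio is \(1\) under the dyadic conventions. Your version also gives a little more: both images are positive rational diagonal rescalings of one lattice, the endpoint log-slopes of \(\operatorname{Stab}_H(I')\). The cost is a case analysis (\(c=0\) versus \(c\in(0,1)\), and symmetrically at \(d\)) that the kernel argument avoids entirely.
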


\begin{proof}
Write
\[
  I=(a,b),
  \qquad
  J=(c,d),
\]
where
\[
  0\leq c<d\leq1.
\]
Using the continuous extension of \(\nu_I\) to the endpoints, we have
\[
  I'=\bigl(\nu_I(c),\nu_I(d)\bigr).
\]
The canonical chart maps rational points of \((0,1)\) to rational
points of \(I\), and
\[
  \nu_I(0)=a,
  \qquad
  \nu_I(1)=b.
\]
Thus \(I'\) has rational endpoints.

We next prove that \(I'\) is \(H\)-isolated.  Suppose that \(h\in H\)
satisfies
\[
  h(I')\cap I'\neq\varnothing.
\]
Since \(I'\subseteq I\), it follows that
\[
  h(I)\cap I\neq\varnothing.
\]
The interval \(I\) is \(H\)-isolated, so \(h(I)=I\).  Hence its
normalized restriction
\[
  g=\nu_I(h|_I)\nu_I^{-1}
\]
belongs to \(G\).  Since \(\nu_I(J)=I'\), the assumed intersection
gives
\[
  g(J)\cap J\neq\varnothing.
\]
The interval \(J\) is \(G\)-isolated, so \(g(J)=J\), and therefore
\[
  h(I')=I'.
\]
This proves that \(I'\) is \(H\)-isolated.

We now compare the two normalized restrictions.  Since \(J\) is an
interval for \(G\leq F\), it is contained in \((0,1)\), the domain of
\(\nu_I\).  Thus the composition \(\nu_J(\nu_I|_J)\) is well defined
and maps \((0,1)\) onto \(I'\):
\[
  (0,1)\xrightarrow{\ \nu_J\ }J
  \xrightarrow{\ \nu_I|_J\ }I'.
\]
Every element of \(\operatorname{Stab}_H(I')\) also stabilizes \(I\),
since its image of \(I\) meets \(I\).  For
\(h\in\operatorname{Stab}_H(I)\) and
\[
  g=\nu_I(h|_I)\nu_I^{-1},
\]
we have
\[
  h(I')=I'
  \quad\Longleftrightarrow\quad
  g(J)=J.
\]
Conversely, every \(g\in\operatorname{Stab}_G(J)\) is the normalized
restriction of some \(h\in\operatorname{Stab}_H(I)\), and the displayed
equivalence shows that this \(h\) stabilizes \(I'\).  Consequently,
\[
  K_J(G)
  =
  \bigl(\nu_J(\nu_I|_J)\bigr)
  \bigl(\operatorname{Stab}_H(I')|_{I'}\bigr)
  \bigl(\nu_J(\nu_I|_J)\bigr)^{-1}.
\]
By definition,
\[
  K_{I'}(H)
  =
  \nu_{I'}
  \bigl(\operatorname{Stab}_H(I')|_{I'}\bigr)
  \nu_{I'}^{-1}.
\]
Both \(\nu_J(\nu_I|_J)\) and \(\nu_{I'}\) are increasing
homeomorphisms from \((0,1)\) onto \(I'\).  Thus
\[
  \phi=\nu_J(\nu_I|_J)\nu_{I'}^{-1}
\]
is an increasing homeomorphism of \((0,1)\), extending continuously to
an increasing homeomorphism of \([0,1]\).  The preceding equalities
give
\[
  \phi^{-1}K_J(G)\phi=K_{I'}(H).
\]

Finally, an element \(f\in F\) satisfies
\[
  \pi_{\mathrm{ab}}(f)=(0,0)
\]
if and only if it is the identity on a neighborhood of \(0\) and a
neighborhood of \(1\).  This property is preserved by conjugation by
\(\phi\).  Hence the conjugacy carries
\[
  \ker\bigl(\pi_{\mathrm{ab}}|_{K_J(G)}\bigr)
\]
onto
\[
  \ker\bigl(\pi_{\mathrm{ab}}|_{K_{I'}(H)}\bigr).
\]
It therefore induces an isomorphism
\[
  \pi_{\mathrm{ab}}\bigl(K_J(G)\bigr)
  \cong
  \pi_{\mathrm{ab}}\bigl(K_{I'}(H)\bigr),
\]
so these two subgroups of \(\mathbb Z^2\) have the same rank.
\end{proof}

Recall that the interval \((0,1)\) is \(H\)-isolated.  Its canonical
chart is the identity, so
\[
  K_{(0,1)}(H)=H.
\]
We allow this interval in the following characterization.

\begin{proposition}[Rank-one characterization of infinite path semigroupoids]
\label{prop:rank-one-isolated-characterization}
Let \(H\leq F\) be closed and suppose that \(\mathcal L=C(H)\) is
finite and full.  Then \(\Path(\mathcal L)\) is infinite if and only if
there is a rational \(H\)-isolated interval \(I\) such that
\[
  \operatorname{rank}\pi_{\mathrm{ab}}\bigl(K_I(H)\bigr)=1.
\]
\end{proposition}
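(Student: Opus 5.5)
The easy direction is "if". Suppose \(I\) is a rational \(H\)-isolated interval with \(\operatorname{rank}\pi_{\mathrm{ab}}(K_I(H))=1\). Then Corollary~\ref{cor:rank-one-isolated-interval} directly gives that \(\Path(C(H))\) is infinite. This also covers \(I=(0,1)\), where \(K_I(H)=H\) and Corollary~\ref{cor:rank-one-infinite-paths} applies.

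For the converse I will argue by strong induction on the inner-edge complexity \(\kappa(H)\). The statement to prove is: if \(\Path(\mathcal L)\) is infinite, then some rational \(H\)-isolated interval has rank-one normalized restriction. By the discussion in Subsection~\ref{subsec:boundary-periods}, \(A=\pi_{\mathrm{ab}}(H)\) has rank one or two. If it has rank one, take \(I=(0,1)\). Assume therefore that \(A\) has rank two. If the action is minimal, Corollary~\ref{cor:minimal-rank-two-finfty} says \(\Path(\mathcal L)\) is finite, which contradicts the hypothesis. So the action is not minimal, and Proposition~\ref{prop:nonminimal-rank-two-alternative} splits the argument into two cases.

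\textbf{Fixed-point case.} Corollary~\ref{cor:fixed-points-orbital-reduction} gives an orbital \(I_i\) for which \(\Path(C(K_{I_i}(H)))\) is infinite.

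\textbf{Exceptional-minimal-set case.} Corollary~\ref{cor:exceptional-gap-reduction} gives a gap \(I_i\in\mathcal I\) with the same property.

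In both cases \(I_i\) is a proper rational \(H\)-isolated interval. Set \(G=K_{I_i}(H)\). This group is closed (Lemma~\ref{lem:normalized-restriction}), has finite full core (Proposition~\ref{prop:normalized-core}), and satisfies \(\kappa(G)<\kappa(H)\) (Corollary~\ref{cor:strict-inner-edge-descent}). Since \(\Path(C(G))\) is infinite, the induction hypothesis yields a rational \(G\)-isolated interval \(J\) (possibly \(J=(0,1)\)) with \(\operatorname{rank}\pi_{\mathrm{ab}}(K_J(G))=1\). Lemma~\ref{lem:iterated-normalized-restrictions} then shows that \(I'=\nu_{I_i}(J)\) is a rational \(H\)-isolated interval and that \(\operatorname{rank}\pi_{\mathrm{ab}}(K_{I'}(H))=1\). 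This closes the induction.

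The base of the induction needs no separate treatment, because strong induction on \(\kappa\) handles every value uniformly. If \(\kappa(H)\) is minimal among the cases that arise, the nonminimal rank-two branch would produce a \(G\) with strictly smaller \(\kappa\); the argument above applies there too, and the descent must terminate since \(\kappa\) is a nonnegative integer.

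I expect the only delicate point to be the bookkeeping in the inductive step: the hypotheses of the reduction corollaries (finite full core, rank two, non-minimality) must be in place before they are invoked. The transfer back to \(H\) must also be exactly what Lemma~\ref{lem:iterated-normalized-restrictions} provides, including the degenerate case \(J=(0,1)\), where \(I'=I_i\) and \(\phi\) is the identity.
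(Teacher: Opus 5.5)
Your proposal is correct and matches the paper's proof essentially step for step. Both use the "if" direction via Corollary~\ref{cor:rank-one-isolated-interval}, then induction on \(\kappa(H)\) with \(I=(0,1)\) in the rank-one case, exclusion of the minimal rank-two case by Corollary~\ref{cor:minimal-rank-two-finfty}, reduction through Corollaries~\ref{cor:fixed-points-orbital-reduction} and~\ref{cor:exceptional-gap-reduction}, and transfer back to \(H\) via Lemma~\ref{lem:iterated-normalized-restrictions}. The only point you leave implicit is that Corollary~\ref{cor:minimal-rank-two-finfty} needs \(H\) finitely generated, which follows from Theorem~\ref{thm:core-closure}\textup{(ii)}.
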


\begin{proof}
If such an interval exists,
Corollary~\ref{cor:rank-one-isolated-interval} shows that
\(\Path(\mathcal L)\) is infinite.

Conversely, suppose that \(\Path(\mathcal L)\) is infinite.  We prove
the existence of \(I\) by induction on
\[
  \kappa(H)=\bigl|E_{\mathrm{in}}(C(H))\bigr|.
\]
Assume that the assertion holds for every closed subgroup of \(F\)
with finite full core and smaller inner-edge complexity.

By Lemma~\ref{lem:normalized-restriction}, applied to \((0,1)\), both
coordinate projections of \(\pi_{\mathrm{ab}}(H)\) are nonzero.  Its
rank is therefore either one or two.  If it has rank one, take
\[
  I=(0,1).
\]

Suppose that \(\pi_{\mathrm{ab}}(H)\) has rank two.  The group \(H\) is
finitely generated by Theorem~\ref{thm:core-closure}\textup{(ii)}.  If
its action on \((0,1)\) were minimal,
Corollary~\ref{cor:minimal-rank-two-finfty} would imply that
\(\Path(\mathcal L)\) is finite.  Thus the action is not minimal.

If \(H\) has a fixed point in \((0,1)\), apply
Corollary~\ref{cor:fixed-points-orbital-reduction}.  Otherwise, apply
Corollary~\ref{cor:exceptional-gap-reduction}.  In either case, we
obtain finitely many proper rational \(H\)-isolated intervals
\[
  I_1,\ldots,I_r
\]
such that, on putting
\[
  G_i=K_{I_i}(H),
\]
we have
\[
  \Path(\mathcal L)\text{ is finite}
  \quad\Longleftrightarrow\quad
  \Path(C(G_i))\text{ is finite for every }i.
\]
Since \(\Path(\mathcal L)\) is infinite, there is an index \(j\) for
which
\[
  \Path(C(G_j))
\]
is infinite.  Put
\[
  G=G_j.
\]
By Lemma~\ref{lem:normalized-restriction} and
Proposition~\ref{prop:normalized-core}, \(G\) is closed and \(C(G)\) is
finite and full.  Since \(I_j\) is proper,
Corollary~\ref{cor:strict-inner-edge-descent} gives
\[
  \kappa(G)<\kappa(H).
\]
The induction hypothesis therefore provides a rational \(G\)-isolated
interval \(J\) such that
\[
  \operatorname{rank}\pi_{\mathrm{ab}}\bigl(K_J(G)\bigr)=1.
\]
Apply Lemma~\ref{lem:iterated-normalized-restrictions} to \(I_j\) and
\(J\).  The interval
\[
  I=\nu_{I_j}(J)
\]
is rational and \(H\)-isolated, and
\[
  \operatorname{rank}\pi_{\mathrm{ab}}\bigl(K_I(H)\bigr)
  =
  \operatorname{rank}\pi_{\mathrm{ab}}\bigl(K_J(G)\bigr)
  =
  1.
\]
This completes the induction.
\end{proof}

Proposition~\ref{prop:rank-one-isolated-characterization} identifies a
rank-one normalized restriction of the original group \(H\) whenever
\(\Path(C(H))\) is infinite.
Theorem~\ref{thm:rank-one-isolated-not-FP2} then implies that \(H\) is
not of type \(\mathrm{FP}_2\).  In the next subsection, we combine this
implication with the finite-path theorem for diagram groups to show
that \(H\) is of type \(\mathrm{FP}_2\) if and only if it is of type
\(F_\infty\), and that both conditions are equivalent to finiteness of
\(\Path(C(H))\).

\subsection{Finiteness properties and decidability}
\label{subsec:finiteness-properties-decidability}

We now combine
Proposition~\ref{prop:rank-one-isolated-characterization} with the
rank-one obstruction from
Section~\ref{sec:rational-isolated-intervals} and the finite-path
theorem for diagram groups.

\begin{theorem}
\label{thm:finite-full-core-equivalence}
Let \(H\leq F\) be closed and suppose that its core
\(\mathcal L=C(H)\) is finite and full.  The following conditions are
equivalent.
\begin{enumerate}[label=\textup{(\arabic*)}]
\item \(H\) is of type \(\mathrm{FP}_2\).
\item \(\Path(\mathcal L)\) is finite.
\item \(H\) is of type \(F_\infty\).
\end{enumerate}
\end{theorem}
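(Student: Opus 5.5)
We prove the cycle of implications \((2)\Rightarrow(3)\Rightarrow(1)\Rightarrow(2)\). Almost everything has been assembled earlier, so the work is in combining the right results in the right order.

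For \((2)\Rightarrow(3)\), note that \(\mathcal L\) is a finite directed \(2\)-complex. Theorem~\ref{thm:core-closure}\textup{(i)} identifies \(\Cl(H)=H\) with the diagram group \(D(\mathcal L,p_H)\). If \(\Path(\mathcal L)\) is finite, Theorem~\ref{thm:finite-path-semigroupoid} of Guba and Sapir applies and shows that this diagram group is of type \(F_\infty\).

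The implication \((3)\Rightarrow(1)\) is general. A group of type \(F_\infty\) admits a \(K(H,1)\) with finitely many cells in each dimension. The cellular chain complex of its universal cover is then a free resolution of \(\mathbb Z\) by finitely generated \(\mathbb ZH\)-modules. Hence \(H\) is of type \(\mathrm{FP}_\infty\), and in particular of type \(\mathrm{FP}_2\).

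The substantive direction is \((1)\Rightarrow(2)\), which we prove by contraposition. Suppose \(\Path(\mathcal L)\) is infinite. By Proposition~\ref{prop:rank-one-isolated-characterization}, there is a rational \(H\)-isolated interval \(I\), possibly \(I=(0,1)\), such that \(\pi_{\mathrm{ab}}(K_I(H))\) has rank one.

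Theorem~\ref{thm:rank-one-isolated-not-FP2} then shows that \(H\) is not of type \(\mathrm{FP}_2\). In the case \(I=(0,1)\) we have \(K_I(H)=H\), and one may equally appeal directly to Theorem~\ref{thm:rank-one-not-FP2}. Note that the quasi-retraction argument also covers this case, since \(\sigma\) and \(\rho\) become the identity.

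The only points needing care are bookkeeping. First, the hypotheses of Theorem~\ref{thm:rank-one-isolated-not-FP2} require \(C(H)\) to be finite and full, and \(H\) to be finitely generated. The latter follows from Theorem~\ref{thm:core-closure}\textup{(ii)}. Second, Proposition~\ref{prop:rank-one-isolated-characterization} is stated with exactly the same standing hypotheses as the present theorem.

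The genuine difficulty, the inductive reduction through fixed points and exceptional minimal sets, is already packaged in that proposition. So the main obstacle here is merely to confirm that the proposition applies verbatim, after which the theorem is immediate.
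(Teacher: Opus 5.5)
Your proposal is correct and matches the paper's proof essentially verbatim: \((2)\Rightarrow(3)\) via Theorem~\ref{thm:core-closure}\textup{(i)} and the Guba--Sapir Theorem~\ref{thm:finite-path-semigroupoid}, \((3)\Rightarrow(1)\) as a general fact, and \((1)\Rightarrow(2)\) by combining Proposition~\ref{prop:rank-one-isolated-characterization} with Theorem~\ref{thm:rank-one-isolated-not-FP2}. Your remark about the case \(I=(0,1)\), where \(K_I(H)=H\) and \(\sigma,\rho\) become the identity, is accurate but not needed, since Theorem~\ref{thm:rank-one-isolated-not-FP2} already covers that interval.
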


\begin{proof}
Suppose first that \(H\) is of type \(\mathrm{FP}_2\).  If
\(\Path(\mathcal L)\) were infinite,
Proposition~\ref{prop:rank-one-isolated-characterization} would provide
a rational \(H\)-isolated interval \(I\) such that
\[
  \operatorname{rank}\pi_{\mathrm{ab}}\bigl(K_I(H)\bigr)=1.
\]
Theorem~\ref{thm:rank-one-isolated-not-FP2} would then imply that \(H\)
is not of type \(\mathrm{FP}_2\), a contradiction.  Thus
\(\Path(\mathcal L)\) is finite, proving that \textup{(1)} implies
\textup{(2)}.

Now suppose that \(\Path(\mathcal L)\) is finite.  Since \(H\) is
closed, Theorem~\ref{thm:core-closure}\textup{(i)} gives an isomorphism
\[
  H\cong D(\mathcal L,p_H).
\]
The directed \(2\)-complex \(\mathcal L\) is finite and has only
finitely many homotopy classes of nonempty \(1\)-paths.
Theorem~\ref{thm:finite-path-semigroupoid} therefore shows that
\(D(\mathcal L,p_H)\), and hence \(H\), is of type \(F_\infty\).  Thus
\textup{(2)} implies \textup{(3)}.

Finally, every group of type \(F_\infty\) is of type
\(\mathrm{FP}_2\), so \textup{(3)} implies \textup{(1)}.
\end{proof}

Since every group of type \(F_\infty\) is finitely presented and every
finitely presented group is of type \(\mathrm{FP}_2\), the conditions
of Theorem~\ref{thm:finite-full-core-equivalence} are also equivalent
to \(H\) being finitely presented.

The same finite-path theorem applies to every nonempty base path in
\(\mathcal L\), not only to the distinguished edge \(p_H\).

\begin{corollary}
\label{cor:all-base-paths-finfty}
Let \(H\leq F\) be closed and of type \(\mathrm{FP}_2\), and suppose
that its core \(\mathcal L=C(H)\) is finite and full.  Then, for every
nonempty \(1\)-path \(w\) in \(\mathcal L\), the diagram group
\[
  D(\mathcal L,w)
\]
is of type \(F_\infty\).
\end{corollary}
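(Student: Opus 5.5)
The proof is a direct combination of Theorem~\ref{thm:finite-full-core-equivalence} with the Guba--Sapir finite-path theorem. The point is that Theorem~\ref{thm:finite-path-semigroupoid} does not refer to the base path at all: it asserts that \emph{every} diagram group over a finite directed \(2\)-complex with finitely many homotopy classes of nonempty \(1\)-paths is of type \(F_\infty\). So we only need to verify its hypotheses for \(\mathcal L\).

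First, \(\mathcal L\) is finite by assumption. It is also finite in the sense of the definition of a directed \(2\)-complex: there are finitely many vertices, edges and positive cells. This holds because a finite core has finitely many edges, each positive cell is determined by its top edge, and every vertex is an endpoint of an edge.

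Second, since \(H\) is closed, of type \(\mathrm{FP}_2\), and has a finite full core, the implication \textup{(1)}\(\Rightarrow\)\textup{(2)} of Theorem~\ref{thm:finite-full-core-equivalence} gives that \(\Path(\mathcal L)\) is finite. That is, \(\mathcal L\) has only finitely many homotopy classes of nonempty \(1\)-paths.

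Finally, fix any nonempty \(1\)-path \(w\) in \(\mathcal L\). The group \(D(\mathcal L,w)\) is by definition a diagram group over \(\mathcal L\). Applying Theorem~\ref{thm:finite-path-semigroupoid} to it yields that \(D(\mathcal L,w)\) is of type \(F_\infty\).

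There is no genuine obstacle here; all the depth lies in Theorem~\ref{thm:finite-full-core-equivalence}. The only point that needs a moment's care is making sure the cited Guba--Sapir theorem really applies to arbitrary base paths and not only to the distinguished edge \(p_H\). As stated, Theorem~\ref{thm:finite-path-semigroupoid} says ``every diagram group over \(L\)'', so this is covered.
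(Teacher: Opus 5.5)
Your proposal is correct and follows the paper's proof exactly: Theorem~\ref{thm:finite-full-core-equivalence} gives finiteness of \(\Path(\mathcal L)\), and Theorem~\ref{thm:finite-path-semigroupoid}, which concerns every diagram group over the finite complex \(\mathcal L\) and not only the one based at \(p_H\), then gives type \(F_\infty\) for \(D(\mathcal L,w)\) for any nonempty \(w\). Your extra check that \(\mathcal L\) is finite as a directed \(2\)-complex is harmless, since that is already what ``finite core'' means in the paper.
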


\begin{proof}
By Theorem~\ref{thm:finite-full-core-equivalence},
\(\Path(\mathcal L)\) is finite.  Since \(\mathcal L\) is a finite
directed \(2\)-complex, Theorem~\ref{thm:finite-path-semigroupoid}
applies to every nonempty \(1\)-path \(w\) in \(\mathcal L\).
\end{proof}

We next show that the equivalent conditions of
Theorem~\ref{thm:finite-full-core-equivalence} can be decided from the
finite full core \(\mathcal L=C(H)\).

We first explain how to compute the rank of the endpoint image from a
finite core.  Let \(G\leq F\) be closed and suppose that
\(\mathcal M=C(G)\) is finite.  By
\cite[Remark~5.13]{GPSclosed}, a finite generating set
\[
  g_1,\ldots,g_s
\]
of \(G\), represented by tree-pair diagrams, can be computed from
\(\mathcal M\).  The extreme branches of these diagrams determine the
vectors \(\pi_{\mathrm{ab}}(g_j)\), and
\[
  \pi_{\mathrm{ab}}(G)
  =
  \left\langle
    \pi_{\mathrm{ab}}(g_1),\ldots,\pi_{\mathrm{ab}}(g_s)
  \right\rangle.
\]
Thus the rank of \(\pi_{\mathrm{ab}}(G)\) is the dimension over
\(\mathbb Q\) of the span of these finitely many vectors, and can be
computed from \(\mathcal M\).

\begin{theorem}
\label{thm:finite-full-core-decision}
There is an algorithm which takes as input a finite full core
\[
  \mathcal L=C(H)
\]
of a closed subgroup \(H\leq F\) and decides whether the equivalent
conditions of Theorem~\ref{thm:finite-full-core-equivalence} hold.

If these conditions do not hold, the algorithm also finds a rational
\(H\)-isolated interval \(I\) such that
\[
  \operatorname{rank}\pi_{\mathrm{ab}}\bigl(K_I(H)\bigr)=1.
\]
\end{theorem}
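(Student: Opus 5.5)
The algorithm is a recursion on the inner-edge complexity \(\kappa(H)\), and it mirrors the proof of Proposition~\ref{prop:rank-one-isolated-characterization}. The input is the child automaton \(\mathcal A(\mathcal L)\); by Proposition~\ref{prop:finite-full-core-automaton}, passing between the automaton and the core is effective.

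The recursive step runs as follows.
\begin{enumerate}[label=\textup{(\roman*)}]
\item Compute the rank of \(\pi_{\mathrm{ab}}(H)\) from a computed generating set, using \cite[Remark~5.13]{GPSclosed} as explained above. If the rank is one, stop and answer that the conditions fail, with witness \(I=(0,1)\). This is correct by Theorem~\ref{thm:rank-one-not-FP2}, and \(K_{(0,1)}(H)=H\).
\item If the rank is two, decide minimality using Theorem~\ref{thm:minimality-child-automaton}: compute the strongly connected components of \(\Gamma_{\mathrm{in}}(\mathcal L)\) and test whether exactly one of them is cyclic. If the action is minimal, answer that the conditions hold, by Corollary~\ref{cor:minimal-rank-two-finfty} and Theorem~\ref{thm:finite-full-core-equivalence}.
\item Otherwise, test for fixed points with the forced-predecessor criterion of Proposition~\ref{prop:fixed-points-cyclic-components}.
\begin{itemize}
\item If there are fixed points, they are computed as eventually periodic expansions, and the intervals \(I_i\) are the orbitals.
\item If there are none, compute the gap representatives \(\mathcal I\) of Proposition~\ref{prop:gap-orbit-representatives}. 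The top component \(\mathcal C_*\) is determined: it is the unique top cyclic component, and it is found by computing reachability between components.
\end{itemize}
\item In both cases the intervals have explicit eventually periodic endpoint expansions. Compute the cores \(C(K_{I_i}(H))\) with Proposition~\ref{prop:normalized-core-effective}. They are finite and full and have strictly fewer inner edges, by Corollary~\ref{cor:strict-inner-edge-descent}.
\item Recurse on each \(K_{I_i}(H)\). By Corollaries~\ref{cor:fixed-points-orbital-reduction} and~\ref{cor:exceptional-gap-reduction}, \(\Path(\mathcal L)\) is finite if and only if every \(\Path(C(K_{I_i}(H)))\) is finite. So answer positively exactly when all the recursive calls answer positively.
\end{enumerate}

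Termination follows because \(\kappa\) strictly decreases at each step and is a nonnegative integer. Correctness follows by induction, using Theorem~\ref{thm:finite-full-core-equivalence} to translate finiteness of \(\Path\) into the finiteness conditions on \(H\).

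For the witness, suppose that some recursive call on \(G=K_{I_j}(H)\) returns a rational \(G\)-isolated interval \(J\) with \(\operatorname{rank}\pi_{\mathrm{ab}}(K_J(G))=1\). Output \(I=\nu_{I_j}(J)\). Lemma~\ref{lem:iterated-normalized-restrictions} shows that \(I\) is rational, \(H\)-isolated, and carries the same rank. The chart \(\nu_{I_j}\) acts on binary expansions by replacing a finite prefix, using the computed words \(s_n,t_n\). Hence the endpoints of \(I\) are computable eventually periodic expansions, and their preperiods and periods are available for the next level.

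The main obstacle is confirming that every ingredient is genuinely effective when applied to a normalized core, not only to the original one. In particular, Proposition~\ref{prop:normalized-core-effective} needs the preperiods and periods of the endpoint expansions of \(I_i\), and these must be extracted explicitly. For the fixed points this comes from following the unique forced digit until a state repeats. For the gaps it comes from the eventual periodicity of \(\lambda(e)\) and \(\rho(e)\) together with the chosen words \(w_e\). I would also check that the finitely many computed endpoints can be compared and deduplicated exactly, using rational arithmetic on eventually periodic expansions; this is routine.
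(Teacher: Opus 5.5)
Your proposal is correct and follows the paper's proof essentially step for step: recursion on \(\kappa\) with the three branches (rank one, minimal rank two, nonminimal rank two), reduction via Corollaries~\ref{cor:fixed-points-orbital-reduction} and~\ref{cor:exceptional-gap-reduction}, termination by Corollary~\ref{cor:strict-inner-edge-descent}, and the witness \(\nu_{I_j}(J)\) via Lemma~\ref{lem:iterated-normalized-restrictions}. The only difference is cosmetic. In the rank-one branch you cite Theorem~\ref{thm:rank-one-not-FP2}, which needs Theorem~\ref{thm:finite-full-core-equivalence} applied to \(G\) to recover infiniteness of \(\Path(C(G))\) for the induction. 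The paper instead cites Proposition~\ref{prop:rank-one-isolated-characterization}, which gives infiniteness directly.
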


\begin{proof}
We describe a recursive procedure whose input at each stage is a finite
full core
\[
  \mathcal M=C(G)
\]
of a closed subgroup \(G\leq F\).  The procedure returns either a
positive answer or a negative answer together with a rational
\(G\)-isolated interval witnessing rank one.

Compute the rank of \(\pi_{\mathrm{ab}}(G)\) as explained above.  By
Lemma~\ref{lem:normalized-restriction}, applied to \((0,1)\), both
coordinate projections are nonzero, so this rank is either one or two.

\medskip\noindent\textup{(1) Rank one.}
If
\[
  \operatorname{rank}\pi_{\mathrm{ab}}(G)=1,
\]
return a negative answer together with the interval \((0,1)\).

\medskip\noindent\textup{(2) Rank two and minimal action.}
Suppose that
\[
  \operatorname{rank}\pi_{\mathrm{ab}}(G)=2.
\]
Use Theorem~\ref{thm:minimality-child-automaton} to decide whether the
action of \(G\) on \((0,1)\) is minimal.  If it is, return a positive
answer.

\medskip\noindent\textup{(3) Rank two and nonminimal action.}
Use the procedure in
Subsection~\ref{subsec:top-components-minimal-sets} to find the fixed
points of \(G\) in \((0,1)\).

If there are fixed points, let
\[
  I_1,\ldots,I_r
\]
be the orbitals of \(G\), as in
Subsection~\ref{subsec:fixed-points-orbital-reduction}.  If there are no
fixed points, construct the finite collection of gaps of the exceptional
minimal set given by Proposition~\ref{prop:gap-orbit-representatives},
and denote its members by
\[
  I_1,\ldots,I_r.
\]
In either case, these are proper rational \(G\)-isolated intervals.

For every \(i\), put
\[
  G_i=K_{I_i}(G).
\]
By Lemma~\ref{lem:normalized-restriction} and
Proposition~\ref{prop:normalized-core}, each \(G_i\) is closed and has
finite full core.  Construct these cores using
Proposition~\ref{prop:normalized-core-effective} and apply the
procedure recursively to each of them.

If every recursive application returns a positive answer, return a
positive answer.  Otherwise, choose an index \(j\) for which the
recursive application returns a negative answer, together with a
rational \(G_j\)-isolated interval \(J\).  Return a negative answer
together with
\[
  I=\nu_{I_j}(J).
\]

We first prove that the procedure terminates.  Recall that
\[
  \kappa(G)=|E_{\mathrm{in}}(C(G))|.
\]
Every recursive application in \textup{(3)} is made to a group \(G_i\)
for which
\[
  \kappa(G_i)<\kappa(G)
\]
by Corollary~\ref{cor:strict-inner-edge-descent}.  Moreover, only
finitely many groups \(G_i\) occur at each stage.  Termination therefore
follows by induction on \(\kappa(G)\).

We next prove, by the same induction, that the procedure returns a
positive answer exactly when
\[
  \Path(C(G))
\]
is finite.  In \textup{(1)}, the interval \((0,1)\) has normalized
restriction \(G\), so
Proposition~\ref{prop:rank-one-isolated-characterization} shows that
\(\Path(C(G))\) is infinite.  In \textup{(2)},
Corollary~\ref{cor:minimal-rank-two-finfty} shows that
\(\Path(C(G))\) is finite.

In \textup{(3)}, Corollary~\ref{cor:fixed-points-orbital-reduction} or
Corollary~\ref{cor:exceptional-gap-reduction} gives
\[
  \Path(C(G))\text{ is finite}
  \quad\Longleftrightarrow\quad
  \Path(C(G_i))\text{ is finite for every }i.
\]
By the induction hypothesis, the condition on the right holds exactly
when all the recursive applications return positive answers.  This
proves correctness.  Applying
Theorem~\ref{thm:finite-full-core-equivalence} at the initial input
\(G=H\) gives the required decision.

Finally, we verify the additional conclusion in the negative case.  In
\textup{(1)}, the returned interval \((0,1)\) has the required property.
In \textup{(3)}, the induction hypothesis gives
\[
  \operatorname{rank}\pi_{\mathrm{ab}}\bigl(K_J(G_j)\bigr)=1.
\]
By Lemma~\ref{lem:iterated-normalized-restrictions}, the interval
\[
  I=\nu_{I_j}(J)
\]
is rational and \(G\)-isolated, and
\[
  \operatorname{rank}\pi_{\mathrm{ab}}\bigl(K_I(G)\bigr)
  =
  \operatorname{rank}\pi_{\mathrm{ab}}\bigl(K_J(G_j)\bigr)
  =1.
\]
Its endpoints can be computed using the explicit formulas for the
canonical chart in Section~\ref{sec:rational-isolated-intervals}.  Thus
every negative answer is accompanied by the required interval.
\end{proof}

We now describe additional constructions available when the algorithm
of Theorem~\ref{thm:finite-full-core-decision} returns a positive
answer.

We use completeness of directed \(2\)-complexes in the sense of
\cite[Section~6]{GS06}.  Completeness means that the chosen rewriting
orientation is terminating and confluent, so every \(1\)-path has a
unique irreducible representative.  A \emph{finite completion} of \(\mathcal L\)
is a finite directed \(2\)-complex \(\widehat{\mathcal L}\) obtained
by adjoining \(2\)-cells \(f\) whose top and bottom paths are nonempty
and satisfy
\[
  \topPath(f)\simeq\botPath(f)
  \quad\text{in }\mathcal L,
\]
together with rewriting orientations for which
\(\widehat{\mathcal L}\) is complete.  In the construction used below,
the original core cells are oriented for rewriting as
\[
  e_0e_1\longrightarrow e.
\]

\begin{proposition}[Effective completion]
\label{prop:effective-completion}
Let \(H\leq F\) be closed and of type \(\mathrm{FP}_2\), and suppose
that its core \(\mathcal L=C(H)\) is finite and full.  From
\(\mathcal L\), one can compute representatives of all classes in
\(\Path(\mathcal L)\), their partially defined multiplication table,
and a finite completion \(\widehat{\mathcal L}\).

Moreover, for every nonempty \(1\)-path \(w\) in \(\mathcal L\), the
inclusion induces an injective homomorphism
\[
  j_w\colon D(\mathcal L,w)\longrightarrow
  D(\widehat{\mathcal L},w)
\]
with an effectively computable retraction
\[
  r_w\colon D(\widehat{\mathcal L},w)\longrightarrow
  D(\mathcal L,w).
\]
\end{proposition}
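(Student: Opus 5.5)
The plan is to first make the finite semigroupoid \(\Path(\mathcal L)\) explicitly computable, and then to invoke the completion machinery of Guba and Sapir.

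\textbf{Step 1: deciding homotopy of paths.} By Theorem~\ref{thm:finite-full-core-equivalence}, \(\Path(\mathcal L)\) is finite. By Lemma~\ref{lem:inner-reduction}, classes with a boundary endpoint are determined by their other endpoint, so only inner classes need work. For two inner \(1\)-paths \(P,Q\) with the same endpoints, homotopy is semi-decidable. Lemma~\ref{lem:common-expansion} says \(P\simeq Q\) if and only if they have a common positive expansion, and positive expansions can be enumerated. Non-homotopy must also be certified, and this is where the structural results come in. By Corollary~\ref{cor:interval-pair-dictionary} and Proposition~\ref{prop:interval-realization}, each path corresponds to an orbit of increasing dyadic pairs. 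I would instead use the recursive decomposition from the proof of Theorem~\ref{thm:finite-full-core-decision} to compute \(|\Path(\mathcal L)(u,v)|\) exactly.
\begin{enumerate}
\item In the minimal case, Proposition~\ref{prop:exact-inner-path-count} gives the count \(q\), which is computable from generators of \(\pi_{\mathrm{ab}}(H)\).
\item In the fixed-point case, Proposition~\ref{prop:isolated-interval-path-correspondence} gives counts inside each orbital. Classes crossing fixed points factor through the marked-interval decomposition of Proposition~\ref{prop:fixed-points-orbital-reduction-inner}.
\item In the exceptional case, Proposition~\ref{prop:essential-exact-count} counts the essential classes, and the gap cores count the nonessential ones.
\end{enumerate}
Once the total number \(N\) of classes is known, enumerate paths of increasing length and test homotopy by searching for common positive expansions in parallel. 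Stop as soon as \(N\) pairwise non-identified representatives have been found and every remaining short path has been identified with one of them.

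The step I expect to be the main obstacle is the stopping criterion. Knowing \(N\) and having \(N\) candidate representatives, I still need a certificate that they are pairwise distinct. I would try to get it from the counting propositions directly. Those counts are realized by explicit families, for instance \(\Lambda_L^jP\) for \(0\le j<q\), whose distinctness is proved via endpoint vectors and Proposition~\ref{prop:endpoint-relations}. That gives explicit pairwise distinct representatives, and every other path is then matched to one of them by the semi-decision procedure, which is guaranteed to terminate.

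\textbf{Step 2: multiplication table and completion.} Computing the multiplication table reduces to identifying \(PQ\) with a representative, again by searching for common expansions. For \(\widehat{\mathcal L}\), I would follow \cite[Section~6]{GS06}. Choose a length-lexicographic order extending \(e_0e_1\to e\), and let the normal form of each class be its least representative, which is computable since the classes are known. Then adjoin one cell rewriting each minimal non-normal path to its normal form. There are finitely many such cells, since any non-normal path contains a minimal non-normal subpath of bounded length: each subword of a normal form is normal, and the classes are finite in number. The resulting rewriting system is terminating and confluent, because irreducible words are exactly the normal forms, which are unique per class.

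\textbf{Step 3: \(j_w\) and \(r_w\).} The added cells join homotopic paths, so each one can be replaced by a fixed diagram over \(\mathcal L\), which has been computed in Step 1. Substituting these diagrams gives a map \(r_w\) from \(D(\widehat{\mathcal L},w)\) to \(D(\mathcal L,w)\). It is a homomorphism, since substitution respects dipole cancellation and then reduction. It restricts to the identity on diagrams over \(\mathcal L\), so \(r_w j_w=\mathrm{id}\) and \(j_w\) is injective.
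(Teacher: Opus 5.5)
There is a genuine gap in Step~1. Your procedure needs the exact number $N$ of classes, and the results you cite do not compute it.

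Exact counts are available only in two places. One is the minimal rank-two case (Proposition~\ref{prop:exact-inner-path-count}), where your search would indeed succeed. The other is the set of essential classes (Proposition~\ref{prop:essential-exact-count}). Everywhere else the cited results give finiteness, not a count:
\begin{itemize}
\item In the fixed-point case, the marked-interval decomposition in the proof of Proposition~\ref{prop:fixed-points-orbital-reduction-inner} only bounds the number and kind of factors of a class. Distinct concatenations can represent the same class. Classes crossing a fixed point are also not simply products of orbital classes, because at a nondyadic rational fixed point an element of $F$ has no breakpoint, so its two one-sided germs are tied together.
\item In the exceptional case, Proposition~\ref{prop:nonessential-gap-reduction} says only that the nonessential classes differ from $\bigcup_i\Path_{I_i}(H)$ by a finite set. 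The classes with an endpoint at a dyadic gap endpoint are shown to be finitely many but are never counted, and distinct members of $\mathcal I$ may lie in the same $H$-orbit.
\end{itemize}
Your distinctness certificate via the families $\Lambda_L^jP$ has the same limitation. The obstacle you single out is also not the real one. If $N$ were known, it would suffice to find $N$ paths such that every edge, and every one-edge extension of each of them, has been proved homotopic to one of them. By induction on length they then represent all classes, so they are automatically pairwise distinct. The missing ingredient is $N$ itself.

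The missing idea is that no count is needed: finiteness alone makes $\Path(\mathcal L)$ computable. Adjoining the empty paths, the homotopy classes form a category presented by the edges of $\mathcal L$ subject to the relations $\topPath(f)=\botPath(f)$. This category is finite by Theorem~\ref{thm:finite-full-core-equivalence}. For a finite, finitely presented category, a Todd--Coxeter-type enumeration terminates and returns representatives and the composition table. The paper simply invokes the categorical enumeration procedure of Bush, Leeming and Walters. Distinctness is certified by the completed table itself: it is an action of the free category on the enumerated elements in which every cell relation holds. Homotopic paths therefore act identically, and the enumerated elements are separated.

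With the table in hand, your Steps~2 and~3 are essentially what the paper does. These are the completion construction in the proof of \cite[Lemma~6.1]{GS06} and the cell replacement from the proof of \cite[Lemma~5.1\textup{(4)}]{GS06}. Keep the original cells, oriented $e_0e_1\to e$, in the rewriting system so that $\mathcal L\subseteq\widehat{\mathcal L}$. This does not affect confluence, because $e_0e_1$ is never a subpath of a normal form.
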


\begin{proof}
By Theorem~\ref{thm:finite-full-core-equivalence},
\(\Path(\mathcal L)\) is finite.  The category of homotopy classes of
\(1\)-paths in \(\mathcal L\), including the empty paths, is therefore
finite.  It is presented by the edges of \(\mathcal L\), subject to the
relations \(\topPath(f)=\botPath(f)\) for the cells \(f\) of
\(\mathcal L\).  The categorical enumeration procedure of
\cite[Theorem~6.1 and Section~7]{BushLeemingWalters} computes
representatives and its composition table.  Restricting to the
nonempty paths gives the required multiplication table.

With this table, the finite-completion construction in the proof of
\cite[Lemma~6.1]{GS06} is effective.  Their cell-replacement
construction in the proof of \cite[Lemma~5.1\textup{(4)}]{GS06} gives
the effective retractions \(r_w\).
\end{proof}

The presentation and homology constructions of Guba and Sapir can now
be applied to the completion, and the effective retractions give the
corresponding information for the original diagram groups.

\begin{corollary}[Effective presentations and homology]
\label{cor:effective-presentations-homology}
Let \(H\leq F\) be closed and of type \(\mathrm{FP}_2\), and suppose
that its core \(\mathcal L=C(H)\) is finite and full.  Given
\(\mathcal L\) and a nonempty \(1\)-path \(w\) in \(\mathcal L\), one
can construct a finite presentation of
\[
  D(\mathcal L,w),
\]
with its generators represented by diagrams over \(\mathcal L\).

Moreover, for every specified integer \(n\geq0\), one can compute the
rank of the finite-rank free abelian group
\[
  H_n\bigl(D(\mathcal L,w);\mathbb Z\bigr).
\]
In particular, a finite presentation of \(H\), with generators
represented by tree-pair diagrams, can be computed from \(\mathcal L\).
\end{corollary}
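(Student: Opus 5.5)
By Theorem~\ref{thm:finite-full-core-equivalence}, the complex \(\mathcal L\) is finite and \(\Path(\mathcal L)\) is finite. By Proposition~\ref{prop:effective-completion}, we can compute from \(\mathcal L\) the path classes, their multiplication table, a finite complete directed \(2\)-complex \(\widehat{\mathcal L}\) containing \(\mathcal L\), and, for each nonempty \(1\)-path \(w\), an injective homomorphism \(j_w\colon D(\mathcal L,w)\to D(\widehat{\mathcal L},w)\) with an effectively computable retraction \(r_w\). The plan is therefore to carry out the construction over the complete complex \(\widehat{\mathcal L}\) and then transport the result back to \(\mathcal L\) through \(j_w\) and \(r_w\).

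\emph{Step 1: work over the completion.} For a finite complete directed \(2\)-complex, Guba and Sapir \cite[Sections~6--9]{GS06} build an explicit finite \(K(\pi,1)\) with finitely many cells in each dimension for every diagram group. Its cells are indexed by irreducible paths together with finite families of pairwise independent reductions. Since \(\widehat{\mathcal L}\) is finite and given with its rewriting system, the following can all be enumerated: the irreducible paths \(w'\) in the class of \(w\), the critical pairs, and the cells of each dimension \(\leq n+1\). In particular, the \(2\)-skeleton gives a finite presentation of \(D(\widehat{\mathcal L},w)\). Its generators are explicit spherical diagrams, possibly after choosing a maximal tree in the \(1\)-skeleton and conjugating by fixed diagrams \(w\to w'\). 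Its relators are read off from the \(2\)-cells.

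\emph{Step 2: pass to the retract.} The group \(G=D(\mathcal L,w)\) is a retract of \(\widehat G=D(\widehat{\mathcal L},w)\) via \(r_w\circ j_w=\mathrm{id}\). Given a finite presentation \(\langle X\mid R\rangle\) of \(\widehat G\), the standard retract presentation of \(G\) is
\[
\langle X\mid R,\ x=\widetilde{r_w(x)}\ (x\in X)\rangle,
\]
where \(\widetilde{r_w(x)}\) is a word in \(X\) representing \(j_w(r_w(x))\). To write this word we need \(j_w(r_w(x))\) as a word in \(X\). This is effective because the word problem in \(\widehat G\) is solvable by diagram reduction, and the \(K(\pi,1)\) construction gives an explicit rewriting of any diagram in terms of the generators: follow the \(1\)-skeleton path determined by the diagram's decomposition into elementary cells. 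Setting \(Y=\{r_w(x):x\in X\}\), with each element represented by a diagram over \(\mathcal L\), the presentation above yields a finite presentation of \(G\) on the generators \(Y\) after Tietze moves. Every step here is explicit.

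\emph{Step 3: homology.} For homology, \(H_n(G)\) is a direct summand of \(H_n(\widehat G)\). The rank of \(H_n(\widehat G)\) can be computed from the finite cellular chain complex in degrees \(n-1,n,n+1\) by Smith normal form. The summand \(H_n(G)\) is the image of the idempotent \((j_w r_w)_*\). To compute this idempotent, realize \(j_w r_w\) by a cellular chain map, which we obtain by an explicit chain homotopy and extension argument over the finite complex. The rank of \(H_n(G)\) is then the trace, or equivalently the rank, of this idempotent on \(H_n(\widehat G)\otimes\mathbb Q\).

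On the statement's claim that \(H_n\) is free abelian: Guba and Sapir proved that homology of diagram groups is free abelian, so torsion is not an issue. Even so, I would phrase the output as the rank, exactly as the statement does. The case \(w=p_H\) gives the presentation of \(H\cong D(\mathcal L,p_H)\) by Theorem~\ref{thm:core-closure}(i). Forgetting core labels turns the spherical diagrams over \(\mathcal L\) into reduced diagrams over \(\K\), that is, into tree-pair diagrams.

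The main obstacle is Step 3: making the chain-level realization of \(j_w r_w\) effective. Step 1 rests on the explicit cell structure in \cite{GS06}, and I expect it to be routine once that structure is cited carefully.
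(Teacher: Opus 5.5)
Your argument for the presentation follows the paper. You use the completion from Proposition~\ref{prop:effective-completion}, the finite presentation of \(\widehat G=D(\widehat{\mathcal L},w)\) from \cite[Theorem~6.6]{GS06}, and then the retract presentation \(\langle X\mid R,\ x=\varphi(x)\rangle\), where \(\varphi\) is the idempotent of \(\widehat G\) given by first applying \(r_w\) and then \(j_w\). This presentation is exactly what the paper obtains by citing \cite[Corollary~2.2]{GrovesWilton09}. You also point out that the words for \(\varphi(x)\) can be found using the solvable word problem, since reduced diagrams are unique. That is correct, and it fills in a step the paper leaves implicit. (One minor correction: in a complete complex the class of \(w\) contains exactly one irreducible path, so there is a single \(w'\).) For homology you make the same reduction as the paper, to the rank of \(\varphi_*\) on \(H_n(\widehat G)\), and that reduction is sound.

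The step that fails as written is realizing \(\varphi\) ``by an extension argument over the finite complex.'' Working downstairs in \(C_*(X)\), where \(X\) is the finite \(K(\widehat G,1)\), causes two problems.
\begin{enumerate}
\item \(C_*(X)\) is not acyclic, so the cycle you need to fill at an extension step need not bound.
\item More seriously, a chain self-map of \(C_*(X)\) built this way is not tied to \(\varphi\). For example, if \(X\) has a single vertex, the identity in degree \(0\) extended by zero is already a chain map. So nothing forces the map you construct to induce \(\varphi_*\).
\end{enumerate}

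The extension has to be done \(\varphi\)-equivariantly on the cellular chains of the universal cover \(\widetilde X\). These chains form a free \(\mathbb Z[\widehat G]\)-resolution with finitely generated terms, and the boundary matrices can be read off from the explicit complex. In degree one, start from the words for \(\varphi(x)\). At each later step the required chain exists because \(\widetilde X\) is contractible, and it can be found by enumeration, since equality in \(\mathbb Z[\widehat G]\) is decidable via reduced diagrams. Tensoring with \(\mathbb Z\) then gives a chain map on \(C_*(X)\) that induces \(\varphi_*\).

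The paper avoids any search. It takes the cell replacements that define \(r_w\) in the proof of \cite[Lemma~5.1(4)]{GS06} and applies them coordinatewise to Squier cubes. It composes this with the chain reduction preceding \cite[Lemma~9.6]{GS06}, and evaluates the result on cycle representatives of the basis in \cite[Theorem~9.7]{GS06}. Either repair completes your argument.
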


\begin{proof}
Construct \(\widehat{\mathcal L}\) and the retractions of
Proposition~\ref{prop:effective-completion}.  Since the completion is
finite and has finitely many irreducible \(1\)-paths, the construction
of \cite[Theorem~6.6]{GS06} gives a finite presentation of
\(D(\widehat{\mathcal L},w)\).
The effective retraction supplies the images of the generators under
the idempotent endomorphism \(r_wj_w\). Applying
\cite[Corollary~2.2]{GrovesWilton09} gives a finite presentation of
\(D(\mathcal L,w)\), with generators represented by their image diagrams.

For homology, apply the cell replacements in the proof of
\cite[Lemma~5.1\textup{(4)}]{GS06} coordinatewise to Squier cubes,
and combine this map with the chain reduction described immediately
before \cite[Lemma~9.6]{GS06}. Using cycle representatives for the basis of
\cite[Theorem~9.7]{GS06}, these constructions compute the matrix of
the idempotent induced by \(r_wj_w\) in each specified degree.
Its image is isomorphic to
\(H_n(D(\mathcal L,w);\mathbb Z)\), by the retract argument preceding
\cite[Theorem~9.9]{GS06}; its rank is then computable by integer
linear algebra.
Taking
\(w=p_H\) gives the final assertion.
\end{proof}

\section{Applications}
\label{sec:applications}

We apply the finiteness criterion of
Theorem~\ref{thm:finite-full-core-equivalence} to maximal subgroups,
actions on dyadic tuples, supported subgroups, and closed overgroups.

\begin{theorem}[Finitely generated maximal subgroups]
\label{thm:finitely-generated-maximal-subgroups-finfty}
Every finitely generated maximal subgroup of Thompson's group \(F\) is
of type \(F_\infty\).
\end{theorem}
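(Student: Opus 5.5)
Let \(H\) be a finitely generated maximal subgroup of \(F\). If \(H\) has finite index in \(F\), then \(H\) is of type \(F_\infty\) because \(F\) is (Brown--Geoghegan), and type \(F_\infty\) passes to finite-index subgroups. (In fact \(F\) has no proper finite-index subgroups other than those containing \([F,F]\), but we do not need this.) So assume \(H\) has infinite index. By \cite[Theorem~1.1]{GolanMaximal}, \(H\) is closed. Since \(H\) is finitely generated, Theorem~\ref{thm:core-closure}\textup{(ii)} shows that \(C(H)\) is finite. We must also know that \(C(H)\) is full, which the introduction asserts for maximal subgroups of infinite index. I would verify it as follows. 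If \(C(H)\) is not full, Lemma~\ref{lem:leaf-intervals} gives a standard dyadic interval \([u]\) whose interior has no two distinct points in one \(H\)-orbit. Take a nontrivial \(f\in F\) supported in a small subinterval of \([u]\). Then \(\langle H,f\rangle\neq H\), so \(\langle H,f\rangle=F\) by maximality. I would then try to contradict this using the known structure of \(\Cl\) of such overgroups, or simply quote \cite{GolanMaximal}, where maximal infinite-index subgroups are described through their (full) core automata. I would rely on the citation here.

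With \(C(H)\) finite and full, Theorem~\ref{thm:finite-full-core-equivalence} reduces the problem to showing that \(\Path(C(H))\) is finite. Equivalently, by Proposition~\ref{prop:rank-one-isolated-characterization}, it suffices to show that no rational \(H\)-isolated interval \(I\) has \(\operatorname{rank}\pi_{\mathrm{ab}}(K_I(H))=1\).

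The key dynamical input is a dichotomy for maximal subgroups. First suppose \(H\) fixes a point \(c\in(0,1)\). Then \(H\leq\operatorname{Stab}_F(c)\), which is proper, so \(H=\operatorname{Stab}_F(c)\). This stabilizer restricts to each side as a copy of \(F\) or of a stabilizer of a rational point; in either case the normalized restrictions have full endpoint image of rank two. Iterating through Corollary~\ref{cor:fixed-points-orbital-reduction}, one checks directly that \(\operatorname{Stab}_F(c)\) for rational \(c\) has finite \(\Path\). Alternatively, use relative order-transitivity together with Proposition~\ref{prop:thin-relative-order}: the orbits of \(\operatorname{Stab}_F(c)\) on dyadic tuples are determined by position relative to \(c\), and the only other invariant is periodicity data at the endpoints. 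This gives finitely many orbits on increasing pairs, hence condition (5) of Theorem~A.

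Next suppose \(H\) has no fixed point. An exceptional minimal set \(M\) is impossible: \(\overline{H}\) would preserve \(M\), so \(H\) would lie in the proper setwise stabilizer of \(M\), and one must rule out that this stabilizer equals \(H\) with isolated gaps. A closed discrete orbit would similarly force \(H\) into a proper subgroup preserving that orbit. So the action is minimal. Then, by Corollary~\ref{cor:minimal-rank-two-finfty}, it remains to show that \(\pi_{\mathrm{ab}}(H)\) has rank two. If it had rank one, \(H[F,F]\) would be a proper subgroup. So \(H[F,F]\neq F\), and maximality forces \(H\supseteq[F,F]\), which gives finite index and a contradiction.

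The main obstacle is the nonminimal, fixed-point-free case: showing that maximality excludes an exceptional minimal set, or, if it does not, showing that every gap restriction has rank-two image. I would first attempt the latter using Corollary~\ref{cor:FP2-isolated-restriction-rank-two} in reverse. Namely, exploit that by maximality \(H\) is the full stabilizer of its structure, so the gap stabilizers restrict to full closed groups whose endpoint images are products \(d_L\mathbb Z\times d_R\mathbb Z\), via the slope formulas of Lemma~\ref{lem:normalized-restriction}.
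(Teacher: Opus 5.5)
Your finite-index case, your reliance on the cited results for closedness and fullness of $C(H)$, your fixed-point case (finitely many $H$-orbits on increasing dyadic pairs, determined by position relative to $c$; the ``periodicity data'' plays no role), and your use of Corollary~\ref{cor:minimal-rank-two-finfty} in the minimal case all match the paper. The genuine gap is the fixed-point-free, nonminimal case. You assert that an exceptional minimal set $M$ cannot occur, but your argument only gives $H\le S=\{f\in F:f(M)=M\}$ with $S$ proper. Maximality then yields $H=S$, which is not a contradiction; the paper does not exclude this case, it exploits it.

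The missing idea is the following. If $I$ is a gap of $M$ and $J\subseteq I$ is a closed dyadic interval, every element of $F[J]$ fixes $M$ pointwise, so $F[J]\le S=H$. Order-$2$-transitivity of $F[J]$ together with Proposition~\ref{prop:transport} then shows that all dyadic intervals inside $I$ carry the same class, so $|\Path_I(H)|=1$. By Proposition~\ref{prop:isolated-interval-path-correspondence} and Lemma~\ref{lem:inner-reduction}(2), $\Path(C(K_I(H)))$ is finite for each representative gap. Corollary~\ref{cor:exceptional-gap-reduction}, whose proof handles the essential classes via Proposition~\ref{prop:essential-exact-count}, then gives finiteness of $\Path(\mathcal L)$.

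Your fallback does not reach this. The claim that gap restrictions have rectangular image $d_L\mathbb Z\times d_R\mathbb Z$ is unsupported. Rank two of a gap restriction yields finiteness only together with minimality of that restriction, and that minimality again comes from $F[J]\le H$. Moreover, the converse of Corollary~\ref{cor:FP2-isolated-restriction-rank-two} you would need (via Proposition~\ref{prop:rank-one-isolated-characterization}) quantifies over all rational $H$-isolated intervals, not only the gaps.

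Your rank-two argument is also wrong as written. From $H[F,F]\neq F$, maximality gives $[F,F]\le H$. But a subgroup containing $[F,F]$ whose image in $\mathbb Z^2$ has rank one has infinite index, so no contradiction arises. Argue instead that a rank-one subgroup of $\mathbb Z^2$ lies in a proper finite-index subgroup $B$. Then $\pi_{\mathrm{ab}}^{-1}(B)$ is a proper finite-index subgroup of $F$ containing $H$, which is impossible for a maximal subgroup of infinite index.

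This must be established before splitting into cases, for two reasons. First, rank two is what rules out a closed discrete orbit (Proposition~\ref{prop:nonminimal-rank-two-alternative}); your remark that such an orbit would force $H$ into a proper subgroup again only gives $H$ equal to its stabilizer. Second, rank two is a hypothesis of Corollary~\ref{cor:exceptional-gap-reduction}.
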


\begin{proof}
Let \(H\) be a finitely generated maximal subgroup of \(F\).  If \(H\)
has finite index, the conclusion follows from the fact that \(F\) is of
type \(F_\infty\) \cite{BG84}, since this property passes to
finite-index subgroups.

Suppose that \(H\) has infinite index.  By
\cite[Theorem~1.1 and Lemma~5.6]{GolanMaximal}, \(H\) is closed and its
core
\[
  \mathcal L=C(H)
\]
is full.  Since \(H\) is finitely generated, \(\mathcal L\) is finite.
Moreover,
\[
  \pi_{\mathrm{ab}}(H)=\mathbb Z^2.
\]
Indeed, otherwise \(\pi_{\mathrm{ab}}(H)\) would be contained in a
proper finite-index subgroup of \(\mathbb Z^2\).  Its inverse image
would be a proper finite-index subgroup of \(F\) containing \(H\),
contrary to maximality and the assumption that \(H\) has infinite
index.

If \(H\) acts minimally on \((0,1)\),
Corollary~\ref{cor:minimal-rank-two-finfty} gives the result.  Assume,
therefore, that the action is not minimal.

Suppose first that \(H\) fixes \(c\in(0,1)\).  Then
\[
  H\leq\operatorname{Stab}_F(c)<F,
\]
so maximality gives
\[
  H=\operatorname{Stab}_F(c).
\]
Its two orbitals are \((0,c)\) and \((c,1)\).  On the dyadic points in
either orbital, its action is order-\(n\)-transitive for every \(n\):
any two increasing \(n\)-tuples in that orbital lie in the interior of
a closed dyadic interval \(J\) contained in the orbital, and
\[
  F[J]\leq H.
\]
The actions on the two sides can also be performed independently, using
elements supported on the respective sides.  Consequently, the orbit
of an increasing dyadic pair is determined by the positions of its
entries relative to \(c\), including possible equality when \(c\) is
dyadic.  There are only finitely many possibilities.

By Corollary~\ref{cor:interval-pair-dictionary} and
Lemma~\ref{lem:inner-reduction}\textup{(2)}, it follows that
\(\Path(\mathcal L)\) is finite.
Theorem~\ref{thm:finite-full-core-equivalence} therefore gives
that \(H\) is of type \(F_\infty\).  Since \(c\) is rational by
Lemma~\ref{lem:finite-fixed-set}, this case also follows from Farley
\cite[Theorem~1.2]{FarleyFixed}.  It also follows from the description
in \cite[Lemma~4.11]{GSstabilizers}, together with
\cite[Section~7.2, Exercise~3]{Geoghegan08}: the stabilizer of a
nondyadic rational point is an ascending HNN extension of
\(F\times F\), whereas the stabilizer of a dyadic point is
\(F\times F\).

Now suppose that \(H\) has no fixed point in \((0,1)\).  By
Proposition~\ref{prop:nonminimal-rank-two-alternative}, it has a unique
exceptional minimal set \(M\).  Let
\[
  S=\{f\in F:f(M)=M\}.
\]
Then \(H\leq S\).  The subgroup \(S\) is proper: otherwise \(M\) would
be a nonempty proper closed \(F\)-invariant subset of \((0,1)\),
contradicting minimality of the action of \(F\).  Hence maximality gives
\[
  H=S.
\]
Let \(I\) be a gap of \(M\).  Every subgroup \(F[J]\), where
\(J\subseteq I\) is a closed dyadic interval, fixes \(M\) pointwise.
Thus
\[
  F[J]\leq H.
\]
Given two closed dyadic intervals contained in \(I\), choose such a
\(J\) containing both in its interior.  Order-\(2\)-transitivity of
\(F[J]\) gives an element of \(H\) carrying the endpoints of one
interval to those of the other.  Proposition~\ref{prop:transport}
therefore implies that
\[
  |\Path_I(H)|=1.
\]
Every gap is a rational \(H\)-isolated interval by
Propositions~\ref{prop:nonminimal-rank-two-alternative} and
\ref{prop:gap-orbit-representatives}.
Proposition~\ref{prop:isolated-interval-path-correspondence} identifies
\(\Path_I(H)\) with
\[
  \Path_{\mathrm{in}}\bigl(C(K_I(H))\bigr).
\]
Since the normalized core is finite and full,
Lemma~\ref{lem:inner-reduction}\textup{(2)} shows that
\[
  \Path\bigl(C(K_I(H))\bigr)
\]
is finite.

In particular, this holds for the finitely many representative gaps of
Proposition~\ref{prop:gap-orbit-representatives}.
Corollary~\ref{cor:exceptional-gap-reduction} gives finiteness of
\(\Path(\mathcal L)\), and
Theorem~\ref{thm:finite-full-core-equivalence} yields
that \(H\) is of type \(F_\infty\).
\end{proof}

In particular, this answers affirmatively
\cite[Problem~8.8]{GolanMaximal}, which asks whether every finitely
generated maximal subgroup of \(F\) is finitely presented.

We next characterize the groups covered by
Theorem~\ref{thm:finite-full-core-equivalence} in terms of their actions
on dyadic tuples.  Recall that an action on a set \(X\) is
\emph{oligomorphic} if its diagonal action on \(X^n\) has finitely many
orbits for every \(n\geq1\).

\begin{proposition}[Dyadic tuple orbits]
\label{prop:dyadic-tuple-orbits}
Let \(H\leq F\) be closed, and suppose that \(\mathcal L=C(H)\) is
finite and full.  The following conditions are equivalent.
\begin{enumerate}[label=\textup{(\arabic*)}]
\item \(H\) is of type \(\mathrm{FP}_2\).
\item \(H\) has finitely many orbits on
      \[
        \D_{<}^{\,2}
        =
        \{(a,b)\in\D^2:a<b\}.
      \]
\item The action of \(H\) on \(\D\) is oligomorphic.
\end{enumerate}
\end{proposition}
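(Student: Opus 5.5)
The plan is to route everything through finiteness of \(\Path(\mathcal L)\). Theorem~\ref{thm:finite-full-core-equivalence} already shows that \textup{(1)} is equivalent to finiteness of \(\Path(\mathcal L)\). By Lemma~\ref{lem:inner-reduction}\textup{(2)}, and since \(\mathcal L\) is finite, this is in turn equivalent to finiteness of \(\Path_{\mathrm{in}}(\mathcal L)\). Corollary~\ref{cor:interval-pair-dictionary} identifies the latter with finiteness of the set of \(H\)-orbits on \(\D^2_<\). So \textup{(1)}\(\Leftrightarrow\)\textup{(2)} is immediate.

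For \textup{(3)}\(\Rightarrow\)\textup{(2)} there is nothing to do. If \(H\) has finitely many orbits on \(\D^2\), then it has finitely many on the invariant subset \(\D^2_<\), since elements of \(F\) are increasing.

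The real content is \textup{(2)}\(\Rightarrow\)\textup{(3)}. Fix \(n\). Every \(n\)-tuple in \(\D^n\) determines a pattern: an ordered set partition recording which coordinates are equal and in what order the distinct values occur. This is an invariant of the diagonal action, and there are finitely many patterns. Within a fixed pattern, the tuple is determined by an increasing \(k\)-tuple \(a_1<\cdots<a_k\) of distinct values, with \(k\leq n\), and the correspondence is \(H\)-equivariant. It therefore suffices to bound the number of \(H\)-orbits on increasing \(k\)-tuples.

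For \(k=1\), Proposition~\ref{prop:orbit-core} gives the bound \(|V_{\mathrm{in}}(\mathcal L)|\). For \(k\geq2\), Proposition~\ref{prop:tuple-transport} says that the orbit of \((a_1,\dots,a_k)\) is determined by the data \((v(a_1),\dots,v(a_k))\) together with \((\omega[a_1,a_2],\dots,\omega[a_{k-1},a_k])\). The types lie in the finite set \(V_{\mathrm{in}}(\mathcal L)\), and the interval classes lie in \(\Path_{\mathrm{in}}(\mathcal L)\), which is finite under \textup{(2)} by Corollary~\ref{cor:interval-pair-dictionary}. Hence there are at most \(|V_{\mathrm{in}}(\mathcal L)|^k\,|\Path_{\mathrm{in}}(\mathcal L)|^{k-1}\) orbits. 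Summing over \(k\leq n\) and over the patterns gives finitely many orbits on \(\D^n\).

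I expect no serious obstacle. The only care needed is in the reduction from arbitrary tuples to increasing tuples of distinct values. It relies on the fact that elements of \(H\) preserve order, so the pattern is invariant and the reduction is equivariant.
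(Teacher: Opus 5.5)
Your proposal is correct and follows essentially the same route as the paper. You obtain \textup{(1)}\(\Leftrightarrow\)\textup{(2)} from Corollary~\ref{cor:interval-pair-dictionary}, Lemma~\ref{lem:inner-reduction}\textup{(2)} and Theorem~\ref{thm:finite-full-core-equivalence}, and \textup{(2)}\(\Rightarrow\)\textup{(3)} by reducing arbitrary tuples to increasing tuples of distinct entries via their equality-and-order pattern, then bounding the orbits on increasing tuples with Proposition~\ref{prop:tuple-transport} (and Proposition~\ref{prop:orbit-core} for single points).
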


\begin{proof}
By Corollary~\ref{cor:interval-pair-dictionary}, the \(H\)-orbits on
\(\D_{<}^{\,2}\) are in bijection with
\(\Path_{\mathrm{in}}(\mathcal L)\).  Since \(\mathcal L\) is finite
and full, Lemma~\ref{lem:inner-reduction}\textup{(2)} gives
\[
  \Path_{\mathrm{in}}(\mathcal L)\text{ is finite}
  \quad\Longleftrightarrow\quad
  \Path(\mathcal L)\text{ is finite}.
\]
Theorem~\ref{thm:finite-full-core-equivalence} therefore proves the
equivalence of \textup{(1)} and \textup{(2)}.

Assume \textup{(2)}.  Then \(\Path_{\mathrm{in}}(\mathcal L)\) is
finite.  For an increasing dyadic tuple
\[
  a_1<\cdots<a_n,
\]
Proposition~\ref{prop:tuple-transport} shows that its orbit is
determined by the types
\[
  v_H(a_1),\ldots,v_H(a_n)
\]
and, when \(n\geq2\), the consecutive interval classes
\[
  \omega_H[a_1,a_2],\ldots,\omega_H[a_{n-1},a_n].
\]
There are finitely many inner vertices and finitely many inner path
classes, so there are only finitely many orbits on increasing
\(n\)-tuples for every \(n\).  An arbitrary tuple in \(\D^n\) is
determined by its equality-and-order pattern together with the
increasing tuple of its distinct entries.  For fixed \(n\), there are
only finitely many such patterns.  Hence \(H\) has finitely many orbits
on \(\D^n\), proving \textup{(3)}.

Finally, \textup{(3)} implies \textup{(2)}, since \(\D_{<}^{\,2}\) is
an \(H\)-invariant subset of \(\D^2\).
\end{proof}

We now consider subgroups supported in dyadic intervals.  The intervals
in the next proposition need not be \(H\)-isolated.

\begin{proposition}[Supported subgroups]
\label{prop:supported-subgroups}
Let \(H\leq F\) be closed, suppose that \(\mathcal L=C(H)\) is finite
and full, and let
\[
  J=[a,b]\subseteq[0,1]
\]
be a closed interval with dyadic endpoints.  Let \(W_J\)
be the \(1\)-path associated with a dyadic subdivision of \(J\).  Then
\[
  H\cap F[J]\cong D(\mathcal L,W_J).
\]
If \(H\) is of type \(\mathrm{FP}_2\), then \(H\cap F[J]\) is of type
\(F_\infty\), and a finite presentation of this subgroup can be
constructed effectively from \(\mathcal L\) and \(J\).
\end{proposition}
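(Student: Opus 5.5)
The plan is to build an explicit isomorphism by conjugating diagrams over \(W_J\) by a fixed tree diagram. Extend the chosen dyadic subdivision \(\mathscr S\) of \(J\) to a dyadic subdivision of \([0,1]\), and let \(T\) be the corresponding tree. By Lemma~\ref{lem:trees-over-core} we get \(\Psi_T\colon p\to AW_JB\), where \(A\) or \(B\) may be empty when \(a=0\) or \(b=1\).

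\emph{The map.} For a reduced spherical \((W_J,W_J)\)-diagram \(\Delta\) over \(\mathcal L\), put
\[
  \Phi(\Delta)=\text{reduced form of }\Psi_T\circ\bigl(\varepsilon(A)+\Delta+\varepsilon(B)\bigr)\circ\Psi_T^{-1}.
\]
This is a spherical \((p,p)\)-diagram over \(\mathcal L\). By Theorem~\ref{thm:core-closure}\textup{(i)} and closedness, forgetting labels gives an element of \(H\). This element is the identity on every leaf interval of \(T\) outside \(J\), since those edges carry no cells, so it lies in \(F[J]\).

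\emph{Homomorphism and injectivity.} Dipoles cancel across \(\Psi_T^{-1}\circ\Psi_T\), so \(\Phi\) is a homomorphism. For injectivity, reducing \(\Psi_T\circ(\varepsilon(A)+\Delta+\varepsilon(B))\circ\Psi_T^{-1}\) only cancels cells of \(\Psi_T\) against cells of its inverse. Hence the element is trivial only if \(\Delta\) reduces to \(\varepsilon(W_J)\). Equivalently, one can read off \(\Delta\) from a tree pair of the image restricted to the leaves under \(J\).

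\emph{Surjectivity.} Let \(h\in H\cap F[J]\). Since \(h\) fixes \(J\) setwise and is the identity outside \(J\), I will choose a tree pair \((T_+,T_-)\) for \(h\) with the following properties:
\begin{itemize}
\item both \(T_+\) and \(T_-\) contain \(T\);
\item every leaf of \(T\) outside \(J\) is a leaf of both trees, paired with itself;
\item the leaves below \(J\) are paired among themselves.
\end{itemize}
This is possible by expanding any tree pair for \(h\) and adding the carets of \(T\) as common expansions. The core is full, so Lemma~\ref{lem:acceptance-reduction} shows that this expanded tree pair is still accepted. By uniqueness of core labellings, the accepted diagram restricted to \(T\) is \(\Psi_T\) on both sides. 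The part below the \(J\)-block is then a spherical \((W_J,W_J)\)-diagram \(\Delta\), and \(h=\Phi(\Delta)\). This bookkeeping is the step I expect to need the most care: the labels on the middle path must agree with the labels of \(W_J\) at the top, and the degenerate cases \(a=0\) or \(b=1\) must be handled.

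\emph{Finiteness and effectiveness.} If \(H\) is of type \(\mathrm{FP}_2\), then Corollary~\ref{cor:all-base-paths-finfty} shows that \(D(\mathcal L,W_J)\) is of type \(F_\infty\). Corollary~\ref{cor:effective-presentations-homology} then gives a finite presentation of \(D(\mathcal L,W_J)\) with generators given as diagrams over \(\mathcal L\). Applying \(\Phi\), which is computable from \(\mathcal L\) and \(J\) via \(T\), turns these generators into tree-pair diagrams. This yields a finite presentation of \(H\cap F[J]\).
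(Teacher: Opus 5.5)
Your argument follows the paper's proof. You conjugate padded \((W_J,W_J)\)-diagrams by \(\Psi_T\), obtain surjectivity from an accepted tree pair refining \(T\), and get type \(F_\infty\) and the finite presentation from Corollaries~\ref{cor:all-base-paths-finfty} and~\ref{cor:effective-presentations-homology}. Two justifications need correcting, although neither changes the conclusion.

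\emph{Injectivity.} It is false that reducing \(\Psi_T\circ(\varepsilon(A)+\Delta+\varepsilon(B))\circ\Psi_T^{-1}\) cancels only cells of \(\Psi_T\) against cells of \(\Psi_T^{-1}\). Suppose \(\Delta\) has an inverse cell at its top whose top path consists of the edges of two sibling leaves of \(T\). Then that cell and the corresponding positive cell of \(\Psi_T\) form a dipole. Use the paper's reasoning instead. Padding a reduced nontrivial \(\Delta\) with trivial diagrams gives a reduced nontrivial diagram. Moreover, conjugation by \(\Psi_T\), followed by reduction, is an isomorphism \(D(\mathcal L,AW_JB)\to D(\mathcal L,p)\). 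Its inverse is conjugation by \(\Psi_T^{-1}\), because equivalence of diagrams is compatible with concatenation.

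\emph{Surjectivity.} An arbitrary expansion of a tree pair for \(h\) need not have the exterior leaves of \(T\) as leaves of both trees. Start instead from the reduced tree pair. It has no carets below an exterior leaf of \(T\), since a deepest such caret would be paired identically with itself and could be cancelled. Then perform only the common expansions needed for both trees to contain \(T\). Alternatively, as the paper does, let the matching exterior forests cancel as dipoles. With either fix, the rest of your bookkeeping goes through by Lemma~\ref{lem:acceptance-reduction} and uniqueness of core labellings.
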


\begin{proof}
Extend the chosen subdivision of \(J\) to a dyadic subdivision of
\([0,1]\), associated with a finite binary tree \(T\).  Its positive
diagram over \(\mathcal L\) has the form
\[
  \Psi_T\colon p_H\longrightarrow_{\mathcal L}W_0 W_J W_1,
\]
where \(W_0\) and \(W_1\) are the paths associated with the
subdivisions of \([0,a]\) and \([b,1]\), respectively.

For a spherical \((W_J,W_J)\)-diagram \(\Delta\), form
\[
  \Psi_T\circ
  \bigl(\varepsilon(W_0)+\Delta+\varepsilon(W_1)\bigr)
  \circ\Psi_T^{-1}.
\]
If \(W_0\) or \(W_1\) is empty, the corresponding summand is omitted.  This
gives a spherical diagram based at \(p_H\), and hence an element of
\(H\).  Its action is the identity outside \(J\).

Adding trivial exterior diagrams is injective on diagram groups: if
\(\Delta\) is reduced and nontrivial, then the padded diagram is also
reduced and nontrivial.  Conjugation by \(\Psi_T\) is an isomorphism
between the corresponding base-path diagram groups.  Thus the
construction gives an injective homomorphism
\[
  D(\mathcal L,W_J)\longrightarrow H\cap F[J].
\]

To prove surjectivity, let \(h\in H\cap F[J]\).  Choose an accepted tree
pair for \(h\) whose two trees refine \(T\).  Since \(h\) is the
identity outside \(J\), the two trees agree there.  Their positive
diagrams therefore decompose over \(\Psi_T\) into matching exterior
forests and two forests based at \(W_J\).  The matching exterior forests
cancel, leaving a spherical diagram over \(W_J\).  Hence \(h\) belongs
to the image of the displayed homomorphism.

If \(H\) is of type \(\mathrm{FP}_2\),
Corollary~\ref{cor:all-base-paths-finfty} gives type \(F_\infty\) for
\(D(\mathcal L,W_J)\).  The effective presentation follows from
Corollary~\ref{cor:effective-presentations-homology}, since \(W_J\) can
be computed from the chosen subdivision of \(J\).
\end{proof}

\begin{corollary}[Stabilizers of finite dyadic sets]
\label{cor:finite-dyadic-set-stabilizers}
Let \(H\leq F\) be closed and of type \(\mathrm{FP}_2\), and suppose
that \(C(H)\) is finite and full.  For every finite set \(S\subseteq\D\),
its stabilizer in \(H\) is of type \(F_\infty\).  A finite presentation
of the stabilizer can be constructed effectively from \(C(H)\) and
\(S\).
\end{corollary}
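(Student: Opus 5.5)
The plan is to write the stabilizer of a finite set \(S=\{s_1<\cdots<s_n\}\subseteq\D\) as a direct product of supported subgroups, and then quote Proposition~\ref{prop:supported-subgroups}. Since the elements of \(F\) are increasing, an element of \(H\) that stabilizes \(S\) setwise fixes every \(s_i\). So the stabilizer is the pointwise stabilizer \(H_S\). Put \(s_0=0\), \(s_{n+1}=1\) and \(J_i=[s_i,s_{i+1}]\) for \(0\leq i\leq n\); these are closed intervals with dyadic endpoints.

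\textbf{Step 1: \(H_S\) splits over the intervals \(J_i\).} Let \(h\in H_S\). For each \(i\), define \(h_i\) to agree with \(h\) on \(J_i\) and to be the identity elsewhere. Because \(h\) fixes the dyadic points \(s_i\) and \(s_{i+1}\), each \(h_i\) lies in \(F\), and it is piecewise-\(H\), with pieces \(h\) and \(1\). Closedness of \(H\) therefore gives \(h_i\in H\cap F[J_i]\). The subgroups \(H\cap F[J_i]\) have pairwise disjoint supports, so they commute. Every element of their product fixes \(S\), and \(h=h_0h_1\cdots h_n\). Hence
\[
  H_S=\prod_{i=0}^{n}\bigl(H\cap F[J_i]\bigr),
\]
an internal direct product.

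\textbf{Step 2: finiteness and effectiveness.} By Proposition~\ref{prop:supported-subgroups}, each factor \(H\cap F[J_i]\) is of type \(F_\infty\), and a finite presentation of it can be constructed effectively from \(C(H)\) and \(J_i\). A finite direct product of groups of type \(F_\infty\) is of type \(F_\infty\): take the product of finite-type \(K(G,1)\) complexes. A finite presentation of the product is obtained by taking the disjoint union of the generators and relators of the factors and adding the commutation relations \([x,y]=1\) for generators \(x,y\) of distinct factors. The factor presentations are realised by diagrams in \(D(\mathcal L,W_{J_i})\). To express their generators as tree-pair diagrams of elements of \(H\), use the padding map \(\Delta\mapsto\Psi_T\circ(\varepsilon(W_0)+\Delta+\varepsilon(W_1))\circ\Psi_T^{-1}\) from the proof of Proposition~\ref{prop:supported-subgroups}; this map is computable.

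The main point needing care is Step 1. Membership of the cut-off elements \(h_i\) in \(H\) uses closedness together with the fact that every \(s_i\) is dyadic, which is what guarantees \(h_i\in F\). The rest is formal.
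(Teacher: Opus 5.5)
Your proposal is correct and follows the paper's proof essentially verbatim: you cut an element of \(H_S\) into pieces supported on the intervals \(J_i=[s_i,s_{i+1}]\), use closedness to place each piece in \(H\cap F[J_i]\), and apply Proposition~\ref{prop:supported-subgroups} to each factor of the resulting direct product. Your remark that the setwise and pointwise stabilizers coincide, because elements of \(F\) are increasing, makes explicit a point the paper leaves implicit.
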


\begin{proof}
Write \(S=\{x_1<\cdots<x_k\}\), put \(x_0=0\) and \(x_{k+1}=1\), and
let \(J_i=[x_i,x_{i+1}]\).  If \(h\in H\) fixes \(S\) pointwise, then
for every \(i\) the map which agrees with \(h\) on \(J_i\) and is the
identity elsewhere is piecewise \(H\), and hence lies in \(H\) because
\(H\) is closed.  Therefore
\[
  H_S\cong\prod_{i=0}^{k}\bigl(H\cap F[J_i]\bigr).
\]
Each factor is of type \(F_\infty\) and has an effectively computable
finite presentation by Proposition~\ref{prop:supported-subgroups}.  The
same conclusions hold for their finite direct product.
\end{proof}

The finite path semigroupoid also forces the existence of copies of
\(F\) supported in arbitrarily small intervals.

\begin{proposition}[Locally supported copies of \(F\)]
\label{prop:locally-supported-copies-of-F}
Let \(H\leq F\) be closed and of type \(\mathrm{FP}_2\), and suppose
that its core \(\mathcal L=C(H)\) is finite and full.  For every
nonempty open interval \(U\subseteq(0,1)\), there is a subgroup of \(H\)
isomorphic to \(F\) whose elements are the identity outside \(U\).
\end{proposition}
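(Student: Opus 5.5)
The plan is to find, inside \(U\), a dyadic interval \(J\) whose interval class \(E=\omega(J)\) satisfies \(E=EE\), with both halves also lying in \(U\). This self-similarity will let me transplant the caret structure of binary trees into \(J\), and closedness will realize every transplanted tree pair as an element of \(H\) supported in \(J\subseteq U\).

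\emph{Step 1: an idempotent class inside \(U\).} By Theorem~\ref{thm:finite-full-core-equivalence}, \(\Path(\mathcal L)\) is finite, and by Proposition~\ref{prop:orbit-core} \(H\) has finitely many orbits on \(\D\). So \(U\) contains an increasing sequence \(y_0<y_1<y_2<\cdots\) of dyadic points of a single type \(v\), all in a compact subinterval of \(U\). Colour each pair \(i<j\) by the class \(\omega[y_i,y_j]\in\Path(\mathcal L)(v,v)\); there are only finitely many colours. By Ramsey's theorem, some indices \(i<j<k\) receive a single colour \(E\) on all three pairs. Lemma~\ref{lem:cutting} then gives
\[
E=\omega[y_i,y_k]=\omega[y_i,y_j]\,\omega[y_j,y_k]=EE .
\]
Put \(J=[y_i,y_k]\), \(J_0=[y_i,y_j]\) and \(J_1=[y_j,y_k]\).

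\emph{Step 2: transplanting trees.} By Proposition~\ref{prop:exact-transport}, for any interval \(Q\) with \(\omega(Q)=E\) there is an element of \(H\) mapping \(J\) onto \(Q\); fix \(g_0,g_1\in H\) with \(g_0(J)=J_0\) and \(g_1(J)=J_1\). For a binary word \(u\), set \(J_u=g_{u_n}\cdots g_{u_1}(J)\) (applying \(g_{u_1}\) last), a dyadic subinterval of \(J\) with \(\omega(J_u)=E\) by Proposition~\ref{prop:transport}. The leaves of any finite binary tree then give a subdivision of \(J\) into intervals \(J_u\).

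For a tree pair \((T_+,T_-)\) with branch pairs \(u_m\to v_m\), define \(\Phi(T_+,T_-)\) to be the map that sends each \(J_{u_m}\) onto \(J_{v_m}\) by an element of \(H\) (again from exact transport, since both have class \(E\)) and is the identity outside \(J\). This map is piecewise \(H\). To see it lies in \(F\) and is well defined, I would make the choice canonical: on \(J_{u_m}\) use the composite \((g\text{-word for }u_m)^{-1}\cdot(g\text{-word for }v_m)\) restricted to \(J_{u_m}\). Because \(H\) is closed, this map lies in \(H\), and it is supported in \(J\subseteq U\). An elementary expansion of the tree pair does not change the map, since the composites agree on \(J_{u0}\) and \(J_{u1}\); so \(\Phi\) is defined on \(F\). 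It is a homomorphism because composition of tree pairs matches composition of these words, using the left-to-right convention.

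\emph{Step 3: injectivity, the main obstacle.} Injectivity is where I expect the real work. The idea is to compare with the standard action: if \(f\in F\) is nontrivial, it maps some standard interval \([u]\) onto \([v]\) with \(u\neq v\). Then \(\Phi(f)\) maps \(J_u\) onto \(J_v\), which are distinct intervals, so \(\Phi(f)\neq 1\). The point to check is \(J_u\neq J_v\). The nesting \(J_{u0},J_{u1}\subset J_u\) with disjoint interiors makes \(u\mapsto J_u\) injective, because the intervals \(J_u\) form a genuine binary tree of nested intervals whose endpoints are all distinct. If this argument has a gap, I would instead invoke the Guba--Sapir substitution embedding \(D(\K,x)\to D(\mathcal L,W_J)\), which replaces the Dunce-hat cell by a reduced \((W_J,W_JW_J)\)-diagram obtained from \(E=EE\). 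I would then use Proposition~\ref{prop:supported-subgroups} to identify the image inside \(H\cap F[J]\).
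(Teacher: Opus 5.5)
Your proof is correct, and it reaches the conclusion by a genuinely different route from the paper's. The paper takes a standard dyadic interval \([u]\subseteq U\) and identifies the subgroup of \(H\) supported in \([u]\) with the diagram group \(D(\mathcal L,u^+)\) via Proposition~\ref{prop:supported-subgroups}. It then applies finiteness of \(\Path(\mathcal L)\) to the prefixes of a long positive expansion \(PBA\) of \(u^+\). This gives \(P\simeq PB\), and hence an idempotent \([B^n]\) dividing \([u^+]\). Finally it invokes Guba--Sapir's rigidity theorem, which says that such an idempotent yields a copy of \(F\) in the diagram group.

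You instead realize the idempotent dynamically. Ramsey's theorem on same-type dyadic points in \(U\) gives adjacent intervals with \(\omega(J)=\omega(J_0)=\omega(J_1)\). Exact transport turns this into \(g_0,g_1\in H\) with \(g_\epsilon(J)=J_\epsilon\). Closedness then lets you glue the composites into an explicit embedding \(F\to H\cap F[J]\). Your argument is self-contained and explicit: it needs neither the diagram-group identification nor the external rigidity theorem. In effect it is a hands-on version of the Guba--Sapir substitution, carried out by piecewise-\(H\) gluing rather than with diagrams. The paper's proof is shorter given the machinery, and it stays inside the diagram-group framework, where the rigidity theorem applies to arbitrary base paths.

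The points you left to check do go through. Write \(\psi_u=g_{u_n}\cdots g_{u_1}\) in the paper's left-to-right convention, so that \(J_u=\psi_u(J)\). Then \(\psi_{u0}=g_0\psi_u\), so \(\psi_{u0}^{-1}\psi_{v0}=\psi_u^{-1}\psi_v\) as elements of \(H\). Thus invariance under elementary expansion holds on the nose. Consecutive pieces agree at the shared dyadic endpoints of the \(J_{u_m}\), so \(\Phi(f)\in F\).

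In Step~3, the phrase ``endpoints are all distinct'' is not accurate, since adjacent intervals share endpoints. What you actually use is that \(J_{w0}=\psi_w(J_0)\) and \(J_{w1}=\psi_w(J_1)\) are nondegenerate proper subintervals of \(J_w\) with disjoint interiors. Then a proper prefix gives strict containment and a divergence gives disjoint interiors, so \(J_u\neq J_v\) whenever \(u\neq v\). Injectivity follows, and the fallback to the substitution embedding is unnecessary.
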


\begin{proof}
Choose a standard dyadic interval
\[
  J=[u]\subseteq U,
\]
and put \(e=u^+\).  By Proposition~\ref{prop:supported-subgroups},
\[
  D(\mathcal L,e)\cong H\cap F[J].
\]
Since \(\mathcal L\) is full, \(e\) has positive expansions of
arbitrarily large length.  Since \(\Path(\mathcal L)\) is finite by
Theorem~\ref{thm:finite-full-core-equivalence}, two distinct nonempty
prefixes of one such expansion represent the same path class.  Write
these prefixes as \(P\) and \(PB\), and the whole expansion as \(PBA\).
Then \(B\) is nonempty,
\[
  P\simeq PB,
\]
and hence
\[
  e\simeq PB^nA
  \qquad(n\geq1).
\]
The finite semigroup \(\{[B^n]:n\geq1\}\) contains an idempotent.  Thus
an idempotent represented by a nonempty path divides \([e]\), and
\cite[Theorem~4]{GubaSapirRigidity} gives a copy of \(F\) in
\(D(\mathcal L,e)\).  Under the displayed isomorphism, this copy lies in
\(H\cap F[J]\), so its elements are the identity outside \(U\).
\end{proof}

The quotient-core argument in
\cite[Corollary~4.10 and the proof of Theorem~5.4]{GolanMaximal} shows
that a subgroup with finite full core has only finitely many closed
overgroups in \(F\), all of which have finite full core.  We now show
that if the original subgroup is closed and of type \(\mathrm{FP}_2\),
then all these overgroups are of type \(F_\infty\).

\begin{proposition}[Closed overgroups]
\label{prop:closed-overgroups-finfty}
Let \(H\leq F\) be closed and of type \(\mathrm{FP}_2\), and suppose
that \(C(H)\) is finite and full.  Then every closed subgroup
\(G\leq F\) containing \(H\) is of type \(F_\infty\).
\end{proposition}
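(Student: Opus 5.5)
Let \(G\leq F\) be closed with \(H\leq G\). The plan is to show that \(C(G)\) is finite and full, and that \(G\) has finitely many orbits on \(\D^2_<\). Proposition~\ref{prop:dyadic-tuple-orbits} (equivalently, Corollary~\ref{cor:interval-pair-dictionary} followed by Theorem~\ref{thm:finite-full-core-equivalence}) then gives that \(G\) is of type \(F_\infty\).

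First, \(C(G)\) is finite and full. Since \(H\leq G\), every \(G\)-orbit on \(\D\) is a union of \(H\)-orbits. By Proposition~\ref{prop:finite-full-core}, \(H\) has finitely many orbits on \(\D\), so \(G\) does as well. For finiteness of the core I will invoke the quotient-core argument cited just before the statement: a subgroup with finite full core has only finitely many closed overgroups, and each of them has finite full core. If a self-contained route is preferred, note that \(H\leq G\) gives a morphism of child automata \(\mathcal A(C(H))\to\mathcal A(C(G))\). Branch pairs of elements of \(H\) are branch pairs of elements of \(G\), so by Lemma~\ref{lem:branch-criterion} equal states in \(C(H)\) map to equal states in \(C(G)\). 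This morphism is surjective on states because \(C(H)\) is full, so every word is readable and reaches a state. Hence \(C(G)\) has at most as many edges as \(C(H)\): it is finite, and full since every word is readable. Theorem~\ref{thm:core-closure}\textup{(ii)} then shows \(G\) is finitely generated.

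Second, the orbit count. By Proposition~\ref{prop:dyadic-tuple-orbits} applied to \(H\), the hypothesis that \(H\) is of type \(\mathrm{FP}_2\) gives that \(H\) has finitely many orbits on \(\D^2_<\). Each \(G\)-orbit on \(\D^2_<\) is a union of \(H\)-orbits, so \(G\) also has finitely many orbits on \(\D^2_<\). Applying Proposition~\ref{prop:dyadic-tuple-orbits} to \(G\), whose core is finite and full by the first step, shows that \(G\) is of type \(\mathrm{FP}_2\). Theorem~\ref{thm:finite-full-core-equivalence} then upgrades this to type \(F_\infty\).

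The only step with real content is the finiteness and fullness of \(C(G)\), and specifically the surjectivity of the automaton morphism. This step needs \(C(H)\) to be full, which is what guarantees that every word, and hence every state of \(\mathcal A(C(G))\), is reached from \(C(H)\). Everything after that is orbit counting.
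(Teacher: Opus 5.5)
Your argument is correct and is essentially the paper's proof. You cite the quotient-core result to get that \(C(G)\) is finite and full, pass the finitely many \(H\)-orbits on \(\D^2_<\) up to \(G\), and then apply Proposition~\ref{prop:dyadic-tuple-orbits} and Theorem~\ref{thm:finite-full-core-equivalence}. In your optional self-contained aside, applying the branch criterion to \(G\) presupposes that every word is readable in \(C(G)\), so establish fullness first: Lemma~\ref{lem:leaf-intervals} applied to \(G\), which has finitely many orbits on \(\D\), rules out leaves, and after that your map \(u_H^+\mapsto u_G^+\) is well defined and surjective.
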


\begin{proof}
By the result recalled above, \(C(G)\) is finite and full.
Proposition~\ref{prop:dyadic-tuple-orbits} shows that \(H\) has finitely
many orbits on increasing dyadic pairs.  Every \(G\)-orbit is a union
of \(H\)-orbits, so \(G\) also has finitely many such orbits.  Applying
Proposition~\ref{prop:dyadic-tuple-orbits} to \(G\), followed by
Theorem~\ref{thm:finite-full-core-equivalence}, gives
that \(G\) is of type \(F_\infty\).
\end{proof}

\bigskip
\noindent
\textsc{Department of Mathematics, Ben-Gurion University of the Negev,
Be'er Sheva, Israel}

\noindent
\textit{Email address}: \texttt{golangi@bgu.ac.il}

\end{document}